\newtheorem{theorem}{\textbf{Theorem}}[section]
\newtheorem{lemma}{\textbf{Lemma}}[section]
\newtheorem{proposition}{\textbf{Proposition}}[section]
\newtheorem{corollary}{\textbf{Corollary}}[section]
\newtheorem{remark}{\textbf{Remark}}[section]
\newtheorem{definition}{\textbf{Definition}}[section]
\def\be{\begin{equation}}
\def\ee{\end{equation}}
\def\bea{\begin{eqnarray}}
\def\eea{\end{eqnarray}}
\def\bt{\begin{theorem}}
\def\et{\end{theorem}}
\def\bl{\begin{lemma}}
\def\el{\end{lemma}}
\def\br{\begin{remark}}
\def\er{\end{remark}}
\def\bp{\begin{proposition}}
\def\ep{\end{proposition}}
\def\bc{\begin{corollary}}
\def\ec{\end{corollary}}
\def\bd{\begin{definition}}
\def\ed{\end{definition}}
\def\vp{\varphi}
\def\ve{\varepsilon}
\def\non{\nonumber}
\def\n{\mathbf{n}}
\def\R3{\mathbb{R}^3}
\def\F2o{\overline{F_2}}
\def\N{\mathcal{N}}
\def\d{{\rm d}}
\def\l{\langle}
\def\r{\rangle}
\def\vph{\varphi^h}
\def\muh{\mu^h}
\def\uh{\bm{u}^h}
\def\ph{P^h}
\def\yh{y^h}
\def\zh{z^h}
\def\rh{r^h}
\def\vps{\varphi^*}
\def\ps{P^*}
\def\bv{\bm{v}}
\def\mus{\mu^*}
\def\bus{\bm{u}^*}
\def\u{\bm{u}}
\def\uu{\bm{u}}
\def\vv{\bm{v}}
\def\tu{\widetilde{\bm{u}}}
\def\dt{\partial_t}
\def\dn{\partial_{\bf n}}
\def\vp{\varphi}
\def\ve{\varepsilon}
\def\iQt{\int_{Q_t}}
\def\iO{\int_\Omega}
\def\checkmmode #1{\relax\ifmmode\hbox{#1}\else{#1}\fi}
\def\aeO{\checkmmode{a.\,e.\, in~$\Omega$}}
\def\aeQ{\checkmmode{a.\,e.\ in~$Q$}}
\def\aeS{\checkmmode{a.\,e.\ on~$\Sigma$}}
\begin{document}

\title{Optimal distributed control for a Cahn-Hilliard-Darcy system \\
with mass sources, unmatched viscosities and singular potential
\footnote{This is a full length preprint version with detailed computations.}}

\author{
{Marco Abatangelo\thanks{Dipartimento di Matematica,
Universit\`{a} degli Studi di Milano, 20133 Milano, Italy.
E-mail: m.abatangelo@gmail.com}}
\and
{Cecilia Cavaterra\thanks{
Dipartimento di Matematica,
Universit\`{a} degli Studi di Milano, 20133 Milano, Italy.
Istituto di Matematica Applicata e Tecnologie Informatiche ``Enrico Magenes'', CNR,
27100 Pavia, Italy.
E-mail: cecilia.cavaterra@unimi.it}}
\and
{Maurizio Grasselli\thanks{Dipartimento di Matematica,
Politecnico di Milano, 20133 Milano, Italy.
E-mail: maurizio.grasselli@polimi.it}}
\and
Hao Wu\thanks{Corresponding author. School of Mathematical Sciences
and Shanghai Key Laboratory for Contemporary Applied Mathematics, Fudan University, 200433 Shanghai, China.
Email: haowufd@fudan.edu.cn}
}

\date{\today}
\maketitle
%

\begin{abstract}
\noindent
We study a Cahn-Hilliard-Darcy system with mass sources, which can be considered as a basic, though simplified, diffuse interface model for the evolution of tumor growth. This system is equipped with an impermeability condition for the (volume) averaged velocity $\u$ as well as homogeneous Neumann boundary conditions for the phase function $\varphi$ and the chemical potential $\mu$. The source term in the convective Cahn-Hilliard equation contains a control $R$ that can be thought, for instance, as a drug or a nutrient in applications. Our goal is to study a distributed optimal control problem in the two dimensional setting with a cost functional of tracking-type. In the physically relevant case with unmatched viscosities for the binary fluid mixtures and a singular potential, we first prove the existence and uniqueness of a global strong solution with $\varphi$ being strictly separated from the pure phases $\pm 1$. This well-posedness result enables us to characterize the control-to-state mapping $\mathcal{S}:R \mapsto \varphi$. Then we obtain the existence of an optimal control,  the Fr\'{e}chet differentiability of $\mathcal{S}$ and first-order necessary optimality conditions expressed through a suitable variational inequality for the adjoint variables. Finally, we prove the differentiability of the control-to-costate operator and establish a second-order sufficient condition for the strict local optimality.

\medskip
\noindent
\textbf{Keywords:} Cahn-Hilliard-Darcy system; singular potential; unmatched viscosities; strong solutions; existence of an optimal control; necessary optimality condition; sufficient optimality condition.

\medskip
\noindent
\textbf{MSC2020:} 35Q35; 35Q92; 49J20; 49J50; 49K20; 76D27; 76T06.
\end{abstract}

\tableofcontents
\section{Introduction}

Consider a mixture of two immiscible and incompressible fluids contained between two flat parallel plates that are separated by a narrow gap, i.e., a Hele-Shaw cell. The motion of such a binary fluid flow is driven by the pressure and by the capillary forces acting on the free interface separating the two components. A reasonable model for this phenomenon is based on the diffuse-interface approach (see \cite{LLGa,LLG} and references therein). In this framework, the relative concentration difference of the two fluids, say the phase field variable $\vp$, is governed by an advective Cahn-Hilliard equation, while the (volume) averaged fluid velocity $\u$ satisfies Darcy's law that also contains the Korteweg force accounting for capillary effects.
More precisely, assuming that the mixture density is a constant everywhere (set to be one for simplicity) except in the gravitational term, we have the following coupled system:
\be
\begin{cases}
\label{BCHHS}
\nu(\vp) \u =  -\nabla P+ \mu \nabla \vp + \rho(\vp)\bm{g},\\
\mathrm{div}\, \u=0, \\
\vp_{t}+ \mathrm{div}\,(\vp \u) =   \Delta \mu, \\
\mu= - \Delta \vp +  \Psi'(\vp),
\end{cases}
\qquad \text{in}\ \Omega\times (0,T).
\ee
Here, $\Omega\subset \mathbb{R}^2$ is assumed to be a bounded domain with smooth boundary $\partial \Omega$, and $T\in (0,+\infty)$ is a given final time. Besides, $\nu=\nu(\varphi)$ is the kinematic viscosity that may depend on the composition of the mixture and $\rho(\vp)\bm{g}$ denotes the gravity force.
Other physical constants in the system \eqref{BCHHS} have been set equal to one for the sake of simplicity.
We denote by $\mu$ the chemical potential, which is the Fr\'{e}chet derivative of the Ginzburg-Landau free energy functional
$$
\mathcal{E}(\varphi) = \int_\Omega \Big( \frac12 \vert \nabla\vp\vert^2 + \Psi(\vp) \Big)  \d x.
$$
The nonlinear function $\Psi$ represents the physically relevant Flory-Huggins potential \be
\label{SING}
\Psi(s)=\frac{\Theta}{2}\left[ (1+s)\ln(1+s)+(1-s)\ln(1-s)\right]-\frac{\Theta_0}{2} s^2,
 \qquad s \in (-1,1),
\ee
where $\Theta>0$ denotes the absolute temperature of the binary mixture and $\Theta_0>\Theta$ is the critical temperature.
We recall that the singular potential $\Psi$ accounts for the competition between the Gibbs-Boltzmann convex mixing entropy (i.e., the logarithmic terms) and demixing (anti-convex) effects, that is, the essence of phase separation phenomena.
Indeed, when $0<\Theta<\Theta_0$,  $\Psi$ has a double well structure with two minima between the pure phases $-1$ and $1$.

System \eqref{BCHHS}, known as the Boussinesq-Hele-Shaw-Cahn-Hilliard (or, simply, the Hele-Shaw-Cahn-Hilliard) system, can also be obtained
as an approximation of the Navier-Stokes-Cahn-Hilliard system for binary fluids, when the viscous forces dominate the advective-inertial forces (see, for instance, \cite{LLGa,DGL18} and references therein).
The standard boundary and initial conditions for \eqref{BCHHS} are the following
\be
\label{bdini}
\begin{cases}
\u \cdot \n= \partial_\n \mu=\partial_\n \vp=0 &\quad \text{on\ } \partial \Omega \times (0,T),\\
\vp|_{t=0}=\vp_0 &\quad  \text{in } \Omega.
\end{cases}
\ee
Here, $\n=\n(x)$ is the unit outward normal vector to the boundary $\partial \Omega$ and we denote by $\partial_\n$ the outward normal derivative on $\partial \Omega$.

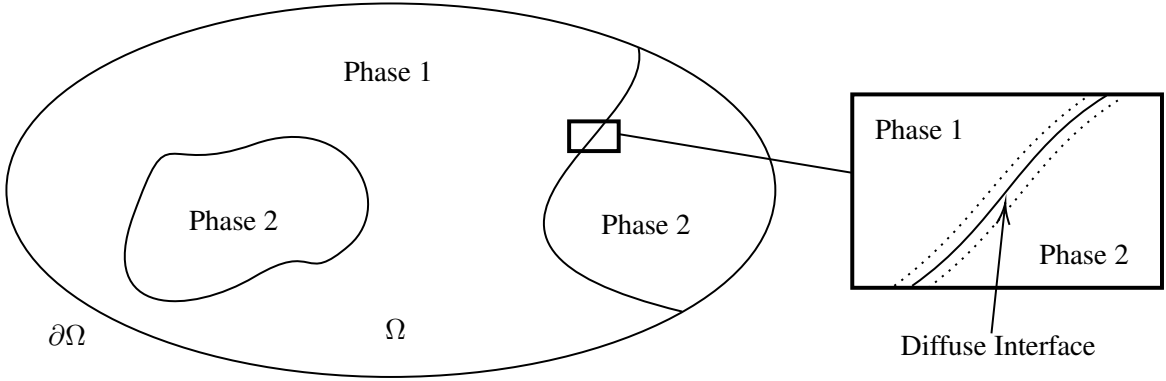
\begin{figure}
  \centering
\tikzset{every picture/.style={line width=0.75pt}} 

\begin{tikzpicture}[x=0.75pt,y=0.75pt,yscale=-1,xscale=1]

\draw   (145.4,95.1) .. controls (193.4,78.1) and (217.4,125.1) .. (193.4,146.1) .. controls (169.4,167.1) and (177.4,143.1) .. (145.4,162.1) .. controls (113.4,181.1) and (65.4,183.1) .. (84.4,131.1) .. controls (103.4,79.1) and (97.4,112.1) .. (145.4,95.1) -- cycle ;
\draw   (20.4,118.36) .. controls (20.4,66.3) and (107.21,24.1) .. (214.3,24.1) .. controls (321.39,24.1) and (408.2,66.3) .. (408.2,118.36) .. controls (408.2,170.41) and (321.39,212.61) .. (214.3,212.61) .. controls (107.21,212.61) and (20.4,170.41) .. (20.4,118.36) -- cycle ;
\draw    (361.2,179.61) .. controls (213.2,143.61) and (348.2,88.61) .. (339.2,46.61) ;
\draw  [line width=1.5]  (304.2,83.61) -- (329.2,83.61) -- (329.2,98.61) -- (304.2,98.61) -- cycle ;
\draw  [line width=1.5]  (447,70) -- (603.2,70) -- (603.2,167.61) -- (447,167.61) -- cycle ;
\draw    (477.2,166.61) .. controls (520.2,136.61) and (531.2,97.61) .. (575.2,70.61) ;
\draw  [dash pattern={on 0.84pt off 2.51pt}]  (485.2,167.61) .. controls (528.2,135.61) and (539.2,98.61) .. (583.2,71.61) ;
\draw  [dash pattern={on 0.84pt off 2.51pt}]  (468.2,166.61) .. controls (511.2,134.61) and (523.2,100.61) .. (566.2,70.61) ;
\draw    (329,90) -- (447.2,109.61) ;
\draw    (517.2,190.61) -- (523.99,126.6) ;
\draw [shift={(524.2,124.61)}, rotate = 96.05] [color={rgb, 255:red, 0; green, 0; blue, 0 }  ][line width=0.75]    (10.93,-3.29) .. controls (6.95,-1.4) and (3.31,-0.3) .. (0,0) .. controls (3.31,0.3) and (6.95,1.4) .. (10.93,3.29)   ;

\draw (189,52) node [anchor=north west][inner sep=0.75pt]   [align=left] {Phase 1};
\draw (470,190) node [anchor=north west][inner sep=0.75pt]   [align=left] {Diffuse Interface};
\draw (210,182) node [anchor=north west][inner sep=0.75pt]   [align=left] {$\displaystyle \Omega $};
\draw (40,185) node [anchor=north west][inner sep=0.75pt]   [align=left] {$\displaystyle \partial \Omega $};
\draw (319,129) node [anchor=north west][inner sep=0.75pt]   [align=left] {Phase 2};
\draw (111,127) node [anchor=north west][inner sep=0.75pt]   [align=left] {Phase 2};
\draw (457,81) node [anchor=north west][inner sep=0.75pt]   [align=left] {Phase 1};
\draw (540,144) node [anchor=north west][inner sep=0.75pt]   [align=left] {Phase 2};

\end{tikzpicture}
  \caption{A diffuse-interface description of two-phase flows in a bounded domain $\Omega\subset \mathbb{R}^2$}
  \label{fig1}
\end{figure}

Neglecting the gravity force $\rho(\vp)\bm{g}$, system \eqref{BCHHS} subject to \eqref{bdini} has recently been analyzed in \cite{Gio2020}, where a nice introduction to the model as well as a detailed story of the related theoretical results can be found (see also \cite{GGW,WW12,WZ13} and references therein and \cite{CFG22,DGG18} for nonlocal models). First of all, the existence of a global weak solution can be obtained by suitably modifying the argument used in \cite{GGW} (see \cite[Theorem 3.2]{Gio2020}). Then, in dimension two, the author proved a conditional uniqueness for weak solutions as well as the existence and uniqueness of a global strong solution, while in dimension three he showed that the initial boundary value problem admits a unique local strong solution (or global if the initial datum is small enough). This is the state-of-the-art of this problem without further simplifications like, e.g., constant kinematic viscosity or a smooth (polynomial) approximation of $\Psi$ (see, e.g., \cite{LTZ,WW12,WZ13}). We recall that in the latter case, uniqueness of global weak solutions in two dimensions has been obtained in \cite[Theorem 4.3]{Gio2020}. On the other hand, it is worth mentioning that a smooth polynomial approximation of $\Psi$ cannot guarantee that the phase function $\vp$ takes its values in the physical range $[-1,1]$ throughout the evolution.

System \eqref{BCHHS} serves as an efficient, simplified model for incompressible
binary fluids moving through a porous medium. The variable viscosity is physically interesting as it is closely related to the well-known Saffman-Taylor instability phenomenon.
When suitable mass sources are taken into account, the system \eqref{BCHHS} can be interpreted as a simplified model of avascular, vascular and metastasis stages of solid
tumour growth (see, e.g., \cite{Fri10,GLSS,Wise08}). In this context, the phase function $\vp$ stands for the difference in volume fractions, where
$\vp=1$ represents the tumor phase and $\vp=-1$ represents the healthy tissue phase. More precisely, we have the following system (neglecting w.l.o.g. the gravitational force)
\be
\begin{cases}
\label{CHHS}
\nu(\vp) \u =  -\nabla P+ \mu \nabla \vp, \\
\mathrm{div}\, \u=S, \\
\vp_{t}+ \mathrm{div}\,(\vp \u) =   \Delta \mu + S + R,  \\
\mu= - \Delta \vp +  \Psi'(\vp),
\end{cases}
\qquad \text{in}\ \Omega \times (0,T).
\ee
 Here, the source terms $S$ and $\widetilde{S} := S +R $ represent possible inter-component mass exchanges as well as gains due
to proliferation of cells and loss due to cell death. This system, also known as the Cahn-Hilliard-Darcy system in the literature, was analyzed in \cite{JWZ} in the easier case that $\nu$ is a positive constant,
$\Psi$ is a double-well polynomial potential, $R = 0$ and $S$ is a given source depending on space and time.
Some well-posedness results were obtained and, in dimension two, the long-time behavior of the global solutions was investigated (i.e., the existence of a minimal pullback attractor and the convergence to a single equilibrium as time goes to infinity).
More recently, under the same assumptions on $\nu$ and $\Psi$, an optimal control problem with respect to the mass source $R$ in dimension two has been studied in \cite{SW21}. The quantity $R$ has been taken as a control variable representing an external mass source (e.g., a drug or a nutrient) that can be supplied to the system to monitor $\vp$ (the size of the tumor).
Given a final time $T>0$, the goal of \cite{SW21} was to analyze the following distributed optimal control problem
$$
\textit{Minimize}\quad \mathcal{J}(\vp,R)
\triangleq
\frac{\alpha_1}{2}\|\vp(T)-\vp_\Omega\|_{L^2(\Omega)}^2
+\frac{\alpha_2}{2}\|\vp-\vp_Q\|_{L^2(Q)}^2
+\frac{\beta}{2}\|R\|^2_{L^2(Q)}
$$
subject to the state system \eqref{bdini}--\eqref{CHHS}, where $R$ belongs to a suitable set $\mathcal{U}_{\mathrm{ad}}$ of admissible controls.
Here, $Q=\Omega \times(0,T)$, $\alpha_1$, $\alpha_2$ and $\beta $ are nonnegative constants (not all identically zero),
 $\vp_\Omega$, $\vp_Q$ denote some prescribed target functions defined in $\Omega$ and $Q$, respectively.
The ratios between the parameters $\alpha_1$, $\alpha_2$ and $\beta$ indicate the importance of the individual targets. In \cite{SW21}, the authors first proved that the optimal control problem admits a solution. Then they showed that the control-to-state operator $\mathcal{S}:R\mapsto\varphi$ is Fr\'echet differentiable between suitable Banach spaces and derived the first-order necessary optimality conditions in terms of the adjoint variables. However, second-order sufficient optimality conditions have not
been derived.

In this study, our aim is to generalize the results of \cite{SW21} on the optimal control problem to the physically relevant case of a variable viscosity depending on $\vp$ and a singular potential like \eqref{SING}. More precisely, we establish the following results:
\begin{itemize}
\item[(1)] The existence of an optimal control (see Theorem \ref{weakcomp});
\item[(2)] The Fr\'{e}chet differentiability of the control-to-state operator $\mathcal{S}$ (see Proposition \ref{1stFD}) and the first-order necessary optimality conditions expressed through a variational inequality for the adjoint variables (see Theorem \ref{NCLO2});
\item[(3)] Differentiability of the control-to-costate operator $\mathcal{T}$ (see Proposition \ref{Diffcotoco}) and a second-order sufficient condition for the strict local optimality (see Theorem \ref{2ndSC}).
\end{itemize}
In order to achieve our goal, a fundamental step is to prove the existence and uniqueness of a global strong solution $(\bm{u},P,\varphi,\mu)$ with $\varphi$ being strictly separated from the pure phases $\pm 1$ (see Theorem \ref{str-well}). The validity of the strict separation property enables us to deal with
the singular potential $\Psi$ and its derivatives, which leads to further regularity properties on the phase function $\varphi$ (cf. \cite{GGG22,GGW} and references therein). These will be crucial to obtain differentiability properties of the associated control-to-state operator $\mathcal{S}$ and the control-to-costate operator $\mathcal{T}$. We note that the second-order analysis is challenging from the mathematical point of view as it  requires that the solution mapping is twice continuously differentiable between suitable Banach spaces.

We conclude the Introduction by mentioning that different choices of the mass source have appeared in recent years in the study of material science, image
processing and biological applications \cite{Lam22,Mi19}. In particular, the Cahn-Hilliard-Darcy system \eqref{CHHS} with source terms depending on $\vp$, suitable boundary conditions and constant viscosity, has recently been studied in \cite{GLRS22,Schi22} (see also \cite{FLRS18,KS22} for multi-species models). A future goal could be the analysis of suitable optimal control problems with
sources depending on $\vp$ (cf. \cite{EK20a,EK20} for a Brinkman version with nutrient) as well as to extend our analysis to nonlocal variants of the system \eqref{CHHS}. Another interesting issue would be the analysis of sparse optimal control problems (see, e.g., \cite{ST21,ST23} and references therein).

\emph{Plan of the paper}. In Section~2, we introduce the functional settings and some analytic tools. In Section 3, we define the control-to-state operator $\mathcal{S}$ and the set of admissible controls. In particular, we state the existence and uniqueness of a strong (and strictly separated) solution, and then derive a continuous dependence estimate for $\mathcal{S}$. The existence of an optimal control is obtained in Section~4. In Section~5, we establish first-order optimality necessary conditions, while in Section~6 we prove a second-order sufficient optimal condition for strict local optimality. Section~7 is an appendix that provides the proof for the existence and uniqueness of a strong solution.


\section{Preliminaries}
\setcounter{equation}{0}

Let $X$ be a real Banach or Hilbert space. Its dual space is indicated by $X'$, and the duality pairing between $X$ and $X'$ is denoted by $\langle \cdot,\cdot\rangle_{X',X}$.
Given an interval $I$ of $\mathbb{R}^+$, we introduce the function space $L^p(I;X)$ with $p\in [1,+\infty]$, which consists of Bochner measurable $p$-integrable functions with
values in $X$. The boldface letter $\mathbf{X}$ denotes the space for vector (or matrix) valued functions.
For the standard Sobolev spaces, we use the notation $W^{k,p} := W^{k,p}(\Omega)$ for any $p \in [1,+\infty]$, $k\in \mathbb{N}$, equipped with the norm $\|\cdot\|_{W^{k,p}(\Omega)}$.
When $k=0$, we denote $W^{0,p}(\Omega)$ by $L^{p}(\Omega)$, while for $p=2$, we denote $W^{m,2}(\Omega)$ by $H^m(\Omega)$.
For convenience, we set $$H_N^2(\Omega)=\{u\in H^2(\Omega)\,:\, \partial_\mathbf{n} u= 0\ \text{a.e. on}\ \partial \Omega\}.$$
For every $f\in (H^1(\Omega))'$, we denote by $\overline{f}$ its generalized mean value over $\Omega$ such that
$$\overline{f}=|\Omega|^{-1}\langle f,1\rangle_{(H^1)',H^1}.$$
If $f\in L^1(\Omega)$, then we simply have $\overline{f}=|\Omega|^{-1}\int_\Omega f \,\mathrm{d}x$. The Poincar\'{e}-Wirtinger inequality gives
\begin{equation}
\label{poincare}
\|f-\overline{f}\|_{L^2(\Omega)}\leq C_P\|\nabla f\|_{L^2(\Omega)},\quad \forall\,
f\in H^1(\Omega),
\end{equation}
where $C_P$ is a positive constant depending only on the spatial dimension and $\Omega$.
Then we introduce the linear spaces
$$
L^2_0(\Omega)=\{ u \in L^2(\Omega):\ \overline{u}=0\}, \quad V_0=\{ u \in H^1(\Omega):\ \overline{u}=0\},\quad
V_0'= \{ u \in (H^1(\Omega))':\ \overline{u}=0 \},
$$
and the linear operator
$A\in \mathcal{L}(H^1(\Omega),(H^1(\Omega))')$ defined by
$$
\l  A u,v \r= \int_\Omega \nabla u \cdot \nabla v \, \d x,
\quad \forall \, u,v  \in H^1(\Omega).
$$
It follows that the restriction of $A$ from $V_0$ onto $V_0'$
is an isomorphism. In particular, $A$ is positively defined on $V_0$
and self-adjoint. We denote its inverse map by $\N =A^{-1}: V_0'
\to V_0$. Note that, for every $f\in V_0'$, $u= \N f \in V_0$ is the unique weak solution to the Neumann problem
$$
\begin{cases}
-\Delta u=f, \quad \text{in} \ \Omega,\\
\partial_\n u=0, \quad \ \  \text{on}\ \partial \Omega.
\end{cases}
$$
For any $f\in V_0'$, we set $\|f\|_{V_0'}=\|\nabla \N f\|_{L^2(\Omega)}$.
According to the Poincar\'{e}-Wirtinger inequality \eqref{poincare}, we
see that  $f\to (\|\nabla f\|_{L^2(\Omega)}^2+|\overline{f}|^2)^\frac12$ is an equivalent norm on $H^1(\Omega)$. Besides, it holds
\begin{align}
\|f\|_{L^2(\Omega)} &\leq \|f\|_{V_0'}^{\frac12} \| \nabla f\|_{L^2(\Omega)}^{\frac12},
\qquad \forall\, f \in V_0. \label{I}
\end{align}

Next, we introduce the Hilbert space for the solenoidal vector field $\u$
$$
\mathbf{H_\sigma}=\{\u\in \mathbf{L}^2(\Omega):
\text{div}\,\u=0\ \text{a.e. in}\ \Omega, \ \ \u\cdot
\mathbf{n}=0 \ \text{a.e. on}\ \partial \Omega\},
$$
endowed with the usual norm $\|\cdot\|_{L^2}$.
Let $\Pi$ be the orthogonal Leray projection from
$\mathbf{L}^2(\Omega)$ onto $\mathbf{H_\sigma}$.
It is well known that every $\uu \in \mathbf{L}^2(\Omega)$
can be uniquely represented as
$
\u= \vv + \nabla P
$ with $\vv= \Pi \,\u \in \mathbf{H_\sigma}$ and $P\in V_0$.  We recall that $\Pi$ is a bounded
operator from $\mathbf{W}^{k,p}(\Omega)$ ($1< p<\infty$, $k\geq 0$) into itself, namely
\begin{equation}
\label{O}
\| \Pi\, \u\|_{W^{k,p}(\Omega)}\leq C \| \u\|_{W^{k,p}(\Omega)},
\qquad \forall\, \u\in \mathbf{W}^{k,p}(\Omega),
\end{equation}
where the positive constant $C$ is independent of $\u$.
Besides, we have the following inequality (see e.g., \cite[Theorem 3.8]{GR})
\be
\label{rot}
\| \u\|_{H^1(\Omega)}\leq
C \left( \| \mathrm{curl}\, \u\|_{L^2(\Omega)}
+\| \u\|_{L^2(\Omega)}\right),
\qquad \forall \,  \u \in \mathbf{H}^1(\Omega)\cap \mathbf{H}_\sigma,
\ee
for some positive constant $C$ independent of $\u$.

In order to handle the mass source term $S$, we recall  the following result on Bogovski's operator (see e.g., \cite[Lemma 2.1.1]{So}):
\bl\label{Bo}
Let $\Omega \subset \mathbb{R}^2$ be a bounded Lipschitz domain, $p\in (1,+\infty)$. For any $g\in L^p(\Omega)$ with $\int_\Omega g\, \mathrm{d}x=0$,
there exists at least one vector $\vv\in \mathbf{W}^{1,p}(\Omega)$ satisfying
$$
\mathrm{div}\,\vv=g \quad \text{a.e. in}\ \Omega, \quad \vv=0\quad \text{a.e. on}\ \partial\Omega.
$$
Moreover, $\|\nabla \vv\|_{L^p(\Omega)}\leq C\|g\|_{L^p(\Omega)}$,
where $C$ is a positive constant only depending on $\Omega$ and $p$.
\el

To estimate the pressure term in Darcy's equation, we recall the following lemma on the homogeneous Neumann problem with a non-constant coefficient (see \cite[Theorem 2.1]{Gio2020}).
\bl\label{press}
Let $\Omega\in\mathbb{R}^2$ be a bounded domain with smooth boundary $\partial\Omega$. Assume that $K\in  C^1(\mathbb{R})$ satisfies $0<\underline{K}\leq K(s)\leq \overline{K}$
for all $s\in \mathbb{R}$, where $\underline{K}$ and $\overline{K}$ are given positive constants. Consider the boundary value problem
\be
\label{Ne}
\begin{cases}
-\mathrm{div}\,(K(\theta)\nabla u)=f,\qquad \, \text{in}\ \Omega, \\
\partial_{\mathbf{n}}u=0,\qquad\qquad \qquad \quad\ \text{on}\ \partial \Omega.
\end{cases}
\ee
Then we have
\begin{itemize}
\item[(1)] Let $\theta$ be a measurable function. For every $f\in V_0'$, there exists a unique weak solution $u\in V_0$ to \eqref{Ne} such that $(K(\theta)\nabla u, \nabla v)=\langle f,v\rangle_{V_0',V_0}$, for all $v\in V_0$.
\item[(2)] Let $\theta\in W^{1,p}(\Omega)$, with $p>2$ and $f\in L_0^2(\Omega)$. Then, $u\in H^2(\Omega)$  and $\partial_{\mathbf{n}}u=0$ almost everywhere on $\partial\Omega$. Moreover,
there exists a positive constant $C$ such that
    $$
    \|u\|_{H^2(\Omega)}\leq C\big(1+\|\theta\|_{W^{1,4}(\Omega)}^2\big)\|f\|_{L^2(\Omega)}.
    $$
\end{itemize}
\el

Next, we report some well-known interpolation inequalities in two dimensions that will be frequently used later.
\begin{itemize}
\item  Ladyzhenskaya's inequality
$$
\|f\|_{L^4(\Omega)}\leq C\|f\|_{L^2(\Omega)}^\frac12\|f\|_{H^1(\Omega)}^\frac12, \qquad \forall\, f\in H^1(\Omega).
$$
\item Agmon's inequality
$$
\|f\|_{L^\infty(\Omega)}\leq C\|f\|_{L^2(\Omega)}^\frac12\|f\|_{H^2(\Omega)}^\frac12, \qquad \forall\, f\in H^2(\Omega).
$$
\item The Br\'{e}zis-Gallouet-Wainger inequality
\begin{align}
\|f\|_{L^\infty(\Omega)}\leq C\big(1+\|f\|_{H^1(\Omega)}\ln^\frac12(e+\|f\|_{W^{1,q}(\Omega)})\big),\qquad \forall\, f\in W^{1,q}(\Omega),\quad q>2.
\label{BGW}
\end{align}
\end{itemize}

Finally, we recall the following Gronwall type lemma (see \cite[Lemma A.1]{Gio2020}):
\bl\label{gron}
Let $f$ be a positive absolutely continuous function on $[0,T]$ and $g, h$ be two summable functions on $[0,T]$ that satisfy the differential inequality
$$
\frac{\d f}{\d t} (t)\leq g(t)f(t)\ln(e+f(t))+h(t),\qquad \text{for a.a.}\ t\in [0,T].
$$
Then we have
$$
f(t)\leq (e+ f(0))^{e^{\int_0^t g(\tau)\d\tau}} e^{\int_0^t e^{\int_\tau^t g(s)\d s}h(\tau)\d\tau},\qquad \forall\, t\in [0,T].
$$
\el

Throughout the paper, we denote by $C$ a generic positive
constant depending only on $\Omega$ and on structural quantities. The constant $C$ may vary from line to line and
even within the same line. Specific dependence will be explicitly pointed out if necessary.
Besides, for the sake of convenience, we set
$$
Q=\Omega \times(0,T),\qquad \Sigma=\partial\Omega\times(0,T),
$$
and
$$
Q_t=\Omega \times(0,t),\qquad \Sigma_t=\partial\Omega\times(0,t),
\qquad \text{for any }\ t\in(0,T).
$$


\section{The control-to-state operator}
\setcounter{equation}{0}
The goal of this section is to define the control-to-state operator. To this end,
we prove the existence and uniqueness of a strictly separated solution and its continuous dependence on the control.

Let us first introduce some basic assumptions on the structure of the state problem \eqref{bdini}--\eqref{CHHS}.
\begin{itemize}
\item[$\mathbf{(A1)}$] The free energy density $\Psi$ can be decomposed into the form
\be
\Psi(s)=F(s)-\frac{\Theta_0}{2}s^2, \qquad \forall \, s \in [-1,1], \label{psiform}
\ee
where the function
$F: [-1,1] \mapsto \mathbb{R}$ satisfies $F\in C([-1,1]) \cap C^{2}(-1,1),$
\begin{align*}
&\lim_{s\rightarrow -1^+}F^{\prime }(s)=-\infty, \quad
\lim_{s\rightarrow 1^-}F^{\prime }(s)=+\infty, \\
&\ \ F^{\prime \prime }(s)\geq \Theta>0,\quad \forall \, s\in (-1,1),
\end{align*}
with the constants $\Theta_0$, $\Theta$ fulfilling
$\alpha:=\Theta_0-\Theta>0$. In addition, there exists $\kappa\in(0,1)$ such that
$F''$ is non-decreasing in $[1-\kappa,1)$ and non-increasing in $(-1,-1+\kappa]$.
Without loss of generality, we assume $F(0)=F'(0)=0$. Moreover, we extend $F(s)=+ \infty$ for all $|s|>1$.
\item[$\mathbf{(A2)}$] The viscosity coefficient $\nu=\nu(s)$ belongs to $C^3(\mathbb{R})$ and satisfies
    $$
    0<\nu_*\leq \nu(s),\ \nu'(s),\ \nu''(s),\ \nu^{(3)}(s)\leq \nu^*,\qquad \forall\, s \in \mathbb{R},
    $$
    where $\nu_*$ and $\nu^*$ are two given positive constants.
\item[$\mathbf{(A3)}$] The external mass source terms satisfy
$$
S\in L^2(0,T;V_0)\cap H^1(0,T;L^2(\Omega)),\quad R\in L^2(0,T;H^1(\Omega))\cap H^1(0,T;(H^1(\Omega))').
$$
\end{itemize}

\br
\label{remarkF}
The assumption $\mathbf{(A1)}$ is fulfilled in the physically relevant case  \eqref{SING} with
\begin{align}
F(s)=\frac{\Theta}{2}\left[ (1+s)\ln(1+s)+(1-s)\ln(1-s)\right],
\qquad \forall \,s \in [-1,1].\label{SINGSS}
\end{align}
Note that the case $\Theta_0-\Theta\leq 0$ is easier, because the potential $\Psi$ is now  convex and we can simply consider $\Psi$ without the decomposition \eqref{psiform}. However, no phase separation takes place in this case.
\er

\subsection{Weak and strong solutions}
\label{exeweak}

We first state a preliminary result on the existence of a global weak solutions to problem \eqref{bdini}--\eqref{CHHS}, which also holds in dimension three.

\begin{proposition}[Weak solutions in dimension two and three]\label{glo-weak}
Let $\Omega\subset \mathbb{R}^d$ ($d\in\{2,3\}$) be a bounded domain with smooth boundary $\partial\Omega$ and $T>0$. Assume that $\mathbf{(A1)}$--$\mathbf{(A2)}$ are satisfied.
For any initial datum $\varphi_0\in H^1(\Omega)$ with $F(\varphi_0)\in L^1(\Omega)$,  $|\overline{\varphi_0}|<1$, and any mass source terms
$S\in L^2(0,T; L^2_0(\Omega))$, $R\in L^\infty(0,T;L^2(\Omega))$ with the following constraint
\begin{equation}
\left|\overline{\vp_0}+\int_0^t\overline{R}(\tau)\,\d\tau\right|\leq 1-\delta_0,\quad \forall\, t\in [0,T],
\label{AR-1}
\end{equation}
for some $\delta_0\in (0,1)$, problem \eqref{bdini}--\eqref{CHHS} admits at least one weak solution $(\u, P, \varphi,\mu)$ on $[0,T]$ in the following sense:

(1) The solution $(\u, P, \varphi,\mu)$ fulfills the regularity properties
\begin{align*}
& \u\in L^2(0,T; \mathbf{L}^2(\Omega))\cap L^s(0,T; \mathbf{H}^1(\Omega)),\quad P\in L^{q_1}(0,T;V_0)\cap L^{q_2}(0,T;H^2(\Omega)),\\
& \varphi\in C([0,T];H^1(\Omega))\cap L^4(0,T;H^2(\Omega))\cap L^2(0,T;W^{2,p}(\Omega))\cap H^1(0,T;(H^1(\Omega))'),\\
& \varphi\in L^\infty(\Omega \times (0,T))\quad \text{with}\ |\varphi(x,t)|<1\ \text{a.e. in}\ \Omega \times (0,T),\\
& \mu\in L^2(0,T;H^1(\Omega)), \quad \Psi'(\varphi)\in L^2(0,T;L^p(\Omega)),
\end{align*}
where $(s,q_1,q_2, p) = (6/5, 8/5, 8/7, 6)$ if $d=3$, $(s,q_1,q_2, p) \in   [1,4/3) \times  [1,2) \times [1,6/5)\times [2,+\infty)$ if $d=2$.

(2) The solution $(\u, P, \varphi,\mu)$ satisfies
\begin{align*}
& \nu(\vp) \u =  -\nabla P+ \mu \nabla \vp, & & \text{a.e. in}\ Q,\\
&\mathrm{div}\, \u=S, & & \text{a.e. in}\ Q,\\
&\langle\vp_{t}, \psi\rangle_{(H^1)',H^1}- (\vp \u, \nabla \psi)+ (\nabla \mu, \nabla \psi) = (S+R, \psi), & & \forall\, \psi\in H^1(\Omega),\ \ \text{for a.a.}\ t\in (0,T),  \\
& \mu= - \Delta \vp +  \Psi'(\vp),& & \text{a.e. in}\ Q.
\end{align*}
Moreover, $\u\cdot
\mathbf{n}= \partial_{\mathbf{n}}\varphi=0$ almost everywhere on $\Sigma$ and $\varphi(\cdot,0)=\varphi_0$ almost everywhere in $\Omega$.

(3) The solution $(\u, P, \varphi,\mu)$ satisfies the energy identity
$$
\frac{\d }{\d t} \mathcal{E}(\varphi) + \|\sqrt{\nu(\varphi)}\u\|_{L^2(\Omega)}^2+\|\nabla \mu\|_{L^2(\Omega)}^2=\int_\Omega S[P+(1-\varphi)\mu]\, \d x+ \int_\Omega R\mu\, \d x,
$$
for almost all $t\in (0,T)$.

\end{proposition}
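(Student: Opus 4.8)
\noindent
The plan is to combine a regularization of the singular potential with a Faedo--Galerkin discretization, derive uniform a priori estimates from a dissipative energy identity, and pass to the limit by compactness. First I would replace $F$ by a family $F_\varepsilon\in C^2(\mathbb{R})$ with globally bounded second derivatives (e.g.\ the Moreau--Yosida regularizations of $F'$), so that $\Psi_\varepsilon=F_\varepsilon-\frac{\Theta_0}{2}s^2$ is smooth with at most quadratic growth. For the regularized problem I would discretize $(\vp,\mu)$ in the eigenbasis of the Neumann Laplacian, while reconstructing $(\u,P)$ \emph{at each instant}: since $\mathrm{div}\,\u=S$ and $\u\cdot\n=0$, the pressure solves the homogeneous Neumann problem $-\mathrm{div}\big(\nu(\vp)^{-1}\nabla P\big)=S-\mathrm{div}\big(\nu(\vp)^{-1}\mu\nabla\vp\big)$, which is solvable by Lemma~\ref{press} (with $K=\nu^{-1}$, admissible by $\mathbf{(A2)}$), after which $\u=\nu(\vp)^{-1}(-\nabla P+\mu\nabla\vp)$. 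Existence for the resulting semi-Galerkin ODE system follows from the Cauchy--Lipschitz theorem together with a continuation argument based on the estimates below.

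The core is the a priori estimate. Integrating the phase equation over $\Omega$ and using $\mathrm{div}\,\u=S\in L^2_0$, $\u\cdot\n=0$, $\partial_\n\mu=0$ gives the mass law $\overline{\vp}(t)=\overline{\vp_0}+\int_0^t\overline{R}(\tau)\,\d\tau$, whence $|\overline{\vp}(t)|\le 1-\delta_0$ by \eqref{AR-1}; this separation of the mean from $\pm1$ is what makes the singular potential tractable. Testing Darcy by $\u$, the phase equation by $\mu$, identifying $\langle\vp_t,\mu\rangle=\frac{\d}{\d t}\mathcal{E}(\vp)$ through $\mu=-\Delta\vp+\Psi'(\vp)$, and combining the two convective contributions via $\int_\Omega\u\cdot\nabla(\vp\mu)\,\d x=-\int_\Omega\vp\mu\,S\,\d x$, I obtain exactly the energy identity of part (3):
\begin{equation*}
\frac{\d}{\d t}\mathcal{E}(\vp)+\|\sqrt{\nu(\vp)}\,\u\|_{L^2(\Omega)}^2+\|\nabla\mu\|_{L^2(\Omega)}^2=\int_\Omega S\big[P+(1-\vp)\mu\big]\,\d x+\int_\Omega R\mu\,\d x.
\end{equation*}
To close it I would control the right-hand side: testing $\mu=-\Delta\vp+F'(\vp)-\Theta_0\vp$ by $\vp-\overline\vp$ and invoking the classical inequality $\int_\Omega F'(\vp)(\vp-\overline\vp)\,\d x\ge c_0\|F'(\vp)\|_{L^1(\Omega)}-c_1$ (valid precisely because $|\overline\vp|\le1-\delta_0$) bounds $\|F'(\vp)\|_{L^1}$, hence $|\overline\mu|$, and then $\|\mu\|_{L^2}$ via \eqref{poincare}; the pressure is estimated through Lemma~\ref{press} in terms of $\|\vp\|_{W^{1,4}}$ and $\|\mu\nabla\vp\|$. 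After absorbing $\|\nabla\mu\|^2$ and $\|\sqrt\nu\,\u\|^2$ by Young's inequality and applying Gronwall's lemma (in the logarithmic form of Lemma~\ref{gron} to accommodate the degenerate pressure constant $1+\|\vp\|_{W^{1,4}}^2$), I get uniform bounds for $\vp$ in $L^\infty(0,T;H^1)$, for $\nabla\mu$ and $\sqrt\nu\,\u$ in $L^2(Q)$, and for $\int_\Omega\Psi(\vp)$.

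Elliptic regularity for $-\Delta\vp=\mu-\Psi'(\vp)$ with $\partial_\n\vp=0$, together with the standard monotonicity/$L^p$ argument for the singular term, then upgrades $\vp$ to $L^4(0,T;H^2)\cap L^2(0,T;W^{2,p})$ and yields $\Psi'(\vp)\in L^2(0,T;L^p)$; estimate \eqref{rot} and Lemma~\ref{press}(2) provide the stated time-integrability of $\u$ and $P$, and a comparison argument in the weak phase equation gives $\vp_t\in L^2(0,T;(H^1)')$. All bounds are uniform in the Galerkin index and in $\varepsilon$, so weak/weak-$*$ limits exist along a subsequence, and Aubin--Lions--Simon compactness gives strong convergence of $\vp$ (e.g.\ in $C([0,T];L^2)\cap L^2(0,T;H^1)$), enough to pass to the limit in $\nu(\vp)\u$, $\mu\nabla\vp$, $\vp\u$ and $F_\varepsilon'(\vp)\to F'(\vp)$. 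The strict bound $|\vp|<1$ a.e.\ follows because $\|F_\varepsilon'(\vp)\|$ stays uniformly bounded while $F'(s)\to\pm\infty$ as $s\to\pm1$; the regularity gained legitimizes the integrations by parts and hence the energy identity in the limit.

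The \textbf{main obstacle} is closing the right-hand side of the energy identity uniformly in the approximation. The term $\int_\Omega SP$ forces control of the pressure, whose bound from Lemma~\ref{press} feeds back onto the $H^2$-regularity of $\vp$ and degenerates like $1+\|\vp\|_{W^{1,4}}^2$, while the chemical-potential terms can only be handled once $\overline\mu$ and $\|F'(\vp)\|_{L^1}$ are tamed through the separation constraint \eqref{AR-1}. Reconciling these two couplings—and keeping every bound independent of $\varepsilon$, in particular the $L^1$-estimate of $F_\varepsilon'(\vp)$—is the delicate point, and it is precisely this degeneracy that is responsible for the sub-optimal time exponents $(s,q_1,q_2,p)$.
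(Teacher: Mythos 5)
Your overall strategy (regularize the singular potential, run a semi-Galerkin scheme in which $(\u,P)$ is reconstructed from $(\vp,\mu)$ at each instant, derive the energy identity, close a Gronwall estimate, and pass to the limit by compactness) is exactly the route the paper sketches, and several of your steps are correct: the mass law giving $|\overline{\vp}(t)|\le 1-\delta_0$ from \eqref{AR-1}, the derivation of the energy identity in part (3), and the control of $\|F'(\vp)\|_{L^1}$ and $|\overline{\mu}|$ via testing $\mu=-\Delta\vp+\Psi'(\vp)$ with $\vp-\overline{\vp}$ under the mean-value separation.

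However, there is a genuine gap in the step you yourself flag as the main obstacle: closing the term $\int_\Omega SP\,\d x$ in the energy identity. You propose to estimate $P$ through Lemma \ref{press} ``in terms of $\|\vp\|_{W^{1,4}}$ and $\|\mu\nabla\vp\|$'' and to absorb the resulting constant $1+\|\vp\|_{W^{1,4}}^2$ with the logarithmic Gronwall lemma. This fails for three reasons. First, part (2) of Lemma \ref{press} requires the right-hand side $f=S-\mathrm{div}\,(\nu(\vp)^{-1}\mu\nabla\vp)$ to lie in $L^2_0(\Omega)$, which demands control of $\nabla\mu\cdot\nabla\vp$ and $\mu\Delta\vp$ in $L^2(\Omega)$ --- far beyond what the energy provides; at the level of the basic estimate only part (1) is usable, and it does not produce an $L^2$ bound on $P$ by itself. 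Second, $\|\vp\|_{W^{1,4}}$ is not controlled by the energy functional (the energy only bounds $\|\vp\|_{H^1}$, and $H^1\not\hookrightarrow W^{1,4}$ even in 2D), so the coefficient in your differential inequality is not an a priori known quantity. Third, Lemma \ref{gron} applies to $f'\le g(t)f\ln(e+f)+h(t)$ with $g,h$ \emph{given summable functions}; it cannot ``accommodate'' a coefficient that depends on uncontrolled norms of the unknown --- that would be circular. (In the paper, the logarithmic Gronwall lemma is used only in the higher-order estimates for strong solutions in 2D, where the logarithm comes from the Br\'{e}zis--Gallouet--Wainger inequality, not to close the basic energy estimate.) The correct resolution, used in the paper's Appendix (Second and Fourth estimates, following Gio2020/GGW), is to avoid any elliptic regularity for $P$ at this stage and instead use the representation
\begin{align*}
P=\mathcal{N}\mathrm{div}\,(\nu(\vp)\u)-\mathcal{N}\mathrm{div}\,\big((\mu-\overline{\mu})\nabla\vp\big)+\overline{\mu}\,(\vp-\overline{\vp}),
\end{align*}
which yields $\|P\|_{L^2(\Omega)}\le C\|\u\|_{L^2(\Omega)}+\|\mu-\overline{\mu}\|_{L^3(\Omega)}\|\nabla\vp\|_{L^2(\Omega)}+|\overline{\mu}|\,(1+\|\vp\|_{L^2(\Omega)})$. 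Every factor here is controlled by energy quantities ($\|\mu-\overline{\mu}\|_{L^3}\le C\|\nabla\mu\|$ by Sobolev--Poincar\'{e}, and $|\overline{\mu}|$ by your own $L^1$ estimate of $F'$), so the right-hand side of the energy inequality is bounded by $C(1+\|R\|^2+\|S\|^2)(1+\mathcal{E}(\vp))$ after Young's inequality, and a \emph{standard} Gronwall lemma closes the estimate uniformly in $\varepsilon$ and in the Galerkin index. Lemma \ref{press}\,(2) and the $W^{1,4}$-dependent constant enter only a posteriori, to obtain the stated regularity $P\in L^{q_2}(0,T;H^2(\Omega))$ once $\vp\in L^4(0,T;H^2(\Omega))$ is known; this order of the argument is what your proposal inverts, and it is also why your sketch, which leans on the two-dimensional Lemma \ref{press} and 2D interpolation, does not yet cover the case $d=3$ included in the proposition.
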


\begin{remark}
Proposition \ref{glo-weak} can be proved by combining the arguments in \cite{MA2020,Gio2020,GGW,JWZ}. When $S=R=0$, the authors of \cite{GGW} established the existence of global
weak solutions in both two and three dimensions for the case with a constant viscosity and a singular potential like in $\mathbf{(A1)}$. Later in \cite{Gio2020}, the result was extended
to the case with a variable viscosity satisfying $\mathbf{(A2)}$. On the other hand, in \cite{JWZ} the authors analyzed problem \eqref{bdini}--\eqref{CHHS} with a
constant viscosity, a regular potential $\Psi(s)=(1/4)(s^2-1)^2$ and external mass sources $S\neq 0$, $R=0$. Recently, the author of \cite{MA2020} further considered the case with a constant
 viscosity and a singular potential $\Psi$ satisfying $\mathbf{(A1)}$ as well as nonzero mass sources like in Proposition \ref{glo-weak}.
\end{remark}

Here, we will not present the detailed proof of Proposition \ref{glo-weak} but only its sketch.  First, we introduce a family of regular potentials
$\left\lbrace \Psi_\varepsilon\right\rbrace$ that suitably approximates the singular potential $\Psi$ as in \cite[Section 3]{GGW}. Then we establish well-posedness of the
approximating problem with the regular potentials $\Psi_\varepsilon$, by means of the Faedo-Galerkin method (see \cite[Section 3]{JWZ}), where the pressure can be handled as in
\cite[Remark 3.4]{Gio2020}. At last, for the approximate solutions $(\u_\varepsilon, P_\varepsilon, \vp_\varepsilon, \mu_\varepsilon)$ related to the family of regular potentials
$\left\lbrace \Psi_{\varepsilon}\right\rbrace$, we recover compactness by means of uniform energy estimates with respect to the approximation parameter $\varepsilon$ and show
that as $\varepsilon\to 0^+$, the limit quadruplet $(\u, P, \vp,\mu)$ is indeed a weak solution to problem \eqref{bdini}--\eqref{CHHS} on $[0,T]$ (cf. \cite[Section 3]{GGW} and in particular,
\cite[Section 3]{MA2020} for further details).

Unfortunately, the regularity of  weak solutions is not enough to study the optimal control problem (in particular, the differentiability of the state-to-control operator etc).
Moreover, because of the variable viscosity, uniqueness of weak solutions obtained in Proposition \ref{glo-weak} remains an open problem even in two dimensions
(cf. \cite[Section 4]{Gio2020}). On the other hand, due to difficulties from Darcy's equation, the existence and uniqueness of a global strong solution to problem \eqref{bdini}--\eqref{CHHS}
for arbitrary regular initial data are still out of reach in three dimensions (cf. \cite{GGW,Gio2020,WZ13}).

Hence, we confine ourselves to the two dimensional setting so that we are able to establish the strong well-posedness of problem \eqref{bdini}--\eqref{CHHS}.
For this purpose, we impose the following additional assumptions on the singular potential $\Psi$ (cf. \cite{GGW,Gio2020}):

\begin{itemize}
\item[$\mathbf{(A1)'}$] The function
$F: [-1,1] \mapsto \mathbb{R}$ satisfies $F\in C([-1,1]) \cap C^{5}(-1,1)$ and there exists $\kappa\in (0,1)$ such that
$$
F^{(3)}(s)s\geq 0,\quad F^{(4)}(s)>0, \quad \forall\, s\in (-1,-1+\kappa]\cup [1-\kappa, 1).
$$
Besides, it holds
\begin{equation}
|F''(s)|\leq Ce^{C|F'(s)|},\qquad \forall\, s\in (-1,1),
\label{SING-F}
\end{equation}
where $C$ is a positive constant independent of $s$.
\end{itemize}
\begin{remark}
For simplicity, we take the parameter $\kappa$ in $\mathbf{(A1)}$ and  $\mathbf{(A1)'}$ to be the same. It is easy to verify that the Gibbs-Boltzmann mixing entropy \eqref{SINGSS}
also satisfies $\mathbf{(A1)'}$.
\end{remark}

We can prove the following strong well-posedness result.

\begin{theorem}[Strong solutions in dimension two]
\label{str-well}
Let $\Omega\subset \mathbb{R}^2$ be a bounded domain with smooth boundary $\partial\Omega$ and $T>0$. Assume that
$\mathbf{(A1)}$, $\mathbf{(A1)'}$ and $\mathbf{(A2)}$  are satisfied. If $\varphi_0\in H_N^2(\Omega)$ is such that $\overline{\varphi_0}\in (-1,1)$, $\widetilde{\mu}_0=-\Delta\varphi_0+F'(\varphi_0)\in H^1(\Omega)$, and the mass sources satisfy $\mathbf{(A3)}$
with the constraint \eqref{AR-1}, then problem \eqref{bdini}--\eqref{CHHS}  admits a unique strong solution $(\u, P, \varphi,\mu)$ on $[0,T]$ such that
\begin{align*}
& \u\in L^\infty(0,T;\mathbf{H}^1(\Omega)),\quad P\in L^\infty(0,T;H^2(\Omega)\cap L^2_0(\Omega)),\\
& \varphi \in C([0,T]; H^3(\Omega))\cap L^2(0,T;H^5(\Omega))\cap H^1(0,T; H^1(\Omega)),\\
& \mu\in C([0,T];H^1(\Omega))\cap L^2(0,T; H^3(\Omega))\cap H^1(0,T; (H^1(\Omega))'),\\
& \Psi''(\varphi)\in L^\infty(0,T;L^p(\Omega)),
\end{align*}
for any $p\in [2,+\infty)$. The strong solution satisfies the system \eqref{CHHS} almost everywhere in $Q$, while $\u \cdot \n= \partial_\n \mu=\partial_\n \vp=0$ a.e. on $\Sigma$ and
$\varphi(\cdot,0)=\varphi_0$ in $\Omega$.  Moreover, there exists some constant $\delta_1\in (0,1)$ such that
\begin{align}
\|\varphi(t)\|_{C(\overline{\Omega})}\leq 1-\delta_1,\qquad \forall\, t\in [0,T]. \label{sep1}
\end{align}
\end{theorem}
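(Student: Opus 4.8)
The plan is to obtain the strong solution by a regularization, uniform a priori estimate and compactness scheme, taking the weak solution of Proposition \ref{glo-weak} as the starting point and upgrading its regularity, and then to prove uniqueness via a continuous dependence estimate. Concretely, I would approximate the singular potential $\Psi$ by a family of globally defined smooth potentials $\{\Psi_\varepsilon\}$ as in the proof of Proposition \ref{glo-weak}, so that on each level the solution $(\u_\varepsilon,P_\varepsilon,\vp_\varepsilon,\mu_\varepsilon)$ is smooth and the formal computations below are rigorous; the crux is then to derive bounds that are \emph{uniform} in $\varepsilon$ and stable in the limit $\varepsilon\to0^+$.

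Before the high-order estimates, I would set up the velocity and pressure machinery that carries the variable viscosity. Using Bogovski's operator (Lemma \ref{Bo}) I split $\u=\bm b+\bm v$ with $\mathrm{div}\,\bm b=S$, $\bm b|_{\partial\Omega}=0$, $\|\bm b\|_{W^{1,p}}\le C\|S\|_{L^p}$, leaving a solenoidal remainder $\bm v\in\mathbf{H}_\sigma$. Taking the divergence of Darcy's law rewritten as $-\mathrm{div}(\nu(\vp)^{-1}\nabla P)=S-\mathrm{div}(\nu(\vp)^{-1}\mu\nabla\vp)$, with $\partial_\n P=0$ (which follows from $\u\cdot\n=0$ and $\partial_\n\vp=0$), I can apply Lemma \ref{press} with coefficient $K=\nu(\vp)^{-1}$ to bound $\|P\|_{H^2}$ by $\big(1+\|\vp\|_{W^{1,4}}^2\big)$ times the $L^2$-norm of $\mu\nabla\vp$; taking instead the curl of Darcy's law and using \eqref{rot} for $\bm v$ controls $\|\u\|_{H^1}$ through $\|\nabla\mu\|_{L^2}$, $\|\nabla\vp\|$ and lower-order terms. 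These estimates, together with \eqref{O}, are the mechanism by which $\nu(\vp)$ enters, and they must be tracked at every order.

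The analytic heart is a single high-order differential inequality. Starting from the energy identity in Proposition \ref{glo-weak}(3), absorbing the source contributions $\int_\Omega S[P+(1-\vp)\mu]+\int_\Omega R\mu$ by means of $\mathbf{(A3)}$, the pressure bound and the embedding $H^1\hookrightarrow L^p$, I get the basic bounds $\vp\in L^\infty(0,T;H^1)$, $\mu\in L^2(0,T;H^1)$ and $\u\in L^2(0,T;\mathbf L^2)$. I would then test the Cahn--Hilliard equation with $\mu_t$ (using $\mu_t=-\Delta\vp_t+\Psi''(\vp)\vp_t$, so that the arising term $(\vp_t,\mu_t)$ equals $\|\nabla\vp_t\|_{L^2}^2+\int_\Omega F''(\vp)|\vp_t|^2-\Theta_0\|\vp_t\|_{L^2}^2$, with a good sign by $\mathbf{(A1)}$) to generate a differential inequality for $Y(t)\simeq\|\nabla\mu\|_{L^2}^2$. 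In two dimensions the dangerous convective terms coming from $\mathrm{div}(\vp\u)$ are estimated by the Br\'ezis--Gallouet--Wainger inequality \eqref{BGW}, which bounds $\|\u\|_{L^\infty}$ and $\|\nabla\vp\|_{L^\infty}$ by higher norms up to a factor $\ln^{1/2}(e+\|\cdot\|_{W^{1,q}})$; the sign conditions $F^{(3)}(s)s\ge0$, $F^{(4)}(s)>0$ near $\pm1$ in $\mathbf{(A1)'}$ ensure that the singular contributions appear with a favorable sign at this and the next order. The outcome is an inequality of exactly the form $\tfrac{\d}{\d t}Y\le g\,Y\ln(e+Y)+h$ with $g,h\in L^1(0,T)$, so the logarithmic Gronwall Lemma \ref{gron} yields $Y\in L^\infty(0,T)$, i.e. $\mu\in L^\infty(0,T;H^1)$. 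I expect this estimate, coupled with the variable-viscosity Darcy bounds, to be the main obstacle.

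The strict separation \eqref{sep1} then follows in the spirit of \cite{GGW,Gio2020}. Testing $-\Delta\vp+F'(\vp)=\mu+\Theta_0\vp=:h$ with $|F'(\vp)|^{p-2}F'(\vp)$ and using the monotonicity of $F'$ gives $\|F'(\vp)\|_{L^p}\le\|h\|_{L^p}$ for every $p\in[2,\infty)$; combined with the bound on $\mu$ in $L^\infty(0,T;H^1)$, the two-dimensional estimate $\|h\|_{L^p}\le C\sqrt p\,\|h\|_{H^1}$, the growth control \eqref{SING-F} relating $F''$ to $F'$, and a further use of \eqref{BGW}, this yields a uniform $L^\infty$ bound on $F'(\vp)$ and hence a separation constant $\delta_1\in(0,1)$ independent of $\varepsilon$. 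Once \eqref{sep1} holds, $\Psi''(\vp)$ and $\Psi'''(\vp)$ are bounded and the system effectively has a smooth potential on the attained range; a finite parabolic bootstrap --- applying elliptic regularity to $-\Delta\vp=h-F'(\vp)$ and to the Cahn--Hilliard equation $\Delta\mu=\vp_t+\mathrm{div}(\vp\u)-S-R$ repeatedly, while re-running the velocity and pressure estimates --- upgrades the bounds to the full class of the statement, in particular $\vp\in C([0,T];H^3)\cap L^2(0,T;H^5)$, $\mu\in C([0,T];H^1)\cap L^2(0,T;H^3)$, $\u\in L^\infty(0,T;\mathbf H^1)$ and $P\in L^\infty(0,T;H^2\cap L^2_0)$. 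Passing to the limit $\varepsilon\to0^+$ with these uniform bounds and the Aubin--Lions lemma produces a strong solution. Finally, uniqueness follows from a continuous dependence estimate: subtracting the equations for two strong solutions, testing the difference of the Cahn--Hilliard equations with $\N$ applied to the phase difference and the difference of Darcy's laws with the velocity difference, and exploiting that the separation makes $\Psi''$ bounded (hence $\Psi'$ Lipschitz) on the relevant interval, one closes the argument by Gronwall.
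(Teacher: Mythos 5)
Your overall architecture coincides with the paper's: regularize $\Psi$ as in Proposition \ref{glo-weak}, split the velocity via Bogovski's operator (Lemma \ref{Bo}), control $\|\u\|_{H^1}$ through the curl of Darcy's law and \eqref{rot}, get the pressure from Lemma \ref{press}, close a higher-order bound with Br\'ezis--Gallouet--Wainger and the logarithmic Gronwall Lemma \ref{gron}, obtain separation from the elliptic Lemma \ref{esing}, bootstrap, pass to the limit, and prove uniqueness via the continuous dependence estimate of Proposition \ref{conti-es1}. However, there is a genuine gap at precisely the step you flag as the main obstacle. Testing the Cahn--Hilliard equation with $\partial_t\mu_\ve$ produces, besides the good terms $\frac12\frac{\d}{\d t}\|\nabla\mu_\ve\|^2+\|\nabla\partial_t\vp_\ve\|^2+\int_\Omega F_\ve''(\vp_\ve)(\partial_t\vp_\ve)^2\,\d x$, the pairings $\langle\partial_t\mu_\ve,\mathrm{div}(\vp_\ve\u_\ve)\rangle$ and $\langle\partial_t\mu_\ve,S+R\rangle$. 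None of these can be handled by \eqref{BGW}, because no norm of $\partial_t\mu_\ve$ is dissipated by this test: one only controls $\nabla\partial_t\vp_\ve$, whereas $\partial_t\mu_\ve=-\Delta\partial_t\vp_\ve+\Psi_\ve''(\vp_\ve)\partial_t\vp_\ve$ contains $\Delta\partial_t\vp_\ve$. If you instead substitute this identity and integrate by parts, the convective pairing generates $\int_\Omega\Psi_\ve''(\vp_\ve)\,\partial_t\vp_\ve\,(\u_\ve\cdot\nabla\vp_\ve)\,\d x$, and a bound on $\Psi_\ve''(\vp_\ve)$ uniform in $\ve$ is exactly what is \emph{not} available at this stage: it follows (via Lemma \ref{esing} and the separation property) only from the $L^\infty(0,T;H^1)$ bound on $\mu$ that you are in the middle of proving. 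The argument, as outlined, is circular.

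The paper breaks this circle by a structural cancellation your plan omits: Darcy's law is differentiated in time and tested with the solenoidal part $\tu_\ve=\u_\ve-\vv_S$ of the velocity (see \eqref{ut1}), \emph{simultaneously} with the test of the Cahn--Hilliard equation by $\partial_t\mu_\ve$ (see \eqref{mut1}). The dangerous term $\langle\partial_t\mu_\ve,\tu_\ve\cdot\nabla\vp_\ve\rangle$ then occurs with opposite signs in the two identities and cancels upon addition, so it is never estimated at all. The price is that the functional cannot be $Y\simeq\|\nabla\mu_\ve\|^2$ alone: it must contain the kinetic term, $\Lambda_\ve\simeq\frac12\int_\Omega\nu(\vp_\ve)|\tu_\ve|^2\,\d x+\frac12\|\nabla\mu_\ve\|^2$, together with the corrections $\int_\Omega\mathrm{div}(\vp_\ve\vv_S)\mu_\ve\,\d x-\int_\Omega(S+R)\mu_\ve\,\d x$ of \eqref{Lambda1}, which shift the time derivatives in $\langle\partial_t\mu_\ve,S+R\rangle$ and $\langle\partial_t\mu_\ve,\mathrm{div}(\vp_\ve\vv_S)\rangle$ onto the data (admissible since $S\in H^1(0,T;L^2(\Omega))$ and $R\in H^1(0,T;(H^1(\Omega))')$ by $\mathbf{(A3)}$). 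Only after this coupling does the remaining hard term $\int_\Omega\mu_\ve\nabla\partial_t\vp_\ve\cdot\tu_\ve\,\d x$ become estimable, and it is there that \eqref{BGW}, the interpolation \eqref{L2phit} and the bounds \eqref{phit1}, \eqref{ucurl} are actually deployed. Note finally that the resulting inequality is quadratic, $\frac{\d}{\d t}\Lambda_\ve^*\lesssim(\Lambda_\ve^*)^2\ln(C+C\Lambda_\ve^*)+h$, and Lemma \ref{gron} applies only because the basic energy estimate already gives $\Lambda_\ve^*\in L^1(0,T)$, so that one may take $g=C\Lambda_\ve^*$ summable; this too needs to be said. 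Without the coupled time-differentiated Darcy test and the corrected functional, your scheme cannot produce a closed differential inequality, and the rest of your plan (separation, bootstrap, limit, uniqueness), which is sound, has nothing to stand on.
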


\begin{remark}\label{regP}
 The strict separation property \eqref{sep1} plays a crucial role in the subsequent analysis, since it yields that the phase function $\varphi$ stays away from the pure phases
 $\pm 1$ in the whole interval $[0,T]$. As a direct consequence, the potential $\Psi$ is no longer singular along the evolution for strong solutions and
 $\max_{0\le j\leq 5}\,\left\|\Psi^{(j)}(\vp)\right\|_{C(\overline{Q})}$ is bounded.
\end{remark}

\begin{remark}
The growth condition \eqref{SING-F} in $\mathbf{(A1)'}$ allows us to derive a nonlinear estimate involving $e^{C|F'(\varphi)|}$ and then apply the Trudinger-Moser inequality (in two dimensions) to show the validity of the instantaneous strict separation property \eqref{sep1}, see Lemma \ref{esing} below, cf. also \cite{GGM2017,GGW}.
Concerning the single Cahn-Hilliard equation, the same strict separation property for $\vp$ was established in \cite{GGG22}, with a more direct proof, for a larger class of singular potentials satisfying the growth condition
\begin{equation}
|F''(s)|\leq Ce^{C|F'(s)|^\sigma},\quad \forall\, s\in (-1,1),\quad \sigma\in[1,2),
\label{SING-F1}
\end{equation}
where $C$ is a positive constant independent of $s$.
Further progress in this direction was made in the recent work\cite{GP23}, where the authors proposed a weaker assumption
such that, as $\delta\to 0^+$,
\begin{equation}
\frac{1}{F'(1-2\delta)}=O\left(\frac{1}{|\ln(\delta)|^\sigma}\right),\quad \frac{1}{F'(-1+2\delta)}=O\left(\frac{1}{|\ln(\delta)|^\sigma}\right), \quad \text{for some}\ \sigma>1/2.
\label{SING-F2}
\end{equation}
The new condition \eqref{SING-F2} relaxes previous assumptions on the entropy function, since it only concerns asymptotic behavior of its first derivative $F'$ near the endpoints $\pm 1$, but does not involve any
pointwise relations between $F'$ and $F''$ as in \eqref{SING-F} and \eqref{SING-F1}.
Then the strict separation property for $\varphi$ was obtained by a different method based on De Giorgi's iteration scheme applied to the elliptic equation $\mu= - \Delta \vp +  \Psi'(\vp)$, see \cite[Theorem 3.1]{GP23} for details.
It is easy to check that the assumption \eqref{SING-F2} is satisfied by the logarithmic entropy \eqref{SINGSS} and indeed accommodates entropy densities with milder singularities. Exploiting the argument for \cite[Theorem 3.1]{GP23}, we can replace the assumption \eqref{SING-F} in Theorem \ref{str-well} by \eqref{SING-F2} and recover the strict separation property \eqref{sep1}. As a consequence, all results obtained in this paper are valid under the weaker assumption \eqref{SING-F2} as well. We leave the details to interested readers.
\end{remark}

We postpone the lengthy proof of Theorem \ref{str-well} to Section~7. On the other hand, a crucial ingredient for studying the optimal control problem is the dependence of  solutions to
problem \eqref{bdini}--\eqref{CHHS} on the control $R$. In this respect, we have

\begin{proposition}[Continuous dependence in dimension two]
\label{conti-es1}
Let $(\u_i, P_i, \varphi_i,\mu_i)$, $i=1,2$, be two different strong solutions to problem \eqref{bdini}--\eqref{CHHS} associated with the initial datum
$\varphi_{0}$ and the mass sources $S$, $R_i$ ($i=1,2$) satisfying the assumptions of Theorem \ref{str-well}. Then there exists a constant $C>0$, which depends only on
$\|\varphi_{0}\|_{H^2(\Omega)}$, $\|\widetilde{\mu}_{0}\|_{H^1}$, $\Omega$, $T$, $\|S\|_{L^2(0,T;H^1(\Omega))\cap H^1(0,T;L^2(\Omega))}$ and $\|R_i\|_{L^2(0,T;H^1(\Omega))
\cap H^1(0,T;(H^1(\Omega))')}$, such that the following estimate holds:
\begin{align}\label{stabu1}
&\|\vp_1-\vp_2\|_{H^1(0,t;L^2(\Omega))\cap C([0,t];H^2(\Omega))\cap L^2(0,t;H^4(\Omega))}\,+\,\|\mu_1-\mu_2\|_{L^2(0,t;H^2(\Omega))}\non\\
&\qquad +\|\u_1-\u_2\|
_{L^2(0,t;L^4(\Omega))}+\,\|P_1-P_2\|_{L^2(0,t;W^{2,4/3}(\Omega))}\non\\
&\quad \le C \|R_1-R_2\|_{L^2(0,t;L^2(\Omega))},\qquad \forall\, t\in (0,T].
\end{align}
\end{proposition}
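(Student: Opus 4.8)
The natural strategy is an energy method applied to the difference of two solutions. Writing $\vp = \vp_1 - \vp_2$, $\mu = \mu_1 - \mu_2$, $\u = \u_1 - \u_2$, $P = P_1 - P_2$ and $R = R_1 - R_2$, I would first subtract the two copies of system \eqref{CHHS}. Since both solutions share the same initial datum and the same source $S$, the differences solve a \emph{linearized-type} system driven only by $R$, with homogeneous initial condition $\vp(0)=0$. The key structural point is that the quadratic coupling terms become differences of products, which I would split via the identity $a_1 b_1 - a_2 b_2 = (a_1-a_2)b_1 + a_2(b_1-b_2)$ so that every nonlinear term factors as (a difference) times (a bounded quantity). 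Here the strict separation property \eqref{sep1} from Theorem \ref{str-well} is essential: it guarantees $\|\Psi''(\vp_i)\|_{C(\overline Q)}$ and higher derivatives are uniformly bounded (Remark \ref{regP}), so that the difference $\Psi'(\vp_1)-\Psi'(\vp_2)$ is controlled by $\vp$ in $L^2$ and in higher Sobolev norms by standard Lipschitz/mean-value estimates.

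First I would test the Cahn-Hilliard equation for the difference with $\N\vp$ (or with $\mu$) and the chemical-potential relation with $\vp$, producing a control of $\|\vp\|_{V_0'}$ or $\|\vp\|_{L^2}$ by $\|\nabla\mu\|_{L^2}$ and the convective term $\mathrm{div}(\vp_1\u_1 - \vp_2\u_2)$. The velocity difference is not an independent unknown: from Darcy's law, $\nu(\vp_1)\u = -\nabla P + \mu\nabla\vp_1 + (\nu(\vp_2)-\nu(\vp_1))\u_2 + \mu_2\nabla\vp$, so $\u$ is estimated elliptically in terms of $\vp$, $\mu$ and $P$, the latter handled through Lemma \ref{press} (the non-constant-coefficient Neumann problem) applied to the divergence of Darcy's equation; note $\mathrm{div}\,\u = 0$ here since both solutions have $\mathrm{div}\,\u_i = S$. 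The Bogovski\u\i{} operator (Lemma \ref{Bo}) and the regularity bound in Lemma \ref{press}(2) give $\|\u\|_{L^4}$ and $\|P\|_{W^{2,4/3}}$ in terms of the phase and potential differences. I would assemble these into a single differential inequality for an energy $\mathcal{G}(t) = \|\vp\|_{H^2}^2$ (or a comparable quantity), of the Gronwall form
$$
\frac{\d}{\d t}\mathcal{G}(t) + c\,\|\vp\|_{H^4}^2 + c\,\|\mu\|_{H^2}^2 \leq g(t)\,\mathcal{G}(t) + C\|R\|_{L^2(\Omega)}^2,
$$
where $g\in L^1(0,T)$ collects the $H^3$/$H^5$-norms of $\vp_i$ and the $H^1$-norm of $\u_i$ supplied by Theorem \ref{str-well}. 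A standard Gronwall argument (the integrability of $g$ is precisely what the strong-solution regularity buys) then yields the stated estimate \eqref{stabu1} on $[0,t]$.

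The hard part will be closing the estimate at the high-regularity level demanded by \eqref{stabu1}: the left-hand side controls $\vp$ up to $C([0,t];H^2)\cap L^2(0,t;H^4)$, so I must differentiate the difference equation and test at a level where the worst terms — those containing $\nabla^3\vp_i$ or $\Delta\u_i$ — appear. These are borderline in two dimensions and require careful use of Ladyzhenskaya's, Agmon's, and especially the Br\'{e}zis-Gallouet-Wainger inequality \eqref{BGW} together with the Gronwall lemma of logarithmic type (Lemma \ref{gron}) to absorb the critical products without losing integrability in time. Controlling the pressure difference in $W^{2,4/3}$ uniformly, and ensuring the viscosity-difference term $(\nu(\vp_1)-\nu(\vp_2))\u_2$ does not destroy the elliptic gain, is the other delicate point; here assumption $\mathbf{(A2)}$ (boundedness of $\nu$ and its derivatives) and the uniform $L^\infty(0,T;\mathbf{H}^1)$-bound on $\u_2$ are exactly what is needed. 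Once these critical terms are tamed, the remaining computation is routine bookkeeping of interpolation and absorption.
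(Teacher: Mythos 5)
Your overall strategy coincides with the paper's: form the difference system, pair the Cahn--Hilliard equation tested with $\mu$ against Darcy's law tested with $\u$, use the strict separation property \eqref{sep1} and Remark \ref{regP} to control $\Psi'(\vp_1)-\Psi'(\vp_2)$, recover $P$ by elliptic regularity and $\u$ from Darcy's law, and close with Gronwall. However, your final paragraph proposes a step that would fail as described: you claim the higher-order closure requires the Br\'ezis--Gallouet--Wainger inequality \eqref{BGW} together with the logarithmic Gronwall lemma (Lemma \ref{gron}). That lemma is structurally incompatible with a continuous-dependence estimate. Applied to the difference, where $f(0)=0$ and the source is $h\sim\|R_1-R_2\|_{L^2(\Omega)}^2$, its conclusion reads $f(t)\leq (e+f(0))^{e^{\int_0^t g\,\d\tau}}\,e^{\int_0^t e^{\int_\tau^t g\,\d s}h(\tau)\,\d\tau}= e^{\,e^{\int_0^t g\,\d\tau}}\,e^{\int_0^t e^{\int_\tau^t g\,\d s}h(\tau)\,\d\tau}$, which stays bounded away from zero as $R_1-R_2\to 0$; no bound linear in $\|R_1-R_2\|_{L^2(Q)}$ can come out of it. The point you are missing is that no logarithmic correction is needed anywhere in this proof: since both solutions are already \emph{strong} solutions, Theorem \ref{str-well} supplies $\u_2\in L^\infty(0,T;\mathbf{H}^1(\Omega))$, $\mu_2\in L^2(0,T;H^3(\Omega))$, $\vp_i\in C([0,T];H^3(\Omega))$, so the Gronwall coefficients at both levels (in the paper, $G_1=1+\|\u_2\|_{L^4}^2+\|\mu_2\|_{H^1}^2+\|S\|^2$ for the $H^1$ estimate and $G_2=1+\|S\|^2+\|\u_2\|_{L^4}^2$ for the $\|\Delta\vp\|$ estimate) lie in $L^1(0,T)$, and the standard linear Gronwall lemma closes the argument. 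There are no borderline terms containing $\nabla^3\vp_i$ or $\Delta\u_i$: one tests with $\Delta^2\vp$ and absorbs everything into the dissipation $\|\Delta^2\vp\|^2$. The BGW inequality and Lemma \ref{gron} are needed only in the Appendix, for the a priori estimates behind Theorem \ref{str-well}, where no strong-solution bounds are yet available.

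Two smaller misdirections. First, Lemma \ref{press} is not the right tool for the pressure difference: its regularity statement requires a right-hand side in $L_0^2(\Omega)$ and yields $H^2$, whereas here the forcing is only in $L^{4/3}$ and the target space is $W^{2,4/3}$. The paper instead moves the viscosity terms to the right-hand side and writes the constant-coefficient Neumann problem $-\Delta P = \nu'(\vp_1)\nabla\vp_1\cdot\u + (\nu(\vp_1)-\nu(\vp_2))S + \nabla(\nu(\vp_1)-\nu(\vp_2))\cdot\u_2 - \mathrm{div}(\mu\nabla\vp_1)-\mathrm{div}(\mu_2\nabla\vp)$, to which the standard elliptic estimate in $W^{2,4/3}$ applies; $\|\u\|_{L^4}$ then follows directly from Darcy's law and the embedding $W^{1,4/3}(\Omega)\hookrightarrow L^4(\Omega)$. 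Second, Lemma \ref{Bo} plays no role here since, as you note yourself, $\mathrm{div}\,\u=0$; likewise, testing with $\N\vp$ is not available, because $\overline{\vp_1-\vp_2}(t)=\int_0^t\overline{R_1-R_2}\,\d\tau$ need not vanish, so you should stick with the tests by $\vp$ and $\mu$ that your proposal (and the paper) also lists.
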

\begin{proof}
Based on Theorem \ref{str-well} and Remark \ref{regP}, we can prove Proposition \ref{conti-es1} by applying an argument similar to that in \cite[Lemma 2.3]{SW21}. Below we sketch the main steps and present necessary modifications due to the variable viscosity in \eqref{CHHS}. Define
 $$R=R_1-R_2,\quad \varphi=\varphi_1-\varphi_2,\quad \mu=\mu_1-\mu_2,\quad \u=\u_1-\u_2,\quad P=P_1-P_2.$$
It follows that the above differences satisfy
\begin{align}
\label{diff3}
\nu(\vp_1)\u\,&=\,-\nabla P+\mu\,\nabla\vp_1+\mu_2\,\nabla\vp-(\nu(\vp_1)-\nu(\vp_2))\u_2, & &\aeQ,\\[1mm]
\label{diff4}
{\rm div}\,\u\,&=\,0, & &\aeQ,\\[1mm]
\label{diff1}
\partial_t\vp-\Delta\mu\,&=\,R-S\vp-\u\cdot\nabla\vp_1 -\u_2\cdot\nabla\vp, & &\aeQ,\\[1mm]
\label{diff2}
\mu\,&=\,-\Delta\vp+\Psi'(\vp_1)-\Psi'(\vp_2), & &\aeQ,\\[1mm]
\label{diff6}
\dn\vp\,&=\,\dn\mu\,=\,\u\cdot{\bf n}\,=\,0, & &\aeS,\\[1mm]
\label{diff7}
\vp|_{t=0}\,&=\,0, & &\aeO.
\end{align}
Moreover, we have
\begin{align}
-\Delta P & =\nu'(\vp_1)\nabla \vp_1\cdot \u + (\nu(\vp_1)-\nu(\vp_2)) S +  \nabla  (\nu(\vp_1)-\nu(\vp_2))\cdot \u_2  \notag \\
&\quad   -\,{\rm div}(\mu\,\nabla\vp_1)-{\rm div}(\mu_2\,\nabla\vp), \qquad \aeQ,\label{diff5}
\end{align}
with the boundary condition $\dn P=0$ almost everywhere on $\Sigma$ and  $\overline{P(t)}=0$ for almost all $t\in (0,T)$.

For any $t\in (0,T]$, following the arguments for \cite[(2.21),\,(2.26)]{SW21} and using $\mathbf{(A2)}$, we easily get
\begin{equation}
\label{diff15}
\int_0^t\|\mu(s)\|_{H^1(\Omega)}^2\,\d s\,\le\,C_1\int_0^t(\|\nabla\mu(s)\|_{L^2(\Omega)}^2 + \|\vp(s)\|_{L^2(\Omega)}^2)\,\d s,
\end{equation}
and
\begin{align}
&\frac 12\,\|\vp(t)\|_{L^2(\Omega)}^2 +\left(1-\gamma\right)
\int_0^t\|\Delta\vp(s)\|_{L^2(\Omega)}^2\,\d s \notag\\
&\quad \leq  \frac{\nu_*}{8} \,\int_0^t \|\u(s)\|_{L^2(\Omega)}^2\,\d s\,+\,C\int_0^t\|R(s)\|_{L^2(\Omega)}^2\, \d s\notag\\
&\qquad +\,C\,(1+\gamma^{-1})\int_0^t \left(1+\,\|\u_2(s)\|_{L^4(\Omega)} + \|S(s)\|_{L^2(\Omega)}\right) \|\vp(s)\|_{H^1(\Omega)}^2\,\d s,
\non
\end{align}
for any $\gamma\in (0,1)$. Next, multiplying \eqref{diff1} by $\mu$ and \eqref{diff3} by $\u$, adding the two resulting identities and integrating over $Q_t$, we obtain
\begin{align}
\label{diff16}
&\frac 12\,\|\nabla\vp(t)\|_{L^2(\Omega)}^2\,+\int_0^t\|\nabla\mu\|_{L^2(\Omega)}^2\,\d s\,+ \int_0^t\nu(\vp_1)|\u|^2\,\d x\,\d s\notag\\
&\quad =  \int_0^t R \mu \,\d x\,\d s\,+\iQt \mu_2\,(\u \cdot\nabla\vp) \,\d x\d s -\iQt\!\mu(S\,\vp\,+\,\u_2\cdot\nabla\vp)\,\d x\,\d s \non\\
&\qquad -\iQt(\Psi'(\vp_1)-\Psi'(\vp_2))\,\dt\vp\,\d x\,\d s
-\iQt (\nu(\vp_1)-\nu(\vp_2))\u_2\cdot \u \,\d x\,\d s.
\end{align}
Thanks to $\mathbf{(A2)}$, the fifth term on the right-hand side of \eqref{diff16} can be estimated as follows
\begin{align}
& \left|\iQt (\nu(\vp_1)-\nu(\vp_2))\u_2\cdot \u \,\d x\,\d s\right| \notag \\
&\quad \leq \iQt \left|\int_0^1 \nu'(\tau \vp_1+(1-\tau)\vp_2) \vp \,\d\tau\right| |\u_2||\u| \,\d x\,\d s \notag \\
&\quad  \leq \nu^* \int_0^t\|\vp(s)\|_{L^4(\Omega)}\|\u_2(s)\|_{L^4(\Omega)} \|\u(s)\|_{L^2(\Omega)}\,\d s\notag\\
&\quad \leq \frac{\nu_*}{8} \,\int_0^t \|\u(s)\|_{L^2(\Omega)}^2\,\d s
+ C \int_0^t \|\u_2(s)\|_{L^4(\Omega)}^2\|\vp(s)\|^2_{H^1(\Omega)}\,\d s.\non
\end{align}
The other four terms can be treated similarly as in \cite[(2.28)--(2.36)]{SW21} by using the strict separation property \eqref{sep1} and Remark \ref{regP}.
Combining these estimates and \eqref{diff15}--\eqref{diff16}, taking $\gamma\in (0,1)$ to be sufficiently small, we arrive at
\begin{align*}
&\frac 12 \,\|\vp(t)\|^2_{H^1(\Omega)} \,+\,
 \frac{\nu_*}{4}\int_0^t\|\u\|_{L^2(\Omega)}^2\,\d s+\frac12 \int_0^t\|\Delta\vp\|_{L^2(\Omega)}^2\,\d s
  + \frac12 \int_0^t\|\nabla\mu\|_{L^2(\Omega)}^2\,\d s \notag\\
&\quad \le\,C \int_0^t\|R(s)\|_{L^2(\Omega)}^2\,\d s\,+\,C \int_0^t G_1(s)\,\|\vp(s)\|^2_{H^1(\Omega)}\,\d s,
\end{align*}
where
$$G_1(\cdot)=1+\|\u_2(\cdot)\|_{L^4(\Omega)}^2 +\|\mu_2(\cdot)\|_{H^1(\Omega)}^2+\|S(\cdot)\|_{L^2(\Omega)}^2\in L^1(0,T).$$
Using standard elliptic estimates and \eqref{diff15}, we deduce from Gronwall's lemma that for any $t\in[0,T]$, it holds
\begin{align}
\label{diff26}
&\|\vp\|_{C([0,t];H^1(\Omega))\cap L^2(0,t;H^2(\Omega))}^2\,+\,\|\mu\|_{L^2(0,t;H^1(\Omega))}^2 \,+\,\|\u\|_{L^2(0,t;L^2(\Omega))}^2 \non\\
&\quad  \leq\,C \int_0^t\|R(s)\|_{L^2(\Omega)}^2\,\d s.
\end{align}
Next, from the Sobolev embedding theorem, \eqref{diff5} and the elliptic estimate for the Neumann problem, we further have
\begin{align}
\label{stime3}
\|\nabla P\|_{L^2(0,t;L^4(\Omega))}\,
& \le\,C\,
\|P\|_{L^2(0,t;W^{2,4/3}(\Omega))}\non\\
& \leq \, C\|\nu'(\vp_1)\nabla \vp_1\cdot \u\|_{L^2(0,t;L^{4/3}(\Omega))} + C\|(\nu(\vp_1)-\nu(\vp_2)) S\|_{L^2(0,t;L^{4/3}(\Omega))} \notag\\
& \quad +
C\| \nabla  (\nu(\vp_1)-\nu(\vp_2))\cdot \u_2\|_{L^2(0,t;L^{4/3}(\Omega))} \notag\\
& \quad + C\, \|\nabla \mu\cdot \nabla\vp_1+\mu_2\Delta \vp\|_{L^2(0,t;L^{4/3}(\Omega))}\notag\\
& \quad + C\, \|\nabla \mu\cdot \nabla\vp_1+\mu_2\Delta\vp\|_{L^2(0,t;L^{4/3}(\Omega))}.
\end{align}
The last two terms on the right-hand side of \eqref{stime3} can be estimated as in \cite[(2.39), (2.40)]{SW21}, that is,
\begin{align}
\int_0^t \left(\|\nabla \mu\cdot \nabla\vp_1+\mu_2\Delta \vp\|_{L^{4/3}(\Omega)}^2
+  \|\nabla \mu\cdot \nabla\vp_1+\mu_2\Delta\vp\|_{L^{4/3}(\Omega)}^2\right)\,\d s\leq\,C \int_0^t\|R(s)\|_{L^2(\Omega)}^2\,\d s.\notag
\end{align}
Besides, from \eqref{diff26} we can deduce that
\begin{align}
\int_0^t\|\nu'(\vp_1)\nabla \vp_1\cdot \u\|_{L^{4/3}(\Omega)}^2\,\d s
\leq \nu^*\int_0^t \|\nabla \vp_1\|_{L^4}^2\|\u\|^2\,\d s\leq C\int_0^t\|R(s)\|_{L^2(\Omega)}^2\,\d s,\notag
\end{align}
\begin{align}
&\int_0^t \|(\nu(\vp_1)-\nu(\vp_2)) S\|_{L^{4/3}(\Omega)}^2\,\d s\non\\
&\quad \leq \|S\|_{C([0,t];L^2(\Omega))}^2
\int_0^t\left\|\int_0^1 \nu'(\tau\vp_1+(1-\tau)\vp_2)\vp\,\d\tau\right\|_{L^4(\Omega)}^2\d s\notag\\
&\quad \leq C\int_0^t\|\vp(s)\|_{L^4(\Omega)}^2\,\d s
\leq C\int_0^t\|R(s)\|_{L^2(\Omega)}^2\,\d s,\notag
\end{align}
and
\begin{align}
& \int_0^t\| \nabla  (\nu(\vp_1)-\nu(\vp_2))\cdot \u_2\|_{L^{4/3}(\Omega)}^2\,\d s\notag\\
&\quad \leq \|\u_2\|_{L^\infty(0,t;L^4(\Omega))}^2
\int_0^t \|\nu'(\phi_1)\nabla \vp\|_{L^2(\Omega)}^2\,\d s\notag\\
&\qquad + \|\u_2\|_{L^\infty(0,t;L^4(\Omega))}^2
\int_0^t \left\|\int_0^1\nu''(\tau\vp_1+(1-\tau)\vp_2)\vp\,\d\tau\nabla \vp_2\right\|_{L^2(\Omega)}^2\,\d s\notag\\
&\quad \leq C\int_0^t \|\vp(s)\|_{H^1(\Omega)}^2\,\d s\leq C\int_0^t\|R(s)\|_{L^2(\Omega)}^2\,\d s.\notag
\end{align}
Collecting the above estimates, we find
\begin{align}
\int_0^t \|P(s)\|_{W^{2,4/3}(\Omega)}^2\,\d s\leq C\int_0^t\|R(s)\|_{L^2(\Omega)}^2\,\d s,\non
\end{align}
which combined with $\mathbf{(A2)}$, \eqref{diff3} and \eqref{diff26} yields
\begin{align}
\int_0^t \|\u(s)\|_{L^4(\Omega)}^2\,\d s\leq C\int_0^t\|R(s)\|_{L^2(\Omega)}^2\,\d s. \non
\end{align}
Finally, using the same argument as in \cite[pp. 502--503]{SW21}, we can derive the following inequality
\begin{align*}
& \frac 12\|\Delta\vp(t)\|_{L^2(\Omega)}^2\,+\,\frac 12\int_0^t\|\Delta^2\vp(s)\|_{L^2(\Omega)}^2\, \d s\\
&\quad \leq C\int_0^t G_2(s)\|\Delta \vp(s)\|_{L^2(\Omega)}^2\, ds + C\int_0^t\|R(s)\|_{L^2(\Omega)}^2\, \d s,\non
\end{align*}
where
$$
G_2(\cdot)=1+\|S(\cdot)\|_{L^2(\Omega)}^2+ \|\u_2(\cdot)\|_{L^4(\Omega)}^2\in L^1(0,T).
$$
Thus, an application of Gronwall's lemma yields
\begin{align}
\|\Delta\vp\|_{C([0,t]; L^2(\Omega))}^2+ \|\Delta^2 \vp\|_{L^2(0,t; L^2(\Omega))}^2\leq C\int_0^t\|R(s)\|_{L^2(\Omega)}^2\, \d s,\quad \forall\, t\in [0,T].\non
\end{align}
The remaining part of \eqref{stabu1} follows from the same arguments as for \cite[(2.46)--(2.48)]{SW21}.
\end{proof}

\subsection{Admissible controls and the control-to-state operator}

On account of Theorem \ref{str-well}, we can specify the settings for the optimal control problem.

\begin{itemize}
\item[$\mathbf{(S1)}$] The potential function $\Psi$ satisfies $\mathbf{(A1)}$ and $\mathbf{(A1)'}$.
\item[$\mathbf{(S2)}$] The viscosity coefficient $\nu$ satisfies $\mathbf{(A2)}$, the external mass source $S$ satisfies $\mathbf{(A3)}$.
\item[$\mathbf{(S3)}$] The initial datum $\vp_0$ satisfies $\vp_0\in H_N^2(\Omega)$ and $\widetilde{\mu}_0=-\Delta\varphi_0+F'(\varphi_0)\in H^1(\Omega)$.
\end{itemize}
\begin{remark}\label{iniseprem}
When the spatial dimension is two, we infer from $\mathbf{(S3)}$ and Lemma \ref{esing} (see Appendix) that $\vp_0\in H^3(\Omega)$ and
$\|\varphi_0\|_{L^{\infty}} \leq 1-\widetilde{\delta}$ for some $\widetilde{\delta}\in(0,1)$ depending on $\|\widetilde{\mu}_0\|_{H^1}$, $\Omega$ and $F$.
As a consequence, it holds $|\overline{\vp_0}|\leq 1-\widetilde{\delta}<1$.
\end{remark}

Let us introduce the space
$$
\mathcal{U}=\big\{R(x,t)\,|\, R\in L^2(0,T;H^1(\Omega))\cap H^1(0,T;(H^1(\Omega))')\big\},
$$
whose (equivalent) norm is given by
$$
\|R\|_{\mathcal{U}}=\|R\|_{L^2(0,T;H^1(\Omega))}+\|\partial_t R\|_{L^2(0,T;(H^1(\Omega))')}.
$$
Thanks to Remark \ref{iniseprem}, for any given initial datum $\varphi_0$ satisfying $\mathbf{(S3)}$, we can fix some positive constant
$$\delta_0\in \left(0,\,\frac12\left(1-|\overline{\vp_0}|\right)\right)$$
and set
\begin{equation}
r_0=\left(1-2\delta_0-|\overline{\vp_0}|\right)|\Omega|>0,\qquad \widetilde{r}_0=r_0+\delta_0|\Omega|>0.
\label{r0}
\end{equation}
Then the set of admissible controls is defined as follows:
\begin{definition}
Let $\varphi_0$ be an initial datum satisfying $\mathbf{(S3)}$.
The set
\begin{align*}
\mathcal{U}_{\mathrm{ad}}
& =\big\{R\in \mathcal{U}\cap L^\infty(Q)\ |\  \|R\|_{\mathcal{U}}\leq r_1,\,\|R\|_{L^1(Q)}\leq r_0, \ R_\mathrm{min}\leq R\leq R_\mathrm{max} \ \  \text{a.e. in}\ Q\big\}
\end{align*}
is referred to as \textbf{the set of admissible controls} for problem \eqref{bdini}--\eqref{CHHS}, where $r_1\in (0,+\infty)$ and $r_0$ is given by \eqref{r0}.
Here, $R_\mathrm{min}, R_\mathrm{max} \in L^\infty(Q)$ are  two given functions with $R_\mathrm{min}\leq R_\mathrm{max} $ almost everywhere in $Q$.
\end{definition}
 Obviously, $\mathcal{U}_{\mathrm{ad}} $ is a bounded, convex and closed subset of the Banach space $\mathcal{U}\subset L^2(Q)$. Throughout the paper, we assume that
 $\mathcal{U}_{\mathrm{ad}} $ is nonempty.
 Moreover, in order to derive optimality conditions for the control problem, we need to properly enlarge $\mathcal{U}_{\mathrm{ad}}$ and consider the open set
$$
\widetilde{\mathcal{U}} =\big\{R\in \mathcal{U}\cap L^\infty(Q)\ |\  \|R\|_{\mathcal{U}}< 2 r_1,\,\|R\|_{L^1(Q)}< \widetilde{r}_0\big\} \quad \text{such that}\quad
\mathcal{U}_{\mathrm{ad}}\subset \widetilde{\mathcal{U}}.
$$

Thanks to Theorem \ref{str-well} and Proposition \ref{conti-es1}, we find that problem \eqref{bdini}--\eqref{CHHS} admits a unique strong solution
$(\bm{u},P,\vp,\mu)$ for every $R\in \widetilde{\mathcal{U}}$ (not only for admissible controls). This enables us to define the control-to-state operator that maps
the control function $R$ to its \textbf{associated state} $(\bm{u},P,\vp,\mu)$. Set the function spaces
\begin{align}
&\mathcal{X}:= C([0,T];H^3(\Omega))\cap L^2(0,T;H^5(\Omega))\cap H^1(0,T;H^1(\Omega)),\non\\
&\mathcal{Y}:= C([0,T];H^2(\Omega))\cap L^2(0,T;H^4(\Omega))\cap H^1(0,T;L^2(\Omega)).\non
\end{align}
Then we have
\begin{proposition}\label{StoCo}
Assume that $\Omega\subset \mathbb{R}^2$ is a bounded domain with smooth boundary $\partial\Omega$ and $T>0$. Let the assumptions $\mathbf{(S1)}$--$\mathbf{(S3)}$ be satisfied.

(1) For any function $R\in\widetilde{\mathcal{U}}$, the \textbf{control-to-state operator} given by
\begin{align}
{\cal S}:\, \widetilde{\mathcal{U}}\to \mathcal{X},\quad    R \mapsto {\cal S}(R)=\varphi,
\label{ctos}
\end{align}
is well defined, where $\varphi$ is the unique global strong solution to the state system \eqref{bdini}--\eqref{CHHS} on $[0,T]$.

(2) There exist constants $K_1, K_2>0$ and $\widetilde{\delta}_1\in (0,1)$ that depend on $\Omega$, $T$, $\nu_*$, $\nu^*$, $\|\vp_0\|_{H^2}$, $\|\widetilde{\mu}_0\|_{H^1}$,
$|\overline{\vp_0}|$, $\delta_0$, $\|S\|_{L^2(0,T:H^1(\Omega))\cap H^1(0,T;L^2(\Omega))}$, $r_1$ and on the other parameters of the system, but not on the choice of
$R\in \widetilde{\mathcal{U}}$, such that
\begin{align}
& \|\vp\|_{C([0,T]; H^3(\Omega))\cap L^2(0,T;H^5(\Omega))\cap H^1(0,T; H^1(\Omega))}  +\|\u\|_{L^\infty(0,T;H^1(\Omega))} + \|P\|_{L^\infty(0,T;H^2(\Omega))}\notag\\
&\quad + \|\mu\|_{C([0,T];H^1(\Omega))\cap L^2(0,T; H^3(\Omega))\cap H^1(0,T; (H^1(\Omega))')}\leq K_1,
\label{K1}
\end{align}
\begin{align}
\|\vp\|_{C(\overline{Q})}\leq 1-\widetilde{\delta}_1,
\label{delta1}
\end{align}
and
\begin{align}
\max_{0\le j\leq 5} \big\|\Psi^{(j)}(\vp)\big\|_{C(\overline{Q})}\leq K_2.\label{K2}
\end{align}

(3) The control-to-state operator ${\cal S}$ is locally Lipschitz continuous as a mapping from $\widetilde{\mathcal{U}}$ into $\mathcal{X}$ equipped with weaker topologies
induced by $L^2(Q)$ and $\mathcal{Y}$, respectively.
\end{proposition}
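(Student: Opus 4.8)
The plan is to derive all three statements from the strong well-posedness result of Theorem \ref{str-well}, the continuous dependence estimate of Proposition \ref{conti-es1}, and the separation property \eqref{sep1}; the only genuine work is to check that the relevant constants can be chosen \emph{independently} of the particular $R\in\widetilde{\mathcal{U}}$. I would establish (1) by verifying that every $R\in\widetilde{\mathcal{U}}$ meets the hypotheses of Theorem \ref{str-well}. The regularity $R\in L^2(0,T;H^1(\Omega))\cap H^1(0,T;(H^1(\Omega))')$ demanded by $\mathbf{(A3)}$ is built into $\mathcal{U}$, while $\mathbf{(S2)}$--$\mathbf{(S3)}$ cover the remaining structural assumptions. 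The key point is the mean-value constraint \eqref{AR-1}: using $|\overline{R}(\tau)|\le|\Omega|^{-1}\|R(\cdot,\tau)\|_{L^1(\Omega)}$ together with $\|R\|_{L^1(Q)}<\widetilde{r}_0$ and the identity $|\Omega|^{-1}\widetilde{r}_0=1-\delta_0-|\overline{\vp_0}|$ coming from \eqref{r0}, one gets
\begin{align*}
\left|\overline{\vp_0}+\int_0^t\overline{R}(\tau)\,\d\tau\right|\le|\overline{\vp_0}|+|\Omega|^{-1}\|R\|_{L^1(Q)}<1-\delta_0,\qquad\forall\,t\in[0,T].
\end{align*}
Hence \eqref{AR-1} holds with the preselected $\delta_0$, Theorem \ref{str-well} supplies a unique strong solution with $\varphi\in\mathcal{X}$, and $\mathcal{S}$ in \eqref{ctos} is well defined.

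For (2) the idea is to revisit the a priori estimates behind Theorem \ref{str-well} (carried out in Section~7) and to observe that therein $R$ enters only through $\|R\|_{\mathcal{U}}$ and through $\delta_0$ in \eqref{AR-1}. Since $\|R\|_{\mathcal{U}}<2r_1$ on $\widetilde{\mathcal{U}}$ and, by the computation above, the same $\delta_0$ works for all admissible $R$, these estimates become uniform over $\widetilde{\mathcal{U}}$ and yield \eqref{K1} with $K_1$ depending only on the listed data. The separation bound \eqref{delta1} is obtained from \eqref{sep1} in the same way: one must track through Lemma \ref{esing} that the Trudinger--Moser argument produces a single radius $\widetilde{\delta}_1\in(0,1)$ valid for the whole family, which is possible precisely because $\delta_0$ and all data bounds are fixed. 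Finally \eqref{K2} is immediate from \eqref{delta1} and Remark \ref{regP}: since $\varphi$ takes values in the compact interval $[-1+\widetilde{\delta}_1,\,1-\widetilde{\delta}_1]$ on which $\Psi\in C^5$, each $\Psi^{(j)}(\varphi)$, $0\le j\le 5$, is bounded by some $K_2=K_2(\widetilde{\delta}_1)$.

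Statement (3) is then a reading of Proposition \ref{conti-es1}. Taking $t=T$ in \eqref{stabu1} gives, for any $R_1,R_2\in\widetilde{\mathcal{U}}$ with states $\varphi_i=\mathcal{S}(R_i)$,
\begin{align*}
\|\mathcal{S}(R_1)-\mathcal{S}(R_2)\|_{\mathcal{Y}}\le C\,\|R_1-R_2\|_{L^2(Q)},
\end{align*}
the left-hand norm being exactly the first line of \eqref{stabu1}; by (2) the constant $C$ is uniform on $\widetilde{\mathcal{U}}$, so $\mathcal{S}$ is Lipschitz (indeed globally on $\widetilde{\mathcal{U}}$) from the $L^2(Q)$-metric into the $\mathcal{Y}$-topology. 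Because the $L^2(Q)$-topology on $\mathcal{X}$ is weaker than the $\mathcal{Y}$-topology, via the embedding $\mathcal{Y}\hookrightarrow L^2(Q)$ the analogous estimate into $L^2(Q)$ follows at once, which settles both cases in (3). I expect the whole difficulty to be concentrated in (2): the bookkeeping needed to confirm that the nonlinear separation argument and the higher-order energy estimates never use $R$ beyond $\|R\|_{\mathcal{U}}$ and $\delta_0$ is the delicate part, after which (1) and (3) are short.
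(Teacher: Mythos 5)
Your proposal is correct and follows essentially the same route the paper takes (the paper leaves this proof implicit, stating the proposition as a direct consequence of Theorem \ref{str-well}, the uniform a priori estimates of Section~7, and Proposition \ref{conti-es1}): in particular, your verification that $\|R\|_{L^1(Q)}<\widetilde{r}_0=(1-\delta_0-|\overline{\vp_0}|)|\Omega|$ forces \eqref{AR-1} with the preselected $\delta_0$ is exactly the reason the paper defines $r_0$ and $\widetilde{r}_0$ as in \eqref{r0}, and your observations that the constants in Section~7 and in \eqref{stabu1} depend on $R$ only through $\|R\|_{\mathcal{U}}<2r_1$ and $\delta_0$, and that \eqref{K2} follows from \eqref{delta1} plus $\Psi\in C^5(-1,1)$, are the intended arguments for parts (2) and (3).
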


\section{Existence of a globally optimal control}
\label{exOP}
\setcounter{equation}{0}
Assume in addition,
\begin{itemize}
\item[$(\mathbf{S4})$] the target functions satisfy $\vp_\Omega\in H^1(\Omega)$, $\vp_Q\in L^2(Q)$;
    \item[$(\mathbf{S5})$] the coefficients $\alpha_1$, $\alpha_2$ and $\beta $ are nonnegative constants (not all equal to zero).
\end{itemize}
Let us consider the following \textbf{optimal control problem}:
\begin{align}
 \textit{Minimize}\quad \mathcal{J}(\vp,R)
:=
\frac{\alpha_1}{2}\|\vp(T)-\vp_\Omega\|_{L^2(\Omega)}^2
+\frac{\alpha_2}{2}\|\vp-\vp_Q\|_{L^2(Q)}^2
+\frac{\beta}{2}\|R\|^2_{L^2(Q)}
\label{costJ}
\end{align}
\textit{subject to the following conditions:}
\begin{itemize}
\item $R$ \textit{is an admissible control, that is,} $R\in \mathcal{U}_{\mathrm{ad}}$\textit{;}
\item $\varphi$ \textit{is the unique strong solution to problem} \eqref{bdini}--\eqref{CHHS} \textit{corresponding to} $R$.
\end{itemize}

Then we introduce the following

\begin{definition}\label{gloloc}
Let $R^*\in \mathcal{U}_{\mathrm{ad}}$.

(1) $R^*$ is called a \textbf{globally optimal control} for problem \eqref{costJ}, if $\mathcal{J}(\mathcal{S}(R^*),R^*)\leq \mathcal{J}(\mathcal{S}(R),R)$
for all $R\in \mathcal{U}_{\mathrm{ad}}$.

(2)  $R^*$ is called a \textbf{locally optimal control} for problem \eqref{costJ} in the sense of $\mathcal{U}$, if there exists some $\lambda>0$ such that
$\mathcal{J}(\mathcal{S}(R^*),R^*)\leq \mathcal{J}(\mathcal{S}(R),R)$ for all $R\in \mathcal{U}_{\mathrm{ad}}$ with $\|R-R^*\|_{\mathcal{U}}\leq\lambda$.

Here, $\vps=\mathcal{S}(R^*)$ is called the associated globally (or locally) optimal state for $R^*$.
\end{definition}

\begin{remark}
Proposition \ref{StoCo} states that the control-to-state operator ${\cal S}$ is well defined for every $R\in \widetilde{\mathcal{U}}$.
Hence, we can reformulate the optimal control problem \eqref{costJ} and minimize the \textbf{reduced cost functional}
\begin{align}
\widehat{\mathcal{J}}(R):=\mathcal{J}(\mathcal{S}(R),R)\quad \text{over }\ \ \mathcal{U}_{\mathrm{ad}},\label{ReJ}
\end{align}
and subject to the state system \eqref{bdini}--\eqref{CHHS}. For a given optimal control $R^*$, we also call the corresponding unique strong solution
$(\uu^*,P^*,\vp^*,\mu^*)$ to problem \eqref{bdini}--\eqref{CHHS} the associated optimal state.
\end{remark}

We begin the study of the optimal control problem \eqref{costJ}  by showing the existence of a globally optimal control.

\begin{theorem}\label{weakcomp}
Assume that $\Omega\subset \mathbb{R}^2$ is a bounded domain with smooth boundary $\partial\Omega$ and $T>0$. Let the assumptions $\mathbf{(S1)}$--$\mathbf{(S5)}$ be satisfied.
The optimal control problem \eqref{costJ} admits at least one globally optimal control $R^*$.
\end{theorem}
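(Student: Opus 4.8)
The plan is to apply the direct method of the calculus of variations to the reduced cost functional $\widehat{\mathcal{J}}(R)=\mathcal{J}(\mathcal{S}(R),R)$ over $\mathcal{U}_{\mathrm{ad}}$. Since every term in \eqref{costJ} is nonnegative, $\widehat{\mathcal{J}}$ is bounded from below, and as $\mathcal{U}_{\mathrm{ad}}$ is assumed nonempty, the infimum $d:=\inf_{R\in\mathcal{U}_{\mathrm{ad}}}\widehat{\mathcal{J}}(R)$ is finite. First I would select a minimizing sequence $\{R_n\}\subset\mathcal{U}_{\mathrm{ad}}$ with $\widehat{\mathcal{J}}(R_n)\to d$, and denote by $(\u_n,P_n,\varphi_n,\mu_n)$ the associated strong solutions furnished by Theorem \ref{str-well}, so that $\varphi_n=\mathcal{S}(R_n)$.

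Next I would extract convergent subsequences. The constraint $\|R_n\|_{\mathcal{U}}\leq r_1$ bounds $\{R_n\}$ in the reflexive Banach space $\mathcal{U}$, while $R_{\mathrm{min}}\leq R_n\leq R_{\mathrm{max}}$ bounds it in $L^\infty(Q)$. Hence, up to a (non-relabeled) subsequence, $R_n\rightharpoonup R^*$ weakly in $\mathcal{U}$ and weakly-$*$ in $L^\infty(Q)$. The crucial compactness comes from the Aubin--Lions--Simon lemma: since $H^1(\Omega)\hookrightarrow\hookrightarrow L^2(\Omega)\hookrightarrow (H^1(\Omega))'$, the space $\mathcal{U}=L^2(0,T;H^1(\Omega))\cap H^1(0,T;(H^1(\Omega))')$ embeds compactly into $L^2(Q)$, so that $R_n\to R^*$ strongly in $L^2(Q)$ (hence in $L^1(Q)$) and, along a further subsequence, a.e.\ in $Q$. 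Because $\mathcal{U}_{\mathrm{ad}}$ is convex and closed in $\mathcal{U}$, and thus weakly closed, the limit $R^*$ inherits all defining constraints: $\|R^*\|_{\mathcal{U}}\leq r_1$ from weak lower semicontinuity of the norm, $\|R^*\|_{L^1(Q)}\leq r_0$ from the $L^1$-convergence, and $R_{\mathrm{min}}\leq R^*\leq R_{\mathrm{max}}$ from the a.e.\ convergence. Consequently $R^*\in\mathcal{U}_{\mathrm{ad}}$.

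With $R^*$ in hand I would identify its state. Each $R_n\in\widetilde{\mathcal{U}}\supset\mathcal{U}_{\mathrm{ad}}$ produces a unique state in $\mathcal{X}$ with the uniform bounds \eqref{K1} by Proposition \ref{StoCo}; more importantly, the continuous dependence estimate \eqref{stabu1} of Proposition \ref{conti-es1} gives
\begin{align}
\|\varphi_n-\varphi^*\|_{C([0,T];H^2(\Omega))}\leq C\,\|R_n-R^*\|_{L^2(Q)}\to 0, \non
\end{align}
where $\varphi^*:=\mathcal{S}(R^*)$. In particular $\varphi_n(T)\to\varphi^*(T)$ in $L^2(\Omega)$ and $\varphi_n\to\varphi^*$ in $L^2(Q)$. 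Combining these strong convergences with $R_n\to R^*$ in $L^2(Q)$, every term of \eqref{costJ} passes to the limit, whence $\widehat{\mathcal{J}}(R^*)=\lim_n\widehat{\mathcal{J}}(R_n)=d$; even discarding the strong control convergence, weak lower semicontinuity of the convex term $\tfrac{\beta}{2}\|\cdot\|_{L^2(Q)}^2$ already yields $\widehat{\mathcal{J}}(R^*)\leq\liminf_n\widehat{\mathcal{J}}(R_n)=d$, which suffices. Therefore $R^*$ is a globally optimal control.

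The argument is essentially routine, and I expect no serious obstruction. The only points requiring genuine care are the verification that all three types of constraints defining $\mathcal{U}_{\mathrm{ad}}$ survive the passage to the limit, and the transition from weak convergence of the controls to strong convergence of the states needed to identify $\varphi^*=\mathcal{S}(R^*)$ and to close the lower-semicontinuity argument. The latter is exactly what the continuous dependence estimate of Proposition \ref{conti-es1} is designed to deliver, so the true enabling ingredient of the proof is the compact embedding $\mathcal{U}\hookrightarrow\hookrightarrow L^2(Q)$ supplied by the Aubin--Lions--Simon lemma.
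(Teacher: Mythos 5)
Your proof is correct, but it follows a genuinely different route from the paper's. The paper extracts only \emph{weak-star} convergence of the minimizing controls $R_n\rightharpoonup R^*$ in $\mathcal{U}$, then works entirely on the state side: the uniform bounds of Proposition \ref{StoCo} give compactness of the states $(\u_n,P_n,\varphi_n,\mu_n)$, one passes to the limit in the nonlinear system to identify the limit quadruple as the strong solution associated with $R^*$ (this identification step, citing \cite[Theorem 4.1]{SW21}, is the technical heart), and finally one invokes weak lower semicontinuity of $\widehat{\mathcal{J}}$. You instead put the compactness on the control side: the constraint $\|R_n\|_{\mathcal{U}}\leq r_1$ plus the Aubin--Lions--Simon lemma gives $\mathcal{U}\hookrightarrow\hookrightarrow L^2(Q)$, hence \emph{strong} $L^2(Q)$ convergence of the controls, and then the continuous dependence estimate \eqref{stabu1} of Proposition \ref{conti-es1} directly yields $\varphi_n\to\mathcal{S}(R^*)$ in $C([0,T];H^2(\Omega))$, so the limit state is identified for free and every term of the cost functional converges (no lower semicontinuity needed). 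Two remarks on what each approach buys. Your argument is shorter and bypasses the limit passage in the PDE system entirely, but it hinges on two structural features specific to this problem: the admissible set carries an $H^1$-in-time bound (without $\|\partial_t R\|_{L^2(0,T;(H^1)')}\leq r_1$ there would be no compactness of controls), and the Lipschitz constant in \eqref{stabu1} depends on $\|R_i\|_{\mathcal{U}}$, so it is uniform along the minimizing sequence precisely because all $R_n$ and $R^*$ lie in $\mathcal{U}_{\mathrm{ad}}$ — a point you use implicitly and should state. The paper's state-side argument is more robust: it would survive in settings where the control set is merely bounded, convex and closed in $L^2(Q)$ (hence only weakly compact), which is the typical situation in optimal control and is why that route is the standard one.
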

\begin{proof}
The conclusion can be proved by the direct method of calculus of variations. We work with the reduced cost functional $\widehat{\mathcal{J}}(R)$.
From the fact that $\widehat{\mathcal{J}}(R)\geq 0$ on $\mathcal{U}_{\mathrm{ad}}$, we infer that its infimum $J_*:=\inf_{R\in \mathcal{U}_{\mathrm{ad}}}\widehat{\mathcal{J}}(R)$
exists. Besides, there exists a minimizing sequence $\{R_n\}_{n\in \mathbb{N}}$ such that
$$ \lim_{n\to +\infty}\widehat{\mathcal{J}}(R_n)=J_*.$$
Since the set $\mathcal{U}_{\mathrm{ad}}$ is weakly (star) compact, we can find some $R^*\in \mathcal{U}_{\mathrm{ad}}$ and a convergent subsequence $\{R_n\}_{n\in \mathbb{N}}$
(not relabelled for simplicity) such that $R_n \to R^*$ weakly star in $\mathcal{U}$. Denote the strong solutions of problem \eqref{bdini}--\eqref{CHHS}
with the mass source term $R_n$ by $(\u_n,P_n,\varphi_n,\mu_n)$. Using the uniform bounds in Proposition \ref{StoCo} and standard compactness arguments, we can conclude that
there exists $(\u^*,P^*,\varphi^*,\mu^*)$ satisfying
\begin{align*}
& \u^*\in L^\infty(0,T;\mathbf{H}^1(\Omega)),\quad P^*\in L^\infty(0,T;H^2(\Omega)\cap L^2_0(\Omega)),\\
& \varphi^* \in C([0,T]; H^3(\Omega))\cap L^2(0,T;H^5(\Omega))\cap H^1(0,T; H^1(\Omega)),\\
& \mu^*\in C([0,T];H^1(\Omega))\cap L^2(0,T; H^3(\Omega))\cap H^1(0,T; (H^1(\Omega))'),
\end{align*}
such that up to a subsequence (again not relabelled for simplicity),
\begin{align*}
& \u_n\to \u^* \quad \text{weakly star in}\quad L^\infty(0,T;\mathbf{H}^1(\Omega)),\\
&P_n \to P^* \quad \text{weakly star in}\quad L^\infty(0,T;H^2(\Omega)\cap L^2_0(\Omega)),\\
& \varphi_n\to \varphi^*\quad \text{weakly in}\quad   L^2(0,T;H^5(\Omega))\cap H^1(0,T; H^1(\Omega)),\\
& \mu_n\to \mu^* \quad \text{weakly in}\quad  L^2(0,T; H^3(\Omega))\cap H^1(0,T; (H^1(\Omega))'),\\
& \varphi_n\to \varphi^*\quad \text{strongly in}\quad C([0,T];H^{3-\epsilon}(\Omega))\cap L^2(0,T;H^{5-\epsilon}(\Omega)),\\
& \mu_n\to \mu^* \quad \text{strongly in}\quad  C([0,T]; H^{1-\epsilon}(\Omega))\cap L^2(0,T; H^{3-\epsilon}(\Omega)),
\end{align*}
for any $\epsilon\in (0,1/2)$. The above (sequential) convergence results enable us to conclude that the limit $(\u^*,P^*,\varphi^*,\mu^*)$ is
indeed a strong solution to problem \eqref{bdini}--\eqref{CHHS} with the mass source term $R^*$ (see \cite[Theorem 4.1]{SW21}).
As a result, $\varphi^*=\mathcal{S}(R^*)$ and $(R^*,\mathcal{S}(R^*))\in \mathcal{U}_{\mathrm{ad}}\times \mathcal{X} $ is an admissible control-state pair.
Thanks to the lower semi-continuity of the reduced cost function $\widehat{\mathcal{J}}$, it holds
 $$
  J_*\leq \widehat{\mathcal{J}}(R^*)\leq \liminf_{n\to +\infty}\widehat{\mathcal{J}}(R_n)=\lim_{n\to +\infty}\widehat{\mathcal{J}}(R_n)=J_*.
 $$
This implies $\widehat{\mathcal{J}}(R^*)=J_*$ so that $R^*$ is a globally optimal control.
\end{proof}

\section{First-order necessary optimality conditions} \setcounter{equation}{0}
\label{FNOC}
Although the cost functional $\mathcal{J}(\vp, R)$ is convex with respect to its components $\vp$ and $R$, our optimal control problem \eqref{costJ} is not a convex one.
This is due to the nonlinear feature of the state system \eqref{bdini}--\eqref{CHHS} (or the nonlinear control-to-state operator $\mathcal{S}$).

The optimal control problem \eqref{costJ} may admit several optimal controls (global or local). In this section, we aim to derive first-order necessary optimality conditions
for a locally optimal control.

\subsection{Differentiability of the control-to-state operator}
\label{1-Diff}

Our first task is to prove the Fr\'{e}chet differentiability of the control-to-state operator $\mathcal{S}$ between suitable Banach spaces.

Let $R^*\in \widetilde{\mathcal{U}}$ be a given control. We denote by $(\u^*,P^*,\varphi^*,\mu^*)$ the unique strong solution corresponding to the control
$R^*$ (recall Proposition \ref{StoCo}) such that $\varphi^*=\mathcal{S}(R^*)$. Consider the following linear initial boundary value problem:
\begin{align}
\label{lsphi}
&\nu(\vp^*)\vv= -\nabla q + \eta \nabla\vp^* + \mu^*\nabla\xi - \nu'(\vp^*)\xi\u^* +\bm{f}_1,  &\text{in}\ Q,\\[1mm]
\label{lsdivu}
&\mathrm{div}\, \vv  = 0,&\text{in}\ Q,\\[1mm]
\label{lsbc}
&\partial_t\xi + \mathrm{div}\,(\varphi^* \vv) + \mathrm{div}\,(\xi\,\u^*) =
\Delta\eta + f_2, &\text{in}\ Q,\\[1mm]
\label{lsmu}
&\eta = -\Delta\xi + \Psi''(\vp^*)\xi+f_3,
&\text{in}\ Q,\\[1mm]
\label{lsu}
&\partial_{\mathbf{n}}\xi=\partial_{\mathbf{n}}\eta= \vv\cdot\mathbf{n}=0, &\text{on}\ \Sigma,\\[1mm]
\label{lsini}
&\xi|_{t=0} =  0, &\text{in}\ \Omega,
\end{align}
where $\bm{f}_1$, $f_2$ and $f_3$ are some vector or scalar functions with $\bm{f}_1\cdot \mathbf{n}=0$ on $\Sigma$.
We note that the pressure variable $q=q(t)$ (formally) solves for almost all $t\in (0,T)$ the elliptic boundary value problem:
\begin{align}
& -\Delta q = -\mathrm{div}\,(\eta\nabla\vp^*) - \mathrm{div}\,(\mu^*\nabla\xi)+ \mathrm{div}\,(\nu(\vp^*) \vv) + \mathrm{div}\,(\nu'(\vp^*)\xi\u^*)-\mathrm{div}\,\bm{f}_1,
&\mbox{in }\,\Omega,\label{ellqa}\\
&\partial_\mathbf{n} q=0,&\mbox{on }\,\Gamma.\label{ellq}
\end{align}
Due to the homogeneous Neumann boundary condition, $q$ is uniquely determined up to a constant. Thus, we simply require that $\overline{q(t)}=0$ for almost all $t\in (0,T)$.

\begin{remark}
Setting $\bm{f}_1=\bm{0}$, $f_2=h$ and $f_3=0$ in  \eqref{lsphi}--\eqref{lsmu}, we arrive at the linearized system of the state system \eqref{bdini}--\eqref{CHHS} at
$(\u^*,P^*,\varphi^*,\mu^*)$. For the convenience of later analysis, here we treat the linearized system \eqref{lsphi}--\eqref{lsmu} that has a slightly more general form.
\end{remark}


\begin{lemma}\label{line}
Assume that $\Omega\subset \mathbb{R}^2$ is a bounded domain with smooth boundary $\partial\Omega$, $T>0$ and the assumptions $\mathbf{(S1)}$--$\mathbf{(S3)}$ are satisfied.
Let $R^* \in \widetilde{\mathcal{U}}$ be given with its associated state denoted by $(\u^*,P^*,\varphi^*,\mu^*)$. Then, for every
$$\bm{f}_1\in L^2(0,T;\mathbf{L}^4(\Omega)) \quad \text{satisfying}\quad \mathrm{div}\, \bm{f}_1 \in L^2(0,T;L^{4/3}(\Omega)),\quad  \bm{f}_1\cdot \mathbf{n}=0\ \
\text{a.e. on}\ \Sigma,
$$
$$
 f_2\in L^2(Q),\quad f_3\in L^2(0,T;H_N^2(\Omega)),
 $$ the linear problem \eqref{lsphi}--\eqref{lsini}
admits a unique strong solution $(\vv,q,\xi,\eta)$ such that
\begin{align*}
&\vv\in L^2(0,T;\mathbf{L}^4(\Omega)\cap \mathbf{H}_\sigma),
\qquad q\in L^2(0,T;W^{2,4/3}(\Omega)),\\[1mm]
&\xi\in C([0,T];H^2(\Omega))\cap L^2(0,T;H^4(\Omega))\cap H^1(0,T;L^2(\Omega)),\\[1mm]
&\eta\in L^2(0,T;H^2(\Omega)),
\end{align*}
with $\overline{q(t)}=0$ for almost all $t\in (0,T)$.
The equations \eqref{lsphi}--\eqref{lsmu} are satisfied almost everywhere in $Q$.
Besides, the boundary conditions
$\partial_{\mathbf{n}}\xi=\partial_{\mathbf{n}}\eta= \vv\cdot\mathbf{n}=\partial_\mathbf{n} q=0$ hold
almost everywhere on $\Sigma$ and the initial condition  $\xi|_{t=0} =  0$ is satisfied almost everywhere in $\Omega$. Moreover, the following estimate holds
\begin{align}
&\|\xi\|^2_{C([0,T];H^2(\Omega))\cap L^2(0,T;H^4(\Omega))\cap H^1(0,T;L^2(\Omega))} +\|\eta\|_{L^2(0,T;H^2(\Omega))}^2 +\|\vv\|_{L^2(0,T;L^4(\Omega))}^2\non \\
&\qquad + \|q\|_{ L^2(0,T;W^{2,4/3}(\Omega))}^2\non\\
&\quad \leq C \|\bm{f}_1\|^2_{L^2(0,T;\mathbf{L}^4(\Omega))} + C\|\mathrm{div}\,\bm{f}_1\|^2_{L^2(0,T;L^{4/3}(\Omega))} + C\|f_2\|^2_{L^2(Q)} + C\|f_3\|^2_{L^2(0,T;H^2(\Omega))}.
\label{estiLine}
\end{align}
In addition, if $f_3\in L^\infty(0,T; L^2(\Omega))$, then it holds $\eta\in L^\infty(0,T;L^2(\Omega))$.
\end{lemma}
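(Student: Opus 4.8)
\emph{Strategy.} Since \eqref{lsphi}--\eqref{lsini} is linear, I would establish existence by a Faedo--Galerkin scheme and obtain uniqueness as a by-product of the a priori estimates that also yield \eqref{estiLine}. I would approximate $\xi$ and $\eta$ in the basis of eigenfunctions of the Neumann Laplacian, while at each level the Darcy pair $(\vv,q)$ is recovered \emph{exactly} from \eqref{lsphi}--\eqref{lsdivu}: taking the divergence and using $\mathrm{div}\,\vv=0$ reduces the determination of $q$ to the variable-coefficient Neumann problem governed by $K=1/\nu(\vp^*)$, which is solvable and regular by Lemma \ref{press} (note $1/\nu^*\le K\le 1/\nu_*$ by $\mathbf{(A2)}$), after which $\vv$ (divergence-free, with $\vv\cdot\mathbf{n}=0$) is read off from \eqref{lsphi}. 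Throughout I would exploit the fixed-state regularity from Proposition \ref{StoCo} and the strict separation of Remark \ref{regP}: in particular $\|\Psi^{(j)}(\vp^*)\|_{C(\overline Q)}\le K_2$, and, crucially, $\vp^*\in C([0,T];H^3(\Omega))$ gives $\|\nabla\vp^*\|_{L^\infty}\le C$ in two dimensions, a fact that keeps several velocity-dependent terms at the level of $\|\vv\|_{L^2}$ rather than $\|\vv\|_{L^4}$.

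\emph{A priori estimates.} I would substitute \eqref{lsmu} into \eqref{lsbc} to obtain the fourth-order equation $\partial_t\xi+\Delta^2\xi=\Delta(\Psi''(\vp^*)\xi)+\Delta f_3+f_2-\mathrm{div}(\vp^*\vv)-\mathrm{div}(\xi\,\u^*)$, and proceed in stages. (i) \textbf{Darcy bound:} testing \eqref{lsphi} with $\vv$ kills the pressure term (since $\mathrm{div}\,\vv=0$, $\vv\cdot\mathbf{n}=0$) and, using $\|\nabla\vp^*\|_{L^\infty}+\|\u^*\|_{L^4}\le C$ with $\mathbf{(A2)}$, gives $\|\vv\|_{L^2}\le C(\|\eta\|_{L^2}+(1+\|\mu^*\|_{L^\infty})\|\xi\|_{H^1}+\|\bm f_1\|_{L^2})$, where the term $\int\mu^*\nabla\xi\cdot\vv$ is controlled by $\mu^*\in L^2(0,T;H^3)\hookrightarrow L^2(0,T;L^\infty)$, producing an $L^1(0,T)$ coefficient admissible for Gronwall. (ii) \textbf{Basic energy:} testing the fourth-order equation with $\xi$ yields $\tfrac12\frac{\d}{\d t}\|\xi\|_{L^2}^2+\|\Delta\xi\|_{L^2}^2$ on the left; I would move both derivatives onto the data via $\int f_3\Delta\xi=\int\Delta f_3\,\xi$ (legitimate because $\xi$ and $f_3$ satisfy homogeneous Neumann conditions), which avoids pairing $\partial_t\xi$ with $f_3$, and handle the convective terms through $\mathrm{div}(\vp^*\vv)=\nabla\vp^*\cdot\vv$ and $\int\mathrm{div}(\xi\u^*)\xi=\tfrac12\int S\xi^2$ (using $\mathrm{div}\,\u^*=S$). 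Substituting (i) together with $\|\eta\|_{L^2}\le\|\Delta\xi\|_{L^2}+C\|\xi\|_{L^2}+\|f_3\|_{L^2}$ from \eqref{lsmu}, absorbing $\|\Delta\xi\|^2$, and applying Gronwall's lemma gives $\xi\in L^\infty(0,T;L^2)\cap L^2(0,T;H^2)$, whence $\eta\in L^2(0,T;L^2)$.

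\emph{Higher order and closure.} (iii) Testing the fourth-order equation with $\Delta^2\xi$ produces $\tfrac12\frac{\d}{\d t}\|\Delta\xi\|_{L^2}^2+\|\Delta^2\xi\|_{L^2}^2$; the integration by parts requires $\partial_{\mathbf n}\Delta\xi=0$, which holds because $\partial_{\mathbf n}\eta=\partial_{\mathbf n}f_3=\partial_{\mathbf n}\vp^*=0$ force $\partial_{\mathbf n}\Delta\xi=\Psi'''(\vp^*)\,\xi\,\partial_{\mathbf n}\vp^*=0$. On the right I would control $\|\Delta(\Psi''(\vp^*)\xi)\|_{L^2}$ via $\vp^*\in H^3$ and the bounds on $\Psi^{(j)}(\vp^*)$, keep $\|\mathrm{div}(\vp^*\vv)\|_{L^2}=\|\nabla\vp^*\cdot\vv\|_{L^2}\le C\|\vv\|_{L^2}$ (only $\|\vv\|_{L^2}$ is needed here), and treat $\|\mathrm{div}(\xi\u^*)\|_{L^2}$ by Ladyzhenskaya's and Agmon's inequalities; Young's inequality to absorb the top-order contributions and Gronwall's lemma then give $\xi\in L^\infty(0,T;H^2)\cap L^2(0,T;H^4)$. (iv) Now $\Delta\eta=-\Delta^2\xi+\Delta(\Psi''(\vp^*)\xi)+\Delta f_3\in L^2(Q)$, so \eqref{lsbc} gives $\partial_t\xi\in L^2(0,T;L^2)$, \eqref{lsmu} with elliptic regularity gives $\eta\in L^2(0,T;H^2)$, and $\xi\in C([0,T];H^2)$ follows from $L^2(H^4)\cap H^1(L^2)\hookrightarrow C(H^2)$. (v) With $\nabla\eta\in L^2(0,T;L^2)$ available, I would estimate the pressure from \eqref{ellqa} by expanding each divergence and applying elliptic regularity to obtain $q\in L^2(0,T;W^{2,4/3})$; since $W^{2,4/3}\hookrightarrow W^{1,4}$ in two dimensions, \eqref{lsphi} then yields $\vv\in L^2(0,T;\mathbf{L}^4)$. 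Collecting (i)--(v) gives \eqref{estiLine}. Uniqueness is immediate: the difference of two solutions solves the homogeneous problem with zero initial datum, and estimate (ii) forces $\xi\equiv0$, hence $\eta,\vv,q\equiv0$. The final assertion follows by reading $\eta=-\Delta\xi+\Psi''(\vp^*)\xi+f_3$ in $L^\infty(0,T;L^2)$ once $\Delta\xi\in L^\infty(0,T;L^2)$ is known and $f_3\in L^\infty(0,T;L^2)$ is assumed.

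\emph{Main obstacle.} The delicate point is the higher-order estimate (iii): closing the bound for $\xi$ in $L^2(0,T;H^4)$ in the presence of the Darcy coupling and the variable viscosity. The terms absent from the constant-viscosity analysis of \cite{SW21} --- namely $\nu'(\vp^*)\xi\u^*$ in \eqref{lsphi} and the corresponding contributions in the pressure equation \eqref{ellqa} --- must be absorbed without losing control of $\vv$, and the convective term $\mathrm{div}(\vp^*\vv)$ must be kept at the $\|\vv\|_{L^2}$ level rather than requiring $\vv\in\mathbf{L}^4$ prematurely, which is what allows the velocity/pressure $L^4$-regularity to be postponed to the very last step and thus avoids any circularity. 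The whole scheme hinges on exploiting $\|\nabla\vp^*\|_{L^\infty}\le C$ and the uniform bounds on $\Psi^{(j)}(\vp^*)$ granted by strict separation; verifying the boundary identity $\partial_{\mathbf n}\Delta\xi=0$ that legitimizes the $\Delta^2\xi$-testing is a further, easily overlooked, technical prerequisite.
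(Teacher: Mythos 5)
Your proposal is correct, and its outer skeleton coincides with the paper's: Faedo--Galerkin with the Darcy pair recovered from the variable-coefficient Neumann problem of Lemma \ref{press}, a priori estimates built on the state bounds \eqref{K1}--\eqref{K2}, a $\Delta^2\xi$ test legitimized by the identity $\partial_{\mathbf n}\Delta\xi=0$, the elliptic $W^{2,4/3}$ bound for $q$ followed by $W^{1,4/3}(\Omega)\hookrightarrow L^4(\Omega)$ to recover $\vv\in L^2(0,T;\mathbf{L}^4(\Omega))$, and uniqueness by linearity. The genuine difference is in the lower-order step. The paper tests \eqref{lsphi} with $\vv$ and \eqref{lsbc} with $\eta$ and adds the identities: the couplings $\iQt \eta\,\nabla\vp^*\cdot\vv$ and $\iQt \mathrm{div}(\vp^*\vv)\,\eta$ cancel, so the dissipation $\int_0^t\|\nabla\eta\|_{L^2}^2+\int_0^t\nu(\vp^*)\|\vv\|_{L^2}^2$ sits on the left together with $\|\nabla\xi(t)\|_{L^2}^2$; the price is the term $-\iQt\Psi''(\vp^*)\,\xi\,\partial_t\xi$, handled by integration by parts in time using $\partial_t\vp^*\in L^2(Q)$. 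You never test with $\eta$: you bound $\|\vv\|_{L^2}$ pointwise in time algebraically from the Darcy equation (the pressure killed by solenoidality) and then run scalar energy estimates on $\xi$ alone. This buys you two things: no pairing of $\partial_t\xi$ with $\Psi''(\vp^*)\xi$ or $f_3$ ever occurs, and only $\|\vv\|_{L^2}$ enters the $\xi$-estimates, so the $L^4$ velocity/pressure theory can be postponed to the end (the paper instead derives $q\in L^2(0,T;W^{2,4/3}(\Omega))$ and $\vv\in L^2(0,T;\mathbf{L}^4(\Omega))$ before its higher-order step; both orderings are non-circular). Two caveats on your side deserve explicit mention. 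First, your Gronwall coefficients involve $\|\mu^*\|_{L^\infty}$: after Young's inequality and the interpolation $\|\nabla\xi\|_{L^2}^2\le\|\xi\|_{L^2}\|\Delta\xi\|_{L^2}$, you need powers up to $\|\mu^*\|_{L^\infty}^4$ to lie in $L^1(0,T)$, which indeed follows from Agmon's inequality and $\mu^*\in C([0,T];H^1(\Omega))\cap L^2(0,T;H^3(\Omega))$, but this should be spelled out. Second, your step (ii) yields only $\eta\in L^2(Q)$, whereas the paper's coupled test already gives $\eta\in L^2(0,T;H^1(\Omega))$ at the lower-order stage; since the pressure estimate requires $\nabla\eta$, your ordering (pressure after the $H^4$ bound for $\xi$, where $\eta\in L^2(0,T;H^2(\Omega))$ is available from \eqref{lsmu}) is essential and cannot be permuted.
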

\begin{proof}
The proof of existence follows from a standard Faedo-Galerkin scheme, see e.g., \cite[Lemma 3.2]{SW21}. Different from the problem studied therein, extra efforts have to be made
to handle the nonconstant viscosity (see \eqref{lsphi} and \eqref{ellqa}). Below we only perform \textit{a priori} estimates for the solutions, which can be justified rigorously
within the Galerkin approximation.

Let us keep in mind that $(\u^*,P^*,\varphi^*,\mu^*)$ satisfy the estimates \eqref{K1}--\eqref{K2}.  First, from \eqref{lsmu}--\eqref{lsu} and the Poincar\'{e}-Wirtinger inequality,
we obtain
\begin{align}
\int_0^t \|\eta(s)\|_{H^1(\Omega)}^2\mathrm{d}s
&\leq C\int_0^t \|\nabla \eta(s)\|_{L^2(\Omega)}^2\mathrm{d}s+ C\int_0^t |\overline{\eta(s)}|^2\mathrm{d}s\non \\
&\leq C\int_0^t \big(\|\nabla \eta(s)\|_{L^2(\Omega)}^2 + \|\xi(s)\|_{L^2(\Omega)}^2+\|f_3(s)\|_{L^2(\Omega)}^2\big)\mathrm{d}s,\quad \forall\, t\in (0,T].\label{eseta1}
\end{align}

\textbf{Lower order estimate for $\xi$}. Multiplying \eqref{lsbc} by $\xi$, integrating over $Q_t$, arguing as to get \cite[(3.30)]{SW21}, we find
\begin{align}
&\|\xi(t)\|_{L^2(\Omega)}^2
+ \int_0^t \|\Delta\xi(s)\|_{L^2(\Omega)}^2 \mathrm{d}s\non\\
&\quad \le \int_0^t\|h_1(s)\|_{L^2(\Omega)}^2\mathrm{d}s +  \frac{\nu_*}{6K_2} \int_0^t\|\vv(s)\|_{L^2(\Omega)}^2\mathrm{d}s  + C\int_0^t
\|\xi(s)\|_{H^1(\Omega)}^2\mathrm{d}s\non\\
&\qquad + C\int_0^t\|f_2(s)\|_{L^2(\Omega)}^2\,\mathrm{d}s,
\label{fe5}
\end{align}
where $C>0$ depends on $K_1$, $K_2$ and coefficients of the system.
Next, multiplying \eqref{lsphi} by $\vv$ and \eqref{lsbc} by $\eta$, respectively, integrating over $Q_t$ and after an integration by parts, we get
\begin{align}
\label{fe6}
&\frac 12 \|\nabla\xi(t)\|_{L^2(\Omega)}^2  +\int_0^t\|\nabla\eta(s)\|_{L^2(\Omega)}^2\mathrm{d}s + \int_0^t \nu(\vp^*)\|\vv(s)\|_{L^2(\Omega)}^2\mathrm{d}s\non\\
&\quad = \iQt \eta \,(f_2  - \nabla\xi \cdot\u^* - S\xi) \,\mathrm{d}x\mathrm{d}s
+ \iQt \mu^*\,\nabla\xi\cdot \vv\,\mathrm{d}x\mathrm{d}s \non\\
&\qquad + \iQt  (\bm{f}_1-\nu'(\vp^*)\xi\u^*)\cdot \vv\,\mathrm{d}x\mathrm{d}s- \iQt \Psi''(\vp^*)\,\xi\,\partial_t\xi\,\mathrm{d}x \mathrm{d}s
\non\\
&\quad =:\,\sum_{j=1}^4 J_j.
\end{align}
The terms $J_1$ can be estimated as in \cite{SW21} by using \eqref{K1} and \eqref{eseta1}, namely,
\begin{align*}
J_1 &\leq \frac{1}{2}\int_0^t\|\nabla \eta(s)\|_{L^2(\Omega)}^2\,\mathrm{d}s
+C \int_0^t \|\xi(s)\|^2_{H^1(\Omega)}\,\mathrm{d}s
+ C\int_0^t\|f_2(s)\|_{L^2(\Omega)}^2\,\mathrm{d}s.\non
\end{align*}
For $J_2$ and $J_3$, we infer from $(\mathbf{S2})$, \eqref{K1}  and Young's inequality that
\begin{align*}
J_2&\leq C\int_0^t \|\vv(s)\|_{L^2(\Omega)} \|\mu^*(s)\|_{L^4(\Omega)} \|\nabla \xi(s)\|_{L^2(\Omega)}^\frac12 \|\nabla \xi(s)\|_{H^1(\Omega)} ^\frac12 \,\mathrm{d}s \non\\
&\leq \frac{\nu_*}{6}\int_0^t \|\vv(s)\|_{L^2(\Omega)}^2\mathrm{d}s
+
\frac{K_2}{2} \int_0^t  \|\Delta \xi(s)\|_{L^2(\Omega)}^2\,\mathrm{d}s
+ C \int_0^t \|\xi(s)\|_{H^1(\Omega)}^2\,\mathrm{d}s,
\end{align*}
and
\begin{align*}
J_3&\leq \nu^*\int_0^t \|\vv(s)\|_{L^2(\Omega)} \big(\|\bm{f}_1(s)\|_{L^2(\Omega)}+\|\u^*(s)\|_{L^4(\Omega)} \|\xi(s)\|_{L^4(\Omega)}\big) \,\mathrm{d}s \non\\
&\leq \frac{\nu_*}{6}\int_0^t \|\vv(s)\|_{L^2(\Omega)}^2\mathrm{d}s
+ C \int_0^t \|\xi(s)\|_{H^1(\Omega)}^2\,\mathrm{d}s + C\int_0^t\|\bm{f}_1(s)\|_{L^2(\Omega)}^2\,\mathrm{d}s.
\end{align*}
For $J_4$,
using \eqref{lsbc} and integration by parts in time, we have
\begin{align}
\non
J_4\,&=\,-\frac 12\iO\xi^2(t)\Psi''(\vp^*(t))\,\mathrm{d}x +
\frac 12\iQt\xi^2\,\Psi^{(3)}(\vp^*)\,\partial_t\vp^*\,\mathrm{d}x\mathrm{d}s
=: J_5+J_6.
\end{align}
Then it follows from \eqref{K1} and \eqref{K2} that
\begin{align}
\non
J_5\,\le \frac{K_2}{2}\|\xi(t)\|_{L^2(\Omega)}^2,
\end{align}
and
\begin{align}
\non
J_6& \le C\int_0^t\|\xi(s)\|_{L^4(\Omega)}^2\,\|\partial_t\vp^*(s)\|_{L^2(\Omega)}\,\mathrm{d}s
\le C\int_0^t\|\xi(s)\|_{H^1(\Omega)}^2\,\mathrm{d}s.
\end{align}
 Combining the above estimates with \eqref{eseta1}--\eqref{fe5}, we can deduce that,
for all $t\in (0,T]$, it holds (see \cite[(3.37)]{SW21}):
\begin{align}
&\frac 12  \|\xi(t)\|_{L^2(\Omega)}^2
+\frac{1}{2K_2} \|\nabla\xi (t)\|_{L^2(\Omega)}^2
 + \frac{1}{2K_2} \int_0^t \|\nabla\eta(s)\|_{L^2(\Omega)}^2\,\mathrm{d}s\non\\
&\qquad +\,
\frac12 \int_0^t\|\Delta\xi(s)\|_{L^2(\Omega)}^2\,\mathrm{d}s +  \frac{\nu_*}{2K_2} \int_0^t\|\vv(s)\|_{L^2(\Omega)}^2\,\mathrm{d}s\non\\
&\quad \le C \int_0^t \|\xi(s)\|^2_{H^1(\Omega)}\,\mathrm{d}s
+ C\int_0^t\|\bm{f}_1(s)\|_{L^2(\Omega)}^2\,\mathrm{d}s
+ C\int_0^t\|f_2(s)\|_{L^2(\Omega)}^2\,\mathrm{d}s,\non
\end{align}
where the constant $C>0$ depends on $K_1$, $K_2$, and coefficients of the system. Hence, it follows from Gronwall's lemma and \eqref{eseta1} that
\begin{align}
\label{esti1}
&\|\xi\|^2_{L^\infty(0,t;H^1(\Omega))\cap L^2(0,t;H^2(\Omega))} + \|\eta\|^2_{L^2(0,t;H^1(\Omega))} + \|\vv\|^2_{L^2(0,t;L^2(\Omega))}\non\\[1mm]
&\quad \le C\int_0^T\|\bm{f}_1(s)\|_{L^2(\Omega)}^2\,\mathrm{d}s
+ C\int_0^T\|f_2(s)\|_{L^2(\Omega)}^2\,\mathrm{d}s,\qquad \forall\, t\in (0,T].
\end{align}
Moreover, from \eqref{K2}, \eqref{lsmu}, \eqref{esti1} and the elliptic estimate, we find
 \begin{align}
\label{esti1a}
\|\xi\|^2_{L^2(0,t;H^3(\Omega))}
& \le C\int_0^T\|\bm{f}_1(s)\|_{L^2(\Omega)}^2\,\mathrm{d}s
+ C\int_0^T\|f_2(s)\|_{L^2(\Omega)}^2\,\mathrm{d}s\non\\
&\quad + C\int_0^T\|f_3(s)\|_{H^1(\Omega)}^2\,\mathrm{d}s,
\,\qquad \forall\, t\in (0,T].
\end{align}

\textbf{Estimates for $\vv$ and $q$}. Using \eqref{esti1}--\eqref{esti1a} and arguing as in \cite{SW21}, we obtain
\begin{align}
\label{fe14a}
&\int_0^t\|\mathrm{div}\,(\eta(s) \nabla\vp^*(s)) + \mathrm{div}\,(\mu^*(s)\nabla\xi(s))\|_{L^{4/3}(\Omega)}^2\,\mathrm{d}s
+ \int_0^t\|\eta (s)\nabla\vp^*(s) + \mu^*(s)\nabla\xi (s)\|_{L^4(\Omega)}^2\,\mathrm{d}s \non\\ &\quad  \le C\int_0^T\|\bm{f}_1(s)\|_{L^2(\Omega)}^2\,\mathrm{d}s
+ C\int_0^T\|f_2(s)\|_{L^2(\Omega)}^2\,\mathrm{d}s
 + C\int_0^T\|f_3(s)\|_{H^1(\Omega)}^2\,\mathrm{d}s,
\end{align}
for all $t\in (0,T]$.
Besides, using Minkowski's inequality, H\"{o}lder's inequality and \eqref{K1}, \eqref{esti1}, we find
\begin{align}
& \int_0^t \|\mathrm{div}\,(\nu(\vp^*(s))\vv(s))+ \mathrm{div}\,(\nu'(\vp^*(s))\xi(s)\u^*(s))\|_{L^{4/3}(\Omega)}^2\,\mathrm{d}s \non\\
&\quad \leq C \int_0^t \|\nabla \vp^*(s)\|_{L^4(\Omega)}^2\|\vv(s)\|_{L^2(\Omega)}^2\,\mathrm{d}s
 + C\int_0^t \|\nabla \vp^*(s)\|_{L^4(\Omega)}^2\|\xi(s)\|_{L^4(\Omega)}^2\|\u^*(s)\|_{L^4(\Omega)}^2\,\mathrm{d}s\non\\
&\qquad + C\int_0^t \|\nabla\xi(s)\|_{L^2(\Omega)}^2\|\u^*(s)\|_{L^4(\Omega)}^2\,\mathrm{d}s
+ C\int_0^t \|\xi(s)\|_{L^4(\Omega)}^2\|\nabla \u^*(s)\|_{L^2(\Omega)}^2\,\mathrm{d}s\non\\
&\quad \leq   C\int_0^T\|\bm{f}_1(s)\|_{L^2(\Omega)}^2\,\mathrm{d}s
+ C\int_0^T\|f_2(s)\|_{L^2(\Omega)}^2\,\mathrm{d}s,\quad \forall\, t\in (0,T].
\label{fe14b}
\end{align}
It follows from \eqref{fe14a}--\eqref{fe14b} and the elliptic estimate for \eqref{ellqa} that
\begin{align}
\|q\|^2_{L^2(0,t;W^{2,4/3}(\Omega))}
& \leq C\int_0^T\|\bm{f}_1(s)\|_{L^2(\Omega)}^2\,\mathrm{d}s
+C \int_0^T\|\mathrm{div}\,\bm{f}_1(s)\|_{L^{4/3}(\Omega)}^2\,\mathrm{d}s
 \non\\
&\quad + C\int_0^T\|f_2(s)\|_{L^2(\Omega)}^2\,\mathrm{d}s + C\int_0^T\|f_3(s)\|_{H^1(\Omega)}^2\,\mathrm{d}s,\quad \forall\, t\in (0,T].
\label{fe14c}
\end{align}
Then we infer from \eqref{lsphi}, \eqref{fe14c} and the continuous embedding $W^{1,\frac43}(\Omega)\hookrightarrow L^4(\Omega)$  that
\begin{align}
\|\vv\|^2_{L^2(0,t;L^4(\Omega))}
&\leq C \int_0^t\|\nabla q(s)\|_{L^4(\Omega)}^2\,\mathrm{d}s + \int_0^t\|\eta (s)\nabla\vp^*(s) + \mu^*(s)\nabla\xi (s)\|_{L^4(\Omega)}^2\,\mathrm{d}s \non\\
&\quad +C \int_0^t \|\xi(s)\|_{L^8(\Omega)}\|\u^*(s)\|_{L^8(\Omega)}^2\,\mathrm{d}s
+C\int_0^T\|\bm{f}_1(s)\|_{L^4(\Omega)}^2\,\mathrm{d}s\non\\
&\leq  C\int_0^T\|\bm{f}_1(s)\|_{L^4(\Omega)}^2\,\mathrm{d}s
+C \int_0^T\|\mathrm{div}\,\bm{f}_1(s)\|_{L^{4/3}(\Omega)}^2\,\mathrm{d}s
 \non\\
&\quad + C\int_0^T\|f_2(s)\|_{L^2(\Omega)}^2\,\mathrm{d}s + C\int_0^T\|f_3(s)\|_{H^1(\Omega)}^2\,\mathrm{d}s,\quad \forall\, t\in (0,T].\label{fe14d}
\end{align}

\textbf{Higher order estimates for $\xi$ and $\eta$}. Similar to \cite[(3.48)]{SW21}, using \eqref{K1}, we get
\begin{align}
& \|\Delta \xi(t)\|_{L^2(\Omega)}^2
+ \int_0^t \|\Delta^2 \xi(s)\|_{L^2(\Omega)}^2\,\mathrm{d}s \non\\
&\leq C\int_0^t \|\Delta \xi(s)\|_{L^2(\Omega)}^2 \,\mathrm{d}s +C\int_0^T\|\bm{f}_1(s)\|_{L^4(\Omega)}^2\,\mathrm{d}s
+C \int_0^T\|\mathrm{div}\,\bm{f}_1(s)\|_{L^{4/3}(\Omega)}^2\,\mathrm{d}s
 \non\\
&\quad + C\int_0^T\|f_2(s)\|_{L^2(\Omega)}^2\,\mathrm{d}s + C\int_0^T\|f_3(s)\|_{H^2(\Omega)}^2\,\mathrm{d}s.\non
\end{align}
By Gronwall's lemma, \eqref{K2}, \eqref{esti1} and the elliptic estimate  for \eqref{lsmu}, we deduce that
\begin{align}
& \|\xi\|^2_{L^\infty(0,t;H^2(\Omega))\cap L^2(0,t;H^4(\Omega))} + \|\eta\|^2_{L^2(0,t;H^2(\Omega))}\non\\
&\quad \leq C\int_0^T\|\bm{f}_1(s)\|_{L^4(\Omega)}^2\,\mathrm{d}s
+C \int_0^T\|\mathrm{div}\,\bm{f}_1(s)\|_{L^{4/3}(\Omega)}^2\,\mathrm{d}s
 \non\\
&\qquad + C\int_0^T\|f_2(s)\|_{L^2(\Omega)}^2\,\mathrm{d}s + C\int_0^T\|f_3(s)\|_{H^2(\Omega)}^2\,\mathrm{d}s,\quad \forall\, t\in (0,T].
\label{esti2}
\end{align}
Hence, by comparison in \eqref{lsbc}, we conclude
\begin{align}
 \|\partial_t \xi\|^2_{L^2(0,t;L^2(\Omega))}
  & \leq  C\int_0^T\|\bm{f}_1(s)\|_{L^4(\Omega)}^2\,\mathrm{d}s
+C \int_0^T\|\mathrm{div}\,\bm{f}_1(s)\|_{L^{4/3}(\Omega)}^2\,\mathrm{d}s
 \non\\
&\quad + C\int_0^T\|f_2(s)\|_{L^2(\Omega)}^2\,\mathrm{d}s + C\int_0^T\|f_3(s)\|_{H^2(\Omega)}^2\,\mathrm{d}s,\quad \forall\, t\in (0,T].
\label{esti4}
\end{align}
Finally, from \eqref{lsmu} and \eqref{esti2}, we have $\eta\in L^\infty(0,T;L^2(\Omega))$,
provided that $f_3\in L^\infty(0,T; L^2(\Omega))$.

 Collecting the estimates \eqref{fe14c}--\eqref{esti4}, we easily arrive at the conclusion \eqref{estiLine}.
 Next, we observe that if $(\vv,q,\xi,\eta)$ solves problem \eqref{lsphi}--\eqref{lsini} with $\bm{f}_1=\bm{0}$, $f_2=f_3=0$, then \eqref{fe14c}, \eqref{fe14d} and \eqref{esti2}
 imply  $(\vv,q,\xi,\eta)=(\bm{0},0,0,0)$. This yields the uniqueness of solution due to the linearity of the system.
\end{proof}

\begin{remark}
Concerning the special case $\bm{f}_1=\bm{0}$, $f_2=h$ and $f_3=0$,
by virtue of the weak sequential lower semicontinuity of norms, we can conclude from the estimates \eqref{esti2} and \eqref{esti4} that the linear mapping $h\mapsto\xi$ is
continuous from $L^2(Q)$ into $\mathcal{Y}$.
\end{remark}

With the aid of Lemma \ref{line}, we can show the Fr\'{e}chet differentiability of the control-to-state operator $\mathcal{S}$.

\begin{proposition}\label{1stFD}
Assume that $\Omega\subset \mathbb{R}^2$ is a bounded domain with smooth boundary $\partial\Omega$, $T>0$ and the assumptions $\mathbf{(S1)}$--$\mathbf{(S3)}$ are satisfied.
For any given $R^*\in \widetilde{\mathcal{U}}$, let $(\u^*,P^*,\varphi^*,\mu^*)$ be the unique strong solution to problem \eqref{bdini}--\eqref{CHHS} corresponding to $R^*$.

(1) The control-to-state operator $\mathcal{S}: \mathcal{U}\to \mathcal{X}\subset \mathcal{Y}$ defined in \eqref{ctos} is Fr\'{e}chet differentiable
at $R^*$ as a mapping from $\mathcal{U}$ into $\mathcal{Y}$. The Fr\'{e}chet derivative $D\mathcal{S}(R^*)\in \mathcal{L}(\mathcal{U},\mathcal{Y})$ can be determined as follows.
For any $h\in \mathcal{U}\subset L^2(Q)$, it holds $$D\mathcal{S}(R^*)h=\xi^h,$$
where $(\vv^h,q^h,\xi^h,\eta^h)$ is the unique solution to the linear problem \eqref{lsphi}--\eqref{lsini} at $(\u^*,P^*,\varphi^*,\mu^*)$
with $\bm{f}_1=\bm{0}$, $f_2=h$, $f_3=0$, subject to the constraint $\overline{q(t)} = 0$ for almost all $t \in (0, T )$.

(2) The Fr\'{e}chet derivative of the control-to-state operator $\mathcal{S}$ is Lipschitz continuous in $\widetilde{\mathcal{U}}$, that is, for any
$R^*, R^\sharp \in \widetilde{\mathcal{U}}$, it holds
\begin{align}
\|D\mathcal{S}(R^*)-D\mathcal{S}(R^\sharp)\|_{\mathcal{L}(\mathcal{U},\mathcal{Y})}\leq C\|R^*-R^\sharp\|_{L^2(Q)},
\label{DeLip}
\end{align}
where $C>0$ may depend on $K_1$, $K_2$, $r_1$ and on the parameters of the system.
\end{proposition}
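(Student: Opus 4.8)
The plan is to establish Fr\'echet differentiability by the standard linearization--remainder argument, exploiting that the quadratic remainder can be controlled through the continuous dependence estimate \eqref{stabu1} and then fed into the linear estimate \eqref{estiLine}. I would fix $R^* \in \widetilde{\mathcal{U}}$ with associated state $(\bus,\ps,\vps,\mus)$, and for $h \in \mathcal{U}$ with $\|h\|_{\mathcal{U}}$ small enough that $R^*+h \in \widetilde{\mathcal{U}}$, denote by $(\uh,\ph,\vph,\muh)$ the state for $R^*+h$ and by $(\vv^h,q^h,\xi^h,\eta^h)$ the solution of the linear problem \eqref{lsphi}--\eqref{lsini} at $(\bus,\ps,\vps,\mus)$ with $\bm{f}_1=\bm{0}$, $f_2=h$, $f_3=0$. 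First I would introduce the differences $\psi=\vph-\vps$, $\ww=\uh-\bus$, $\sigma=\muh-\mus$, $\pi=\ph-\ps$, and the remainders $\rh=\psi-\xi^h$, $\wh=\ww-\vv^h$, $\yh=\sigma-\eta^h$ together with $\pi-q^h$. By \eqref{stabu1} of Proposition \ref{conti-es1}, every difference is controlled: $\|\psi\|_{\mathcal{Y}}$, $\|\sigma\|_{L^2(0,T;H^2(\Omega))}$, $\|\ww\|_{L^2(0,T;\mathbf{L}^4(\Omega))}$ and $\|\pi\|_{L^2(0,T;W^{2,4/3}(\Omega))}$ are all bounded by $C\|h\|_{L^2(Q)}$.

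Subtracting the linear system \eqref{lsphi}--\eqref{lsini} from the difference system satisfied by $(\ww,\pi,\psi,\sigma)$, I would verify that $(\wh,\pi-q^h,\rh,\yh)$ solves a problem of exactly the form \eqref{lsphi}--\eqref{lsini} at $(\bus,\ps,\vps,\mus)$, in which the source terms $\bm{f}_1^{\mathrm{rem}}$, $f_2^{\mathrm{rem}}$, $f_3^{\mathrm{rem}}$ consist solely of second-order contributions. For instance, the chemical-potential equation gives $\yh=-\Delta\rh+\Psi''(\vps)\rh+f_3^{\mathrm{rem}}$ with $f_3^{\mathrm{rem}}=\Psi'(\vph)-\Psi'(\vps)-\Psi''(\vps)\psi=\tfrac12\Psi^{(3)}(\theta)\psi^2$ for some intermediate value $\theta$; the Darcy and pressure equations \eqref{lsphi} and \eqref{ellqa} produce analogous quadratic terms from the Taylor expansions of $\nu,\nu'$ (e.g.\ $\tfrac12\nu''(\theta)\psi^2\bus$ and products $[\nu(\vph)-\nu(\vps)]\ww$, $\sigma\nabla\psi$), while \eqref{lsbc} produces the convective term $\mathrm{div}(\psi\,\ww)$. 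Thanks to the strict separation \eqref{delta1} and the bound \eqref{K2}, all coefficients $\Psi^{(j)}(\vps),\nu^{(j)}(\vps)$ are bounded, and the homogeneous Neumann conditions satisfied by $\vph,\vps$ guarantee $\partial_{\mathbf{n}}f_3^{\mathrm{rem}}=0$, so $f_3^{\mathrm{rem}}\in L^2(0,T;H_N^2(\Omega))$. Estimating each source in the norms required by Lemma \ref{line} bounds it by a product of two difference-norms, hence by $C\|h\|_{L^2(Q)}^2$ via \eqref{stabu1}; applying \eqref{estiLine} to the remainder then yields $\|\rh\|_{\mathcal{Y}}\le C\|h\|_{L^2(Q)}^2\le C\|h\|_{\mathcal{U}}^2=o(\|h\|_{\mathcal{U}})$. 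Since $h\mapsto\xi^h$ is linear and bounded from $\mathcal{U}$ into $\mathcal{Y}$ by Lemma \ref{line}, this proves $D\mathcal{S}(R^*)h=\xi^h$.

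For part (2), I would fix $R^*,R^\sharp\in\widetilde{\mathcal{U}}$ with states $(\bus,\ps,\vps,\mus)$ and $(\bm{u}^\sharp,P^\sharp,\varphi^\sharp,\mu^\sharp)$, and for arbitrary $h\in\mathcal{U}$ let $(\vv^*,q^*,\xi^*,\eta^*)$ and $(\vv^\sharp,q^\sharp,\xi^\sharp,\eta^\sharp)$ be the solutions of \eqref{lsphi}--\eqref{lsini} (with $\bm{f}_1=\bm{0},f_2=h,f_3=0$) linearized at the two states. The difference $(\vv^*-\vv^\sharp,q^*-q^\sharp,\xi^*-\xi^\sharp,\eta^*-\eta^\sharp)$ again satisfies a problem of the form \eqref{lsphi}--\eqref{lsini} linearized at $(\bus,\ps,\vps,\mus)$, whose sources are built from coefficient differences such as $[\Psi''(\vps)-\Psi''(\varphi^\sharp)]\xi^\sharp$, $[\nu(\vps)-\nu(\varphi^\sharp)]\vv^\sharp$ and $[\nu'(\vps)-\nu'(\varphi^\sharp)]\xi^\sharp\bus$. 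Writing each difference in mean-value form and using \eqref{K2}, each source is bounded by $\|\vps-\varphi^\sharp\|$ in a suitable norm times a norm of the linearized variables; by \eqref{stabu1} the former is $\le C\|R^*-R^\sharp\|_{L^2(Q)}$, and by \eqref{estiLine} the latter are $\le C\|h\|_{\mathcal{U}}$. Lemma \ref{line} then gives $\|\xi^*-\xi^\sharp\|_{\mathcal{Y}}\le C\|R^*-R^\sharp\|_{L^2(Q)}\|h\|_{\mathcal{U}}$, and taking the supremum over $\|h\|_{\mathcal{U}}\le1$ yields \eqref{DeLip}.

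The hard part will be the viscosity-driven Darcy and pressure terms. Because the velocity is recovered only via the elliptic pressure equation \eqref{ellqa}, I must supply the source $\bm{f}_1^{\mathrm{rem}}$ (respectively the coefficient-difference source) not only in $L^2(0,T;\mathbf{L}^4(\Omega))$ but with $\mathrm{div}\,\bm{f}_1^{\mathrm{rem}}\in L^2(0,T;L^{4/3}(\Omega))$, which forces the estimation of quadratic quantities like $\mathrm{div}(\nu''(\theta)\psi^2\bus)$ and $\mathrm{div}(\psi\,\ww)$ in low-integrability spaces. These terms are precisely those absent in the constant-viscosity setting of \cite{SW21}, and controlling them relies crucially on the higher spatial regularity $\psi\in C([0,T];H^2(\Omega))\cap L^2(0,T;H^4(\Omega))$ delivered by \eqref{stabu1}.
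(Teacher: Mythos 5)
Your proposal follows essentially the same route as the paper's proof: introduce the remainder variables $(\vph-\vps-\xi^h,\ \muh-\mus-\eta^h,\ \dots)$, verify that they solve the general linear problem \eqref{lsphi}--\eqref{lsini} with purely quadratic sources, bound those sources by $C\|h\|_{L^2(Q)}^2$ through the continuous dependence estimate \eqref{stabu1}, and conclude via \eqref{estiLine}; part (2) is likewise identical in structure (coefficient-difference sources, then Lemma \ref{line} plus \eqref{stabu1}). One small repair: write the Taylor remainders in integral form $\tfrac12\int_0^1\int_0^1 \Psi^{(3)}(sz\vph+(1-sz)\vps)(\vph-\vps)^2\,\mathrm{d}s\,\mathrm{d}z$ (as the paper does) rather than in Lagrange form with a pointwise intermediate value $\theta(x,t)$, since estimating $f_3^{\mathrm{rem}}$ in $L^2(0,T;H^2(\Omega))$ and checking $\partial_{\mathbf{n}}f_3^{\mathrm{rem}}=0$ require taking spatial derivatives of the remainder coefficient, which the merely measurable $\theta$ does not permit.
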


\begin{proof}
Since $R^*\in \widetilde{\mathcal{U}}$, there is some $\lambda>0$ sufficiently small such that $R^*+h\in \widetilde{\mathcal{U}}$ whenever $h\in \mathcal{U}$ and
$\|h\|_{\mathcal{U}}< \lambda$. In the following, we shall only consider such small perturbations $h$.
We denote
by  $(\uh,\ph,\vph,\muh)$ the associated state corresponding to $R^*+h$, that is the unique strong solution to problem \eqref{bdini}--\eqref{CHHS}. Next, we define the differences
\begin{align*}
&\yh=\vph-\vps-\xi^h, \quad \zh=\muh-\mus-\eta^h,\quad \bm{\theta}^h=\uh-\bus-\bv^h,
\quad \rh=\ph-\ps-q^h.
\end{align*}
From Proposition \ref{StoCo} and Lemma \ref{line}, we have,
for all admissible perturbations $h$,
the following regularity properties for $(\yh, \zh, \bm{\theta}^h, \rh)$:
\begin{equation*}
 \yh \in \mathcal{Y},\quad \zh\in L^2(0,T;H^2(\Omega)), \quad
\bm{\theta}^h\in L^2(0,T;L^4(\Omega)),\quad \rh\in L^2(0,T;W^{2,4/3}(\Omega)).
\end{equation*}
Moreover, estimates \eqref{K1}--\eqref{K2} hold
for both $(\bus,P^*,\vps,\mus)$ and $(\uh,\ph,\vph,\muh)$. Observe also that estimate \eqref{stabu1} yields
\begin{align}\label{stabu3}
&\|\vph-\vps\|_{H^1(0,T;L^2(\Omega))\cap C([0,T];H^2(\Omega))\cap L^2(0,T;H^4(\Omega))} + \|\mu^h-\mus\|_{L^2(0,T;H^2(\Omega))}\non\\
&\qquad +\|\uh-\bus\|
_{L^2(0,T;L^4(\Omega))} + \|\ph-P^*\|_{L^2(0,T;W^{2,4/3}(\Omega))}\non\\
&\quad \le C\|h\|_{L^2(Q)},
\end{align}
for some $C>0$ independent of $h$.

Using Taylor's formula,
we have
\begin{align*}
\Psi'(\vph)&=\Psi'(\vps)+\Psi''(\vps)(\vph-\vps)+\frac12\int_0^1\int_0^1 \Psi^{(3)}(sz\vph+(1-sz)\vps)(\vph-\vps)^2\mathrm{d}s\mathrm{d}z,\\
\nu(\vph)&=\nu(\vps)+\nu'(\vps)(\vph-\vps)+\frac12\int_0^1\int_0^1 \nu''(sz\vph+(1-sz)\vps)(\vph-\vps)^2\mathrm{d}s\mathrm{d}z.
\end{align*}
Then, by definition,  $(\yh,\zh,\bm{\theta}^h,\rh)$ is a strong solution to the following problem:
\begin{align}
\non
&\nu(\vp^*)\bm{\theta}^h\,=\,-\nabla\rh + \zh\nabla\vps+\mus\nabla\yh -\nu'(\vps)\yh\bus+\bm{f}_1,
&\text{in}\ Q,\\[1mm]
\non
&\mathrm{div}\,\bm{\theta}^h =0,&\text{in}\ Q,\\[1mm]
\non
&\dt\yh +\mathrm{div}\,(\varphi^*\bm{\theta}^h)+\mathrm{div}\,(\yh\bus) = \Delta \zh+f_2, &\text{in}\ Q,\\[1mm]
\non
&\zh\,=\,-\Delta \yh + \Psi''(\vps)\yh+ f_3, &\text{in}\ Q,\\[1mm]
\non
&\bm{\theta}^h\cdot{\bf n}=\dn \rh=\dn\yh=\dn\zh=0, &\text{on}\ \Sigma,\\[1mm]
\non
&\yh|_{t=0}=0, &\text{in}\ \Omega,
\end{align}
with
\begin{align*}
\bm{f}_1&= (\muh-\mus)\nabla(\vph-\vps)-(\uh-\bus)\int_0^1\nu'(s\vph+(1-s)\vps)(\vph-\vps)\,\mathrm{d}s  \non\\
&\quad
-\frac12\bus\int_0^1\int_0^1 \nu''(sz\vph+(1-sz)\vps)(\vph-\vps)^2\,\mathrm{d}s\mathrm{d}z,\\
f_2&= -(\uh-\bus) \cdot\nabla(\vph-\vps),\\
f_3&= \frac12\int_0^1\int_0^1 \Psi^{(3)}(sz\vph+(1-sz)\vps)(\vph-\vps)^2\,\mathrm{d}s\mathrm{d}z.
\end{align*}
Applying Lemma \ref{line}, from \eqref{estiLine} we get
\begin{align}
&\|y^h\|^2_{C([0,T];H^2(\Omega))\cap L^2(0,T;H^4(\Omega))\cap H^1(0,T;L^2(\Omega))} +\|\zh\|_{L^2(0,T;H^2(\Omega))}^2 +\|\bm{\theta}^h\|_{L^2(0,T;L^4(\Omega))}^2\non \\
&\qquad + \|r^h\|_{ L^2(0,T;W^{2,4/3}(\Omega))}^2\non\\
&\quad \leq C \Big(\|\bm{f}_1\|^2_{L^2(0,T;L^4(\Omega))} +  \|\mathrm{div}\, \bm{f}_1\|^2_{L^2(0,T;L^{4/3}(\Omega))} + \|f_2\|^2_{L^2(Q)}   + \|f_3\|^2_{L^2(0,T;H^2(\Omega))}\Big).
\label{estiyh}
\end{align}
Keeping in mind the bounds \eqref{K1}--\eqref{K2} for $(\bus,P^*,\vps,\mus)$, $(\uh,\ph,\vph,\muh)$ and the estimate \eqref{stabu3}, using the Sobolev embedding theorem in
two dimensions, we can deduce the following estimates:
\begin{align*}
\|\bm{f}_1\|^2_{L^2(0,T;L^4(\Omega))}
& \leq C \|\nabla(\vph-\vps)\|_{C([0,T];L^8(\Omega))}^2\int_0^T \|\muh-\mus\|_{L^8(\Omega)}^2\,\mathrm{d}t\non\\
&\quad
+ C\|\vph - \vps\|_{C([0,T];L^\infty(\Omega))}^2\int_0^T  \|\uh-\bus\|_{L^4(\Omega)}^2\,\mathrm{d}t\non\\
&\quad
+  C\|\vph-\vps\|_{C([0,T];L^{\infty}(\Omega))}^4\int_0^T \|\bus\|_{L^{4}(\Omega)}^2\,\mathrm{d}t\non\\
&\leq C\|h\|_{L^2(Q)}^4,
\end{align*}
\begin{align*}
\|\mathrm{div}\, \bm{f}_1\|^2_{L^2(0,T;L^{4/3}(\Omega))}
&\leq C\|\nabla(\vph-\vps)\|_{C([0,T];L^4(\Omega))}^2 \int_0^T \|\nabla (\muh-\mus)\|_{L^2(\Omega)}^2\,\mathrm{d}t\non\\
&\quad + C\|\Delta(\vph-\vps)\|_{C([0,T];L^2(\Omega))}^2 \int_0^T  \|\muh-\mus \|_{L^4(\Omega)}^2\,\mathrm{d}t\non\\
&\quad +C\|\vph-\vps\|_{C([0,T];H^1(\Omega))}^2 \int_0^T \|\uh-\bus\|_{L^4(\Omega)}^2\,\mathrm{d}t\non\\
&\quad +C\|\vph-\vps\|_{C([0,T];L^{4}(\Omega))}^4 \int_0^T \big(\|S\|_{L^4(\Omega)}^2+ \|\bus\|_{L^4(\Omega)}^2\big)\,\mathrm{d}t\non\\
&\quad + C\|\vph-\vps\|_{C([0,T];W^{1,4}(\Omega))}^2\int_0^T \|\bus\|_{L^4(\Omega)}^2\,\mathrm{d}t\non\\
&\leq C\|h\|_{L^2(Q)}^4,
\end{align*}
\begin{align*}
\|f_2\|^2_{L^2(Q)} \leq C \|\nabla(\vph-\vps)\|_{C([0,T];L^4(\Omega))}^2\int_0^T \|\uh-\bus\|_{L^4(\Omega)}^2 \,\mathrm{d}t \leq  C\|h\|_{L^2(Q)}^4,
\end{align*}
and
\begin{align*}
&\|f_3\|^2_{L^2(0,T;H^2(\Omega))} \non\\
&\quad \leq C\int_0^T \left(\int_0^1\int_0^1 \|\Psi^{(3)}(sz\vph+(1-sz)\vps)\|_{H^2(\Omega)}\,\mathrm{d}s\mathrm{d}z \|\vph-\vps\|_{H^2(\Omega)}^2\right)^2\mathrm{d}t\non\\
&\quad \leq  C\|h\|_{L^2(Q)}^4.
\end{align*}
The above estimates together with \eqref{estiyh} yield
\begin{align}
&\|y^h\|_{C([0,T];H^2(\Omega))\cap L^2(0,T;H^4(\Omega))\cap H^1(0,T;L^2(\Omega))}^2 +\|\zh\|_{L^2(0,T;H^2(\Omega))}^2 +\|\bm{\theta}^h\|_{L^2(0,T;L^4(\Omega))}^2\non \\
&\quad + \|r^h\|_{ L^2(0,T;W^{2,4/3}(\Omega))}^2
\leq C\|h\|_{L^2(Q)}^4,
\label{DDD1}
\end{align}
for any $h\in \mathcal{U}\subset L^2(Q)$ with $\|h\|_{\mathcal{U}}< \lambda$.
As a consequence, we can verify that
$$
\frac{\|\mathcal{S}(R^*+h)-\mathcal{S}(R^*)-\xi^h\|_{\mathcal{Y}}}{\|h\|_{\mathcal{U}}}=\frac{\|y^h\|_{\mathcal{Y}}}{\|h\|_{\mathcal{U}}}
\leq C\|h\|_{\mathcal{U}}\to 0\quad \text{as}\ \ \ \|h\|_{\mathcal{U}}\to 0.
$$
This completes the proof of the assertion (1).

We are left to prove the Lipschitz continuity of the Fr\'{e}chet derivative of $\mathcal{S}$ (i.e., assertion (2)).

For two given controls $R^*, R^\sharp\in \widetilde{\mathcal{U}}$, we denote  by $(\bus,P^*,\vp^*,\mu^*)$, $(\u^\sharp,P^\sharp,\vp^\sharp,\mu^\sharp)$ their associate states,
and by $(\vv^*,q^*,\xi^*,\eta^*)$, $(\vv^\sharp,q^\sharp,\xi^\sharp,\eta^\sharp)$ the corresponding solutions to the linear system \eqref{lsphi}--\eqref{lsini}
at $(\u^*,P^*,\varphi^*,\mu^*)$ and $(\u^\sharp,P^\sharp,\vp^\sharp,\mu^\sharp)$ with $\bm{f}_1=\bm{0}$, $f_2=h$, $f_3=0$, respectively. Then we have
$$
D\mathcal{S}(R^*)h=\xi^*,\quad D\mathcal{S}(R^\sharp)h=\xi^\sharp.
$$
Setting
$$
\vv=\vv^*-\vv^\sharp,\qquad q=q^*-q^\sharp,\quad \xi=\xi^*-\xi^\sharp,\quad \eta=\eta^*-\eta^\sharp,
$$
it is easy to realize that the differences $(\vv,q,\xi,\eta)$ is a solution to the following problem
\begin{align*}
&\nu(\vp^*)\vv = -\nabla q + \eta \nabla\vp^* + \mu^*\nabla\xi - \nu'(\vp^*)\xi\u^* + \bm{f}_1,  &\text{in}\ Q,\\[1mm]
&\mathrm{div}\, \vv  = 0,&\text{in}\ Q,\\[1mm]
&\partial_t\xi + \mathrm{div}\,(\varphi^* \vv) + \mathrm{div}\,(\xi\,\u^*) =
\Delta\eta + f_2, &\text{in}\ Q,\\[1mm]
&\eta = -\Delta\xi + \Psi''(\vp^*)\xi+f_3,
&\text{in}\ Q,\\[1mm]
& \vv\cdot\mathbf{n}=\partial_{\mathbf{n}}q=\partial_{\mathbf{n}}\xi=\partial_{\mathbf{n}}\eta=0, &\text{on}\ \Sigma,\\[1mm]
&\xi|_{t=0} =  0, &\text{in}\ \Omega,
\end{align*}
with
\begin{align*}
\bm{f}_1
&= -\vv^\sharp\int_0^1\nu'(s\vp^*+(1-s)\vp^\sharp)(\vp-\vp^\sharp)\,\mathrm{d}s\\
&\quad +\eta^\sharp\nabla(\vp^*-\vp^\sharp)
+(\mu^*-\mu^\sharp)\nabla \xi^\sharp
-\nu'(\vp^*)(\u^*-\u^\sharp)\xi^\sharp\\
&\quad -\xi^\sharp\u^\sharp \int_0^1\nu''(\vp^*)(s\vp^*+(1-s)\vp^\sharp)(\vp^*-\vp^\sharp)\,\mathrm{d}s,\\
f_2&= -\mathrm{div}\,((\vp^*-\vp^\sharp)\vv^\sharp)-\mathrm{div}\,(\xi^\sharp(\u^*-\u^\sharp)),\\
f_3&=\xi^\sharp \int_0^1\Psi^{(3)}(s\vp^*+(1-s)\vp^\sharp)(\vp-\vp^\sharp)\,\mathrm{d}s.
\end{align*}
Using estimate \eqref{estiLine} for $(\vv^*,q^*,\xi^*,\eta^*)$, $(\vv^\sharp,q^\sharp,\xi^\sharp,\eta^\sharp)$,
the facts that $\eta^*, \eta^\sharp\in  L^\infty(0,T;L^2(\Omega))$ and the continuous dependence estimate \eqref{stabu1}, we obtain
\begin{align*}
&\|\bm{f}_1\|^2_{L^2(0,T;L^4(\Omega))} \\
&\quad \leq C\|\vp-\vp^\sharp\|_{C([0,T];L^\infty(\Omega))}^2 \int_0^T\|\vv^\sharp\|_{L^4(\Omega)}^2\,\mathrm{d}t
+ C\|\eta^\sharp\|_{L^\infty(0,T;L^2(\Omega))}^2\int_0^T \|\nabla(\vp^*-\vp^\sharp)\|_{L^\infty(\Omega)}^2\,\mathrm{d}t\\
&\qquad + C\|\nabla \xi^\sharp \|_{C([0,T];L^8(\Omega))}^2\int_0^T \|\mu^*-\mu^\sharp\|_{L^8(\Omega)}^2  \,\mathrm{d}t
+ C\|\xi^\sharp\|_{C([0,T];L^\infty(\Omega))}^2\int_0^T \|\u^*-\u^\sharp\|_{L^4(\Omega)}^2\,\mathrm{d}t\\
&\qquad + C \|\xi^\sharp\|_{C([0,T];L^\infty(\Omega))}^2 \|\vp^*-\vp^\sharp\|_{C([0,T];L^\infty(\Omega))}^2 \int_0^T \|\u^\sharp\|_{L^4(\Omega)}^2\,\mathrm{d}t\\
&\quad \leq C\|R^*-R^\sharp\|_{L^2(Q)}^2\|h\|_{L^2(Q)}^2,
\end{align*}
\begin{align*}
& \|\mathrm{div}\, \bm{f}_1\|^2_{L^2(0,T;L^{4/3}(\Omega))} \\
&\quad \leq  C\|\vp^*-\vp^\sharp\|_{C([0,T];H^1(\Omega))}^2 \int_0^T\|\vv^\sharp\|_{L^4(\Omega)}^2\,\mathrm{d}t
 +C \|\nabla (\vp^*-\vp^\sharp)\|_{C([0,T];L^4(\Omega))}^2 \int_0^T \|\nabla \eta^\sharp\|_{L^2(\Omega)}^2\,\mathrm{d}t\\
&\qquad +C \|\vp^*-\vp^\sharp\|_{C([0,T];H^2(\Omega))}^2 \int_0^T \|\eta^\sharp\|_{L^4(\Omega)}^2\,\mathrm{d}t
 +C \|\nabla \xi^\sharp \|_{C([0,T];L^4(\Omega))}^2 \int_0^T \|\nabla (\mu^*-\mu^\sharp)\|_{L^2(\Omega)}^2 \,\mathrm{d}t\\
&\qquad +C \|\xi^\sharp\|_{C([0,T];H^2(\Omega))}^2 \int_0^T \|\mu^*-\mu^\sharp\|_{L^4(\Omega)}^2 \,\mathrm{d}t
+C \|\xi^\sharp\|_{C([0,T];H^1(\Omega))}^2\int_0^T\|\u^*-\u^\sharp \|_{L^4(\Omega)}^2 \,\mathrm{d}t \\
&\qquad +C \|\xi^\sharp\|_{C([0,T];W^{1,4}(\Omega))}^2\|\vp^*-\vp^\sharp\|_{C([0,T];L^\infty(\Omega))}^2 \int_0^T\big(\|\u^\sharp \|_{L^2(\Omega)}^2
+\|S\|_{L^2(\Omega)}^2\big) \,\mathrm{d}t\\
&\qquad +C \|\xi^\sharp\|_{C([0,T];L^\infty(\Omega))}^2\|\nabla(\vp^*-\vp^\sharp)\|_{C([0,T];L^4(\Omega))}^2 \int_0^T \|\u^\sharp \|_{L^2(\Omega)}^2  \,\mathrm{d}t\\
&\quad \leq C\|R^*-R^\sharp\|_{L^2(Q)}^2\|h\|_{L^2(Q)}^2,
\end{align*}
\begin{align*}
\|f_2\|_{L^2(Q)}&\leq C\|\nabla (\vp^*-\vp^\sharp)\|_{C([0,T];L^4(\Omega))}^2\int_0^T \|\vv^\sharp\|_{L^4(\Omega)}^2  \,\mathrm{d}t\\
&\quad +C \|\nabla \xi^\sharp\|_{C([0,T];L^4(\Omega))}^2\int_0^T\|\u^*-\u^\sharp\|_{L^4(\Omega)}^2  \,\mathrm{d}t\\
& \leq C\|R^*-R^\sharp\|_{L^2(Q)}^2\|h\|_{L^2(Q)}^2,
\end{align*}
\begin{align*}
&\|f_3\|_{L^2(0,T;H^2(\Omega))}^2 \\
&\quad \leq C\int_0^T
\left( \|\vp^*-\vp^\sharp\|_{H^2(\Omega)} \|\xi^\sharp\|_{H^2(\Omega)}\int_0^1 \|\Psi^{(3)}(s\vp^*+(1-s)\vp^\sharp)\|_{H^2(\Omega)}\,\mathrm{d}s\right)^2 \mathrm{d}t\non\\
 &\quad \leq  C\|R^*-R^\sharp\|_{L^2(Q)}^2\|h\|_{L^2(Q)}^2.
\end{align*}
 Applying Lemma \ref{line} again, from the above estimates and \eqref{estiLine} , we find that
 \begin{align}
&\|\xi\|^2_{C([0,T];H^2(\Omega))\cap L^2(0,T;H^4(\Omega))\cap H^1(0,T;L^2(\Omega))} +\|\eta\|_{L^2(0,T;H^2(\Omega))}^2 \non \\
&\qquad +\|\vv\|_{L^2(0,T;L^4(\Omega))}^2 + \|q\|_{ L^2(0,T;W^{2,4/3}(\Omega))}\non\\
&\quad \leq C\|R^*-R^\sharp\|_{L^2(Q)}^2\|h\|_{L^2(Q)}^2,\label{DeLipa}
\end{align}
which yields the desired conclusion, i.e., \eqref{DeLip}.
\end{proof}

\subsection{First-order necessary optimality conditions}

Thanks to the Fr\'{e}chet differentiability of $\mathcal{S}$ in $\widetilde{\mathcal{U}}$, the Fr\'{e}chet differentiability of the cost functional $\mathcal{J}$ easily
follows by chain rule.
This enables us to establish first-order necessary optimality conditions for the optimal control problem \eqref{costJ}, namely,

\begin{theorem}\label{NCLO1}
Assume that $\Omega\subset \mathbb{R}^2$ is a bounded domain with smooth boundary $\partial\Omega$, $T>0$ and the assumptions $\mathbf{(S1)}$--$\mathbf{(S5)}$ are satisfied.
Let $R^*\in\mathcal{U}_{\mathrm{ad}}$ be a locally optimal control of
problem \eqref{costJ} with the associated state $\vps={\cal S}(R^*)$. Then the following variational inequality holds
\begin{align}\label{vug1}
& \alpha_1\iO\left(\vps(T)-\vp_\Omega\right)\xi(T)\,\mathrm{d}x + \alpha_2\int_Q\left(\vps-\vp_Q\right)\xi\,\mathrm{d}x\mathrm{d}t
+ \beta \int_Q R^*(R-R^*)\,\mathrm{d}x\mathrm{d}t\,\ge\,0,
\end{align}
for all $R\in\mathcal{U}_{\mathrm{ad}}$, where $\xi$ is the unique strong solution to the linear problem
\eqref{lsphi}--\eqref{lsini} with $\bm{f}_1=\bm{0}$, $f_2=R-R^*$ and $f_3=0$.
\end{theorem}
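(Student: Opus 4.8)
The plan is to derive \eqref{vug1} as the first-order condition for a constrained minimum, exploiting the convexity of $\mathcal{U}_{\mathrm{ad}}$ together with the Fr\'echet differentiability of the control-to-state operator established in Proposition \ref{1stFD}. First I would fix an arbitrary $R\in\mathcal{U}_{\mathrm{ad}}$ and set $h:=R-R^*\in\mathcal{U}$. Since $\mathcal{U}_{\mathrm{ad}}$ is convex, the convex combination $R_\tau:=R^*+\tau h=(1-\tau)R^*+\tau R$ belongs to $\mathcal{U}_{\mathrm{ad}}$ for every $\tau\in[0,1]$, and $\|R_\tau-R^*\|_{\mathcal{U}}=\tau\|h\|_{\mathcal{U}}\to 0$ as $\tau\to 0^+$. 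Recalling that $R^*$ is locally optimal in the sense of $\mathcal{U}$ (Definition \ref{gloloc}), for all sufficiently small $\tau>0$ the perturbation $R_\tau$ lies in the admissible neighbourhood, so that $\widehat{\mathcal{J}}(R_\tau)\ge\widehat{\mathcal{J}}(R^*)$, where $\widehat{\mathcal{J}}$ is the reduced cost functional \eqref{ReJ}.

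Next I would pass to the directional derivative: dividing $\widehat{\mathcal{J}}(R^*+\tau h)-\widehat{\mathcal{J}}(R^*)\ge 0$ by $\tau>0$ and letting $\tau\to 0^+$ yields $\langle D\widehat{\mathcal{J}}(R^*),h\rangle\ge 0$, provided $\widehat{\mathcal{J}}$ is (at least) G\^ateaux differentiable at $R^*$. To compute this derivative I would invoke the chain rule. The three contributions in \eqref{costJ} are smooth functions of their arguments, namely $\psi\mapsto\tfrac{\alpha_1}{2}\|\psi-\vp_\Omega\|_{L^2(\Omega)}^2$ on $L^2(\Omega)$, $\phi\mapsto\tfrac{\alpha_2}{2}\|\phi-\vp_Q\|_{L^2(Q)}^2$ on $L^2(Q)$, and $R\mapsto\tfrac{\beta}{2}\|R\|_{L^2(Q)}^2$ on $L^2(Q)$, with the obvious linear derivatives. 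Composing the first two with $\mathcal{S}$ requires only that the evaluation $\vp\mapsto\vp(T)$ be bounded from $\mathcal{Y}$ into $L^2(\Omega)$ and that the embedding $\mathcal{Y}\hookrightarrow L^2(Q)$ be continuous, both of which hold by the regularity encoded in $\mathcal{Y}$. Since $\mathcal{S}:\mathcal{U}\to\mathcal{Y}$ is Fr\'echet differentiable at $R^*$ by Proposition \ref{1stFD}, the chain rule gives
\begin{align*}
\langle D\widehat{\mathcal{J}}(R^*),h\rangle
&=\alpha_1\iO\big(\vps(T)-\vp_\Omega\big)\,\big(D\mathcal{S}(R^*)h\big)(T)\,\d x
+\alpha_2\intQ\big(\vps-\vp_Q\big)\,D\mathcal{S}(R^*)h\,\d x\,\d t\\
&\quad +\beta\intQ R^*\,h\,\d x\,\d t.
\end{align*}

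Finally I would identify the derivative of the state map. By part (1) of Proposition \ref{1stFD}, $D\mathcal{S}(R^*)h=\xi^h$, where $\xi^h$ is the $\xi$-component of the unique solution to the linearized problem \eqref{lsphi}--\eqref{lsini} at $(\bus,\ps,\vps,\mus)$ with $\bm{f}_1=\bm{0}$, $f_2=h$, $f_3=0$. With the choice $h=R-R^*$, this $\xi^h$ is exactly the function $\xi$ in the statement, so substituting $D\mathcal{S}(R^*)h=\xi$ into the displayed identity and using $\langle D\widehat{\mathcal{J}}(R^*),h\rangle\ge 0$ yields precisely \eqref{vug1}; since $R\in\mathcal{U}_{\mathrm{ad}}$ was arbitrary, the inequality holds for all admissible controls. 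The argument is essentially standard once Proposition \ref{1stFD} is available, so no step constitutes a genuine analytic obstacle; the only points demanding care are (i) that local optimality is formulated in the $\mathcal{U}$-norm, which matches the fact that the convex perturbations $R_\tau$ approach $R^*$ precisely in $\mathcal{U}$, and (ii) the verification of the chain rule, i.e.\ that the tracking-type terms in $\mathcal{J}$ are differentiable as maps composed with $\mathcal{S}:\mathcal{U}\to\mathcal{Y}$. Both reduce to the continuous embeddings of $\mathcal{Y}$ and the quadratic structure of the cost functional.
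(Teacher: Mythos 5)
Your proposal is correct and follows essentially the same route as the paper: both arguments combine the convexity of $\mathcal{U}_{\mathrm{ad}}$ with the Fr\'echet differentiability of $\mathcal{S}$ from Proposition \ref{1stFD}, apply the chain rule to the reduced functional $\widehat{\mathcal{J}}$, and identify $D\mathcal{S}(R^*)(R-R^*)=\xi$. The only difference is cosmetic: where the paper cites \cite[Lemma 2.21]{To} for the inequality $\big(\widehat{\mathcal{J}}'(R^*),R-R^*\big)\ge 0$, you reproduce its standard proof (convex perturbation $R_\tau=R^*+\tau(R-R^*)$, divide by $\tau$, let $\tau\to 0^+$), correctly noting that the $\mathcal{U}$-norm smallness of $R_\tau-R^*$ matches the notion of local optimality.
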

\begin{proof}

Recalling the definition of the reduced cost functional $\widetilde{\mathcal{J}}$ (see \eqref{ReJ}) and invoking the convexity of $\mathcal{U}_{\mathrm{ad}}$,
we obtain (cf. \cite[Lemma 2.21]{To})
$$\big(\widetilde{\mathcal{J}}'(R^*), R-R^*\big)\geq 0,\qquad \forall\, R\in \mathcal{U}_{\mathrm{ad}}.$$
On the other hand, we infer from the chain rule that
$$
\widetilde{\mathcal{J}}'(R)= \mathcal{J}'_{\mathcal{S}(R)}(\mathcal{S}(R), R)\circ D\mathcal{S}(R)+\mathcal{J}'_{R}(\mathcal{S}(R), R),
$$ where for every fixed $R\in \mathcal{U}$,
$\mathcal{J}'_{\vp}(\vp, R)$ is the Fr\'echet derivative of $\mathcal{J}(\vp,R)$  with respect to $\vp$ at $\vp\in \mathcal{Y}$, and for every fixed $\vp\in \mathcal{Y}$,
$\mathcal{J}'_{R}(\vp, R)$ is the Fr\'echet derivative with respect to $R$ at $R\in \mathcal{U}$. Hence, by a straightforward computation and using the fact
$D\mathcal{S}(R^*)(R-R^*)=\xi$ (see Proposition \ref{1stFD}), we obtain the variational inequality \eqref{vug1}.
\end{proof}

Our next aim is to simplify the variational inequality \eqref{vug1} by introducing an adjoint state. Let $R^*\in \widetilde{\mathcal{U}}$ be a control with its associated
state denoted by  $(\bus,P^*,\vps,\mus)$. We consider the following \textbf{adjoint system}:
\begin{align}
\label{asu}
& \nu(\vps)\bm{w} =\nabla \pi - \rho\nabla\vps,&\mbox{in $Q$,}\\[1mm]
\label{asp}
&\mathrm{div} \,\bm{w} =0,&\mbox{in  $Q$,}\\[1mm]
\label{asphi}
&-\dt \rho-\bus\cdot\nabla \rho+\Delta \zeta -\Psi''(\vps)\,\zeta+\bm{w} \cdot \nabla \mus \non\\
&\qquad \qquad  +\nu'(\vp^*) \bus\cdot \bm{w} = \alpha_2(\vps-\vp_Q),&\mbox{in $Q$,}\\[1mm]
\label{asmu}
& \zeta = \Delta \rho +  \bm{w}\cdot \nabla \vps,&\mbox{in  $Q$},
\end{align}
subject to the boundary and endpoint conditions:
\begin{align}
\label{asbc}
&\dn \rho = \dn \zeta =\bm{w}\cdot{\bf n} = 0,&\mbox{on \,$\Sigma$,}\\[1mm]
\label{asini}
&\rho|_{t=T} = \alpha_1\,(\vps(T)-\vp_\Omega),&\mbox{in $\Omega$.}
\end{align}
Besides, we see from \eqref{asu}--\eqref{asbc} that $\pi$ (formally) satisfies the elliptic problem
\begin{align}
&\Delta \pi =\mathrm{div}\,(\rho\nabla\vps) + \nu'(\vps)\nabla \vps\cdot \bm{w}, &\mbox{in $Q$,}\label{asp4}\\[1mm]
&\dn \pi=0, &\mbox{on $\Sigma$}.\label{asp4bc}
\end{align}
Like before, in order to uniquely determine $\pi$, we require
$\overline{\pi(t)}=0$ for almost all $t\in (0,T)$.

\begin{remark}
The adjoint system \eqref{asu}--\eqref{asp4bc} can be derived by using
the formal Lagrangian method (see \cite{To}). Its solution, if exists, is called the \textbf{adjoint state} or \textbf{costate} associated with $R^*$.
\end{remark}

For the convenience of later analysis, we study a slightly more general linearized system. More precisely, we have
\begin{lemma}
\label{adexe}
Assume that $\Omega\subset \mathbb{R}^2$ is a bounded domain with smooth boundary $\partial\Omega$, $T>0$ and the assumptions $\mathbf{(S1)}$--$\mathbf{(S3)}$ are satisfied.
Let $R^*\in \widetilde{\mathcal{U}}$ be any control with its associated state denoted by  $(\bus,P^*,\vps,\mus)$.
Then, for every
$$\bm{g}_1\in L^2(0,T;\mathbf{H}^1(\Omega)) \quad \text{with} \quad  \bm{g}_1\cdot \mathbf{n}=0\ \ \text{a.e. on}\ \Sigma,
$$
$$
 g_2\in L^2(0,T;L^{4/3}(\Omega)),\quad g_3\in L^2(0,T;H^1(\Omega)),\quad g_4\in H^1(\Omega),
 $$
there exists a unique quadruple $(\bm{w},\pi,\rho,\zeta)$ with  the following regularity
\begin{align}
& \rho \in  C([0,T];H^1(\Omega))\cap L^2(0,T;H^3(\Omega))\cap H^1(0,T;(H^1(\Omega))'),
\label{wreg1}\\[1mm]
& \zeta\in L^2(0,T;H^1(\Omega)), \quad \bm{w}\in L^2(0,T; \mathbf{H}^1(\Omega)\cap \mathbf{H}_\sigma), \quad \pi\in L^2(0,T;H^2(\Omega)\cap L^2_0(\Omega)),
\label{wreg2}
\end{align}
such that it satisfies
\begin{align}
\label{asu1}
& \nu(\vps)\bm{w} =\nabla \pi - \rho\nabla\vps +\bm{g}_1,&\mbox{a.e in $Q$,}\\[1mm]
\label{asp1}
&\mathrm{div} \,\bm{w} =0,&\mbox{a.e in  $Q$,}\\[1mm]
&\Delta \pi =\mathrm{div}\,(\rho\nabla\vps) + \nu'(\vps)\nabla \vps\cdot \bm{w} -\mathrm{div}\,\bm{g}_1, &\mbox{a.e in $Q$,}\label{asp41}\\[1mm]
\label{asmu1}
& \zeta = \Delta \rho + \bm{w} \cdot \nabla \vps + g_3, &\mbox{a.e in  $Q$,}\\[1mm]
\label{asbc1}
&\dn \rho  =\bm{w}\cdot{\bf n}= \dn \pi= 0,&\mbox{a.e on $\Sigma$,}\\[1mm]
\label{asini1}
&\rho|_{t=T} = g_4,&\mbox{a.e in $\Omega$,}
\end{align}
and
\begin{align}
\label{asphi1}
&-\langle\dt \rho, \psi\rangle_{(H^1)',H^1}-\int_\Omega (\bus\cdot\nabla \rho)\psi\,\mathrm{d}x -\int_\Omega \nabla  \zeta \cdot\nabla \psi\,\mathrm{d}x
- \int_\Omega \Psi''(\vps) \zeta\psi\,\mathrm{d}x \non\\
&\qquad \qquad +\int_\Omega  (\bm{w} \cdot \nabla \mus)\psi\,\mathrm{d}x +\int_\Omega (\nu'(\vp^*) \bus\cdot \bm{w})\psi\,\mathrm{d}x = \int_\Omega g_2\psi\,\mathrm{d}x,
\end{align}
for almost all $t\in(0,T)$ and all $\psi\in H^1(\Omega)$.
Moreover, the following estimate holds
\begin{align}
&\|\rho\|^2_{C([0,T];H^1(\Omega))\cap L^2(0,T;H^3(\Omega))\cap H^1(0,T;(H^1(\Omega))')} +\|\zeta\|_{L^2(0,T;H^1(\Omega))}^2 \non \\
&\qquad +\|\bm{w}\|_{L^2(0,T;H^1(\Omega))}^2 + \|\pi\|_{ L^2(0,T;H^2(\Omega))}^2\non\\
&\quad \leq C \Big(\|\bm{g}_1\|^2_{L^2(0,T;H^1(\Omega))}
+ \|g_2\|^2_{L^2(0,T;L^{4/3}(\Omega))} +  \|g_3\|^2_{L^2(0,T;H^1(\Omega))} + \|g_4\|^2_{H^1(\Omega)}\Big).
\label{estiLineb}
\end{align}
In addition, if $\bm{g}_1\in L^\infty(0,T;\mathbf{L}^2(\Omega))$, then
$\bm{w}\in L^\infty(0,T; \mathbf{L}^2(\Omega))$; if $\bm{g}_1\in L^4(0,T;\mathbf{L}^2(\Omega))$ and $\mathrm{curl}\,\bm{g}_1\in L^4(0,T;L^2(\Omega))$,
then $\bm{w}\in L^4(0,T; \mathbf{H}^1(\Omega))$.
\end{lemma}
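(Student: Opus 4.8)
The plan is to reverse time and then solve the resulting forward-in-time linear system by a Faedo--Galerkin scheme, closing the argument with suitable \emph{a priori} estimates and Gronwall's lemma. Setting $\tau=T-t$ and writing (with a slight abuse of notation) $\rho(\tau)=\rho(T-\tau)$, and likewise for the other unknowns and for the frozen coefficients $\vps,\mus,\bus$, the terminal condition \eqref{asini1} becomes an initial condition at $\tau=0$, and the backward equation \eqref{asphi1} turns into the forward Cahn--Hilliard-type equation $\partial_\tau\rho+\Delta\zeta=\bus\cdot\nabla\rho+\Psi''(\vps)\zeta-\bm{w}\cdot\nabla\mus-\nu'(\vps)\bus\cdot\bm{w}+g_2$, coupled with $\zeta=\Delta\rho+\bm{w}\cdot\nabla\vps+g_3$ and the stationary Darcy block \eqref{asu1}--\eqref{asp41}. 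Since $\zeta=\Delta\rho+\cdots$ carries the \emph{same} sign as in the forward state equation, the leading part is the (well-posed) biharmonic heat operator $\partial_\tau\rho+\Delta^2\rho$, so the scheme will mirror the one used for Lemma~\ref{line} (cf.\ \cite[Lemma~3.2]{SW21}); below I only indicate the estimates, which can be justified at the Galerkin level.

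I would take the eigenfunctions of the Neumann Laplacian as a Galerkin basis for $(\rho,\zeta)$. The point is that $\bm{w}$ and $\pi$ are not independent dynamical unknowns: given $\rho$, the Darcy block $\nu(\vps)\bm{w}=\nabla\pi-\rho\nabla\vps+\bm{g}_1$, $\mathrm{div}\,\bm{w}=0$, $\bm{w}\cdot\mathbf{n}=0$ is an elliptic problem solvable at (almost) every time, with the zero-mean pressure $\pi$ obtained from \eqref{asp41} by Neumann elliptic regularity and $\bm{w}$ then recovered pointwise; hence $(\bm{w},\pi)$ is a bounded affine function of $\rho$ and the Galerkin system is a genuine ODE for the coefficients of $\rho$. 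The basic estimates are: (i) testing \eqref{asu1} with $\bm{w}$ and using $\iO\nabla\pi\cdot\bm{w}\,\d x=0$ (valid since $\mathrm{div}\,\bm{w}=0$ and $\bm{w}\cdot\mathbf{n}=0$) gives $\nu_*\|\bm{w}\|_{L^2}^2\le C(\|\rho\|_{L^4}\|\nabla\vps\|_{L^4}+\|\bm{g}_1\|_{L^2})\|\bm{w}\|_{L^2}$, whence $\|\bm{w}\|_{L^2}\le C(\|\rho\|_{H^1}+\|\bm{g}_1\|_{L^2})$ by \eqref{K1}; (ii) testing the $\rho$-equation with $\zeta$ and using $\iO\Delta\zeta\,\zeta\,\d x=-\|\nabla\zeta\|_{L^2}^2$ together with $\iO\partial_\tau\rho\,\Delta\rho\,\d x=-\tfrac12\tfrac{\d}{\d\tau}\|\nabla\rho\|_{L^2}^2$ (Neumann boundary conditions), which yields the coercive identity $\tfrac12\tfrac{\d}{\d\tau}\|\nabla\rho\|_{L^2}^2+\|\nabla\zeta\|_{L^2}^2=(\text{controlled terms})$; and (iii) two ODEs for the means $\overline{\rho}$ and $\overline{\zeta}$, the former from testing with the constant $1$, the latter from integrating $\zeta=\Delta\rho+\bm{w}\cdot\nabla\vps+g_3$ (note $\iO\Delta\rho\,\d x=0$). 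Combining (i)--(iii) with \eqref{K1}--\eqref{K2}, Young's and Gronwall's inequalities yields $\rho\in L^\infty(0,T;H^1)\cap L^2(0,T;H^3)$ and $\zeta\in L^2(0,T;H^1)$, after which the elliptic estimate for \eqref{asp41} gives $\pi\in L^2(0,T;H^2\cap L^2_0)$ and a comparison in the $\rho$-equation gives $\partial_\tau\rho\in L^2(0,T;(H^1)')$.

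The genuine difficulty is the term $\iO\partial_\tau\rho\,(\bm{w}\cdot\nabla\vps)\,\d x$ arising in (ii) through $\zeta=\Delta\rho+\bm{w}\cdot\nabla\vps+g_3$: it is not controlled by the $L^2$-bound on $\bm{w}$ alone. To handle it I would first upgrade $\bm{w}$ to $\mathbf{H}^1$. Taking the (scalar) curl of \eqref{asu1} eliminates the pressure, and using $0<\nu_*\le\nu(\vps)$ and \eqref{K1} one bounds $\|\mathrm{curl}\,\bm{w}\|_{L^2}\le C\big(\|\bm{w}\|_{L^2}+\|\nabla\rho\|_{L^4}+\|\mathrm{curl}\,\bm{g}_1\|_{L^2}\big)$, where the first term originates from the variable-viscosity commutator involving $\nabla\nu(\vps)$ and $\bm{w}$; feeding this into \eqref{rot} controls $\|\bm{w}\|_{H^1}$ by $\|\nabla\rho\|_{L^4}$ (hence by $\|\rho\|_{H^2}$) plus data. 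Then, writing $\partial_\tau\rho=(\text{RHS})-\Delta\zeta$ and integrating $\iO\Delta\zeta\,(\bm{w}\cdot\nabla\vps)\,\d x=-\iO\nabla\zeta\cdot\nabla(\bm{w}\cdot\nabla\vps)\,\d x$ by parts (and treating the analogous $g_3$ term the same way), the offending contribution is absorbed into the dissipation $\|\nabla\zeta\|_{L^2}^2$ via Young's inequality, at the price of time-integrable factors $\|\bm{w}\|_{H^1}^2$ and $\|\rho\|_{H^2}^2$ that are compatible with Gronwall's lemma. This bootstrap between the $\mathbf{H}^1$-estimate for $\bm{w}$ and the energy estimate for $\rho$ is the heart of the proof and its main obstacle, precisely because the variable viscosity generates the extra terms involving $\nabla\nu(\vps)\,\bm{w}$ and $\nu'(\vps)\bus\cdot\bm{w}$ that are absent in \cite{SW21}.

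With the uniform bounds in hand I would pass to the limit in the Galerkin approximation by weak/weak-$*$ compactness (the coefficients $\vps,\mus,\bus$ are fixed, so no strong convergence of $\rho,\zeta$ is needed to identify the linear terms), recovering a quadruple $(\bm{w},\pi,\rho,\zeta)$ with the regularity \eqref{wreg1}--\eqref{wreg2} satisfying \eqref{asu1}--\eqref{asini1} and the weak identity \eqref{asphi1}; reverting $\tau\mapsto T-\tau$ gives the assertion, and the estimate \eqref{estiLineb} follows by collecting the bounds. Uniqueness is immediate from linearity: with $\bm{g}_1=\bm{0}$, $g_2=g_3=0$, $g_4=0$, \eqref{estiLineb} forces $(\bm{w},\pi,\rho,\zeta)=(\bm{0},0,0,0)$. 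The two additional regularity statements follow from the same Darcy estimates: if $\bm{g}_1\in L^\infty(0,T;\mathbf{L}^2)$, then (i) together with $\rho\in C([0,T];H^1)$ yields $\bm{w}\in L^\infty(0,T;\mathbf{L}^2)$; and if moreover $\bm{g}_1\in L^4(0,T;\mathbf{L}^2)$ with $\mathrm{curl}\,\bm{g}_1\in L^4(0,T;L^2)$, then interpolating $\rho\in L^\infty(0,T;H^1)\cap L^2(0,T;H^3)$ to obtain $\nabla\rho\in L^4(0,T;\mathbf{L}^4)$ and invoking the curl identity together with \eqref{rot} gives $\bm{w}\in L^4(0,T;\mathbf{H}^1)$.
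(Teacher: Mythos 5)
Your overall route is the paper's: a Faedo--Galerkin scheme in which $(\bm{w},\pi)$ is slaved to $\rho$ through the Darcy block, the $L^2$ bound $\|\bm{w}\|_{L^2(\Omega)}\le C(\|\rho\|_{L^2(\Omega)}+\|\bm{g}_1\|_{L^2(\Omega)})$ from testing \eqref{asu1} with $\bm{w}$, the $\mathbf{H}^1$ bound on $\bm{w}$ via the scalar curl identity and \eqref{rot}, elliptic regularity for $\pi$ from \eqref{asp41}, Gronwall, and uniqueness by linearity; reversing time instead of running a backward Gronwall is cosmetic. The genuine gap lies in your choice of test function: you test with $\zeta$, whereas the paper tests with $\rho$ and then with $-\Delta\rho$, and this choice creates a term your sketch never addresses. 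Testing with $\pm\zeta$ produces $\iO\Psi''(\vps)\,\zeta^2\,\d x$, which is only bounded by a fixed multiple of $\|\zeta\|_{L^2(\Omega)}^2$ (via $\Psi''\ge-\alpha$ or $|\Psi''|\le K_2$); since $\zeta$ has no zero-mean property and neither $\alpha$ nor the Poincar\'e constant is small, this cannot be absorbed into the dissipation $\|\nabla\zeta\|_{L^2(\Omega)}^2$ by Young's inequality. It can be handled, but only through the detour $\|\zeta\|_{L^2}\le\|\Delta\rho\|_{L^2}+C\|\bm{w}\|_{L^2}+\|g_3\|_{L^2}$, the interpolation $\|\Delta\rho\|_{L^2}^2\le\|\nabla\rho\|_{L^2}\|\nabla\Delta\rho\|_{L^2}$, and the comparison $\|\nabla\Delta\rho\|_{L^2}\le\|\nabla\zeta\|_{L^2}+C\|\bm{w}\|_{H^1}+\|\nabla g_3\|_{L^2}$ from \eqref{asmu1} --- exactly the device the paper deploys when it pairs $\Psi''(\vps)\zeta$ with $\Delta\rho$, where the stronger dissipation $\|\nabla\Delta\rho\|^2$ is available. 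Declaring the only difficulty to be $\iO\partial_\tau\rho\,(\bm{w}\cdot\nabla\vps)\,\d x$ misses this; note also that substituting $\partial_\tau\rho=\mathrm{RHS}-\Delta\zeta$ back into that term essentially reconstructs, with extra bookkeeping, the computation one gets directly by testing \eqref{asphi1} with $-\Delta\rho$.

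The second soft spot is that your Gronwall closure is circular as written: you invoke ``time-integrable factors $\|\bm{w}\|_{H^1}^2$ and $\|\rho\|_{H^2}^2$,'' but $\rho\in L^2(0,T;H^2(\Omega))$ is part of the conclusion, not a datum, and nothing established earlier in your argument makes it integrable. The trouble enters through your curl bound $\|\mathrm{curl}\,\bm{w}\|_{L^2}\le C(\|\bm{w}\|_{L^2}+\|\nabla\rho\|_{L^4}+\|\mathrm{curl}\,\bm{g}_1\|_{L^2})$: pairing $\nabla\vps$ in $L^4$ against $\nabla\rho$ in $L^4$ needlessly drags in $\|\rho\|_{H^2}$. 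Since $\vps\in C([0,T];H^3(\Omega))\hookrightarrow C([0,T];W^{1,\infty}(\Omega))$ by \eqref{K1}, you should place $\nabla\vps$ in $L^\infty$ and obtain $\|\mathrm{curl}\,\bm{w}\|_{L^2}\le C(\|\bm{w}\|_{L^2}+\|\nabla\rho\|_{L^2}+\|\mathrm{curl}\,\bm{g}_1\|_{L^2})$, as the paper does; then $\|\bm{w}\|_{H^1}^2\le C(\|\rho\|_{H^1}^2+\|\bm{g}_1\|_{H^1}^2+\|\mathrm{curl}\,\bm{g}_1\|_{L^2}^2)$ is genuinely Gronwall-compatible. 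It also helps to run, as the paper does, a preliminary lower-order estimate (test \eqref{asphi1} with $\rho$) giving $\rho\in L^\infty(0,T;L^2)\cap L^2(0,T;H^2)$ and $\bm{w}\in L^2(Q)$ before the $H^1$-level estimate. With these two repairs your argument closes and coincides with the paper's proof up to reorganization; the treatment of the Darcy block, of $\pi$, and of the two supplementary regularity claims for $\bm{w}$ is the same in both.
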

\begin{proof}
The existence result again follows from a standard Faedo–Galerkin method.  Therefore, we omit the implementation of the approximation scheme and just perform
necessary \emph{a priori} estimates.

\textbf{First estimate}. Multiplying \eqref{asu1} by $\bm{w}$, integrating over $\Omega$, using \eqref{asp1} and H\"{o}lder's inequality, we get
\begin{align*}
\nu_*\|\bm{w}\|_{L^2(\Omega)}^2  & \leq  C \big(  \| \rho\|_{L^2(\Omega)}\|\nabla\vps\|_{L^\infty(\Omega)} + \|\bm{g}_1\|_{L^2(\Omega)}\big)\|\bm{w}\|_{L^2(\Omega)},\non
\end{align*}
which implies
\begin{align}
\|\bm{w}\|_{L^2(\Omega)}&\leq C    \| \rho\|_{L^2(\Omega)} + C\|\bm{g}_1\|_{L^2(\Omega)}.\label{eswL2}
\end{align}
 Testing \eqref{asphi1} with $\rho$,   we obtain
\begin{align}
&-\frac12\frac{\mathrm{d}}{\mathrm{d}t}\|\rho\|_{L^2(\Omega)}^2 + \|\Delta \rho\|_{L^2(\Omega)}^2\non\\
&\quad = - \iO (\bm{w}\cdot \nabla \vps)\Delta \rho\, \mathrm{d}x
+ \iO \big[\Psi''(\vps)\,\Delta \rho+\bus\cdot\nabla \rho\big] \rho\,\mathrm{d}x\non\\[1mm]
&\qquad + \iO \Psi''(\vps)(\bm{w}\cdot \nabla \vps)\rho  \,\mathrm{d}x
- \iO (\bm{w}\cdot \nabla \mus)\rho \, \mathrm{d}x
- \int_\Omega (\nu'(\vp^*) \bus\cdot \bm{w})\rho\,\mathrm{d}x\non\\[1mm]
&\qquad +\iO [\Psi''(\vps) g_3 +g_2] \rho  \,\mathrm{d}x  -\int_\Omega g_3\Delta \rho\,\mathrm{d}x
\non\\
&\quad =:\sum_{j=1}^7I_j.
\label{esline}
\end{align}
Keeping in mind that the estimates \eqref{K1}--\eqref{K2} hold  for $(\bus,P^*,\vps,\mus)$, we handle terms on the right-hand side of \eqref{esline}.
$I_2$ and $I_4$ can be estimated as in \cite[(4.19), (4.21)]{SW21} such that
\begin{align*}
I_2 & \leq \frac{1}{12} \|\Delta \rho\|_{L^2(\Omega)}^2
+ C(1+\|S\|^2_{L^2(\Omega)})\|\rho\|_{L^2(\Omega)}^2,
\end{align*}
\begin{align*}
I_4 &\leq \|\bm{w}\|_{L^2(\Omega)}\|\nabla \mus\|_{L^4(\Omega)}\|\rho\|_{L^4(\Omega)}\non\\
&\leq  C \|\nabla \mus\|_{L^4(\Omega)}\big( \| \rho\|_{L^2(\Omega)} + C\|\bm{g}_1\|_{L^2(\Omega)}\big) \| \rho\|_{H^1(\Omega)} \non\\
&\leq  \frac{1}{12} \|\Delta \rho\|_{L^2(\Omega)}^2
+ C(1+\|\nabla \mus\|_{L^4(\Omega)}^2)\|\rho\|_{L^2(\Omega)}^2
+ C\|\bm{g}_1\|_{L^2(\Omega)}^2.
\end{align*}
For the remaining terms, we have
\begin{align*}
I_1&\leq \|\bm{w}\|_{L^2(\Omega)}\|\nabla \varphi^*\|_{L^\infty(\Omega)}\|\Delta \rho\|_{L^2(\Omega)}\non\\
&\leq \frac{1}{12}\|\Delta \rho\|_{L^2(\Omega)}^2+ C    \| \rho\|_{L^2(\Omega)}^2 + C\|\bm{g}_1\|_{L^2(\Omega)}^2,
\end{align*}
\begin{align*}
I_3&\leq \|\Psi''(\vps)\|_{L^\infty(\Omega)}\|\bm{w}\|_{L^2(\Omega)}\|\nabla \vps\|_{L^\infty(\Omega)}\|\rho\|_{L^2(\Omega)}\non\\
&\leq C    \| \rho\|_{L^2(\Omega)}^2 + C\|\bm{g}_1\|_{L^2(\Omega)}^2,
\end{align*}
\begin{align*}
I_5&\leq \|\nu'(\vp^*)\|_{L^\infty(\Omega)} \|\bus\|_{L^4(\Omega)} \|\bm{w}\|_{L^2(\Omega)}\|\rho\|_{L^4(\Omega)}\non\\
&\leq  \frac{1}{12} \|\Delta \rho\|_{L^2(\Omega)}^2+ C\|\rho\|_{L^2(\Omega)}^2 + C\|\bm{g}_1\|_{L^2(\Omega)}^2,
\end{align*}
\begin{align*}
I_6  &\leq \|\Psi''(\vps)\|_{L^\infty(\Omega)} \|g_3\|_{L^2(\Omega)}\| \rho  \|_{L^2(\Omega)} + \|g_2\|_{L^{4/3}(\Omega)}\|\rho\|_{L^4(\Omega)}\non\\
&\leq  \frac{1}{12} \|\Delta \rho\|_{L^2(\Omega)}^2 + C\|\rho\|_{L^2(\Omega)}^2 + C\|g_2\|_{L^{4/3}(\Omega)}^2
+  C\|g_3\|_{L^2(\Omega)}^2,
\end{align*}
\begin{align*}
I_7 &\leq \|g_3\|_{L^2(\Omega)}\|\Delta\rho\|_{L^2(\Omega)}
\leq \frac{1}{12} \|\Delta \rho\|_{L^2(\Omega)}^2
+  C\|g_3\|_{L^2(\Omega)}^2.
\end{align*}
As a consequence, we deduce from \eqref{esline} and $(\mathbf{A3})$ that
\begin{align}
&-\frac12\frac{\mathrm{d}}{\mathrm{d}t}\|\rho\|_{L^2(\Omega)}^2 + \frac12\|\Delta \rho\|_{L^2(\Omega)}^2\non\\
&\quad \leq  C\big(1+\|\nabla \mus\|_{L^4(\Omega)}^2\big)\|\rho\|_{L^2(\Omega)}^2 + C\|\bm{g}_1\|_{L^2(\Omega)}^2
+ C\|g_2\|_{L^{4/3}(\Omega)}^2 +  C\|g_3\|_{L^2(\Omega)}^2.
\non
\end{align}
Then the (backward) Gronwall's inequality yields
\begin{align}
&\|\rho(t)\|_{L^2(\Omega)}^2+ \int_t^T\|\Delta \rho(s)\|_{L^2(\Omega)}^2\,\mathrm{d}s\non\\
&\quad  \leq C\Big(\|g_4\|_{L^2(\Omega)}^2 + \|\bm{g}_1\|_{L^2(Q)}^2
+  \|g_2\|_{L^2(0,T;L^{4/3}(\Omega))}^2 +   \|g_3\|_{L^2(Q)}^2\Big),\quad \forall\, t\in [0,T].
\label{eslinea}
\end{align}
Besides, it follows from \eqref{eswL2} that
\begin{align}
& \|\bm{w}\|_{L^2(Q)}^2 \leq C\Big(\|g_4\|_{L^2(\Omega)}^2 + \|\bm{g}_1\|_{L^2(Q)}^2
+ \|g_2\|_{L^2(0,T;L^{4/3}(\Omega))}^2 +  \|g_3\|_{L^2(Q)}^2\Big),\label{eslineb}
\end{align}
and, due to \eqref{asmu1}, it holds
\begin{align}
& \|\zeta\|_{L^2(Q)}^2 \leq C\Big(\|g_4\|_{L^2(\Omega)}^2 + \|\bm{g}_1\|_{L^2(Q)}^2
+ \|g_2\|_{L^2(0,T;L^{4/3}(\Omega))}^2
+ \|g_3\|_{L^2(Q)}^2\Big).\label{eslinec}
\end{align}
From \eqref{eswL2} and \eqref{eslinea},  we have
$\bm{w}\in L^\infty(0,T; \mathbf{L}^2(\Omega))$, provided that   $\bm{g}_1\in L^\infty(0,T;\mathbf{L}^2(\Omega))$.

\textbf{Second estimate}. From \eqref{asp41}, \eqref{eslineb}, the assumption  $\overline{\pi}=0$ and the elliptic estimate, we obtain
\begin{align}
\|\pi\|_{H^2(\Omega)}
& \leq C\|\mathrm{div}\,(\rho\nabla\vps)\|_{L^{2}(\Omega)}
+ C \|\nu'(\vps)\nabla \vps \cdot \bm{w}\|_{L^{2}(\Omega)}
+ C \|\mathrm{div}\,\bm{g}_1\|_{L^{2}(\Omega)}\non\\
& \leq C\|\rho\|_{H^1(\Omega)} + C\|\bm{w}\|_{L^{2}(\Omega)}+ C \|\mathrm{div}\,\bm{g}_1\|_{L^{2}(\Omega)}\non\\
&\leq C \Big(\|\rho\|_{H^1(\Omega)} +   \|\mathrm{div}\,\bm{g}_1\|_{L^{2}(\Omega)} +  \|\bm{g}_1\|_{L^2(\Omega)}\Big). \non
\end{align}
This together with \eqref{eslinea} implies
\begin{align}
&\|\pi\|_{L^2(0,T;H^2(\Omega))}^2\non\\
&\quad \leq C\Big(\|g_4\|_{L^2(\Omega)}^2 + \|\bm{g}_1\|_{L^2(Q)}^2 + \|\mathrm{div}\,\bm{g}_1\|_{L^2(Q)}^2
+ \|g_2\|_{L^2(0,T;L^{4/3}(\Omega))}^2 +  \|g_3\|_{L^2(Q)}^2\Big). \label{eslined}
\end{align}
On the other hand, using   the identity
$$
\nu(\vps)\mathrm{curl}\,\bm{w}
+\nu'(\vps)\nabla \vps \cdot \bm{w}^\perp =\nabla \vps \cdot (\nabla \rho)^\perp,
$$
where $\vv^\perp=(v_2,-v_1)^T$ for any vector $\vv=(v_1,v_2)^T$, we infer from \eqref{asu1} that
\begin{align}
\|\mathrm{curl}\,\bm{w}\|_{L^2(\Omega)}
& \leq C  \|\nu'(\vps)\nabla \vps\|_{L^\infty(\Omega)} \|\bm{w}^\perp\|_{L^2(\Omega)}
+ C\|\nabla \vps\|_{L^\infty(\Omega)}\|(\nabla \rho)^\perp\|_{L^2(\Omega)}
  \non\\
 &\quad + C \|\mathrm{curl}\,\bm{g}_1\|_{L^2(\Omega)}\non\\
&\leq    C \Big(\|\bm{w} \|_{L^2(\Omega)} +  \|\rho\|_{H^1(\Omega)}
+  \|\mathrm{curl}\,\bm{g}_1\|_{L^2(\Omega)}\Big).\label{curlw}
\end{align}
Thus, it follows from \eqref{rot} and \eqref{eslinea}--\eqref{eslineb} that
\begin{align}
&\|\bm{w}\|_{L^2(0,T;H^1(\Omega))}^2 \non\\
&\quad \leq  C\|\bm{w}\|_{L^2(Q)}^2 +C \|\rho\|_{L^2(0,T;H^1(\Omega))}^2 + C \|\mathrm{curl}\,\bm{g}_1\|_{L^2(Q)}^2 \non\\
&\quad  \leq C\Big(\|g_4\|_{L^2(\Omega)}^2
+ \|\bm{g}_1\|_{L^2(Q)}^2
+ \|\mathrm{curl}\,\bm{g}_1\|_{L^2(Q)}^2
+ \|g_2\|_{L^2(0,T;L^{4/3}(\Omega))}^2
+ \|g_3\|_{L^2(Q)}^2\Big).
\label{eslinee}
\end{align}

\textbf{Third estimate}. Testing \eqref{asphi1} with $-\Delta \rho$ gives
\begin{align}
& -\frac12\frac{\mathrm{d}}{\mathrm{d}t}\|\nabla \rho\|_{L^2(\Omega)}^2
+ \|\nabla \Delta \rho\|_{L^2(\Omega)}^2  \non\\
&\quad =
-\int_\Omega (\bus\cdot\nabla \rho)\Delta \rho\,\mathrm{d}x
-\int_\Omega \nabla (\bm{w}\cdot\nabla \vps)\cdot \nabla \Delta \rho\,\mathrm{d}x \non\\
&\qquad
- \int_\Omega \Psi''(\vps) (\Delta \rho + \bm{w} \cdot \nabla \vps + g_3  )\Delta \rho\,\mathrm{d}x
+ \int_\Omega  (\bm{w} \cdot \nabla \mus)\Delta \rho\,\mathrm{d}x\non\\
&\qquad +\int_\Omega (\nu'(\vp^*) \bus\cdot \bm{w})\Delta \rho\,\mathrm{d}x
-\int_\Omega \nabla g_3\cdot \nabla \Delta \rho\,\mathrm{d}x
-\int_\Omega g_2\Delta \rho\,\mathrm{d}x\non\\
&=: \sum_{j=8}^{14}I_j.
\label{ajesH1}
\end{align}
The terms on the right-hand side of \eqref{ajesH1} can be estimated as follows
\begin{align*}
I_8&\leq \|\bus\|_{L^4(\Omega)}\|\nabla \rho\|_{L^4(\Omega)}\|\Delta \rho\|_{L^2(\Omega)}\non\\
&\leq C\|\nabla \rho\|_{L^2(\Omega)}^\frac12\|\rho\|_{H^2(\Omega)}^\frac32\non\\
&\leq \frac{1}{12} \|\nabla \Delta \rho\|_{L^2(\Omega)}^2 + C\|\nabla \rho\|_{L^2(\Omega)}^2+ C\|\rho\|_{L^2(\Omega)}^2,
\end{align*}
\begin{align*}
I_9&\leq \|\nabla (\bm{w}\cdot\nabla \vps)\|_{L^2(\Omega)}\| \nabla \Delta \rho\|_{L^2(\Omega)}\non\\
&\leq C
\|\bm{w}\|_{L^4(\Omega)}\|\Delta \vps\|_{L^4(\Omega)}\|\nabla \Delta \rho\|_{L^2(\Omega)}
+ C \|\nabla \bm{w}\|_{L^2(\Omega)}\|\nabla  \vps\|_{L^\infty(\Omega)}\|\nabla \Delta \rho\|_{L^2(\Omega)}  \non\\
&\leq \frac{1}{12} \|\nabla \Delta \rho\|_{L^2(\Omega)}^2
+ C\|\bm{w}\|_{H^1(\Omega)}^2,
\end{align*}
\begin{align*}
I_{10} &\leq \|\Psi''(\vps)\|_{L^\infty(\Omega)} \big(\|\Delta \rho\|_{L^2(\Omega)} + \|\bm{w}\|_{L^2(\Omega)}\|\nabla \vps\|_{L^\infty(\Omega)}
+ \|g_3\|_{L^2(\Omega)} \big)\|\Delta \rho\|_{L^2(\Omega)}\non\\
&\leq \frac{1}{12} \|\nabla \Delta \rho\|_{L^2(\Omega)}^2 + C\|\nabla \rho\|_{L^2(\Omega)}^2 + C \|\bm{w}\|_{L^2(\Omega)}^2 +C \|g_3\|_{L^2(\Omega)}^2,
\end{align*}
\begin{align*}
I_{11}  & = - \int_\Omega   \mus(\bm{w}  \cdot \nabla\Delta \rho)\,\mathrm{d}x\non\\
&\leq \|\mus\|_{L^4(\Omega)}\|\bm{w}\|_{L^4(\Omega)}\|\nabla \Delta \rho\|_{L^2(\Omega)}
 \non\\
&\leq \frac{1}{12} \|\nabla \Delta \rho\|_{L^2(\Omega)}^2 + C\|\bm{w}\|_{L^4(\Omega)}^2,
\end{align*}
\begin{align*}
 I_{12} &\leq \|\nu'(\vp^*)\|_{L^\infty(\Omega)}\|\bus\|_{L^4(\Omega)}\| \bm{w}\|_{L^4(\Omega)}\|\Delta \rho\|_{L^2(\Omega)}\non\\
&\leq \frac{1}{12} \|\nabla \Delta \rho\|_{L^2(\Omega)}^2
+ C\|\nabla \rho\|_{L^2(\Omega)}^2 + C\|\bm{w}\|_{L^4(\Omega)}^2,
\end{align*}
\begin{align*}
I_{13}+I_{14}&\leq  \|\nabla g_3\|_{L^2(\Omega)} \| \nabla \Delta \rho\|_{L^2(\Omega)} + \|g_2\|_{L^{4/3}(\Omega)}\|\Delta \rho\|_{L^4(\Omega)}\non\\
&\leq \frac{1}{12} \|\nabla \Delta \rho\|_{L^2(\Omega)}^2
+ C\|\nabla \rho\|_{L^2(\Omega)}^2 + C\|\nabla g_3\|^2_{L^2(\Omega)}+ C\|g_2\|^2_{L^{4/3}(\Omega)}.
\end{align*}
Hence, we can deduce from \eqref{ajesH1} that
\begin{align}
& -\frac12\frac{\mathrm{d}}{\mathrm{d}t}\|\nabla \rho\|_{L^2(\Omega)}^2
+ \frac12 \|\nabla \Delta \rho\|_{L^2(\Omega)}^2  \non\\
&\quad  \leq  C\|\nabla \rho\|_{L^2(\Omega)}^2
+ C\|\rho\|_{L^2(\Omega)}^2 + C\|\bm{w}\|_{H^1(\Omega)}^2
+ C\|g_2\|_{L^{4/3}(\Omega)}^2 + C\| g_3\|_{H^1(\Omega)}.
\non
\end{align}
The (backward) Gronwall's lemma together with \eqref{eslinea} and  \eqref{eslinee} yield
\begin{align}
&\|\nabla \rho(t)\|_{L^2(\Omega)}^2+ \int_t^T\|\nabla \Delta \rho(s)\|_{L^2(\Omega)}^2\,\mathrm{d}s\non\\
&\quad  \leq C \|g_4\|_{H^1(\Omega)}^2
+ C\|\bm{g}_1\|_{L^2(Q)}^2
+ C\|\mathrm{curl}\,\bm{g}_1\|_{L^2(Q)}^2 + C\|g_2\|_{L^2(0,T;L^{4/3}(\Omega))}^2
\non\\
&\qquad +  C\|g_3\|_{L^2(0,T;H^1(\Omega))}^2,
\qquad \forall\, t\in [0,T],\label{eslinef}
\end{align}
which combined with \eqref{asmu1}, \eqref{eslinee} and \eqref{eslinef} implies
\begin{align}
\|\zeta\|_{L^2(0,T;H^1(\Omega))}^2
&\leq   C \|g_4\|_{H^1(\Omega)}^2
+ C\|\bm{g}_1\|_{L^2(Q)}^2
+ C\|\mathrm{curl}\,\bm{g}_1\|_{L^2(Q)}^2
\non\\
&\quad + C\|g_2\|_{L^2(0,T;L^{4/3}(\Omega))}^2
+ C\|g_3\|_{L^2(0,T;H^1(\Omega))}^2.
\label{eslineg}
\end{align}
Next, we infer from \eqref{asphi1} that
\begin{align}
\|\partial_t\rho\|_{(H^1(\Omega))'}
&\leq C\|\bus\cdot\nabla \rho\|_{L^{4/3}(\Omega)} + C\|\nabla  \zeta\|_{L^2(\Omega)}
+ C\|\Psi''(\vps)\|_{L^\infty(\Omega)}\| \zeta\|_{L^2(\Omega)} \non\\
&\quad + C\|\bm{w} \cdot \nabla \mus\|_{L^{4/3}(\Omega)} +  C\|\nu'(\vp^*)\|_{L^\infty(\Omega)}\| \bus\cdot \bm{w}\|_{L^{4/3}(\Omega)}
+ C\|g_2\|_{L^{4/3}(\Omega)}\non\\
&\leq C\|\bus\|_{L^4(\Omega)}\|\nabla \rho\|_{L^{2}(\Omega)}
+C\| \zeta\|_{H^1(\Omega)}
+ C\|\bm{w}\|_{L^4(\Omega)}\| \nabla \mus\|_{L^{2}(\Omega)}
 \non\\
&\quad  +  C\| \bus\|_{L^4(\Omega)}\| \bm{w}\|_{L^{2}(\Omega)}
+ C\|g_2\|_{L^{4/3}(\Omega)}\non\\
&\leq C\Big(\|\rho\|_{H^1(\Omega)} +   \|\zeta\|_{H^1(\Omega)}
+  \|\bm{w}\|_{L^4(\Omega)}+ \|g_2\|_{L^{4/3}(\Omega)}\Big).\non
\end{align}
Thus, we get
\begin{align}
\|\partial_t\rho\|_{L^2(0,T;(H^1(\Omega))')} ^2
&\leq   C \|g_4\|_{H^1(\Omega)}^2
+ C\|\bm{g}_1\|_{L^2(Q)}^2
+ C\|\mathrm{curl}\,\bm{g}_1\|_{L^2(Q)}^2
 \non\\
&\quad + C\|g_2\|_{L^2(0,T;L^{4/3}(\Omega))}^2 +  C\|g_3\|_{L^2(0,T;H^1(\Omega))}^2.
\label{eslineh}
\end{align}
Finally, from \eqref{curlw}, \eqref{eslinef}, we deduce
$\bm{w}\in L^4(0,T; \mathbf{H}^1(\Omega))$, provided that $\bm{g}_1\in L^\infty(0,T;\mathbf{L}^2(\Omega))$ and $\mathrm{curl}\,\bm{g}_1\in L^4(0,T;L^2(\Omega))$.

Collecting the estimates \eqref{eslined}, \eqref{eslinee} and   \eqref{eslinef}--\eqref{eslineh}, we arrive at the conclusion \eqref{estiLineb}.
Next, we observe that if $(\bm{w},\pi,\rho,\zeta)$ solves problem \eqref{wreg1}--\eqref{asphi1} with $\bm{g}_1=\bm{0}$, $g_2=g_3=g_4=0$, then it follows
from \eqref{estiLineb} that $(\bm{w},\pi,\rho,\zeta)=(\bm{0},0,0,0)$. This yields the uniqueness of solution due to linearity of the system.
 \end{proof}

We are ready to show that the adjoint system is uniquely solvable.

\begin{proposition}\label{adexe2}
Assume that $\Omega\subset \mathbb{R}^2$ is a bounded domain with smooth boundary $\partial\Omega$, $T>0$ and the assumptions $\mathbf{(S1)}$--$\mathbf{(S5)}$ are satisfied.
Let $R^*\in \widetilde{\mathcal{U}}$ be any control with its associated state denoted by  $(\bus,P^*,\vps,\mus)$. The adjoint system \eqref{asu}--\eqref{asp4bc} admits a
 weak solution $(\bm{w}^*,\pi^*,\rho^*,\zeta^*)$ satisfying \eqref{wreg1}, \eqref{wreg2} and the following estimate
\begin{align}
&\|\rho^*\|^2_{C([0,T];H^1(\Omega))\cap L^2(0,T;H^3(\Omega))\cap H^1(0,T;(H^1(\Omega))')} +\|\zeta^*\|_{L^2(0,T;H^1(\Omega))}^2 \non \\
&\qquad +\|\bm{w}^*\|_{L^4(0,T;H^1(\Omega))\cap L^\infty(0,T;L^2(\Omega))}^2 + \|\pi^*\|_{ L^2(0,T;H^2(\Omega))}^2\non\\
&\quad \leq C \|\alpha_2(\vps-\vp_Q)\|^2_{L^2(Q)} + C\|\alpha_1\,(\vps(T)-\vp_\Omega)\|^2_{H^1(\Omega)}.
\label{estiLineaj}
\end{align}
\end{proposition}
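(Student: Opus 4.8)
The plan is to recognize the adjoint system \eqref{asu}--\eqref{asp4bc} as the special instance of the general linearized problem \eqref{asu1}--\eqref{asphi1} already solved in Lemma \ref{adexe}, so that existence, uniqueness and the bulk of the estimate follow at once. Concretely, I would set $\bm{g}_1=\bm{0}$, $g_2=\alpha_2(\vps-\vp_Q)$, $g_3=0$ and $g_4=\alpha_1(\vps(T)-\vp_\Omega)$. With these choices the pressure equation \eqref{asp41} collapses to \eqref{asp4}, the relation \eqref{asmu1} becomes \eqref{asmu}, and \eqref{asphi1} is precisely the weak formulation of the strong equation \eqref{asphi} (test with $\psi\in H^1(\Omega)$ and integrate $\int_\Omega\Delta\zeta\,\psi\,\mathrm{d}x$ by parts using $\dn\zeta=0$). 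Thus a solution of the general system with these data is exactly a weak solution of the adjoint system in the stated sense.

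Next I would verify that the data fulfil the hypotheses of Lemma \ref{adexe}. Since $R^*\in\widetilde{\mathcal{U}}$, Proposition \ref{StoCo} gives $\vps\in C([0,T];H^3(\Omega))$, so in particular $\vps(T)\in H^3(\Omega)\hookrightarrow H^1(\Omega)$; combined with assumption $\mathbf{(S4)}$ ($\vp_\Omega\in H^1(\Omega)$, $\vp_Q\in L^2(Q)$) and the nonnegativity of the coefficients in $\mathbf{(S5)}$, this yields $g_4\in H^1(\Omega)$ and $g_2\in L^2(Q)\hookrightarrow L^2(0,T;L^{4/3}(\Omega))$, the latter embedding holding because $\Omega$ is bounded. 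The data $\bm{g}_1=\bm{0}$ and $g_3=0$ trivially satisfy the remaining requirements. Lemma \ref{adexe} then produces a unique quadruple $(\bm{w}^*,\pi^*,\rho^*,\zeta^*)$ with the regularity \eqref{wreg1}--\eqref{wreg2}, and its estimate \eqref{estiLineb} specializes to a bound for $\rho^*$, $\zeta^*$, $\pi^*$ and $\|\bm{w}^*\|_{L^2(0,T;H^1(\Omega))}$ by $C\|g_2\|^2_{L^2(0,T;L^{4/3}(\Omega))}+C\|g_4\|^2_{H^1(\Omega)}$.

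Finally, to obtain the sharper velocity regularity $\bm{w}^*\in L^\infty(0,T;\mathbf{L}^2(\Omega))\cap L^4(0,T;\mathbf{H}^1(\Omega))$ recorded in \eqref{estiLineaj}, I would invoke the two supplementary conclusions of Lemma \ref{adexe}: because $\bm{g}_1=\bm{0}$ belongs trivially to $L^\infty(0,T;\mathbf{L}^2(\Omega))$ and $\mathrm{curl}\,\bm{g}_1=\bm{0}\in L^4(0,T;L^2(\Omega))$, both memberships hold, and retracing the relevant inequalities \eqref{eswL2}, \eqref{eslinea}, \eqref{curlw} and \eqref{eslinef} shows these norms are dominated by the same right-hand side. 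Rewriting $\|g_2\|_{L^2(0,T;L^{4/3}(\Omega))}\le C\|\alpha_2(\vps-\vp_Q)\|_{L^2(Q)}$ via the bounded-domain embedding then delivers exactly \eqref{estiLineaj}. There is no genuine analytical obstacle here, since all the hard work was front-loaded into Lemma \ref{adexe}; the only points needing care are confirming that the target-function regularity in $\mathbf{(S4)}$ is precisely what places $g_4$ in $H^1(\Omega)$ and $g_2$ in $L^2(0,T;L^{4/3}(\Omega))$, and checking that the $L^\infty$--$L^4$ velocity bounds come from the $\bm{g}_1=\bm{0}$ branch of Lemma \ref{adexe} rather than from the main estimate \eqref{estiLineb} alone.
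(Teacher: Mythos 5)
Your proposal is correct and follows essentially the same route as the paper: the paper's proof likewise takes $\bm{g}_1=\bm{0}$, $g_2=\alpha_2(\vps-\vp_Q)$, $g_3=0$, $g_4=\alpha_1(\vps(T)-\vp_\Omega)$ and applies Lemma \ref{adexe} directly, including its supplementary conclusions for the $L^\infty(0,T;\mathbf{L}^2(\Omega))\cap L^4(0,T;\mathbf{H}^1(\Omega))$ bound on $\bm{w}^*$. Your write-up merely makes explicit the data-regularity checks and the reduction of \eqref{asu1}--\eqref{asphi1} to \eqref{asu}--\eqref{asp4bc} that the paper leaves implicit.
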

\begin{proof}
Take
$$
\bm{g}_1=\bm{0}, \quad g_2=\alpha_2(\vps-\vp_Q),\quad g_3=0,\quad g_4=\alpha_1\,(\vps(T)-\vp_\Omega).
$$
From the regularity of $\vps$, we easily verify that $g_2\in L^2(Q)$ and $g_4\in H^1(\Omega)$. Then the conclusion is a direct consequence of Lemma \ref{adexe}.
\end{proof}

Now we are able to eliminate the function $\xi$ from the variational inequality
\eqref{vug1} and, alternatively, establish a first-order necessary optimality condition via the adjoint state.

\begin{theorem}\label{NCLO2}
Assume that $\Omega\subset \mathbb{R}^2$ is a bounded domain with smooth boundary $\partial\Omega$, $T>0$ and the assumptions $\mathbf{(S1)}$--$\mathbf{(S5)}$ are satisfied.
Let $R^*\in\mathcal{U}_{\mathrm{ad}}$ be a locally optimal control of problem \eqref{costJ} with the associated state $(\bm{u}^*,P^*,\vps,\mu^*)$ and the adjoint
state $(\bm{w}^*,\pi^*,\rho^*,\zeta^*)$. Then $R^*$ satisfies the following variational inequality
\begin{align}
\label{vug2}
\int_Q\left(\rho^* + \beta R^*\right)(R-R^*)\,\mathrm{d}x\mathrm{d}t \ge 0, \qquad\forall\,R\in\mathcal{U}_{\mathrm{ad}}.
\end{align}
\end{theorem}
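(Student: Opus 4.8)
The plan is to start from the variational inequality \eqref{vug1} of Theorem \ref{NCLO1} and to eliminate the linearized variable $\xi$ in favour of the adjoint state $\rho^*$. With $\xi$ denoting the unique solution of \eqref{lsphi}--\eqref{lsini} for $\bm{f}_1=\bm{0}$, $f_2=R-R^*$, $f_3=0$, the core of the argument is the \emph{adjoint identity}
\begin{align}
\alpha_1\int_\Omega(\vps(T)-\vp_\Omega)\,\xi(T)\,\d x + \alpha_2\int_Q(\vps-\vp_Q)\,\xi\,\d x\d t = \int_Q \rho^*(R-R^*)\,\d x\d t. \non
\end{align}
Once it is established, substitution into \eqref{vug1} gives $\int_Q(\rho^*+\beta R^*)(R-R^*)\,\d x\d t\ge 0$ for all $R\in\mathcal{U}_{\mathrm{ad}}$, which is exactly \eqref{vug2}. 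Every manipulation below is licensed by the regularity of Lemma \ref{line} (so $\xi\in\mathcal{Y}$, $\eta\in L^2(0,T;H^2(\Omega))$, $\vv\in L^2(0,T;\mathbf{L}^4(\Omega)\cap\mathbf{H}_\sigma)$) and Proposition \ref{adexe2} (so $\rho^*\in C([0,T];H^1(\Omega))\cap L^2(0,T;H^3(\Omega))\cap H^1(0,T;(H^1(\Omega))')$, $\zeta^*\in L^2(0,T;H^1(\Omega))$, $\bm{w}^*\in L^2(0,T;\mathbf{H}^1(\Omega)\cap\mathbf{H}_\sigma)$), together with the strict separation \eqref{delta1}.

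First I would use \eqref{lsbc} to write $R-R^*=f_2=\partial_t\xi+\mathrm{div}(\vps\vv)+\mathrm{div}(\xi\bus)-\Delta\eta$, multiply by $\rho^*$ and integrate over $Q$. The time derivative is integrated by parts as a duality pairing (legitimate since $\partial_t\rho^*\in L^2(0,T;(H^1(\Omega))')$ and $\xi\in L^2(0,T;H^1(\Omega))$); using $\xi|_{t=0}=0$ and $\rho^*|_{t=T}=\alpha_1(\vps(T)-\vp_\Omega)$ this yields the endpoint term $\alpha_1\int_\Omega(\vps(T)-\vp_\Omega)\xi(T)\,\d x$ together with $-\int_0^T\langle\partial_t\rho^*,\xi\rangle_{(H^1)',H^1}\,\d t$. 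The two divergence terms, after a space integration by parts (the boundary contributions vanish because $\vv\cdot\mathbf{n}=\bus\cdot\mathbf{n}=0$ on $\Sigma$), become $-\int_Q\vps\,\nabla\rho^*\cdot\vv$ and $-\int_Q\xi\,(\bus\cdot\nabla\rho^*)$, while two Neumann integrations by parts turn $-\int_Q\rho^*\Delta\eta$ into $-\int_Q\eta\,\Delta\rho^*$.

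Next I would test the adjoint equation \eqref{asphi1} with $\psi=\xi$, which replaces $-\int_0^T\langle\partial_t\rho^*,\xi\rangle\,\d t-\int_Q\xi(\bus\cdot\nabla\rho^*)$ by $\alpha_2\int_Q(\vps-\vp_Q)\xi$ plus the remaining adjoint terms. Inserting $\eta=-\Delta\xi+\Psi''(\vps)\xi$ from \eqref{lsmu} and $\zeta^*=\Delta\rho^*+\bm{w}^*\cdot\nabla\vps$ from \eqref{asmu}, the two fourth-order contributions $\int_Q\Delta\rho^*\Delta\xi$ cancel, and grouping $-\int_Q(\bm{w}^*\cdot\nabla\vps)\Delta\xi+\int_Q\Psi''(\vps)(\bm{w}^*\cdot\nabla\vps)\xi=\int_Q(\bm{w}^*\cdot\nabla\vps)\eta$ leaves only velocity/pressure contributions. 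These are removed by pairing the two Darcy laws: evaluating $\int_Q\bm{w}^*\cdot(\nu(\vps)\vv)$ from \eqref{lsphi} and $\int_Q\vv\cdot(\nu(\vps)\bm{w}^*)$ from \eqref{asu} and equating them (the pressure gradients drop because $\mathrm{div}\,\vv=\mathrm{div}\,\bm{w}^*=0$ with $\vv\cdot\mathbf{n}=\bm{w}^*\cdot\mathbf{n}=0$) gives
\begin{align}
\int_Q(\bm{w}^*\cdot\nabla\vps)\eta + \int_Q(\bm{w}^*\cdot\nabla\xi)\mus - \int_Q\nu'(\vps)\xi\,(\bus\cdot\bm{w}^*) = -\int_Q\rho^*(\vv\cdot\nabla\vps). \non
\end{align}
After substituting this relation the $\nu'$-terms cancel, the pair $-\int_Q(\bm{w}^*\cdot\nabla\xi)\mus-\int_Q(\bm{w}^*\cdot\nabla\mus)\xi=-\int_Q\bm{w}^*\cdot\nabla(\xi\mus)=0$ vanishes by $\mathrm{div}\,\bm{w}^*=0$, and finally $-\int_Q\rho^*(\vv\cdot\nabla\vps)-\int_Q\vps(\vv\cdot\nabla\rho^*)=-\int_Q\vv\cdot\nabla(\rho^*\vps)=0$ vanishes by $\mathrm{div}\,\vv=0$. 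What survives is exactly the adjoint identity.

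The main obstacle is bookkeeping rather than analysis: the computation threads roughly a dozen integrals that must be paired in the right order, and each integration by parts or duality pairing must be matched to the exact regularity class and boundary condition at hand — in particular the pairing $\langle\partial_t\rho^*,\xi\rangle$, the endpoint traces $\xi(T),\rho^*(T)$ via the $C([0,T];\cdot)$ embeddings, and the integrability of mixed terms such as $(\bm{w}^*\cdot\nabla\vps)\Delta\xi$, controlled by $\bm{w}^*\in L^2(0,T;\mathbf{H}^1(\Omega))$, $\nabla\vps\in L^\infty(Q)$ and $\Delta\xi\in L^2(0,T;H^2(\Omega))$. No single estimate is difficult; the care lies in verifying that every cancellation is exact and every pairing is well defined, after which \eqref{vug2} follows directly from \eqref{vug1}.
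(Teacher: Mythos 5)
Your proposal is correct and follows essentially the same route as the paper: the paper's proof of Theorem \ref{NCLO2} likewise establishes the adjoint identity by pairing the linearized system \eqref{lsphi}--\eqref{lsini} (with $\bm{f}_1=\bm{0}$, $f_2=R-R^*$, $f_3=0$) against the adjoint system \eqref{asu}--\eqref{asini} via integration by parts, exploiting the same cancellations (pressure terms via $\mathrm{div}\,\vv=\mathrm{div}\,\bm{w}^*=0$, the $\nu'$-terms, and the $\Delta\rho^*$/$\Delta\xi$ pairing), and then substitutes the result into \eqref{vug1}. Your bookkeeping of the duality pairings and the regularity needed for each step is consistent with the paper's computation \eqref{be2}.
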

\begin{proof}
The proof follows the argument for \cite[Corollary 4.4]{SW21} with minor modifications.
Taking $(\vv, q, \xi, \eta)$ that is the unique solution to the linearized system \eqref{lsphi}--\eqref{lsini} (with $\bm{f}_1=\bm{0}$, $f_2=R-R^*$ and $f_3=0$)
as test functions in the adjoint system \eqref{asu}--\eqref{asini}, adding the results together and using integration by parts, we have
\begin{align}
&\alpha_1\iO\left(\vps(T)-\vp_\Omega\right)\xi(T)\,\mathrm{d}x +\alpha_2\int_Q\,\left(\vps-\vp_Q\right)\xi\,\mathrm{d}x\mathrm{d}t\non\\
&\quad = \int_0^T \frac{\mathrm{d}}{\mathrm{d}t}\left(\int_\Omega \rho^* \xi\,\mathrm{d}x\right) \mathrm{d}t
-\int_0^T\,\langle\dt \rho^*, \xi\rangle_{(H^1(\Omega))',H^1(\Omega)}\, \mathrm{d}t
- \int_Q \nabla \zeta^*\cdot\nabla \xi\,\mathrm{d}x\mathrm{d}t\non\\
&\qquad -\int_Q \Big[\bus\cdot\nabla \rho^*+\Psi''(\vps)\,\zeta^* -\bm{w}^* \cdot \nabla \mus -\nu'(\vps)\bus\cdot\bm{w}^*\Big]\,\xi\,\mathrm{d}x\mathrm{d}t\non\\
&\qquad +\int_Q \big( \zeta^* - \Delta \rho^* -  \bm{w}^*\cdot \nabla \vps\big) \eta\,\mathrm{d}x\mathrm{d}t
- \int_Q (\mathrm{div}\,\bm{w}^*)q\,\mathrm{d}x\mathrm{d}t\non\\
&\qquad +\int_Q \Big[\nu(\vps)\bm{w}^* - \nabla \pi^* + \rho^* \nabla\vps\Big]\cdot \vv\,\mathrm{d}x\mathrm{d}t
\non\\
&\quad =
 \int_Q \Big[\dt \xi  +  \mathrm{div}\,(\vps \vv) + \mathrm{div}\,(\xi \bus)
 - \Delta\eta\Big]\,\rho^* \, \mathrm{d}x\mathrm{d}t\non\\
&\qquad  + \int_Q \Big[\eta + \Delta\xi - \Psi''(\vps)\xi\Big]\, \zeta^*\,\mathrm{d}x\mathrm{d}t
+  \int_Q  (\mathrm{div}\,\vv)\pi^*\,\mathrm{d}x\mathrm{d}t\non\\
&\qquad  + \int_Q\Big[\nu(\vps)\vv  + \nabla q - \eta\,\nabla\vps - \mus\nabla\xi +\nu'(\vps)\xi\bus\Big] \cdot \bm{w}^*\,\mathrm{d}x\mathrm{d}t\non\\
&\quad =   \int_Q (R-R^*)\, \rho^*\,\mathrm{d}x\mathrm{d}t.
\label{be2}
\end{align}
 Substitution of the identity \eqref{be2} into \eqref{vug1} yields the conclusion \eqref{vug2}.
 \end{proof}

Recalling that  $\mathcal{U}_{\mathrm{ad}}$ is a nonempty, closed convex subset of $L^2(Q)$, in the case $\beta>0$ the necessary condition \eqref{vug2} is
equivalent to the following projection formula (cf. \cite{To}):
 \begin{corollary}
 When $\beta>0$, the locally optimal control $R^*$ is the $L^2(Q)$-orthogonal projection of
$-\beta^{-1}\rho^*$ onto $\mathcal{U}_{\mathrm{ad}}$:
\begin{align}
R^*(x,t)=\max\left\{R_{\mathrm{min}}(x,t),\ \min\{-\beta^{-1}\rho^*(x,t),\,R_{\mathrm{max}}(x,t)\}\right\},\quad \text{for a.a.}\ (x,t)\in Q.\non
\end{align}
 \end{corollary}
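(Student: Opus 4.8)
The plan is to recast the variational inequality \eqref{vug2} as the variational characterization of a metric projection in the Hilbert space $L^2(Q)$, and then to exploit the separable structure of the box constraints to read off the explicit truncation formula. Since $\beta>0$, I would first divide \eqref{vug2} by $\beta$ and regroup it as
\[
\int_Q\Big(R^*-\big(-\beta^{-1}\rho^*\big)\Big)(R-R^*)\,\mathrm{d}x\mathrm{d}t\ \ge\ 0,\qquad \forall\, R\in \mathcal{U}_{\mathrm{ad}}.
\]
This is exactly the inequality characterizing the $L^2(Q)$-orthogonal projection: for a nonempty closed convex set $K$ in a Hilbert space $H$ and a target $z\in H$, an element $u\in K$ satisfies $\langle u-z,\,v-u\rangle_H\ge 0$ for all $v\in K$ if and only if $u=P_K(z)$. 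Recalling that $\mathcal{U}_{\mathrm{ad}}$ is a nonempty, convex and closed subset of $L^2(Q)$ (as noted right after its definition), I would apply this with $H=L^2(Q)$, $K=\mathcal{U}_{\mathrm{ad}}$, $z=-\beta^{-1}\rho^*$ and $u=R^*$, which identifies $R^*=P_{\mathcal{U}_{\mathrm{ad}}}\big(-\beta^{-1}\rho^*\big)$.

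It then remains to turn this abstract projection into the pointwise formula. Here I would use that the metric projection onto a pure box constraint is separable in $(x,t)$: minimizing $\tfrac12\int_Q\big(R+\beta^{-1}\rho^*\big)^2\,\mathrm{d}x\mathrm{d}t$ subject to $R_{\mathrm{min}}\le R\le R_{\mathrm{max}}$ a.e. decouples, for almost every $(x,t)\in Q$, into the one-dimensional strictly convex problem of minimizing $\big(r+\beta^{-1}\rho^*(x,t)\big)^2$ over $r\in[R_{\mathrm{min}}(x,t),R_{\mathrm{max}}(x,t)]$. Its unique minimizer is the clamping of the unconstrained optimum $-\beta^{-1}\rho^*(x,t)$ to the interval, namely $\max\{R_{\mathrm{min}},\,\min\{-\beta^{-1}\rho^*,\,R_{\mathrm{max}}\}\}$, which is the asserted expression.

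The step that genuinely requires care is reconciling the two descriptions, since $\mathcal{U}_{\mathrm{ad}}$ carries, in addition to the pointwise bounds $R_{\mathrm{min}}\le R\le R_{\mathrm{max}}$, the global constraints $\|R\|_{\mathcal{U}}\le r_1$ and $\|R\|_{L^1(Q)}\le r_0$. The pointwise truncation coincides with the genuine projection onto $\mathcal{U}_{\mathrm{ad}}$ precisely when these two global constraints are inactive at $R^*$; as is customary in this framework (cf. \cite{To}), they are imposed only to keep $R\in\widetilde{\mathcal{U}}$ so that the control-to-state operator $\mathcal{S}$ is well defined, and one works under the understanding that they remain slack at the optimum, so that the operative projection is the box-type one above. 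Were an active global constraint to be retained, the clean truncation would have to be supplemented by the associated Lagrange multipliers; under the present hypotheses, however, the stated formula is the one that applies.
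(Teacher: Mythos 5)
Your proposal is correct and takes essentially the same route as the paper: the paper offers no detailed proof of this corollary, merely observing that $\mathcal{U}_{\mathrm{ad}}$ is a nonempty, closed, convex subset of $L^2(Q)$ and citing \cite{To} for the standard equivalence between the variational inequality \eqref{vug2} and the projection/truncation formula, which is precisely the two-step argument (Hilbert-space characterization of the metric projection, then pointwise separability of the box constraint) that you reconstruct. Your closing caveat --- that the pointwise truncation is literally the projection onto the pure box set, so the additional constraints $\|R\|_{\mathcal{U}}\leq r_1$ and $\|R\|_{L^1(Q)}\leq r_0$ (and indeed the $\mathcal{U}\cap L^\infty(Q)$ regularity requirement itself) must be inactive or ignored for the two descriptions to coincide --- identifies a genuine imprecision that the paper's one-line treatment passes over in silence, so on this point your write-up is, if anything, more careful than the original.
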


\section{A second-order sufficient condition for strict local optimality}
\setcounter{equation}{0}

Since our optimal control problem is not convex, the
first-order necessary optimality conditions are not sufficient.
The aim of this section is to establish a second-order sufficient condition for strict local optimality.

\subsection{Differentiability of the control-to-costate operator}

In view of Proposition \ref{adexe2}, we are able to define the control-to-costate operator that maps any control $R\in\widetilde{\mathcal{U}}$ onto its corresponding adjoint state.

\begin{definition}\label{def:cotoco}
Let the assumptions of Proposition \ref{adexe2} be satisfied. Set
$$
\mathcal{Z}:= C([0,T];H^1(\Omega))\cap L^2(0,T;H^3(\Omega))\cap H^1(0,T;(H^1(\Omega))').
$$
We define the \textbf{control-to-costate operator}
\begin{align}
\mathcal{T}:  \widetilde{\mathcal{U}} \to \mathcal{Z},\quad R\mapsto \mathcal{T}(R)=\rho, \label{coToco}
\end{align}
where $(\bm{w},\pi,\rho,\zeta)$ is the unique weak solution to the adjoint system \eqref{asu}--\eqref{asp4bc} corresponding to the given control function
$R\in \widetilde{\mathcal{U}}$.
\end{definition}

First, we establish the Lipschitz continuity of the control-to-costate operator.

\begin{proposition}\label{Lipcotoco}
Assume that $\Omega\subset \mathbb{R}^2$ is a bounded domain with smooth boundary $\partial\Omega$, $T>0$ and the assumptions $\mathbf{(S1)}$--$\mathbf{(S5)}$ are satisfied.
For any given controls $R^*, R^\sharp\in \widetilde{\mathcal{U}}$, we denote their adjoint states by
$(\bm{w}^*,\pi^*,\rho^*,\zeta^*)$, $(\bm{w}^\sharp,\pi^\sharp,\rho^\sharp,\zeta^\sharp)$, respectively. Then it holds
\begin{align}
&\|\rho^*-\rho^\sharp\|_{C([0,T];H^1(\Omega))\cap L^2(0,T;H^3(\Omega))\cap H^1(0,T;(H^1(\Omega))')}
 + \|\zeta^*-\zeta^\sharp\|_{L^2(0,T;H^1(\Omega))}\non\\
&\qquad +\|\bm{w}^*-\bm{w}^\sharp\|
_{L^\infty(0,T;L^2(\Omega))\cap L^4(0,T;H^1(\Omega))}
+ \|\pi^*-\pi^\sharp\|_{L^2(0,T;H^2(\Omega))}\non\\
&\quad \le  C\|R^*-R^\sharp\|_{L^2(Q)},
\label{stabuLip}
\end{align}
where $C>0$ may depend on $K_1$, $K_2$, $r_1$ and on the parameters of the system.
\end{proposition}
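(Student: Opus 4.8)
The plan is to reduce the statement to a single application of Lemma \ref{adexe}. Denote by $(\bus,P^*,\vps,\mus)$ and $(\u^\sharp,P^\sharp,\vp^\sharp,\mu^\sharp)$ the strong states associated with $R^*$ and $R^\sharp$, and set
$$\bm{w}=\bm{w}^*-\bm{w}^\sharp,\quad \pi=\pi^*-\pi^\sharp,\quad \rho=\rho^*-\rho^\sharp,\quad \zeta=\zeta^*-\zeta^\sharp.$$
Subtracting the two copies of the adjoint system \eqref{asu}--\eqref{asini} and always pivoting the nonlinear coefficients around the starred state (so that $\nu(\vp^\sharp)\bm{w}^\sharp=\nu(\vps)\bm{w}^\sharp-(\nu(\vps)-\nu(\vp^\sharp))\bm{w}^\sharp$, and analogously for $\nu'$ and $\Psi''$), one checks that $(\bm{w},\pi,\rho,\zeta)$ solves precisely the linear problem \eqref{asu1}--\eqref{asphi1} of Lemma \ref{adexe} at the starred state $(\bus,P^*,\vps,\mus)$, with data
$$\bm{g}_1=-\Big(\int_0^1\nu'(s\vps+(1-s)\vp^\sharp)\,\d s\Big)(\vps-\vp^\sharp)\bm{w}^\sharp-\rho^\sharp\nabla(\vps-\vp^\sharp),$$
$g_4=\alpha_1(\vps(T)-\vp^\sharp(T))$, the term $g_3=\bm{w}^\sharp\cdot\nabla(\vps-\vp^\sharp)$ entering the $\zeta$-equation, and $g_2$ collecting the remaining lower-order contributions $\alpha_2(\vps-\vp^\sharp)$, $(\bus-\u^\sharp)\cdot\nabla\rho^\sharp$, $(\Psi''(\vps)-\Psi''(\vp^\sharp))\zeta^\sharp$, $-\bm{w}^\sharp\cdot\nabla(\mus-\mu^\sharp)$ together with the Taylor remainders of the $\nu'$-product. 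Thus \eqref{stabuLip} will follow from \eqref{estiLineb}, together with the additional $\bm{w}$-regularity statements (and their underlying quantitative bounds) at the end of Lemma \ref{adexe}, once every piece of $\bm{g}_1,g_2,g_3,g_4$ is bounded by $C\|R^*-R^\sharp\|_{L^2(Q)}$ in the norms required there.

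For this I dispose of three independent sources of control. The states $(\bus,P^*,\vps,\mus)$ and $(\u^\sharp,P^\sharp,\vp^\sharp,\mu^\sharp)$ obey the uniform bounds \eqref{K1}--\eqref{K2} and the separation property, so that $\nu,\nu',\nu'',\Psi''$ evaluated along them and all Taylor remainders are bounded; the sharp costate $(\bm{w}^\sharp,\pi^\sharp,\rho^\sharp,\zeta^\sharp)$ is bounded in the norms of \eqref{estiLineaj}, in particular $\rho^\sharp\in C([0,T];H^1(\Omega))\cap L^2(0,T;H^3(\Omega))$, $\zeta^\sharp\in L^2(0,T;H^1(\Omega))$ and $\bm{w}^\sharp\in L^\infty(0,T;\mathbf{L}^2(\Omega))\cap L^4(0,T;\mathbf{H}^1(\Omega))$; and the continuous dependence estimate \eqref{stabu1} controls all state differences $\vps-\vp^\sharp$, $\mus-\mu^\sharp$, $\bus-\u^\sharp$ by $C\|R^*-R^\sharp\|_{L^2(Q)}$. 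The guiding principle in every product is to place the sharp-costate factor in its time-integrable norm and the state-difference factor in its $C([0,T];\cdot)$ norm (or conversely), so that exactly one factor of $\|R^*-R^\sharp\|_{L^2(Q)}$ is extracted while the other stays uniformly bounded.

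The delicate points, which I would treat first, are the two top-order data. To bound $g_3=\bm{w}^\sharp\cdot\nabla(\vps-\vp^\sharp)$ in $L^2(0,T;H^1(\Omega))$ one must control $\nabla\bm{w}^\sharp\cdot\nabla(\vps-\vp^\sharp)$ and $\bm{w}^\sharp\cdot\nabla^2(\vps-\vp^\sharp)$ in $L^2(Q)$; since $\bm{w}^\sharp$ is only $L^4$ in time at the $H^1$ level, the matching integrability is furnished by interpolating \eqref{stabu1}, namely $\vps-\vp^\sharp\in L^4(0,T;H^3(\Omega))$ with norm $\le C\|R^*-R^\sharp\|_{L^2(Q)}$, combined with $H^2\hookrightarrow L^\infty\cap W^{1,4}$ and Hölder in time. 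The same interpolation, together with the $L^4(0,T;\mathbf{H}^1)$ bound on $\bm{w}^\sharp$ and the $L^2(0,T;H^3)$ bound on $\rho^\sharp$, yields $\bm{g}_1\in L^2(0,T;\mathbf{H}^1(\Omega))$ and, moreover, $\bm{g}_1\in L^\infty(0,T;\mathbf{L}^2(\Omega))\cap L^4(0,T;\mathbf{L}^2(\Omega))$ with $\mathrm{curl}\,\bm{g}_1\in L^4(0,T;L^2(\Omega))$; these last three properties are exactly what the concluding remarks of Lemma \ref{adexe} require in order to upgrade $\bm{w}$ to $L^\infty(0,T;\mathbf{L}^2(\Omega))\cap L^4(0,T;\mathbf{H}^1(\Omega))$, matching the left-hand side of \eqref{stabuLip}. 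The remaining terms in $g_2$ are genuinely lower order: each is a product of a state difference (in $L^2(0,T;\mathbf{L}^4)$ for $\bus-\u^\sharp$, in $C([0,T];L^\infty)$ for $\vps-\vp^\sharp$, in $L^2(0,T;L^4)$ for $\nabla(\mus-\mu^\sharp)$) with a uniformly bounded sharp-costate factor, and is estimated in $L^2(0,T;L^{4/3}(\Omega))$ by Hölder without difficulty, while $g_4$ is immediate from $\vps-\vp^\sharp\in C([0,T];H^2(\Omega))$.

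The main obstacle is therefore the regularity bookkeeping for the two top-order data $\bm{g}_1$ and $g_3$: one must verify not merely their $L^2$-in-time bounds, but the stronger mixed-norm bounds ($L^4$ in time for the $\bm{w}^\sharp$-products and $\mathrm{curl}\,\bm{g}_1\in L^4(0,T;L^2)$) that let Lemma \ref{adexe} deliver the $L^\infty(0,T;\mathbf{L}^2(\Omega))\cap L^4(0,T;\mathbf{H}^1(\Omega))$ control of $\bm{w}$ appearing in \eqref{stabuLip}. Once these are in place, collecting the estimates and invoking \eqref{estiLineb} gives \eqref{stabuLip}, and the conclusion follows by linearity.
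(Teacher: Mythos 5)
Your proposal is correct and takes essentially the same approach as the paper: the paper likewise writes the difference of the two adjoint systems as an instance of Lemma \ref{adexe} at the starred state with exactly your data $\bm{g}_1,g_2,g_3,g_4$, verifies the same mixed-norm bounds (in particular $\bm{g}_1\in L^\infty(0,T;\mathbf{L}^2(\Omega))$ and $\mathrm{curl}\,\bm{g}_1\in L^4(0,T;L^2(\Omega))$, so that the concluding remarks of Lemma \ref{adexe} deliver the $L^\infty(0,T;\mathbf{L}^2(\Omega))\cap L^4(0,T;\mathbf{H}^1(\Omega))$ control of $\bm{w}$), using \eqref{K1}--\eqref{K2}, \eqref{estiLineaj} and \eqref{stabu1}, and concludes from \eqref{estiLineb}. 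The interpolation you invoke for the top-order terms ($\vp^*-\vp^\sharp$ in $L^4(0,T;H^3(\Omega))$) is the same device the paper uses implicitly in its term-by-term H\"{o}lder estimates of $\bm{g}_1$, $\mathrm{curl}\,\bm{g}_1$ and $g_3$.
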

\begin{proof}
Let us set
$$
\bm{w}=\bm{w}^*-\bm{w}^\sharp,\quad \pi=\pi^*-\pi^\sharp,\quad \rho=\rho^*-\rho^\sharp,\quad \zeta=\zeta^*-\zeta^\sharp.
$$
Besides, we denote by $(\u^*,P^*,\vp^*,\mu^*)$, $(\u^\sharp,P^\sharp,\vp^\sharp,\mu^\sharp)$ the associated states corresponding to $R^*$, $R^\sharp$, respectively.
Then $(\bm{w},\pi,\rho,\zeta)$ satisfies the following problem
\begin{align*}
& \nu(\vp^*)\bm{w} =\nabla \pi - \rho\nabla\vp^* +\bm{g}_1,&\mbox{a.e. in $Q$,}\\[1mm]
&\mathrm{div} \,\bm{w} =0,&\mbox{a.e. in  $Q$,}\\[1mm]
&\Delta \pi =\mathrm{div}\,(\rho\nabla\vp^*) + \nu'(\vp^*)\nabla \vp^*\cdot \bm{w} -\mathrm{div}\,\bm{g}_1, &\mbox{a.e. in $Q$,}\\[1mm]
& \zeta = \Delta \rho + \bm{w} \cdot \nabla \vp_1 + g_3, &\mbox{a.e. in  $Q$,}\\[1mm]
&\dn \rho  =\bm{w}\cdot{\bf n}= \dn \pi= 0,&\mbox{a.e. on $\Sigma$,}\\[1mm]
&\rho|_{t=T} = g_4,&\mbox{a.e. in $\Omega$,}
\end{align*}
and
\begin{align*}
&-\langle\dt \rho, \psi\rangle_{(H^1)',H^1}
-\int_\Omega (\u^*\cdot\nabla \rho)\psi\,\mathrm{d}x
-\int_\Omega \nabla  \zeta \cdot\nabla \psi\,\mathrm{d}x
-\int_\Omega \Psi''(\vp^*) \zeta\psi\,\mathrm{d}x \non\\
&\qquad \qquad
+\int_\Omega  (\bm{w} \cdot \nabla \mu^*)\psi\,\mathrm{d}x
+\int_\Omega (\nu'(\vp^*) \u_1\cdot \bm{w})\psi\,\mathrm{d}x = \int_\Omega g_2\psi\,\mathrm{d}x,
\end{align*}
for almost all $t\in(0,T)$ and all $\psi\in H^1(\Omega)$.
Here, we have
\begin{align*}
\bm{g}_1&= -\bm{w}^\sharp\int_0^1\nu'(s\vp^*+(1-s)\vp^\sharp)(\vp^*-\vp^\sharp)\,\mathrm{d}s -\rho^\sharp\nabla(\vp^*-\vp^\sharp),\\
g_2 &=  (\u^*-\u^\sharp)\cdot\nabla \rho^\sharp
+ \zeta^\sharp \int_0^1\Psi^{(3)}(s\vp^*+(1-s)\vp^\sharp)(\vp^*-\vp^\sharp)\,\mathrm{d}s\\
&\quad -\bm{w}^\sharp\cdot\nabla (\mu^*-\mu^\sharp) -\nu'(\vp^*)\bm{w}^\sharp\cdot(\u^*-\u^\sharp)\non\\
&\quad - (\u^\sharp\cdot\bm{w}^\sharp)\int_0^1\nu''(s\vp^*+(1-s)\vp^\sharp)(\vp^*-\vp^\sharp)\,\mathrm{d}s  +\alpha_2(\vp^*-\vp^\sharp),\\
g_3&=\bm{w}^\sharp\cdot\nabla(\vp^*-\vp^\sharp),\qquad g_4=\alpha_1(\vp^*(T)-\vp^\sharp(T)).
\end{align*}
Again, we exploit Taylor's formula in $\bm{g}_1$ and $g_2$. Then using estimates \eqref{stabu1}, \eqref{K1}, \eqref{K2} and \eqref{estiLineaj}, we can obtain
\begin{align*}
\|\bm{g}_1\|_{L^2(0,T;H^1(\Omega))}^2
&\leq C\|\vp^*-\vp^\sharp\|_{C([0,T];L^\infty(\Omega))}^2  \int_0^T \|\bm{w}^\sharp\|_{H^1(\Omega)}^2\,\mathrm{d}t\\
&\quad + C \|\bm{w}^\sharp\|_{L^\infty(0,T;L^2(\Omega))}^2 \int_0^T \|\vp^*-\vp^\sharp\|_{W^{1,\infty}(\Omega)}^2 \,\mathrm{d}t\non\\
&\quad + C \|\nabla (\vp^*-\vp^\sharp)\|_{C([0,T];L^4(\Omega))}^2  \int_0^T \|\rho^\sharp\|_{W^{1,4}(\Omega)}^2\,\mathrm{d}t\non\\
&\quad + C \|\rho^\sharp\|_{C([0,T];L^4(\Omega))}^2 \int_0^T \|\vp^*-\vp^\sharp\|_{W^{2,4}(\Omega)}^2\,\mathrm{d}t\\
&\leq C\|R^*-R^\sharp\|_{L^2(Q)}^2,
\end{align*}
\begin{align*}
\|\bm{g}_1\|_{L^\infty(0,T;L^2(\Omega))}^2
&\leq C \|\bm{w}^\sharp\|_{L^\infty(0,T;L^2(\Omega))}^2 \|\vp^*-\vp^\sharp\|_{C([0,T];L^\infty(\Omega))}^2\\
&\quad +C \|\rho^\sharp\|_{C([0,T];L^4(\Omega))}^2 \|\nabla (\vp^*-\vp^\sharp)\|_{C([0,T];L^4(\Omega))}^2\\
&\leq   C\|R^*-R^\sharp\|_{L^2(Q)}^2,
\end{align*}
\begin{align*}
\|\mathrm{curl}\,\bm{g}_1\|_{L^4(0,T;L^2(\Omega))}^4
&\leq C \|\vp^*-\vp^\sharp\|_{C([0,T];L^\infty(\Omega))}^4 \int_0^T \|\bm{w}^\sharp\|_{H^1(\Omega)}^4\,\mathrm{d}t \\
&\quad +C \|\bm{w}^\sharp\|_{L^\infty(0,T;L^2(\Omega))}^4 \int_0^T \|\vp^*-\vp^\sharp\|_{W^{1,\infty}(\Omega)}^4 \,\mathrm{d}t \\
&\quad +C \|\rho^\sharp\|_{C([0,T];H^1(\Omega))}^4\int_0^T \|\nabla (\vp^*-\vp^\sharp)\|_{L^\infty(\Omega)}^4 \,\mathrm{d}t\\
&\leq   C\|R^*-R^\sharp\|_{L^2(Q)}^4,
\end{align*}
\begin{align*}
\|g_2\|_{L^2(0,T; L^{4/3}(\Omega))}^2
& \leq C\|\nabla \rho^\sharp\|_{C([0,T];L^2(\Omega))}^2 \int_0^T\|\u^*-\u^\sharp\|_{L^4(\Omega)}^2\,\mathrm{d}t\\
&\quad +C \|\vp^*-\vp^\sharp\|_{C([0,T];L^2(\Omega))}^2\int_0^T\|\zeta^\sharp \|_{L^4(\Omega)}^2\,\mathrm{d}t\\
&\quad +C \|\bm{w}^\sharp\|_{L^\infty(0,T;L^2(\Omega))}^2 \int_0^T\|\nabla (\mu^*-\mu^\sharp)\|_{L^4(\Omega)}^2\,\mathrm{d}t\\
&\quad +C \|\bm{w}^\sharp\|_{L^\infty(0,T;L^2(\Omega))}^2 \int_0^T\|\u^*-\u^\sharp\|_{L^4(\Omega)}^2\,\mathrm{d}t\\
&\quad +C \|\bm{w}^\sharp\|_{L^\infty(0,T;L^2(\Omega))}^2 \|\vp^*-\vp^\sharp\|_{C([0,T];L^\infty(\Omega))}^2
\int_0^T \|\u^\sharp\|_{L^4(\Omega)}^2\,\mathrm{d}t\\
&\quad +C \|\vp^*-\vp^\sharp\|_{L^2(Q)}^2\\
&\leq C\|R^*-R^\sharp\|_{L^2(Q)}^2,
\end{align*}
\begin{align*}
\|g_3\|_{L^2(0,T;H^1(\Omega))}^2
&\leq C\int_0^T\|\bm{w}^\sharp\|_{H^1(\Omega)}^2 \|\nabla(\vp^*-\vp^\sharp)\|_{L^2(\Omega)} \|\nabla(\vp^*-\vp^\sharp)\|_{H^2(\Omega)}\,\mathrm{d}t\\
&\quad +C \|\bm{w}^\sharp\|_{L^\infty(0,T;L^2(\Omega))}^2 \int_0^T \|\nabla(\vp^*-\vp^\sharp)\|_{W^{1,\infty}(\Omega)}^2\,\mathrm{d}t\\
&\leq C\|R^*-R^\sharp\|_{L^2(Q)}^2,
\end{align*}
\begin{align*}
\|g_4\|_{H^1(\Omega)}^2\leq \|\vp^*-\vp^\sharp\|^2_{C([0,T];H^1(\Omega))} \leq C\|R^*-R^\sharp\|_{L^2(Q)}^2.
\end{align*}
Taking the above estimates into account, we can apply Lemma \ref{adexe} and deduce   that
\begin{align}
&\|\rho\|^2_{C([0,T];H^1(\Omega))\cap L^2(0,T;H^3(\Omega))\cap H^1(0,T;(H^1(\Omega))')} +\|\zeta\|_{L^2(0,T;H^1(\Omega))}^2 \non \\
&\qquad +\|\bm{w}\|_{L^\infty(0,T;L^2(\Omega))\cap L^4(0,T;H^1(\Omega))}^2
+ \|\pi\|_{ L^2(0,T;H^2(\Omega))}^2\non\\
&\quad \leq C  \|\bm{g}_1\|^2_{L^2(0,T;H^1(\Omega))} +  C  \|\bm{g}_1\|^2_{L^\infty(0,T;L^2(\Omega))} + C  \|\mathrm{curl}\,\bm{g}_1\|^2_{L^4(0,T;L^2(\Omega))}
\non \\
&\qquad + C\|g_2\|^2_{L^2(0,T;L^{4/3}(\Omega))} + C \|g_3\|^2_{L^2(0,T;H^1(\Omega))} + C\|g_4\|^2_{H^1(\Omega)} \non\\
 &\quad \le  C\|R^*-R^\sharp\|^2_{L^2(Q)},
\label{estiLineg}
\end{align}
which yields the required estimate \eqref{stabuLip}.
\end{proof}

Next, we show the Fr\'{e}chet differentiability of the control-to-costate operator.

\begin{proposition}\label{Diffcotoco}
Assume that $\Omega\subset \mathbb{R}^2$ is a bounded domain with smooth boundary $\partial\Omega$, $T>0$ and the assumptions $\mathbf{(S1)}$--$\mathbf{(S5)}$ are satisfied.

(1) For any $R^*\in \widetilde{\mathcal{U}}$, the control-to-costate operator $\mathcal{T}: \widetilde{\mathcal{U}}\to \mathcal{Z}$ defined in \eqref{coToco} is
Fr\'{e}chet differentiable at $R^*$ as a mapping from $\mathcal{U}$ into $\mathcal{Z}$. The Fr\'{e}chet derivative $D\mathcal{T}(R^*)\in \mathcal{L}(\mathcal{U},\mathcal{Z})$
can be determined as follows.
Given $R^*\in \widetilde{\mathcal{U}}$, we denote by $(\u^*,P^*,\varphi^*,\mu^*)$ its associated state and by $(\bm{w}^*,\pi^*,\rho^*,\zeta^*)$ its adjoint state.
Besides, for any $h\in \mathcal{U}$, we denote by $(\vv^h,q^h,\xi^h,\eta^h)$ the unique solution to problem \eqref{lsphi}--\eqref{lsini} at $(\u^*,P^*,\varphi^*,\mu^*)$
with $\bm{f}_1=\bm{0}$, $f_2=h$, $f_3=0$.
 Then, it holds
\begin{align}
D\mathcal{T}(R^*)h=\widetilde{\rho}^h,\label{Dtt}
\end{align}
where $(\widetilde{\bm{w}}^h,\widetilde{\pi}^h,\widetilde{\rho}^h,\widetilde{\zeta}^h)$ with the regularity
\begin{align*}
& \widetilde{\rho}^h \in \mathcal{Z},\quad \widetilde{\zeta}^h\in L^2(0,T;H^1(\Omega)), \\
&  \widetilde{\bm{w}}^h\in L^\infty(0,T;L^2(\Omega))\cap L^2(0,T; \mathbf{H}^1(\Omega)\cap \mathbf{H}_\sigma), \quad \widetilde{\pi}^h\in L^2(0,T;H^2(\Omega)\cap L^2_0(\Omega)),
\end{align*}
is a weak solution to the following linear system
\begin{align}
\label{asuz}
& \nu(\vps)\widetilde{\bm{w}} =\nabla \widetilde{\pi} - \widetilde{\rho}\nabla\vps +\bm{g}_1,&\mbox{a.e. in $Q$,}\\[1mm]
\label{aspz}
&\mathrm{div} \,\widetilde{\bm{w}} =0,&\mbox{a.e. in  $Q$,}\\[1mm]
\label{asmuz}
& \widetilde{\zeta} = \Delta \widetilde{\rho} + \widetilde{\bm{w}} \cdot \nabla \vps + g_3, &\mbox{a.e. in  $Q$,}\\[1mm]
\label{asbcz}
&\dn \widetilde{\rho} =\widetilde{\bm{w}}\cdot{\bf n}= \dn \widetilde{\pi}= 0,&\mbox{a.e. on $\Sigma$,}\\[1mm]
\label{asiniz}
&\widetilde{\rho}|_{t=T} = g_4,&\mbox{a.e. in $\Omega$,}
\end{align}
and
\begin{align}
\label{asphiz}
&-\langle\dt \widetilde{\rho}, \psi\rangle_{(H^1)',H^1}
-\int_\Omega (\bus\cdot\nabla \widetilde{\rho})\psi\,\mathrm{d}x -\int_\Omega \nabla  \widetilde{\zeta} \cdot\nabla \psi\,\mathrm{d}x
-\int_\Omega \Psi''(\vps) \widetilde{\zeta}\psi\,\mathrm{d}x \non\\
&\qquad \qquad +\int_\Omega  (\widetilde{\bm{w}} \cdot \nabla \mus)\psi\,\mathrm{d}x
+\int_\Omega (\nu'(\vp^*) \bus\cdot \widetilde{\bm{w}})\psi\,\mathrm{d}x = \int_\Omega g_2\psi\,\mathrm{d}x,
\end{align}
 for almost all $t\in(0,T)$ and all $\psi\in H^1(\Omega)$.
Here,
\begin{align}
\bm{g}_1 &= -\nu'(\vps)\xi^h\bm{w}^*-\rho^*\nabla \xi^h, \label{dg1}\\
g_2&= \vv^h\cdot\nabla\rho^* + \Psi^{(3)}(\vps)\xi^h\zeta^*
-\bm{w}^*\cdot\nabla\eta^h-\nu''(\vps)\xi^h\u^*\cdot\bm{w}^*\non\\
&\quad -\nu'(\vps)\vv^h\cdot\bm{w}^*+\alpha_2\xi^h, \label{dg2}\\
g_3&= \bm{w}^*\cdot\nabla \xi^h,\label{dg3}\\
g_4&= \alpha_1\xi^h(T).\label{dg4}
\end{align}

(2) The Fr\'{e}chet derivative of the control-to-costate operator $\mathcal{T}$ is Lipschitz continuous in $\widetilde{\mathcal{U}}$, i.e.,
for any $R^*, R^\sharp \in \widetilde{\mathcal{U}}$, it holds
\begin{align}
\|D\mathcal{T}(R^*)  - D\mathcal{T}(R^\sharp) \|_{\mathcal{L}(\mathcal{U},\mathcal{Z})}
\leq C\|R^*-R^\sharp\|_{L^2(Q)},
\label{DeLipT}
\end{align}
where $C>0$ may depend on $K_1$, $K_2$, $r_1$ and on the parameters of the system.
\end{proposition}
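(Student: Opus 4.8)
The plan is to follow the template of Proposition \ref{1stFD}, replacing the linearized state system and its solvability (Lemma \ref{line}) by the linearized adjoint system \eqref{asu1}--\eqref{asphi1} together with Lemma \ref{adexe}, and to exploit the quantitative estimates already available: the continuous dependence bound \eqref{stabu1}, the second-order state remainder \eqref{DDD1}, the costate Lipschitz estimate \eqref{stabuLip}, and the a priori adjoint bound \eqref{estiLineaj}. Throughout, fix $R^*\in\widetilde{\mathcal{U}}$ and restrict to perturbations $h\in\mathcal{U}$ with $\|h\|_{\mathcal{U}}$ small enough that $R^*+h\in\widetilde{\mathcal{U}}$.

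For assertion (1), denote by $(\uh,\ph,\vph,\muh)$ and $(\wh,\pi^h,\rho^h,\zeta^h)$ the state and costate associated with $R^*+h$, by $(\vv^h,q^h,\xi^h,\eta^h)$ the linearized-state solution of Lemma \ref{line} (with $\bm{f}_1=\bm{0}$, $f_2=h$, $f_3=0$), and by $(\widetilde{\bm{w}}^h,\widetilde{\pi}^h,\widetilde{\rho}^h,\widetilde{\zeta}^h)$ the solution of \eqref{asuz}--\eqref{asphiz}. I then introduce the second-order differences
$$
\widehat{\bm{w}}^h=\wh-\bm{w}^*-\widetilde{\bm{w}}^h,\quad \widehat{\pi}^h=\pi^h-\pi^*-\widetilde{\pi}^h,\quad \widehat{\rho}^h=\rho^h-\rho^*-\widetilde{\rho}^h,\quad \widehat{\zeta}^h=\zeta^h-\zeta^*-\widetilde{\zeta}^h.
$$
Subtracting the adjoint system for $R^*+h$, the adjoint system for $R^*$, and the linearized adjoint system \eqref{asuz}--\eqref{asphiz}, and Taylor-expanding the coefficients $\nu,\nu',\nu'',\Psi'',\Psi^{(3)}$ about $\vps$, one finds that $(\widehat{\bm{w}}^h,\widehat{\pi}^h,\widehat{\rho}^h,\widehat{\zeta}^h)$ solves a system of the form \eqref{asu1}--\eqref{asphi1} with new data $\widehat{\bm{g}}_1,\widehat{g}_2,\widehat{g}_3,\widehat{g}_4$ (whose boundary compatibility $\widehat{\bm{g}}_1\cdot\mathbf{n}=0$ follows from $\partial_\mathbf{n}\xi^h=0$ and $\bm{w}^*\cdot\mathbf{n}=0$). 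Each datum is a finite sum of products in which at least one factor is quadratically small: either a second-order state remainder among $\vph-\vps-\xi^h$, $\muh-\mus-\eta^h$, $\uh-\bus-\vv^h$ (all $O(\|h\|_{L^2(Q)}^2)$ by \eqref{DDD1}), or a product of two first-order differences — state differences controlled by \eqref{stabu1}, costate differences by \eqref{stabuLip}, and linearized-state norms by \eqref{estiLine}. Using the 2D embeddings and the uniform bounds \eqref{K1}, \eqref{K2}, \eqref{estiLineaj}, one estimates each datum to be $O(\|h\|_{L^2(Q)}^2)$ in the norm demanded by Lemma \ref{adexe}; then \eqref{estiLineb} yields $\|\widehat{\rho}^h\|_{\mathcal{Z}}\le C\|h\|_{L^2(Q)}^2=o(\|h\|_{\mathcal{U}})$, which proves \eqref{Dtt}.

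For assertion (2), given $R^*,R^\sharp\in\widetilde{\mathcal{U}}$ and any $h\in\mathcal{U}$, I compare the two linearized-costate solutions $\widetilde{\rho}^{h}$ built over the base points $R^*$ and $R^\sharp$. Their difference again solves a system of the form \eqref{asu1}--\eqref{asphi1}, now with data equal to the differences of the right-hand sides \eqref{dg1}--\eqref{dg4} evaluated at the two base states and costates. Every such difference factors as (a quantity Lipschitz in the base data) times (a linearized quantity of size $\|h\|_{L^2(Q)}$); the Lipschitz factors are controlled by $\|R^*-R^\sharp\|_{L^2(Q)}$ via \eqref{stabu1}, \eqref{stabuLip}, the Lipschitz dependence of $(\vv^h,\xi^h,\eta^h)$ on the base state (cf. \eqref{DeLipa}), and \eqref{estiLineaj}. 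Applying \eqref{estiLineb} once more gives $\|D\mathcal{T}(R^*)h-D\mathcal{T}(R^\sharp)h\|_{\mathcal{Z}}\le C\|R^*-R^\sharp\|_{L^2(Q)}\|h\|_{L^2(Q)}$, hence \eqref{DeLipT}.

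The main obstacle is the bookkeeping required to place the second-order data in the exact spaces demanded by Lemma \ref{adexe}. The delicate term is $\widehat{\bm{g}}_1\in L^2(0,T;\mathbf{H}^1(\Omega))$: its gradient couples second derivatives of $\vps$ with the comparatively low-regularity costate fields $\rho^*,\bm{w}^*$, so derivatives must be distributed carefully and paired with the 2D Sobolev embeddings $H^1\hookrightarrow L^p$ ($p<\infty$) and $H^2\hookrightarrow W^{1,4}\cap L^\infty$, together with the separation bound \eqref{K2} on $\Psi^{(j)}(\vps)$. Once the products are correctly paired, the orders $O(\|h\|_{L^2(Q)}^2)$ in part (1) and $O(\|h\|_{L^2(Q)}\,\|R^*-R^\sharp\|_{L^2(Q)})$ in part (2) follow exactly as in the proofs of Propositions \ref{1stFD} and \ref{Lipcotoco}.
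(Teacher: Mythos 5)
Your proposal is correct and follows essentially the same route as the paper's own proof: the same second-order difference decomposition with Taylor expansion of $\nu$, $\Psi''$ and their derivatives, solvability via Lemma \ref{adexe}, and estimation of the resulting data through \eqref{stabu1}, \eqref{DDD1}, \eqref{stabuLip}, \eqref{estiLineaj} in part (1), followed by the same difference-of-linearized-costates argument (with the commutator terms from the coefficient differences absorbed into the data) in part (2). The only step you leave implicit is the paper's preliminary verification (its \eqref{ggg1}) that the first-order data \eqref{dg1}--\eqref{dg4} themselves lie in the spaces required by Lemma \ref{adexe}, including the stronger conditions $\bm{g}_1\in L^\infty(0,T;\mathbf{L}^2(\Omega))$ and $\mathrm{curl}\,\bm{g}_1\in L^4(0,T;L^2(\Omega))$ that yield the $L^\infty(0,T;\mathbf{L}^2(\Omega))\cap L^4(0,T;\mathbf{H}^1(\Omega))$ regularity of $\widetilde{\bm{w}}^h$ used later in part (2); this is the same kind of bookkeeping you already plan for the hat-data.
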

\begin{proof}
By definition, for the given control function $R^*$, $(\u^*,P^*,\varphi^*,\mu^*)$ is the unique strong solution to problem \eqref{bdini}--\eqref{CHHS},
while  $(\bm{w}^*,\pi^*,\rho^*,\zeta^*)$ is the unique weak solution to the adjoint system \eqref{asu}--\eqref{asp4bc} with $\bm{g}_1=\bm{0}$,
$g_2=\alpha_2(\vps-\vp_Q)$, $g_3=0$ and $g_4=\alpha_1\,(\vps(T)-\vp_\Omega)$ therein.

  Let us first establish the solvability of problem \eqref{asuz}--\eqref{dg4}. Thanks to Lemma \ref{line}, we easily find  $g_4=\alpha_1\xi^h(T) \in H^1(\Omega)$.
  It is straightforward to check that $\bm{g}_1\cdot \mathbf{n}=0$ on $\Sigma$.
Using the regularity properties of $(\u^*,P^*,\varphi^*,\mu^*)$, $(\bm{w}^*,\pi^*,\rho^*,\zeta^*)$ and $(\vv^h,q^h,\xi^h,\eta^h)$, we can further verify that
\begin{align}
\begin{cases}
\bm{g}_1\in L^2(0,T;\mathbf{H}^1(\Omega))\cap L^\infty(0,T;\mathbf{L}^2(\Omega)),\quad \mathrm{curl}\,\bm{g}_1\in L^4(0,T;L^2(\Omega)),\\
g_2\in L^2(0,T;L^{4/3}(\Omega)),\quad g_3\in L^2(0,T;H^1(\Omega)).
\end{cases}
\label{ggg1}
\end{align}
Indeed, it is straightforward to check that
\begin{align*}
&\|\bm{g}_1\|^2_{L^2(0,T;H^1(\Omega))}\\
& \quad \leq
 \int_0^T \|\nu'(\vps)\|_{L^\infty(\Omega)}^2\|\xi^h\|_{L^4(\Omega)}^2 \|\bm{w}^*\|_{L^4(\Omega)}^2+ \|\rho^*\|_{L^4(\Omega)}^2\|\nabla \xi^h\|_{L^4(\Omega)}^2\,\mathrm{d}t\non\\
 &\qquad + \int_0^T \|\nu''(\vps)\|_{L^\infty(\Omega)}^2\|\nabla \vps\|_{L^\infty(\Omega)}^2\|\xi^h\|_{L^4(\Omega)}^2 \|\bm{w}^*\|_{L^4(\Omega)}^2\,\mathrm{d}t\\
 &\qquad +\int_0^T  \|\nu'(\vps)\|_{L^\infty(\Omega)}^2\|\nabla \xi^h\|_{L^4(\Omega)}^2 \|\bm{w}^*\|_{L^4(\Omega)}^2\,\mathrm{d}t \\
 &\qquad + \int_0^T \|\nu'(\vps)\|_{L^\infty(\Omega)}^2\| \xi^h\|_{L^\infty(\Omega)}^2 \|\nabla \bm{w}^*\|_{L^2(\Omega)}^2\,\mathrm{d}t\\
 &\qquad + \int_0^T \|\rho^*\|_{L^4(\Omega)}^2\|\xi^h\|_{W^{2,4}(\Omega)}^2
 +\|\nabla \rho^*\|_{L^4(\Omega)}^2\|\nabla \xi^h\|_{L^4(\Omega)}^2\,\mathrm{d}t\\
 &\quad \leq C\|h\|_{L^2(Q)}^2,
\end{align*}
\begin{align*}
\|\bm{g}_1\|^2_{L^\infty(0,T;L^2(\Omega))}
&  \leq C\|\xi^h\|_{C([0,T];L^\infty(\Omega))}^2\|\bm{w}^*\|_{L^\infty(0,T;L^2(\Omega))}^2
\\
&\quad + C\|\rho^*\|_{C([0,T];L^4(\Omega))}^2\|\nabla \xi^h\|_{C([0,T];L^4(\Omega))}^2\\
 &  \leq C\|h\|_{L^2(Q)}^2,
\end{align*}
\begin{align*}
& \|\mathrm{curl}\,\bm{g}_1\|^4_{L^4(0,T;L^2(\Omega))} \\
& \quad   \leq C\|\xi^h\|_{C([0,T];L^\infty(\Omega))}^4 \int_0^T\|\bm{w}^*\|_{H^1(\Omega)}^4\,\mathrm{d}t\\
&\qquad +C\|\bm{w}^*\|_{L^\infty(0,T;L^2(\Omega))}^4
\|\xi^h\|_{C([0,T];H^1(\Omega))}^2
\int_0^T \|\xi^h\|_{H^3(\Omega)}^2 \,\mathrm{d}t\\
&\qquad +C\|\nabla \rho^*\|_{C([0,T];H^1(\Omega))}^4\|\xi^h\|_{C([0,T];H^1(\Omega))}^2
\int_0^T \|\xi^h\|_{H^3(\Omega)}^2 \,\mathrm{d}t\\
 &\quad \leq C\|h\|_{L^2(Q)}^4,
\end{align*}
\begin{align*}
&\|g_2\|_{L^2(0,T;L^{4/3}(\Omega)))}^2 \\
&\quad \leq \int_0^T \|\vv^h\|_{L^4(\Omega)}^2\|\nabla\rho^*\|_{L^2(\Omega)}^2
+  \|\Psi^{(3)}(\vps)\|_{L^\infty(\Omega)}^2 \|\xi^h\|_{L^4(\Omega)}^2\|\zeta^*\|_{L^2(\Omega)}^2\,\mathrm{d}t
\\
&\qquad +\int_0^T \|\bm{w}^*\|_{L^2(\Omega)}\|\bm{w}^*\|_{H^1(\Omega)} \|\eta^h\|_{L^2(\Omega)}\|\Delta \eta^h\|_{L^2(\Omega)}\,\mathrm{d}t\\
&\qquad +  \int_0^T \|\nu''(\vps)\|_{L^\infty(\Omega)}^2\|\xi^h\|_{L^\infty(\Omega)} ^2 \|\u^*\|_{L^2(\Omega)}^2\|\bm{w}^*\|_{L^4(\Omega)}^2\,\mathrm{d}t\\
&\qquad +\int_0^T \|\nu'(\vps)\|_{L^\infty(\Omega)}^2 \|\vv^h\|_{L^4(\Omega)}^2\|\bm{w}^*\|_{L^2(\Omega)}^2\,\mathrm{d}t +\alpha_2\int_0^T \|\xi^h\|_{L^2(\Omega)}^2\,\mathrm{d}t
\\
&\quad \leq C\|h\|_{L^2(Q)}^2,
\end{align*}
and
\begin{align*}
\|g_3\|_{L^2(0,T;H^1(\Omega))}^2
&  \leq  \int_0^T \|\nabla \bm{w}^*\|_{L^2(\Omega)}^2\|\nabla \xi^h\|_{L^2(\Omega)}\|\nabla \xi^h\|_{H^2(\Omega)}  \,\mathrm{d}t\non\\
&\quad + C\int_0^T \|\bm{w}^*\|_{L^2(\Omega)}^2\|\nabla \xi^h\|_{W^{1,\infty}(\Omega)}^2\,\mathrm{d}t\non\\
&\leq C\|h\|_{L^2(Q)}^2.
\end{align*}
Combining the above estimates, we arrive at the conclusion \eqref{ggg1}.

Thanks to \eqref{ggg1}, we are able to apply Lemma \ref{adexe} and conclude that the linear problem \eqref{asuz}--\eqref{dg4}
  admits a unique weak solution $(\widetilde{\bm{w}}^h,\widetilde{\pi}^h,\widetilde{\rho}^h,\widetilde{\zeta}^h)$ with the desired regularity. Moreover, it holds
  \begin{align}
&\|\widetilde{\rho}^h\|^2_{C([0,T];H^1(\Omega))\cap L^2(0,T;H^3(\Omega))\cap H^1(0,T;(H^1(\Omega))')} +\|\widetilde{\zeta}^h\|_{L^2(0,T;H^1(\Omega))}^2 \non \\
&\qquad +\|\widetilde{\bm{w}}^h\|_{L^\infty(0,T;L^2(\Omega))\cap L^4(0,T;H^1(\Omega))}^2 + \|\widetilde{\pi}^h\|_{ L^2(0,T;H^2(\Omega))}^2\non\\
&\quad \leq C \|\bm{g}_1\|^2_{L^2(0,T;H^1(\Omega))} + \|\bm{g}_1\|^2_{L^\infty(0,T;L^2(\Omega))}+ \|\mathrm{curl}\,\bm{g}_1\|^2_{L^4(0,T;L^2(\Omega))} \non \\
&\qquad + C\|g_2\|^2_{L^2(0,T;L^{4/3}(\Omega))} +  C\|g_3\|^2_{L^2(0,T;H^1(\Omega))} + C\|g_4\|^2_{H^1(\Omega)} \non\\
&\quad \leq C\|h\|_{L^2(Q)}^2.
\label{estiLinedd}
\end{align}

  Next, we show that $(\widetilde{\bm{w}}^h,\widetilde{\pi}^h,\widetilde{\rho}^h,\widetilde{\zeta}^h)$ gives the Fr\'{e}chet derivative of $\mathcal{T}$.
  Similar to Proposition \ref{1stFD}, since $R^*\in \widetilde{\mathcal{U}}$, there is some $\lambda>0$ sufficiently small such that $R^*+h\in \widetilde{\mathcal{U}}$,
  whenever $h\in \mathcal{U}$ and $\|h\|_{\mathcal{U}}< \lambda$. Below we shall only consider such small perturbations $h$.
For $R^*+h$, we denote by $(\u^h,P^h,\vph,\muh)$ its associated  state
and by
$(\bm{w}^h,\pi^h,\rho^h,\zeta^h)$ its associated adjoint state.
Define the differences
\begin{align*}
&\widehat{y}^h=\rho^h-\rho^*-\widetilde{\rho}^h, \quad \widehat{z}^h=\zeta^h-\zeta^*-\widetilde{\zeta}^h,\quad \bm{a}^h=\bm{w}^h-\bm{w}^*-\widetilde{\bm{w}}^h,
\quad b^h=\pi^h-\pi^*-\widetilde{\pi}^h.
\end{align*}
For all admissible perturbations $h$,  we can check that
\begin{align*}
 \widehat{y}^h &\in \mathcal{Z},\quad \widehat{z}^h\in L^2(0,T;H^1(\Omega)),\non\\[1mm]
\bm{a}^h&\in L^2(0,T;\mathbf{H}^1(\Omega)\cap\mathbf{H}_\sigma),\quad b^h\in L^2(0,T;H^2(\Omega)\cap L_0^2(\Omega)).
\end{align*}
Then $(\bm{a}^h, b^h, \yh, \zh)$ is a weak solution to the following problem:
\begin{align}
\label{asuy}
& \nu(\vps)\bm{a}^h =\nabla b^h - \widehat{y}^h\nabla\vps +\widehat{\bm{g}}_1, &\mbox{a.e. in $Q$,}\\[1mm]
\label{aspy}
&\mathrm{div} \,\bm{a}^h =0,&\mbox{a.e. in  $Q$,}\\[1mm]
\label{asmuy}
& \widehat{z}^h = \Delta \widehat{y}^h + \bm{a}^h \cdot \nabla \vps + \widehat{g}_3, &\mbox{a.e. in  $Q$,}\\[1mm]
\label{asbcy}
&\dn \widehat{y}^h =\bm{a}^h\cdot{\bf n}= 0,&\mbox{a.e. on \,$\Sigma$,}\\[1mm]
\label{asiniy}
&\widehat{y}^h|_{t=T} = \widehat{g}_4,&\mbox{a.e. in $\Omega$,}
\end{align}
and
\begin{align}
\label{asphiy}
&-\langle\dt \widehat{y}^h, \psi\rangle_{(H^1)',H^1}
-\int_\Omega (\bus\cdot\nabla \widehat{y}^h)\psi\,\mathrm{d}x
-\int_\Omega \nabla  \widehat{z}^h \cdot\nabla \psi\,\mathrm{d}x
-\int_\Omega \Psi''(\vps) \widehat{z}^h\psi\,\mathrm{d}x \non\\
&\qquad \qquad +\int_\Omega  (\bm{a}^h \cdot \nabla \mus)\psi\,\mathrm{d}x
+\int_\Omega (\nu'(\vp^*) \bus\cdot \bm{a}^h)\psi\,\mathrm{d}x = \int_\Omega \widehat{g}_2\psi\,\mathrm{d}x,
\end{align}
 for almost all $t\in(0,T)$ and all $\psi\in H^1(\Omega)$. Using Taylor's formula, it is straightforward to check that
 \begin{align*}
 \widehat{\bm{g}}_1&= - (\bm{w}^h-\bm{w}^*)\int_0^1 \nu'(s\vp^h+(1-s)\vps)(\vph-\vps)\,\mathrm{d}s\\
 &\quad -\bm{w}^* \nu'(\vps)(\vph-\vps-\xi^h)\\
 &\quad -\frac12 \bm{w}^* \int_0^1\int_0^1 \nu''(sz\vph+(1-sz)\vps)(\vph-\vps)^2\mathrm{d}s\mathrm{d}z\\
 &\quad -(\rho^h-\rho^*)\nabla (\vph-\vps) -\rho^*\nabla(\vph-\vps-\xi^h),\\
 \widehat{g}_2&= \alpha_2(\vph-\vps-\xi^h) + (\uh-\bus)\cdot\nabla(\rho^h-\rho^*)
+(\uh-\bus-\bm{v}^h)\cdot\nabla \rho^*\\
&\quad + (\zeta^h-\zeta^*)\int_0^1\Psi^{(3)}(s\vph+(1-s)\vps)(\vph-\vps)\,\mathrm{d}s\\
&\quad + \zeta^* \Psi^{(3)}(\vps)(\vph-\vps-\xi^h) \\
&\quad +\frac12\zeta^*\int_0^1\int_0^1\Psi^{(4)} (sz\vph+(1-sz)\vps)(\vph-\vps)^2\,\mathrm{d}s\mathrm{d}z\\
&\quad - (\bm{w}^h-\bm{w}^*)\cdot\nabla (\mu^h-\mu^*)
-\bm{w}^*\cdot\nabla (\mu^h-\mu^*-\eta^h)\\
&\quad -\bus\cdot(\bm{w}^h-\bm{w}^*) \int_0^1\nu''(s\vph+(1-s)\vps)(\vph-\vps)\,\mathrm{d}s\\
&\quad - \nu''(\vps)(\vph-\vps-\xi^h)(\bus\cdot\bm{w}^*)\\
&\quad - \frac12 (\bus \cdot \bm{w}^*) \int_0^1\int_0^1 \nu^{(3)}(sz\vph+(1-sz)\vps)(\vph-\vps)^2\mathrm{d}s\mathrm{d}z\\
&\quad -\nu'(\vps)(\uh-\bus)\cdot(\bm{w}^h-\bm{w}^*)
-\nu'(\vps)\big(\uh-\bus-\bm{v}^h\big)\cdot\bm{w}^*,\\
\widehat{g}_3&=(\bm{w}^h-\bm{w}^*)\cdot\nabla(\vph-\vps) +\bm{w}^*\cdot\nabla(\vph-\vps-\xi^h),\\
\widehat{g}_4&= \alpha_1(\vph(T)-\vps(T)-\xi^h(T)).
\end{align*}

Applying Lemma \ref{adexe}, we can derive the following estimate
\begin{align}
&\|\widetilde{y}^h\|^2_{C([0,T];H^1(\Omega))\cap L^2(0,T;H^3(\Omega))\cap H^1(0,T;(H^1(\Omega))')} \non\\
&\quad \leq C \Big(\|\widehat{\bm{g}}_1\|^2_{L^2(0,T;H^1(\Omega))}
+ \|\widehat{g}_2\|^2_{L^2(0,T;L^{4/3}(\Omega))} +  \|\widehat{g}_3\|^2_{L^2(0,T;H^1(\Omega))} + \|\widehat{g}_4\|^2_{H^1(\Omega)}\Big)\non\\
&\quad \leq C\|h\|_{L^2(Q)}^4,
\label{estiLined}
\end{align}
for any $h\in \mathcal{U}\subset L^2(Q)$ with $\|h\|_{\mathcal{U}}< \lambda$.
To obtain \eqref{estiLined}, we note that the estimates \eqref{K1}--\eqref{K2} hold
for both $(\bus,P^*,\vps,\mus)$ and $(\uh,\ph,\vph,\muh)$. Besides, we recall that the control-to-costate operator is Lipschitz continuous (see \eqref{stabuLip}), namely,
\begin{align}
&\|\rho^h-\rho^*\|_{C([0,T];H^1(\Omega))\cap L^2(0,T;H^3(\Omega))\cap H^1(0,T;(H^1(\Omega))')}
 + \|\zeta^h-\zeta^*\|_{L^2(0,T;H^1(\Omega))}\non\\
&\qquad +\|\bm{w}^h-\bm{w}^*\|
_{L^\infty(0,T;L^2(\Omega))\cap L^4(0,T;H^1(\Omega))} + \|\pi^h-\pi^*\|_{L^2(0,T;H^2(\Omega))}\non\\
&\quad \leq C\|h\|_{L^2(Q)},
\label{stabu4}
\end{align}
for some $C>0$ independent of $h$.
Combining the above estimates with \eqref{DDD1} and \eqref{estiLineaj}, we can deduce that
\begin{align*}
  \|\widehat{\bm{g}}_1\|_{L^2(0,T;H^1(\Omega))}^2
&   \leq C\|\vph-\vps\|_{C([0,T];L^\infty(\Omega))}^2
\int_0^T \|\bm{w}^h-\bm{w}^*\|_{H^1(\Omega)}^2\,\mathrm{d}t \\
&\quad + C\|\nabla(\vph-\vps)\|_{C([0,T];L^4(\Omega))}^2
\int_0^T \|\bm{w}^h-\bm{w}^*\|_{L^4(\Omega)}^2\,\mathrm{d}t\\
&\quad + C\|\vph-\vps-\xi^h\|_{C([0,T];L^\infty(\Omega))}^2
\int_0^T \| \bm{w}^*\|_{H^1(\Omega)}^2\,\mathrm{d}t\\
&\quad + C \|\nabla (\vph-\vps-\xi^h)\|_{C([0,T];L^4(\Omega))}^2
\int_0^T \| \bm{w}^*\|_{L^4(\Omega)}^2\,\mathrm{d}t\non\\
&\quad + C \|\vph-\vps\|_{C([0,T];L^\infty(\Omega))}^4\int_0^T \|\bm{w}^*\|_{H^1(\Omega)}^2\,\mathrm{d}t \\
&\quad + C\|\nabla(\vph-\vps)\|_{C([0,T];L^8(\Omega))}^4
\int_0^T \|\bm{w}^*\|_{L^4(\Omega)}^2\,\mathrm{d}t\\
&\quad +C \|\rho^h-\rho^*\|^2_{C([0,T];H^1(\Omega))} \int_0^T \|\nabla(\vph-\vps)\|_{L^\infty(\Omega)}^2\,\mathrm{d}t\non\\
&\quad +C \|\rho^h-\rho^*\|^2_{C([0,T];L^4(\Omega))} \int_0^T \|\nabla(\vph-\vps)\|_{W^{1,4}(\Omega)}^2\,\mathrm{d}t\\
&\quad + C \|\rho^*\|_{C([0,T];H^1(\Omega))}^2 \int_0^T \|\nabla(\vph-\vps-\xi^h)\|_{L^\infty(\Omega)}^2\,\mathrm{d}t\non\\
&\quad + C\|\rho^*\|_{C([0,T];L^4(\Omega))}^2 \int_0^T \|\nabla(\vph-\vps-\xi^h)\|_{W^{1,4}(\Omega)}^2\,\mathrm{d}t\non\\
&\leq C\|h\|_{L^2(Q)}^4,
\end{align*}
\begin{align*}
\|\widehat{g}_2\|_{L^2(0,T;L^{4/3}(\Omega))}^2
& \leq C\int_0^T \|\vph-\vps-\xi^h\|_{L^2(\Omega)}^2\,\mathrm{d}t\\
&\quad + C \|\nabla(\rho^h-\rho^*)\|_{C([0,T];L^2(\Omega))}^2 \int_0^T \|\uh-\bus\|_{L^4(\Omega)}^2\,\mathrm{d}t\non\\
&\quad +C\|\nabla \rho^*\|_{C([0,T];L^2(\Omega))}^2\int_0^T  \|\uh-\bus-\bm{v}^h\|_{L^4(\Omega)}^2\,\mathrm{d}t\\
&\quad +C\|\vph-\vps\|_{C([0,T];L^4(\Omega))}^2\int_0^T \|\zeta^h-\zeta^*\|_{L^2(\Omega)}^2\,\mathrm{d}t\\
&\quad +C \|\vph-\vps-\xi^h\|_{C([0,T];L^4(\Omega))}^2\int_0^T \| \zeta^* \|_{L^2(\Omega)}^2\,\mathrm{d}t\\
&\quad +C \|\vph-\vps\|_{C([0,T];L^8(\Omega))}^4\int_0^T \| \zeta^* \|_{L^4(\Omega)}^2\,\mathrm{d}t\\
&\quad +C \|\bm{w}^h-\bm{w}^*\|_{L^\infty(0,T;L^2(\Omega))}^2 \int_0^T\|\nabla (\mu^h-\mu^*)\|_{L^4(\Omega)}^2\,\mathrm{d}t\\
&\quad +C  \|\bm{w}^*\|_{L^\infty(0,T;L^2(\Omega))}^2 \int_0^T\|\nabla (\mu^h-\mu^*-\eta^h)\|_{L^4(\Omega)}^2\,\mathrm{d}t\\
&\quad +C \|\bm{w}^h-\bm{w}^*\|_{L^\infty(0,T;L^2(\Omega))}^2 \|\vph-\vps\|_{C([0,T];L^\infty(\Omega))}^2\int_0^T \|\bus \|_{L^4(\Omega)}^2\,\mathrm{d}t\\
&\quad +C \|\vph-\vps-\xi^h\|_{C([0,T];L^\infty(\Omega))}^2 \|\bm{w}^*\|_{L^\infty(0,T;L^2(\Omega))}^2 \int_0^T \|\bus \|_{L^4(\Omega)}^2\,\mathrm{d}t\\
&\quad +C \|\vph-\vps\|_{C([0,T];L^\infty(\Omega))}^4 \|\bm{w}^*\|_{L^\infty(0,T;L^2(\Omega))}^2 \int_0^T \|\bus \|_{L^4(\Omega)}^2\,\mathrm{d}t\\
&\quad +C \|\bm{w}^h-\bm{w}^*\|_{L^\infty(0,T;L^2(\Omega))}^2 \int_0^T\|\uh-\bus\|_{L^4(\Omega)}^2\,\mathrm{d}t\\
&\quad +C \|\bm{w}^*\|_{L^\infty(0,T;L^2(\Omega))}^2 \int_0^T\|\uh-\bus-\bm{v}^h\|_{L^4(\Omega)}^2\,\mathrm{d}t\\
&\leq C\|h\|_{L^2(Q)}^4,
\end{align*}
\begin{align*}
\|\widehat{g}_3\|_{L^2(0,T;H^1(\Omega))}^2
&\leq C\int_0^T \|\bm{w}^h-\bm{w}^*\|_{H^1(\Omega)}^2 \|\nabla(\vph-\vps)\|_{L^2(\Omega)} \|\nabla(\vph-\vps)\|_{H^2(\Omega)}\,\mathrm{d}t\\
&\quad +C \|\bm{w}^h-\bm{w}^*\|_{L^\infty(0,T;L^2(\Omega))}^2
\int_0^T \|\nabla(\vph-\vps)\|_{W^{1,\infty}(\Omega)}^2\,\mathrm{d}t\\
&\quad +C\int_0^T \|\bm{w}^*\|_{H^1(\Omega)}^2\|\nabla(\vph-\vps-\xi^h)\|_{L^2(\Omega)} \|\nabla(\vph-\vps-\xi^h)\|_{H^2(\Omega)}\,\mathrm{d}t\\
&\quad +C\|\bm{w}^*\|_{L^\infty(0,T;L^2(\Omega))}^2
\int_0^T \| \nabla(\vph-\vps-\xi^h)\|_{W^{1,\infty}(\Omega)}^2\,\mathrm{d}t\\
&\leq C\|h\|_{L^2(Q)}^4,
\end{align*}
and
\begin{align*}
\|\widehat{g}_4\|_{H^1(\Omega)}^2&\leq C\|\vph-\vps-\xi^h\|_{C([0,T];H^1(\Omega))}^2\leq C\|h\|_{L^2(Q)}^4.
\end{align*}
Collecting the above estimates, we arrive at \eqref{estiLined}.
As a consequence, it holds
$$
\frac{\|\mathcal{T}(R^*+h)-\mathcal{T}(R^*) -\widetilde{\rho}^h\|_{\mathcal{Z}}}{\|h\|_{\mathcal{U}}} =\frac{\|\widehat{y}^h\|_{\mathcal{Z}}}{\|h\|_{\mathcal{U}}}
\leq C\|h\|_{\mathcal{U}}\to 0,\quad \text{as}\ \ \ \|h\|_{\mathcal{U}}\to 0.
$$
This completes the proof of the assertion (1).

Next, we prove assertion (2), that is, the Lipschitz continuity of the Fr\'{e}chet derivative of $\mathcal{T}$.

For two given controls $R^*, R^\sharp\in \widetilde{\mathcal{U}}$,
 we denote by $(\bus,P^*,\vp^*,\mu^*)$, $(\u^\sharp,P^\sharp,\vp^\sharp,\mu^\sharp)$ their associate states, by $(\vv^*,q^*,\xi^*,\eta^*)$,
 $(\vv^\sharp,q^\sharp,\xi^\sharp,\eta^\sharp)$ the corresponding solutions to the linear system \eqref{lsphi}--\eqref{lsini} at $(\u^*,P^*,\varphi^*,\mu^*)$
 and $(\u^\sharp,P^\sharp,\vp^\sharp,\mu^\sharp)$ with $\bm{f}_1=\bm{0}$, $f_2=h$, $f_3=0$, respectively,
 and by $(\widetilde{\bm{w}}^*,\widetilde{\pi}^*,\widetilde{\rho}^*,\widetilde{\zeta}^*)$,
 $(\widetilde{\bm{w}}^\sharp,\widetilde{\pi}^\sharp,\widetilde{\rho}^\sharp,\widetilde{\zeta}^\sharp)$
 the solutions to the linear system \eqref{asuz}--\eqref{dg4} corresponding to $R^*$ and $R^\sharp$.
 Define the differences
 $$
 \widetilde{\bm{w}}=\widetilde{\bm{w}}^*-\widetilde{\bm{w}}^\sharp,
 \quad \widetilde{\pi}=\widetilde{\pi}^*-\widetilde{\pi}^\sharp,
 \quad \widetilde{\rho}=\widetilde{\rho}^*-\widetilde{\rho}^\sharp,
 \quad \widetilde{\zeta}=\widetilde{\zeta}^*-\widetilde{\zeta}^\sharp.
 $$
 We find that $(\widetilde{\bm{w}},\widetilde{\pi},\widetilde{\rho},\widetilde{\zeta})$ is a solution to the following problem
 \begin{align*}
& \nu(\vps)\widetilde{\bm{w}} =\nabla \widetilde{\pi} - \widetilde{\rho}\nabla\vps +\widetilde{\bm{g}}_1,&\mbox{a.e. in $Q$,}\\[1mm]
&\mathrm{div} \,\widetilde{\bm{w}} =0,&\mbox{a.e. in  $Q$,}\\[1mm]
& \widetilde{\zeta} = \Delta \widetilde{\rho} + \widetilde{\bm{w}} \cdot \nabla \vps + \widetilde{g}_3, &\mbox{a.e. in  $Q$,}\\[1mm]
&\dn \widetilde{\rho} =\widetilde{\bm{w}}\cdot{\bf n}=  0, &\mbox{a.e. on $\Sigma$,}\\[1mm]
&\widetilde{\rho}|_{t=T} = \widetilde{g}_4,&\mbox{a.e. in $\Omega$,}
\end{align*}
and
\begin{align*}
&-\langle\dt \widetilde{\rho}, \psi\rangle_{(H^1)',H^1}
-\int_\Omega (\bus\cdot\nabla \widetilde{\rho})\psi\,\mathrm{d}x -\int_\Omega \nabla  \widetilde{\zeta} \cdot\nabla \psi\,\mathrm{d}x
-\int_\Omega \Psi''(\vps) \widetilde{\zeta}\psi\,\mathrm{d}x \non\\
&\qquad \qquad +\int_\Omega  (\widetilde{\bm{w}} \cdot \nabla \mus)\psi\,\mathrm{d}x
+\int_\Omega (\nu'(\vp^*) \bus\cdot \widetilde{\bm{w}})\psi\,\mathrm{d}x = \int_\Omega \widetilde{g}_2\psi\,\mathrm{d}x,
\end{align*}
 for almost all $t\in(0,T)$ and all $\psi\in H^1(\Omega)$. Here, we have
\begin{align*}
\widetilde{\bm{g}}_1&=-\widetilde{\bm{w}}^\sharp\int_0^1\nu'(s\vp^*+(1-s)\vp^\sharp)(\vp^*-\vp^\sharp)\,\mathrm{d}s
-\widetilde{\rho}^\sharp\nabla(\vp^*-\vp^\sharp)\\
&\quad -\nu'(\vp^*)\xi^*(\bm{w}^*-\bm{w}^\sharp)
-\nu'(\vp^*)(\xi^*-\xi^\sharp)\bm{w}^\sharp\\
&\quad -\xi^\sharp\bm{w}^\sharp\int_0^1\nu''(s\vp^*+(1-s)\vp^\sharp)(\vp^*-\vp^\sharp)\,\mathrm{d}s\\
&\quad -(\rho^*-\rho^\sharp)\nabla \xi^*-\rho^\sharp\nabla (\xi^*-\xi^\sharp),\\
\widetilde{g}_2&= (\bm{u}^*-\bm{u}^\sharp)\cdot\nabla \widetilde{\rho}^\sharp
 +\widetilde{\xi}^\sharp\int_0^1\Psi^{(3)}(s\vp^*+(1-s)\vp^\sharp)(\vp^*-\vp^\sharp)\,\mathrm{d}s\\
&\quad -\widetilde{\bm{w}}^\sharp\cdot\nabla(\mu^*-\mu^\sharp)
-\nu'(\vp^*)(\bm{u}^*-\bm{u}^\sharp)\cdot\widetilde{\bm{w}}^\sharp\\
&\quad - \bm{u}^\sharp\cdot \widetilde{\bm{w}}^\sharp \int_0^1\nu''(s\vp^*+(1-s)\vp^\sharp)(\vp^*-\vp^\sharp)\,\mathrm{d}s\\
&\quad + \bm{v}^*\cdot\nabla (\rho^*-\rho^\sharp) + (\bm{v}^*-\bm{v}^\sharp)\cdot\nabla \rho^\sharp\\
&\quad + \Psi^{(3)}(\vp^*)\xi^*(\zeta^*-\zeta^\sharp) + \Psi^{(3)}(\vp^*)(\xi^*-\xi^\sharp)\zeta^\sharp\\
&\quad + \xi^\sharp\zeta^\sharp\int_0^1\Psi^{(4)}(s\vp^*+(1-s)\vp^\sharp)(\vp^*-\vp^\sharp)\,\mathrm{d}s\\
&\quad -\bm{w}^*\cdot\nabla(\eta^*-\eta^\sharp)-(\bm{w}^*-\bm{w}^\sharp)\cdot\nabla \eta^\sharp
- \nu''(\vp^*)\xi^*\bm{u}^*\cdot(\bm{w}^*-\bm{w}^\sharp)\\
&\quad - \nu''(\vp^*)\xi^*(\bm{u}^*-\bm{u}^\sharp)\cdot \bm{w}^\sharp - \nu''(\vp^*)(\xi^*-\xi^\sharp)\bm{u}^\sharp \cdot \bm{w}^\sharp \\
&\quad - \xi^\sharp(\bm{u}^\sharp\cdot\bm{w}^\sharp) \int_0^1\nu^{(3)}(s\vp^*+(1-s)\vp^\sharp)(\vp^*-\vp^\sharp)\,\mathrm{d}s\\
&\quad -\nu'(\vp^*)\bm{v}^*\cdot(\bm{w}^*-\bm{w}^\sharp) -\nu'(\vp^*)(\bm{v}^*-\bm{v}^\sharp)\bm{w}^\sharp\\
&\quad -(\bm{v}^\sharp\cdot \bm{w}^\sharp)\int_0^1\nu''(s\vp^*+(1-s)\vp^\sharp)(\vp^*-\vp^\sharp)\,\mathrm{d}s +\alpha_2(\xi^*-\xi^\sharp),\\
\widetilde{g}_3&= \widetilde{\bm{w}}^\sharp\cdot\nabla(\vp^*-\vp^\sharp) + (\bm{w}^*-\bm{w}^\sharp)\cdot\nabla \xi^*+\bm{w}^\sharp\cdot\nabla(\xi^*-\xi^\sharp),\\
\widetilde{g}_4&=\alpha_1(\xi^*(T)-\xi^\sharp(T)).
\end{align*}

Applying Lemma \ref{adexe}, we can derive the following estimate
\begin{align}
&\|\widetilde{\rho}^*-\widetilde{\rho}^\sharp\|^2_{C([0,T];H^1(\Omega))\cap L^2(0,T;H^3(\Omega))\cap H^1(0,T;(H^1(\Omega))')} \non\\
&\quad \leq C \Big(\|\widetilde{\bm{g}}_1\|^2_{L^2(0,T;H^1(\Omega))}
+ \|\widetilde{g}_2\|^2_{L^2(0,T;L^{4/3}(\Omega))} +  \|\widetilde{g}_3\|^2_{L^2(0,T;H^1(\Omega))} + \|\widetilde{g}_4\|^2_{H^1(\Omega)}\Big)\non\\
&\quad\leq  C\|R^*-R^\sharp\|^2_{L^2(Q)}\|h\|_{L^2(Q)}^2.
\label{estiLinez}
\end{align}
Indeed, using the estimates \eqref{stabu1}, \eqref{K1}, \eqref{K2},  \eqref{estiLine}, \eqref{DeLipa}, \eqref{estiLineaj}, \eqref{stabuLip} and \eqref{estiLinedd}, we find
\begin{align*}
&\|\widetilde{\bm{g}}_1\|_{L^2(0,T;H^1(\Omega))}^2 \\
&\quad \leq C\|\widetilde{\bm{w}}^\sharp\|_{L^\infty(0,T;L^2(\Omega))}^2 \int_0^T\|\vp^*-\vp^\sharp\|_{W^{1,\infty}(\Omega)}^2\,\mathrm{d}t
+ C\|\vp^*-\vp^\sharp\|_{C([0,T];L^\infty(\Omega))}^2 \int_0^T \|\widetilde{\bm{w}}^\sharp\|_{H^1(\Omega)}^2\,\mathrm{d}t\\
&\qquad + C\|\widetilde{\rho}^\sharp\|_{C([0,T];H^1(\Omega))}^2 \int_0^T\|\nabla(\vp^*-\vp^\sharp)\|_{L^\infty(\Omega)}^2\,\mathrm{d}t
+ C\|\vp^*-\vp^\sharp\|_{C([0,T];H^2(\Omega))}^2  \int_0^T\|\widetilde{\rho}^\sharp\|_{L^\infty(\Omega)}^2\,\mathrm{d}t\\
&\qquad +C  \|\bm{w}^*-\bm{w}^\sharp\|_{L^\infty(0,T;L^2(\Omega))}^2\int_0^T \|\xi^*\|_{W^{1,\infty}(\Omega)}^2\,\mathrm{d}t
+C \|\xi^*\|_{C([0,T];L^\infty(\Omega))}^2\int_0^T \|\bm{w}^*-\bm{w}^\sharp\|_{H^1(\Omega)}^2\,\mathrm{d}t\\
&\qquad +C \|\bm{w}^\sharp\|_{L^\infty(0,T;L^2(\Omega))}^2 \int_0^T\|\xi^*-\xi^\sharp\|_{W^{1,\infty}(\Omega)}^2\,\mathrm{d}t
+C \|\xi^*-\xi^\sharp\|_{C([0,T];L^\infty(\Omega))}^2 \int_0^T\|\bm{w}^\sharp\|_{H^1(\Omega)}^2\,\mathrm{d}t\\
&\qquad +C \|\xi^\sharp\|_{C([0,T];L^\infty(\Omega))}^2 \|\bm{w}^\sharp\|_{L^\infty(0,T;L^2(\Omega))}^2 \int_0^T\|\vp^*-\vp^\sharp\|_{W^{1,\infty}(\Omega)}^2\,\mathrm{d}t \\
&\qquad  + C \|\bm{w}^\sharp\|_{L^\infty(0,T;L^2(\Omega))}^2 \|\vp^*-\vp^\sharp\|_{C([0,T];L^\infty(\Omega))}^2\int_0^T \|\nabla \xi^\sharp\|_{L^\infty(\Omega)}^2\,\mathrm{d}t\\
&\qquad +C  \|\xi^\sharp\|_{C([0,T];L^\infty(\Omega))}^2 \|\vp^*-\vp^\sharp\|_{C([0,T];L^\infty(\Omega))}^2 \int_0^T \|\bm{w}^\sharp\|_{H^1(\Omega)}^2\,\mathrm{d}t\\
&\qquad +C \|\rho^*-\rho^\sharp\|_{C([0,T];H^1(\Omega))}^2 \int_0^T \| \nabla \xi^*\|_{L^\infty(\Omega)}^2\,\mathrm{d}t
 +C \| \nabla \xi^*\|_{C([0,T];H^1(\Omega))}^2\int_0^T \|\rho^*-\rho^\sharp\|_{L^\infty(\Omega)}^2\,\mathrm{d}t\\
&\qquad +C \|\rho^\sharp\|_{C([0,T];H^1(\Omega))}^2\int_0^T \|\nabla (\xi^*-\xi^\sharp)\|_{L^\infty(\Omega)}^2\,\mathrm{d}t \\
&\qquad
 +C\| \nabla (\xi^*-\xi^\sharp)\|_{C([0,T];H^1(\Omega))}^2 \int_0^T\|\rho^\sharp\|_{L^\infty(\Omega)}^2\,\mathrm{d}t\\
& \quad  \leq C\|R^*-R^\sharp\|^2_{L^2(Q)}\|h\|_{L^2(Q)}^2,
\end{align*}
\begin{align*}
& \|\widetilde{g}_2\|_{L^2(0,T;L^{4/3}(\Omega))}^2 \\
&\quad \leq C \|\nabla \widetilde{\rho}^\sharp\|_{C([0,T];L^2(\Omega))}^2\int_0^T \|\bm{u}^*-\bm{u}^\sharp\|_{L^4(\Omega)}^2\,\mathrm{d}t  + C \|\widetilde{\xi}^\sharp\|_{C([0,T];L^2(\Omega))}^2 \int_0^T\|\vp^*-\vp^\sharp\|_{L^4(\Omega)}^2\,\mathrm{d}t\\
&\qquad +C\|\widetilde{\bm{w}}^\sharp\|_{L^\infty(0,T;L^2(\Omega))}^2 \int_0^T\|\nabla(\mu^*-\mu^\sharp)\|_{L^4(\Omega)}^2\,\mathrm{d}t  +C\|\widetilde{\bm{w}}^\sharp\|_{L^\infty(0,T;L^2(\Omega))}^2 \int_0^T \|\bm{u}^*-\bm{u}^\sharp\|_{L^4(\Omega)}^2\,\mathrm{d}t\\
&\qquad +C\|\widetilde{\bm{w}}^\sharp\|_{L^\infty(0,T;L^2(\Omega))}^2 \|\vp^*-\vp^\sharp\|_{C([0,T];L^\infty(\Omega))}^2 \int_0^T\|\bm{u}^\sharp\|_{L^4(\Omega)}^2\,\mathrm{d}t\\
&\qquad +C \|\nabla (\rho^*-\rho^\sharp)\|_{C([0,T];L^2(\Omega))}^2 \int_0^T\| \bm{v}^*\|_{L^4(\Omega)}^2\,\mathrm{d}t
+C  \|\nabla \rho^\sharp\|_{C([0,T];L^2(\Omega))}^2   \int_0^T\|\bm{v}^*-\bm{v}^\sharp\|_{L^4(\Omega)}^2\,\mathrm{d}t\\
&\qquad +C \|\xi^*\|_{C([0,T];L^2(\Omega))}^2 \int_0^T\|\zeta^*-\zeta^\sharp\|_{L^4(\Omega)}^2\,\mathrm{d}t
+C \|\xi^*-\xi^\sharp\|_{C([0,T];L^2(\Omega))}^2\int_0^T \|\zeta^\sharp\|_{L^4(\Omega)}^2\,\mathrm{d}t\\
&\qquad +C\|\xi^\sharp\|_{C([0,T];L^2(\Omega))}^2 \|\vp^*-\vp^\sharp\|_{C([0,T];L^\infty(\Omega))}^2\int_0^T \|\zeta^\sharp\|_{L^4(\Omega)}^2\,\mathrm{d}t\\
& \qquad +C \|\bm{w}^*\|_{L^\infty(0,T;L^2(\Omega))}^2\int_0^T \|\nabla(\eta^*-\eta^\sharp)\|_{L^4(\Omega)}^2\,\mathrm{d}t \\
&\qquad
 +C \|\bm{w}^*-\bm{w}^\sharp\|_{L^\infty(0,T;L^2(\Omega))}^2\int_0^T \|\nabla \eta^\sharp\|_{L^4(\Omega)}^2\,\mathrm{d}t\\
 &\qquad +C \|\bm{w}^*-\bm{w}^\sharp\|_{L^\infty(0,T;L^2(\Omega))}^2 \|\xi^*\|_{C([0,T];L^\infty(\Omega))}^2
 \int_0^T \| \bm{u}^*\|_{L^4(\Omega)}^2\,\mathrm{d}t\\
 &\qquad +C \|\bm{w}^\sharp\|_{L^\infty(0,T;L^2(\Omega))}^2\| \xi^*\|_{C([0,T];L^\infty(\Omega))}^2
 \int_0^T \|\bm{u}^*-\bm{u}^\sharp\|_{L^4(\Omega)}^2\,\mathrm{d}t\\
 &\qquad  +C \|\bm{w}^\sharp\|_{L^\infty(0,T;L^2(\Omega))}^2 \|\xi^*-\xi^\sharp\|_{C([0,T];L^\infty(\Omega))}^2
 \int_0^T \|\bm{u}^\sharp \|_{L^4(\Omega)}^2\,\mathrm{d}t\\
 &\qquad +C \|\bm{w}^\sharp\|_{L^\infty(0,T;L^2(\Omega))}^2 \| \xi^\sharp\|_{C([0,T];L^\infty(\Omega))}^2
 \|\vp^*-\vp^\sharp\|_{C([0,T];L^\infty(\Omega))}^2  \int_0^T\|\bm{u}^\sharp\|_{L^4(\Omega)}^2\,\mathrm{d}t\\
 &\qquad +C \|\bm{w}^*-\bm{w}^\sharp\|_{L^\infty(0,T;L^2(\Omega))}^2 \int_0^T \| \bm{v}^*\|_{L^4(\Omega)}^2\,\mathrm{d}t
  +C \|\bm{w}^\sharp\|_{L^\infty(0,T;L^2(\Omega))}^2\int_0^T \|\bm{v}^*-\bm{v}^\sharp \|_{L^4(\Omega)}^2\,\mathrm{d}t\\
  &\qquad  +C \|\bm{w}^\sharp\|_{L^\infty(0,T;L^2(\Omega))}^2 \|\vp^*-\vp^\sharp\|_{C([0,T];L^\infty(\Omega))}^2
  \int_0^T  \|\bm{v}^\sharp \|_{L^4(\Omega)}^2 \,\mathrm{d}t  +\alpha_2\|\xi^*-\xi^\sharp\|_{L^2(Q)}^2\\
  &\quad \leq C\|R^*-R^\sharp\|^2_{L^2(Q)}\|h\|_{L^2(Q)}^2,
\end{align*}
\begin{align*}
\|\widetilde{g}_3\|_{L^2(0,T;H^1(\Omega))}^2
&\leq C\|\widetilde{\bm{w}}^\sharp\|_{L^\infty(0,T;L^2(\Omega))}^2
\int_0^T \|\nabla(\vp^*-\vp^\sharp)\|_{W^{1,\infty}(\Omega)}^2 \,\mathrm{d}t\\
&\quad +C \|\nabla(\vp^*-\vp^\sharp)\|_{C([0,T];L^2(\Omega))} \int_0^T \|\nabla \widetilde{\bm{w}}^\sharp\|_{L^2(\Omega)}^2  \|\nabla(\vp^*-\vp^\sharp)\|_{H^2(\Omega)} \,\mathrm{d}t\\
&\quad +C \|\bm{w}^*-\bm{w}^\sharp\|_{L^\infty(0,T;L^2(\Omega))}^2 \int_0^T\| \nabla \xi^*\|_{W^{1,\infty}(\Omega)}^2 \,\mathrm{d}t\\
&\quad +C \| \nabla \xi^*\|_{C([0,T];L^2(\Omega))}\int_0^T\|\bm{w}^*-\bm{w}^\sharp\|_{H^1(\Omega)}^2  \| \nabla \xi^*\|_{H^2(\Omega)}  \,\mathrm{d}t\\
&\quad + C\|\bm{w}^\sharp\|_{L^\infty(0,T;L^2(\Omega))}^2
\int_0^T \|\nabla(\xi^*-\xi^\sharp)\|_{W^{1,\infty}(\Omega)}^2 \,\mathrm{d}t\\
&\quad +C \|\nabla(\xi^*-\xi^\sharp)\|_{C([0,T];L^2(\Omega))} \int_0^T \|\nabla \bm{w}^\sharp\|_{L^2(\Omega)}^2  \|\nabla(\xi^*-\xi^\sharp)\|_{H^2(\Omega)} \,\mathrm{d}t\\
&  \leq C\|R^*-R^\sharp\|^2_{L^2(Q)}\|h\|_{L^2(Q)}^2,
\end{align*}
and
\begin{align*}
\|\widetilde{g}_4\|_{H^1(\Omega)}^2\leq C\|\xi^*-\xi^\sharp\|_{C([0,T];H^1(\Omega))}^2
\leq  C\|R^*-R^\sharp\|^2_{L^2(Q)}\|h\|_{L^2(Q)}^2.
\end{align*}
Collecting the above estimates, we obtain \eqref{estiLinez}, which easily yields the conclusion \eqref{DeLipT}.
\end{proof}

\subsection{A second-order sufficient condition}

Since the control-to-state operator $\mathcal{S}$ and the control-to-costate operator $\mathcal{T}$ are continuously Fr\'{e}chet differentiable (recall Proposition \ref{1stFD}
and Proposition  \ref{Diffcotoco}), the cost functional $\mathcal{J}$ as well as the reduced cost functional  $\widehat{\mathcal{J}}$ are twice continuously differentiable
in $\mathcal{U}$ thanks to the chain rule. With the help of the adjoint state, for a given control $R^*\in \widetilde{\mathcal{U}}$, the first and second Fr\'{e}chet derivatives of
$\widehat{\mathcal{J}}$ at $R^*$ can be calculated. Indeed, we have
$$
\widehat{\mathcal{J}}'(R^*)h=
\int_Q\left(\rho^* + \beta R^*\right)h\,\mathrm{d}x\mathrm{d}t,\quad \forall\,
h\in \mathcal{U},
$$
where $\rho^*$ is the associated adjoint state of $R^*$, and
$$
\widehat{\mathcal{J}}''(R^*)[h_1,h_2]=\int_Q \big(\beta h_1+\widetilde{\rho}_{R^*}^{h_1}\big)h_2\,\mathrm{d}x\mathrm{d}t,\quad \forall\,
h_1, h_2\in \mathcal{U},
$$
where $\widetilde{\rho}_{R^*}^{h_1}$ is determined as in \eqref{Dtt} with $h=h_1$.

To derive the second-order sufficient condition, we apply the idea in \cite{CRT08} and introduce the concept of cone of critical directions.
\begin{definition}
Let $R^*\in \mathcal{U}_{\mathrm{ad}}$ and set
$$
A_0(R^*)=\{(x,t)\in Q\ |\ |\rho^*(x,t) + \beta R^*(x,t)|>0\},
$$
where $\rho^*$ is the associated adjoint state of $R^*$.
The \textbf{cone of critical directions}, denoted by $\mathcal{C}(R^*)$, is defined as
$$
\mathcal{C}(R^*):=\{h\in  \mathcal{U}\cap L^\infty(Q)\ |\ h\ \text{ satisfies }\ \eqref{CC}\},
$$
where \eqref{CC} is given by
\begin{equation}\label{CC}
h(x,t)
\begin{cases}
\geq 0,\quad \text{if}\ \ (x,t)\notin A_0(R^*),\ R^*(x,t)=R_{\mathrm{min}}(x,t),\\
\leq 0,\quad \text{if}\ \ (x,t)\notin A_0(R^*),\ R^*(x,t)=R_{\mathrm{max}}(x,t),\\
=0,\quad \text{if}\ \ (x,t)\in A_0(R^*),
\end{cases}
\end{equation}
for almost all $(x,t)\in Q$.
\end{definition}

The main result of this section reads as follows.

\begin{theorem}\label{2ndSC}
Assume that $\Omega\subset \mathbb{R}^2$ is a bounded domain with smooth boundary $\partial\Omega$, $T>0$ and assumptions $\mathbf{(S1)}$--$\mathbf{(S5)}$ are satisfied
with $\beta>0$. Let $R^*\in\mathcal{U}_{\mathrm{ad}}$ be any control satisfying the variational inequality \eqref{vug2}. Moreover, we assume that
\begin{align}
\widehat{\mathcal{J}}''(R^*)[h,h]=\int_Q \big(\beta h+\widetilde{\rho}_{R^*}^{h}\big)h\,\mathrm{d}x\mathrm{d}t>0,\quad \forall\, h\in \mathcal{C}(R^*)\setminus \{0\}.\label{2ndScon}
\end{align}
Then there exists constants $\lambda,\,\theta>0$ such that the following inequality holds
\begin{align}
\widehat{\mathcal{J}}(R)\geq \widehat{\mathcal{J}}(R^*)+\theta\|R-R^*\|_{L^2(Q)}^2,\quad \forall\, R\in \mathcal{U}_{\mathrm{ad}} \ \ \text{with}\ \ \|R-R^*\|_{L^2(Q)}<\lambda.
\label{quadgro}
\end{align}
As a consequence, $R^*$ is a strict local minimizer of $\widehat{\mathcal{J}}$ on the set $\mathcal{U}_{\mathrm{ad}}$.
\end{theorem}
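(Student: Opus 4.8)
The plan is to follow the by-now classical strategy for second-order sufficient conditions of Casas--Raymond--Tr\"oltzsch type (cf. \cite{CRT08}), adapted to the reduced functional $\widehat{\mathcal{J}}$. Throughout I write $\bar d := \rho^* + \beta R^*$ for the optimality gap and
\[
Q_0(h) := \widehat{\mathcal{J}}''(R^*)[h,h] = \beta\|h\|_{L^2(Q)}^2 + \int_Q \widetilde{\rho}^{h}_{R^*}\,h\,\mathrm{d}x\mathrm{d}t
\]
for the quadratic form (with $\widetilde{\rho}^{h}_{R^*}$ as in \eqref{Dtt}). The first observation is that $Q_0$ is a \emph{Legendre form} on $L^2(Q)$: estimate \eqref{estiLinedd} shows that $h\mapsto \widetilde{\rho}^{h}_{R^*}$ is bounded from $L^2(Q)$ into $\mathcal{Z}$, and since $\mathcal{Z}=C([0,T];H^1(\Omega))\cap L^2(0,T;H^3(\Omega))\cap H^1(0,T;(H^1(\Omega))')$ embeds compactly into $L^2(Q)$ by the Aubin--Lions--Simon lemma, the operator $D\mathcal{T}(R^*):h\mapsto\widetilde{\rho}^{h}_{R^*}$ is compact on $L^2(Q)$. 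Thus $Q_0$ is the sum of the coercive term $\beta\|\cdot\|_{L^2}^2$ (recall $\beta>0$) and a weakly continuous quadratic form, so that $h_k\rightharpoonup h$ weakly in $L^2(Q)$ forces $\int_Q\widetilde{\rho}^{h_k}_{R^*}h_k\to\int_Q\widetilde{\rho}^{h}_{R^*}h$. I also record that \eqref{DeLipT}--\eqref{estiLinez} give the base-point Lipschitz bound $|\widehat{\mathcal{J}}''(R_1)[h,h]-\widehat{\mathcal{J}}''(R_2)[h,h]|\le C\|R_1-R_2\|_{L^2(Q)}\|h\|_{L^2(Q)}^2$ for $R_1,R_2\in\widetilde{\mathcal{U}}$, and that the projection formula of the preceding Corollary gives the pointwise sign structure $\bar d>0\Rightarrow R^*=R_{\mathrm{min}}$, $\bar d<0\Rightarrow R^*=R_{\mathrm{max}}$, so that $\bar d\,(R-R^*)\ge0$ a.e. for every $R\in\mathcal{U}_{\mathrm{ad}}$.

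The heart of the argument is a coercivity estimate on an enlarged cone. For $\tau>0$ let $\mathcal{C}_\tau(R^*)$ consist of all $h\in L^2(Q)$ obeying the sign conditions of \eqref{CC} together with $h=0$ a.e. on $\{|\bar d|>\tau\}$, so that $\mathcal{C}(R^*)\subset\mathcal{C}_\tau(R^*)$. I claim that \eqref{2ndScon} implies the existence of $\delta,\tau>0$ with $\widehat{\mathcal{J}}''(R^*)[h,h]\ge\delta\|h\|_{L^2(Q)}^2$ for all $h\in\mathcal{C}_\tau(R^*)$. The plan is to argue by contradiction: if this fails, choose $h_n\in\mathcal{C}_{1/n}(R^*)$ with $\|h_n\|_{L^2(Q)}=1$ and $Q_0(h_n)<1/n$, and pass to a weak limit $h_n\rightharpoonup h$. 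The sign conditions survive weak convergence, while $h_n=0$ on $\{|\bar d|>1/n\}$ forces $h=0$ on $\{|\bar d|>0\}$; the weak continuity of the compact part yields $\lim_n Q_0(h_n)=\beta+\int_Q\widetilde{\rho}^{h}_{R^*}h\le0$, whence $h\ne0$, and \eqref{2ndScon} then contradicts $Q_0(h)\le0$. I expect this to be the \textbf{main obstacle}, precisely because of the two-norm discrepancy: the weak $L^2$-limit $h$ need not lie in $L^\infty(Q)$, whereas \eqref{2ndScon} is postulated only on $\mathcal{C}(R^*)\subset\mathcal{U}\cap L^\infty(Q)$. Bridging this gap---by truncating $h$ into $\mathcal{C}(R^*)$ and exploiting the $L^2$-continuity together with the Legendre property of $Q_0$---is exactly the delicate technical content of the Casas--Raymond--Tr\"oltzsch machinery and must be carried out with care.

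With the coercivity in hand, the quadratic growth \eqref{quadgro} follows by a second contradiction argument. Suppose \eqref{quadgro} fails; then there exist $R_k\in\mathcal{U}_{\mathrm{ad}}$, $R_k\ne R^*$, $R_k\to R^*$ in $L^2(Q)$, with $\widehat{\mathcal{J}}(R_k)<\widehat{\mathcal{J}}(R^*)+\tfrac1k\|R_k-R^*\|_{L^2(Q)}^2$. Set $\rho_k=\|R_k-R^*\|_{L^2(Q)}$ and $h_k=(R_k-R^*)/\rho_k$. Since $\widehat{\mathcal{J}}$ is twice continuously Fr\'echet differentiable on the open convex set $\widetilde{\mathcal{U}}$ (as recalled above), a second-order Taylor expansion gives, for some $\zeta_k$ on the segment $[R^*,R_k]\subset\widetilde{\mathcal{U}}$,
\[
\tfrac1k\rho_k^2>\widehat{\mathcal{J}}'(R^*)(R_k-R^*)+\tfrac12\,\widehat{\mathcal{J}}''(\zeta_k)[R_k-R^*,R_k-R^*].
\]
By the variational inequality \eqref{vug2}, $\widehat{\mathcal{J}}'(R^*)(R_k-R^*)=\int_Q\bar d\,(R_k-R^*)\ge0$, so dividing by $\rho_k^2$ yields $\widehat{\mathcal{J}}''(\zeta_k)[h_k,h_k]<2/k$; the base-point Lipschitz bound and $\|\zeta_k-R^*\|_{L^2(Q)}\le\rho_k\to0$ then give $\limsup_k\widehat{\mathcal{J}}''(R^*)[h_k,h_k]\le0$. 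Passing to a weak limit $h_k\rightharpoonup h$, I verify as before that $h$ obeys the sign conditions and vanishes on $\{|\bar d|>0\}$ (the latter since $\int_Q\bar d\,h_k=\widehat{\mathcal{J}}'(R^*)(R_k-R^*)/\rho_k=O(\rho_k)\to0$ with nonnegative integrand, and $\bar d\in L^2(Q)$), hence $h\in\mathcal{C}_\tau(R^*)$. The coercivity gives $\widehat{\mathcal{J}}''(R^*)[h,h]\ge\delta\|h\|_{L^2(Q)}^2$, whereas the weak continuity of the compact part yields $\lim_k\widehat{\mathcal{J}}''(R^*)[h_k,h_k]=\beta+\int_Q\widetilde{\rho}^{h}_{R^*}h\le0$, i.e. $\int_Q\widetilde{\rho}^{h}_{R^*}h\le-\beta$, so that $\widehat{\mathcal{J}}''(R^*)[h,h]\le\beta(\|h\|_{L^2(Q)}^2-1)\le0$. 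Comparing the two forces $h=0$, and then $\beta\le0$, contradicting $\beta>0$. This contradiction establishes \eqref{quadgro}, and the final assertion that $R^*$ is a strict local minimizer of $\widehat{\mathcal{J}}$ on $\mathcal{U}_{\mathrm{ad}}$ is then immediate.
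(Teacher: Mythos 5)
Your proposal follows the two-stage Casas--Tr\"oltzsch route: first upgrade \eqref{2ndScon} to a quantitative coercivity estimate $\widehat{\mathcal{J}}''(R^*)[h,h]\ge\delta\|h\|_{L^2(Q)}^2$ on an enlarged cone $\mathcal{C}_\tau(R^*)\subset L^2(Q)$, then run a contradiction argument for the quadratic growth \eqref{quadgro}. The genuine gap is that the first stage --- the lemma on which everything else in your argument rests --- is never established. You sketch its proof by contradiction, you yourself identify the obstruction (the weak $L^2$-limit $h$ of your contradiction sequence need not belong to $\mathcal{C}(R^*)$, which in this paper is a subset of $\mathcal{U}\cap L^\infty(Q)$, so \eqref{2ndScon} cannot be applied to it), and you then defer the repair to unspecified ``machinery \ldots carried out with care.'' This is not a removable formality here: truncation $h_M=\max(-M,\min(h,M))$ preserves the sign conditions \eqref{CC} and fixes $L^\infty$, but gives no control on the $\mathcal{U}$-regularity $L^2(0,T;H^1(\Omega))\cap H^1(0,T;(H^1(\Omega))')$ required for membership in $\mathcal{C}(R^*)$; and even granting a density argument within the cone, strict positivity on a dense subset only yields \emph{nonnegativity} at the limit point, whereas your contradiction needs strict positivity (or a quantitative lower bound) there. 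So, as written, the pivotal coercivity claim is unproved and the proof is incomplete.

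The paper shows that this detour is unnecessary. It runs a single contradiction argument in the spirit of \cite{CRT08}: with $\widetilde{h}_k=(R_k-R^*)/\|R_k-R^*\|_{L^2(Q)}\rightharpoonup\widetilde{h}$, it first proves $\widehat{\mathcal{J}}'(R^*)\widetilde{h}=0$ (Newton--Leibniz expansion plus \eqref{vug2}) and concludes $\widetilde{h}\in\mathcal{C}(R^*)$ by invoking Step~2 of \cite[Theorem 4.1]{CRT08}; then the compactness of $h\mapsto\widetilde{\rho}_{R^*}^{h}$ (via $\mathcal{Z}\hookrightarrow\hookrightarrow L^2(Q)$) together with the second-order Taylor expansion and the Lipschitz continuity of $\widehat{\mathcal{J}}''$ gives $\widehat{\mathcal{J}}''(R^*)[\widetilde{h},\widetilde{h}]\le 0$; condition \eqref{2ndScon}, used only \emph{qualitatively}, then forces $\widetilde{h}=0$, whence $\int_Q\widetilde{\rho}_{R^*}^{\widetilde{h}_k}\widetilde{h}_k\,\mathrm{d}x\mathrm{d}t\to 0$ and $0<\beta=\lim_k\widehat{\mathcal{J}}''(R^*)[\widetilde{h}_k,\widetilde{h}_k]\le 0$, a contradiction. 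Your Step~3 already contains all of these ingredients: you obtain $h$ satisfying \eqref{CC}, $\widehat{\mathcal{J}}''(R^*)[h,h]\le 0$, and $\int_Q\widetilde{\rho}_{R^*}^{h}h\,\mathrm{d}x\mathrm{d}t\le-\beta$ (so $h\neq 0$). Had you at that point applied \eqref{2ndScon} directly to $h$ --- granting its cone membership, which is the one place where the $L^\infty$/$\mathcal{U}$ subtlety genuinely must be faced, and which the paper itself settles only by citation of \cite{CRT08} --- you would conclude $h=0$ and then $\beta\le 0$, finishing the proof with no coercivity lemma at all. In short: your Step~3 is essentially the paper's proof, while your Step~2 is an unproven (and strictly stronger) intermediate claim that the paper's argument shows to be dispensable.
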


\begin{proof}
The proof follows from a contradiction argument in the spirit of \cite[Theorem 4.1]{CRT08}. Suppose that $R^*$ does not fulfill \eqref{quadgro}.
Then there exists a sequence of controls $\{R_k\}\subset \mathcal{U}_{\mathrm{ad}}$ satisfying $R_k\neq R^*$, $R_k\to R^*$ strongly in $L^2(Q)$ such that
\begin{align}
\widehat{\mathcal{J}}(R_k)< \widehat{\mathcal{J}}(R^*)+\frac{1}{k}\|R_k-R^*\|_{L^2(Q)}^2,\qquad  \forall\, k\in \mathbb{Z}^+.
\label{contri1}
\end{align}
Define
$$
\widetilde{h}_k=\frac{R_k-R^*}{\|R_k-R^*\|_{L^2(Q)}}.
$$
Since $\|\widetilde{h}_k\|_{L^2(Q)}=1$, we can extract a subsequence (not relabelled for simplicity) such that
$$
\widetilde{h}_k\to \widetilde{h}\quad \text{weakly in}\ L^2(Q)\ \  \text{for some}\ \widetilde{h}\in L^2(Q).
$$

Let $\rho_k$ be the associated adjoint state of $R_k$. Using Propositions \ref{StoCo},  \ref{adexe2}, and \ref{Lipcotoco}, we can deduce that
\begin{align*}
& \left|\int_Q\left(\rho_k + \beta R_k\right)\widetilde{h}_k\,\mathrm{d}x\mathrm{d}t
- \int_Q\left(\rho^* + \beta R^*\right)\widetilde{h}\,\mathrm{d}x\mathrm{d}t\right|\\
&\quad \leq \left|\int_Q\left((\rho_k-\rho^*) + \beta (R_k-R^*)\right)\widetilde{h}_k\,\mathrm{d}x\mathrm{d}t\right|
+\left|\int_Q\left(\rho^* + \beta R^*\right)(\widetilde{h}_k-\widetilde{h})\,\mathrm{d}x\mathrm{d}t\right| \to 0,
\end{align*}
as $k\to+\infty$, which implies
\begin{align}
\lim_{k\to+\infty} \widehat{\mathcal{J}}'(R_k)\widetilde{h}_k =\widehat{\mathcal{J}}'(R^*)\widetilde{h}.
\label{1stCov}
\end{align}
Using the Newton-Leibniz formula
$$
\widehat{\mathcal{J}}(R_k) = \widehat{\mathcal{J}}(R^*)
+ \int_0^1 \widehat{\mathcal{J}}'(s R_k+(1-s) R^*) (R_k-R^*) \,\mathrm{d}s,
$$
we infer from \eqref{contri1} that
\begin{align*}
\int_0^1 \widehat{\mathcal{J}}'(s R_k+(1-s) R^*) \widetilde{h}_k \,\mathrm{d}s < \frac{1}{k}\|R_k-R^*\|_{L^2(Q)} \to 0, \quad \text{as}\ k\to+\infty.
\end{align*}
This fact, combined with \eqref{1stCov} and Proposition \ref{Lipcotoco}, easily yields
$\widehat{\mathcal{J}}'(R^*)\widetilde{h}\leq 0$. On the other hand, since $R_k\in \mathcal{U}_{\mathrm{ad}}$, it follows from \eqref{vug2} that
$\widehat{\mathcal{J}}'(R^*)\widetilde{h}_k\geq 0$. This further implies
$\widehat{\mathcal{J}}'(R^*)\widetilde{h}\geq 0$.
As a consequence, we find
\begin{align}
\widehat{\mathcal{J}}'(R^*)\widetilde{h}=0. \label{1stCovb}
\end{align}
Then, following the same argument in Step 2 of the proof of \cite[Theorem 4.1]{CRT08}, we can conclude
$$\widetilde{h}\in \mathcal{C}(R^*).$$

Next, from Proposition \ref{Diffcotoco} and the compact embedding $\mathcal{Z}\hookrightarrow\hookrightarrow L^2(Q)$, we can deduce that, up to a subsequence
(not relabelled hereafter for simplicity), it holds
$$
\|\widetilde{\rho}_{R^*}^{\widetilde{h}_k} -\widetilde{\rho}_{R^*}^{\widetilde{h}}\|_{L^2(\Omega)}\to 0,\quad \text{as}\ k\to+\infty.
$$
As a result, we have (again up to a subsequence)
\begin{align*}
\left|\int_Q \widetilde{\rho}_{R^*}^{\widetilde{h}_k} \widetilde{h}_k\,\mathrm{d}x\mathrm{d}t
-\int_Q \widetilde{\rho}_{R^*}^{\widetilde{h}} \widetilde{h}\,\mathrm{d}x\mathrm{d}t\right|
&\leq \left|\int_Q \big(\widetilde{\rho}_{R^*}^{\widetilde{h}_k} -\widetilde{\rho}_{R^*}^{\widetilde{h}} \big)\widetilde{h}_k\,\mathrm{d}x\mathrm{d}t\right|
+\left|\int_Q \widetilde{\rho}_{R^*}^{\widetilde{h}} (\widetilde{h}_k- \widetilde{h}\big)\,\mathrm{d}x\mathrm{d}t\right|\to  0,
\end{align*}
as $k\to+\infty$. Hence, it holds
\begin{align}
\lim_{k\to+\infty} \widehat{\mathcal{J}}''(R^*)[\widetilde{h}_k,\widetilde{h}_k] =\widehat{\mathcal{J}}''(R^*)[\widetilde{h},\widetilde{h}].
\label{2ndCov}
\end{align}
Using a second-order Taylor's expansion, we find
\begin{align*}
\widehat{\mathcal{J}}(R_k)&  = \widehat{\mathcal{J}}(R^*) + \widehat{\mathcal{J}}'(R^*)(R_k-R^*) \\
&\quad + \frac12\|R_k-R^*\|_{L^2(Q)}^2\int_0^1\int_0^1 \widehat{\mathcal{J}}''(sz R_k+(1-sz) R^*) [\widetilde{h}_k,\widetilde{h}_k]\,\mathrm{d}s\mathrm{d}z.
\end{align*}
From \eqref{vug2} and \eqref{contri1} we infer that
\begin{align}
&   \int_0^1\int_0^1 \widehat{\mathcal{J}}''(sz R_k+(1-sz) R^*) [\widetilde{h}_k,\widetilde{h}_k]\,\mathrm{d}s\mathrm{d}z  < \frac{2}{k}.\label{2ndCovb}
\end{align}
Besides, the Lipschitz continuity property (see Proposition \ref{Diffcotoco}) implies
$$
\left|\int_0^1\int_0^1 \widehat{\mathcal{J}}''(sz R_k+(1-sz) R^*) [\widetilde{h}_k,\widetilde{h}_k]\,\mathrm{d}s\mathrm{d}z
-\widehat{\mathcal{J}}''(R^*)[\widetilde{h}_k,\widetilde{h}_k]\right|\to  0,\quad \text{as}\ k\to+\infty.
$$
From the above fact, \eqref{2ndCov} and \eqref{2ndCovb}, we easily deduce that
\begin{align*}
\widehat{\mathcal{J}}''(R^*)[\widetilde{h},\widetilde{h}]
& = \limsup_{k\to+\infty} \widehat{\mathcal{J}}''(R^*)[\widetilde{h}_k,\widetilde{h}_k]\\
&= \limsup_{k\to+\infty}  \int_0^1\int_0^1 \widehat{\mathcal{J}}''(sz R_k+(1-sz) R^*) [\widetilde{h}_k,\widetilde{h}_k]\,\mathrm{d}s\mathrm{d}z  \\
&\quad + \lim_{k\to+\infty}\left(\widehat{\mathcal{J}}''(R^*)[\widetilde{h}_k,\widetilde{h}_k]
- \int_0^1\int_0^1 \widehat{\mathcal{J}}''(sz R_k+(1-sz) R^*) [\widetilde{h}_k,\widetilde{h}_k]\,\mathrm{d}s\mathrm{d}z \right)\\
&\leq 0.
\end{align*}

Recalling condition \eqref{2ndScon} and the fact that $\widetilde{h}\in \mathcal{C}(R^*)$, we find $\widetilde{h}=0$ so that $\widetilde{h}_k\to 0$ weakly in $L^2(Q)$.
Hence, $\widetilde{\rho}_{R^*}^{\widetilde{h}}=0$ and, up to a subsequence, we have
$$
\|\widetilde{\rho}_{R^*}^{\widetilde{h}_k}\|_{L^2(\Omega)}\to 0,\quad \text{as}\ k\to+\infty.
$$
Then, we find that
\begin{align*}
0&<\beta = \beta \lim_{k\to+\infty} \|\widetilde{h}_k\|_{L^2(\Omega)}^2
 \\
&= \lim_{k\to+\infty}\widehat{\mathcal{J}}''(R^*)[\widetilde{h}_k,\widetilde{h}_k]
-\lim_{k\to+\infty}  \int_Q \widetilde{\rho}_{R^*}^{\widetilde{h}_k} \widetilde{h}_k\,\mathrm{d}x\mathrm{d}t \non\\
&= \widehat{\mathcal{J}}''(R^*)[\widetilde{h},\widetilde{h}]\leq 0,
\end{align*}
which leads to a contradiction. The proof of Theorem \ref{2ndSC} is complete.
\end{proof}

\section{Appendix}
\label{appe}\setcounter{equation}{0}
In this appendix, we prove Theorem \ref{str-well} on the existence and uniqueness of a strong solution to problem \eqref{bdini}--\eqref{CHHS}.

We apply a strategy similar to that for the existence of weak solutions described in Section \ref{exeweak}
(see \cite{JWZ,GGW,Gio2020,SW21}). Here, the crucial issue is to derive uniform higher-order estimates for the approximate solutions. Below we sketch the main steps and
point out the necessary modifications due to the presence of mass source terms.
For simplicity, we denote the norm of $L^2(\Omega)$ (or $(L^2(\Omega))^2$) by $\|\cdot\|$, and the norms of $L^p(\Omega)$, $W^{m,p}(\Omega)$, $H^m(\Omega)$
by $\|\cdot\|_{L^p}$, $\|\cdot\|_{W^{m,p}}$ and $\|\cdot\|_{H^m}$, respectively.

\smallskip

\textbf{Step 1. The approximating problem.} Let  $\varepsilon\in (0,\kappa)$ with $\kappa$ being the constant given in $\mathbf{(A1)'}$. As in \cite{GGW}, we introduce a
family of regular potentials $\lbrace \Psi_\varepsilon \rbrace$ that approximates the original singular potential $\Psi$ by setting
\be
\Psi_\varepsilon(s)=F_\varepsilon(s)-\frac{\Theta_0}{2}s^2,\quad \forall\, s\in \mathbb{R},\label{defF}
\ee
where
\be
F_\varepsilon(s)=
\begin{cases}
 \displaystyle{\sum_{j=0}^4 \frac{1}{j!}}
 F^{(j)}(1-\varepsilon) \left[s-(1-\varepsilon)\right]^j,
 \qquad\qquad \!\!  \forall\,s\geq 1-\varepsilon,\\
 F(s), \qquad \qquad \qquad \qquad \qquad \qquad \qquad \quad
 \forall\, s\in[-1+\varepsilon, 1-\varepsilon],\\
 \displaystyle{\sum_{j=0}^4
 \frac{1}{j!}} F^{(j)}(-1+\varepsilon)\left[ s-(-1+\varepsilon)\right]^j,
 \qquad\ \forall\, s\leq -1+\varepsilon.
 \end{cases}
 \label{defF1s}
\ee
Thanks to the definition of $\Psi_\varepsilon$,
one easily finds (see \cite{Gio2020}) some $\overline{\kappa}\in (0,\kappa]$ such that, for any $\varepsilon\in (0,\overline{\kappa})$, the approximating
function $\Psi_\varepsilon$  given by \eqref{defF} satisfies
 $\Psi_\varepsilon\in C^{4}(\mathbb{R})$ and
\begin{equation}
\gamma_1s^4-\gamma_2 \leq \Psi_\varepsilon(s), \quad -\alpha\leq \Psi''_\varepsilon(s)\leq L,\quad \forall \, s\in \mathbb{R},
\label{psiep}
\end{equation}
where $\gamma_1$, $\gamma_2$ are positive constants
independent of $\varepsilon$, the constant $\alpha$ is given in $\mathbf{(A1)}$ and $L$ is a positive constant that may depend on $\varepsilon$. Moreover, we have
\begin{align}
& \Psi_{\varepsilon}(s)\leq \Psi(s),\quad \forall\, s\in [-1,1]\ \ \ \ \text{and}\ \ \ |\Psi'_{\varepsilon}(s)| \leq |\Psi'(s)|, \quad \forall\, s\in (-1,1). \nonumber
\end{align}
For every $\varepsilon\in (0,\overline{\kappa})$, we study the following approximating problem:
\be
\begin{cases}
\label{A1CHHS}
\nu(\vp_\varepsilon) \u_\varepsilon= -\nabla P_\varepsilon+ \mu_\varepsilon \nabla \vp_\varepsilon,\\
\text{div} \u_\varepsilon=S, \\
\partial_t \vp_\varepsilon + \mathrm{div}\,(\vp_\varepsilon \u_\varepsilon)   =
 \Delta \mu_\varepsilon + S+ R,\\
\mu_\varepsilon= - \Delta \vp_\varepsilon +
 \Psi'_\varepsilon(\vp_\varepsilon),
\end{cases}
 \quad \text{ in } Q,
\ee
subject to the initial and boundary conditions
\be
\label{bdini2}
\begin{cases}
\u_\varepsilon \cdot \n= \partial_\n \mu_\varepsilon
=\partial_\n \vp_\varepsilon=0, \quad\ \text{on\ } \Sigma,\\
\vp_\varepsilon|_{t=0}=\vp_0, \quad\ \ \qquad \qquad \qquad \text{in } \Omega.
\end{cases}
\ee

When the strong solutions are concerned, we need to pay some attention to the ``initial value" of the approximated chemical potential
$\mu_{\varepsilon,0} \triangleq - \Delta \vp_0 +  \Psi'_\varepsilon(\vp_0)$ (see \cite{GMT,Gio2020}).
To this end, we consider the following Neumann problem with a singular nonlinearity:
\be
\begin{cases}
-\Delta u+F^{\prime}(u)=f,\quad \text { in } \Omega, \\
\partial_{\mathbf{n}} u=0, \qquad \qquad\quad\
\text { on } \partial \Omega,
\end{cases}
\label{ph1}
\ee
where the function $F$ satisfies the assumptions $\mathbf{(A1)}$ and $\mathbf{(A1)'}$. Then the following lemma holds
\begin{lemma} \label{esing}
Let $\Omega\subset \mathbb{R}^2$ be a bounded domain with smooth boundary $\partial\Omega$. For any $f\in L^2(\Omega)$, problem \eqref{ph1} admits a unique strong solution $u\in H^2(\Omega)$ and $F^{\prime}(u)\in L^2(\Omega)$.
If $f \in H^1(\Omega)$, then $\|\Delta u\| \leq C\|\nabla u\|^{\frac{1}{2}}\|\nabla f\|^{\frac{1}{2}}$ for some positive constant $C$ independent of $u$.
For any $q\in [2,+\infty)$, there exists a positive constant $C=C(q,\Omega)$ such that
\begin{align}
\|u\|_{W^{2, q}}+\|F^{\prime}(u)\|_{L^{q}} &\leq C\left(1+\|f\|_{H^1}\right),\label{ph2}\\
\|F^{\prime \prime}(u)\|_{L^{q}}
&\leq C\Big(1+{e}^{C\|f\|_{H^1}^{2}}\Big).
\label{ph3}
\end{align}
Moreover, $F'(u)\in W^{1,q}(\Omega)$ and there exists a constant $\delta\in (0,1)$ such that
\be
\|u\|_{L^{\infty}} \leq 1-\delta.\label{sep2}
\ee
\end{lemma}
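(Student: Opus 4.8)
The plan is to obtain the solution of \eqref{ph1} as the limit of the solutions of the regularized problems in which $F$ is replaced by the approximating potentials $F_\varepsilon$ from \eqref{defF}--\eqref{defF1s}, to derive every quantitative bound uniformly in $\varepsilon$, and then to let $\varepsilon\to0^+$. Note that, since $F_\varepsilon=\Psi_\varepsilon+\tfrac{\Theta_0}{2}(\cdot)^2$ and $\Psi''_\varepsilon\ge-\alpha=-(\Theta_0-\Theta)$ by \eqref{psiep}, each $F_\varepsilon$ is uniformly convex with $F''_\varepsilon\ge\Theta>0$. Uniqueness is immediate: if $u_1,u_2$ both solve \eqref{ph1}, subtracting the equations and testing with $u_1-u_2$ yields $\|\nabla(u_1-u_2)\|^2+\int_\Omega(F'(u_1)-F'(u_2))(u_1-u_2)\,\mathrm{d}x=0$, and since $F''\ge\Theta$ the monotone term dominates $\Theta\|u_1-u_2\|^2$, forcing $u_1=u_2$.

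For existence and the first two bounds, I would argue as follows. For fixed $\varepsilon$ the operator $u\mapsto-\Delta u+F'_\varepsilon(u)$ is strongly monotone and coercive from $H^1(\Omega)$ into its dual (equivalently, the strictly convex functional $\tfrac12\|\nabla u\|^2+\int_\Omega F_\varepsilon(u)\,\mathrm{d}x-\int_\Omega fu\,\mathrm{d}x$ admits a unique minimizer), so the regularized problem has a unique solution $u_\varepsilon$. Testing it with $F'_\varepsilon(u_\varepsilon)$ and integrating by parts, the Neumann condition removes the boundary term and $\int_\Omega F''_\varepsilon(u_\varepsilon)|\nabla u_\varepsilon|^2\ge0$, whence $\|F'_\varepsilon(u_\varepsilon)\|^2\le\int_\Omega f\,F'_\varepsilon(u_\varepsilon)\,\mathrm{d}x\le\|f\|\,\|F'_\varepsilon(u_\varepsilon)\|$, i.e.\ $\|F'_\varepsilon(u_\varepsilon)\|\le\|f\|$. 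Comparison in $-\Delta u_\varepsilon=f-F'_\varepsilon(u_\varepsilon)$ then gives $\|\Delta u_\varepsilon\|\le2\|f\|$ and, by elliptic regularity, an $\varepsilon$-uniform $H^2$ bound; passing to the limit and identifying $F'(u)$ by monotonicity yields the strong solution with $u\in H^2(\Omega)$, $F'(u)\in L^2(\Omega)$. When $f\in H^1(\Omega)$ I rewrite $\Delta u=F'(u)-f$ and compute $\|\Delta u\|^2=\int_\Omega\Delta u\,F'(u)\,\mathrm{d}x-\int_\Omega\Delta u\,f\,\mathrm{d}x=-\int_\Omega F''(u)|\nabla u|^2\,\mathrm{d}x+\int_\Omega\nabla u\cdot\nabla f\,\mathrm{d}x\le\|\nabla u\|\,\|\nabla f\|$, giving $\|\Delta u\|\le C\|\nabla u\|^{1/2}\|\nabla f\|^{1/2}$ (justified at the $\varepsilon$-level and preserved in the limit).

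For the $L^q$ estimate \eqref{ph2} I would test the regularized equation with $|F'_\varepsilon(u_\varepsilon)|^{q-2}F'_\varepsilon(u_\varepsilon)$. Integration by parts produces the nonnegative term $(q-1)\int_\Omega|F'_\varepsilon(u_\varepsilon)|^{q-2}F''_\varepsilon(u_\varepsilon)|\nabla u_\varepsilon|^2$, so $\|F'_\varepsilon(u_\varepsilon)\|_{L^q}^q\le\int_\Omega|f|\,|F'_\varepsilon(u_\varepsilon)|^{q-1}\,\mathrm{d}x$, hence $\|F'_\varepsilon(u_\varepsilon)\|_{L^q}\le\|f\|_{L^q}$. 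Since in two dimensions $\|f\|_{L^q}\le C\sqrt{q}\,\|f\|_{H^1}$ with the Sobolev constant growing like $\sqrt q$, this gives $\|F'(u)\|_{L^q}\le C\|f\|_{H^1}$ in the limit, and elliptic $L^q$-regularity for $-\Delta u=f-F'(u)$ produces $\|u\|_{W^{2,q}}\le C(1+\|f\|_{H^1})$, completing \eqref{ph2}.

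The last paragraph (and the main obstacle) concerns \eqref{ph3} and the separation. Here the growth condition \eqref{SING-F} in $\mathbf{(A1)'}$ enters through $|F''(u)|^q\le C^q e^{Cq|F'(u)|}$, so everything reduces to bounding $\int_\Omega e^{\sigma|F'(u)|}\,\mathrm{d}x$. Expanding the exponential and inserting the bound $\|F'(u)\|_{L^k}\le C\sqrt{k}\,\|f\|_{H^1}$ from the previous paragraph, a Stirling estimate of $\sum_k\frac{\sigma^k}{k!}(C\sqrt{k}\,\|f\|_{H^1})^k$ gives $\int_\Omega e^{\sigma|F'(u)|}\,\mathrm{d}x\le C\,e^{C\|f\|_{H^1}^2}$, which is precisely the two-dimensional Trudinger--Moser mechanism and yields \eqref{ph3}. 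Finally, combining $\|F''(u)\|_{L^q}\le C$ with $\nabla u\in L^\infty(\Omega)$ (from $W^{2,q}(\Omega)\hookrightarrow C^1(\overline\Omega)$ for $q>2$ in two dimensions) gives $\nabla F'(u)=F''(u)\nabla u\in L^q(\Omega)$, so $F'(u)\in W^{1,q}(\Omega)\hookrightarrow L^\infty(\Omega)$ with controlled norm; since $F':(-1,1)\to\mathbb{R}$ is a strictly increasing bijection (because $F''\ge\Theta>0$ and $F'(s)\to\pm\infty$ as $s\to\pm1^\mp$), a uniform $L^\infty$ bound on $F'(u)$ confines $u$ to a compact subinterval of $(-1,1)$, which is the separation \eqref{sep2}. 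The hard part is exactly this step: converting the $\varepsilon$-uniform $L^q$ bounds on $F'$, with their correct $\sqrt q$ growth, into the exponential integrability required for \eqref{ph3} and for closing the separation, thereby controlling $F''$ through its nonlinear coupling with $F'$ without circularity.
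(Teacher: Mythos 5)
The paper itself does not prove Lemma \ref{esing}: the remark immediately following it defers to \cite{A2009,GGM2017,GGW,GMT} and, for the separation property \eqref{sep2}, to \cite[Lemma 3.2]{HW21}. Your proposal reconstructs the argument behind those references, and it is correct in all essential respects: uniqueness by strong monotonicity ($F''\ge\Theta$); existence and the $L^2$/$H^2$ bounds by minimizing the convex regularized functional and testing with $F'_\varepsilon(u_\varepsilon)$ (note $F''_\varepsilon\ge\Theta$ does follow from \eqref{psiep}); the refined bound $\|\Delta u\|\le\|\nabla u\|^{1/2}\|\nabla f\|^{1/2}$ from the sign of $-\int_\Omega F''(u)|\nabla u|^2\,\mathrm{d}x$; the bound $\|F'_\varepsilon(u_\varepsilon)\|_{L^q}\le\|f\|_{L^q}\le C\sqrt{q}\,\|f\|_{H^1}$ by testing with $|F'_\varepsilon(u_\varepsilon)|^{q-2}F'_\varepsilon(u_\varepsilon)$ and the $\sqrt{q}$ growth of the two-dimensional Sobolev constant; exponential integrability of $F'(u)$ by summing these bounds over $k$ with Stirling (this is Trudinger's original mechanism), which together with \eqref{SING-F} gives \eqref{ph3}; and the separation by $F'(u)\in W^{1,q}(\Omega)\hookrightarrow L^\infty(\Omega)$ for $q>2$ plus inversion of $F'$, which blows up at $\pm1$.

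Two points deserve more care than your sketch gives them, though neither is a gap in the strategy. First, you must decide at which level the exponential argument runs. If at the $\varepsilon$-level, you need the approximations \eqref{defF1s} to satisfy the growth condition \eqref{SING-F} with constants uniform in $\varepsilon$ (this is checked in \cite{GGW} for this specific construction, but it is not automatic and you do not verify it). The cleaner route, consistent with your own remark about avoiding circularity, is: pass the bounds $\|F'_\varepsilon(u_\varepsilon)\|_{L^k}\le C\sqrt{k}\,\|f\|_{H^1}$ (with $C$ independent of $k$ and $\varepsilon$) to the limit by Fatou, using $u_\varepsilon\to u$ a.e.\ and $|u|<1$ a.e.\ (which itself must be extracted from the uniform $L^2$ bound on $F'_\varepsilon(u_\varepsilon)$ and the blow-up of $F'_\varepsilon$ near $\pm1$ --- your appeal to ``monotonicity'' hides this step), and only then run the series argument directly for $F'(u)$, where $\mathbf{(A1)'}$ applies pointwise. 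Second, the identity $\nabla F'(u)=F''(u)\nabla u$ is a chain rule for a nonlinearity that is not globally Lipschitz; it should be justified by truncating $F'$ at height $M$ (the truncations are Lipschitz, so the chain rule holds for them) and letting $M\to\infty$ by dominated convergence, which works precisely because you already know $F''(u)\nabla u\in L^q(\Omega)$. With these two points made explicit, your proof is complete and matches the one in the cited literature.
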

\begin{remark}
The results presented in Lemma \ref{esing} can be found in \cite{A2009,GGM2017,GGW,GMT}, and in particular, the strict separation property \eqref{sep2} was proved in
\cite[Lemma 3.2]{HW21}.
\end{remark}

Now, for the initial datum $\varphi_0$ given in Theorem \ref{str-well}, we infer from Lemma \ref{esing} that there exists some $\widetilde{\delta}\in (0,1)$ such that
$\|\varphi_0\|_{L^{\infty}} \leq 1-\widetilde{\delta}$, i.e., the initial phase function is indeed strictly separated from the pure states $\pm 1$. The constant $\widetilde{\delta}$
depends on $\|\widetilde{\mu}_0\|_{H^1}$ and $\Omega$. Then, by the elliptic estimate, we also find that $\varphi_0\in H^3(\Omega)$.
As a consequence, different from \cite[Section 5]{Gio2020}, here we do not need to introduce approximations for the initial data (in dimension two). Indeed,
for every $\varepsilon\in (0, \min\{\overline{\kappa}, (1/2)\widetilde{\delta}\})$, we have (recall \eqref{defF1s})
$$
\widetilde{\mu}_{\varepsilon,0} \triangleq - \Delta \vp_0 +  F'_\varepsilon(\vp_0) =  - \Delta \vp_0 +  F'(\vp_0)=\widetilde{\mu}_{0},
$$
which yields the $\varepsilon$-independent estimate  $\|\widetilde{\mu}_{\varepsilon,0}\|_{H^1}=\|\widetilde{\mu}_0\|_{H^1}$.

Existence and uniqueness of global strong solutions to the approximating problem \eqref{A1CHHS}--\eqref{bdini2} can be proven by using the standard arguments similar to those in \cite{JWZ,WW12,WZ13}
via a suitable Faedo--Galerkin scheme. We only state the result and skip its proof.

\begin{proposition}
Let the assumptions in Theorem \ref{str-well} be satisfied. For every $\varepsilon\in (0, \min\{\overline{\kappa}, (1/2)\widetilde{\delta}\})$,  problem \eqref{A1CHHS} admits a
unique global strong solution $(\u_\varepsilon, P_\varepsilon, \varphi_\varepsilon, \mu_\varepsilon)$ on $[0,T]$ such that
\begin{align*}
& \u_\varepsilon\in C([0,T]; \mathbf{H}^1(\Omega))\cap L^2(0,T; \mathbf{H}^3(\Omega)),\\
& P_\varepsilon\in C([0,T];V_0)\cap L^2(0,T;H^4(\Omega)),\\
& \varphi_\varepsilon \in C([0,T]; H^3(\Omega))\cap L^2(0,T; H^5(\Omega))\cap H^1(0,T;H^1(\Omega)),\\
& \mu_\varepsilon \in C([0,T];H^1(\Omega))\cap L^2(0,T;H^3(\Omega)).
\end{align*}
The strong solution satisfies the system \eqref{A1CHHS} almost everywhere in $Q$. Moreover, $\u_\varepsilon \cdot \n= \partial_\n \mu_\varepsilon=\partial_\n \vp_\varepsilon=0$
almost everywhere on $\Sigma$ and $\varphi(\cdot,0)=\varphi_0$ in $\Omega$.
\end{proposition}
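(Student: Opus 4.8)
The plan is to construct the solution by a Faedo--Galerkin scheme and then pass to the limit, exactly in the spirit of \cite{JWZ,WW12,WZ13}, the only genuinely new features being the nonconstant viscosity $\nu(\varphi_\varepsilon)$ and the mass sources $S$, $R$. Since $\varepsilon$ is fixed, the regularized potential $\Psi_\varepsilon$ is a genuinely smooth potential with $-\alpha\leq\Psi_\varepsilon''\leq L$ by \eqref{psiep}, so no singularity has to be tracked at this stage and all constants are allowed to depend on $\varepsilon$; the delicate $\varepsilon$-uniform bounds are postponed to the subsequent steps of the Appendix. First I would fix the Galerkin basis $\{w_j\}$ of eigenfunctions of $-\Delta$ with homogeneous Neumann boundary condition (so that $\partial_\n w_j=0$), orthonormal in $L^2(\Omega)$ and orthogonal in $H^1(\Omega)$, and project the Cahn--Hilliard part of \eqref{A1CHHS} onto $\mathrm{span}\{w_1,\dots,w_n\}$. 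At each level, once the finite-dimensional phase field $\varphi_\varepsilon^n$ and chemical potential $\mu_\varepsilon^n$ are known, the velocity $\u_\varepsilon^n$ and the pressure $P_\varepsilon^n$ are recovered by solving the elliptic Neumann problem for $P_\varepsilon^n$ obtained by taking the divergence of Darcy's law subject to the constraint $\mathrm{div}\,\u_\varepsilon^n=S$; here Lemma \ref{press} (with $K=1/\nu$) provides the well-posedness and the $H^2$ estimate for $P_\varepsilon^n$. The resulting ODE system for the Galerkin coefficients has a locally Lipschitz right-hand side, hence a local-in-time solution by Cauchy--Lipschitz.

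The core of the argument is a hierarchy of a priori estimates, uniform in the Galerkin index $n$. I would begin with the energy identity of Proposition \ref{glo-weak}(3) (now with $\Psi_\varepsilon$ in place of $\Psi$): testing the discrete equations by $\mu_\varepsilon^n$ and $\u_\varepsilon^n$ and exploiting the coercivity $\gamma_1 s^4-\gamma_2\leq\Psi_\varepsilon(s)$ yields, after controlling the source contribution $\int_\Omega S[P_\varepsilon^n+(1-\varphi_\varepsilon^n)\mu_\varepsilon^n]+\int_\Omega R\mu_\varepsilon^n$ via Lemma \ref{press} and the mean-value identity $\overline{\mu_\varepsilon^n}=|\Omega|^{-1}\int_\Omega\Psi_\varepsilon'(\varphi_\varepsilon^n)$, the bounds $\varphi_\varepsilon^n\in L^\infty(0,T;H^1)$, $\mu_\varepsilon^n\in L^2(0,T;H^1)$ and $\u_\varepsilon^n\in L^2(0,T;\mathbf L^2)$. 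Note that $\overline{\varphi_\varepsilon^n}(t)=\overline{\varphi_0}+\int_0^t\overline{R}$ stays strictly inside $(-1,1)$ by \eqref{AR-1}, which keeps $\overline{\mu_\varepsilon^n}$ bounded. I would then raise the regularity by testing with $\partial_t\varphi_\varepsilon^n$ (or, equivalently, by differentiating the energy structure) to reach $\mu_\varepsilon^n\in L^\infty(0,T;H^1)$ and $\varphi_\varepsilon^n\in L^\infty(0,T;H^2)$, and finally by performing higher-order estimates on $\Delta\mu_\varepsilon^n$ and $\Delta^2\varphi_\varepsilon^n$. Because we are in two dimensions and Darcy's law provides no parabolic smoothing of $\u_\varepsilon^n$, the convective term $\mathrm{div}(\varphi_\varepsilon^n\u_\varepsilon^n)$ and the viscosity coupling must be controlled through the Br\'ezis--Gallou\"et--Wainger inequality \eqref{BGW} together with the curl estimate \eqref{rot} and Lemma \ref{press}(2); the resulting differential inequality is of the logarithmic type handled by Lemma \ref{gron}, which is what prevents finite-time blow-up and upgrades the local solution to a global one. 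Repeated elliptic bootstrapping via $\mu_\varepsilon=-\Delta\varphi_\varepsilon+\Psi_\varepsilon'(\varphi_\varepsilon)$ then delivers $\varphi_\varepsilon^n\in C([0,T];H^3)\cap L^2(0,T;H^5)$, $\mu_\varepsilon^n\in C([0,T];H^1)\cap L^2(0,T;H^3)$, $\u_\varepsilon^n\in C([0,T];\mathbf H^1)\cap L^2(0,T;\mathbf H^3)$ and $P_\varepsilon^n\in C([0,T];V_0)\cap L^2(0,T;H^4)$, together with $\partial_t\varphi_\varepsilon^n\in L^2(0,T;H^1)$ by comparison in the equation.

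With these bounds in hand I would pass to the limit $n\to\infty$. Weak and weak-$\ast$ compactness provide limits in the spaces above, while the Aubin--Lions--Simon lemma yields strong convergence of $\varphi_\varepsilon^n$ in, say, $C([0,T];H^{3-\sigma})$ and of $\mu_\varepsilon^n$ in $L^2(0,T;H^{3-\sigma})$; since $\Psi_\varepsilon$ and $\nu$ are smooth with bounded derivatives on the relevant range, the nonlinear terms $\Psi_\varepsilon'(\varphi_\varepsilon^n)$, $\nu(\varphi_\varepsilon^n)\u_\varepsilon^n$ and $\mu_\varepsilon^n\nabla\varphi_\varepsilon^n$ converge to the expected limits, so the limit quadruplet solves \eqref{A1CHHS}--\eqref{bdini2} almost everywhere with the stated regularity, and the boundary and initial conditions are recovered from the traces and the continuity in time. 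Uniqueness follows by writing the system for the difference of two solutions, testing as in Proposition \ref{conti-es1} (the computation being in fact simpler here, since $\Psi_\varepsilon''$ is globally bounded and $\varepsilon$ is fixed, so the strict separation \eqref{sep1} is not needed), and applying Gronwall's lemma to conclude that the difference vanishes. The main obstacle throughout is precisely the two-dimensional higher-order estimate: closing the bounds for $\|\nabla\mu_\varepsilon^n\|$ and $\|\Delta\varphi_\varepsilon^n\|$ against the non-smoothing convective and variable-viscosity terms, which is exactly why the logarithmic Gronwall inequality of Lemma \ref{gron} and the pressure estimate of Lemma \ref{press}(2) are indispensable.
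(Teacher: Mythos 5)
Your proposal is correct and follows essentially the same route as the paper: the paper itself skips this proof, stating only that it follows via a suitable Faedo--Galerkin scheme by the standard arguments of \cite{JWZ,WW12,WZ13}, which is precisely the strategy you elaborate (with the local Lipschitz ODE step, energy and higher-order estimates, compactness, and a Gronwall-based uniqueness argument). Moreover, the tools you single out for the genuinely new difficulties---Lemma \ref{press} with $K=1/\nu$ for the pressure, the curl estimate \eqref{rot}, the Br\'ezis--Gallouet--Wainger inequality \eqref{BGW} and the logarithmic Gronwall Lemma \ref{gron}---are exactly those the paper deploys in Step~2 of the Appendix for the $\varepsilon$-uniform estimates, so your sketch is fully consistent with the paper's framework.
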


\textbf{Step 2. A priori estimates.}
We derive uniform estimates
for $(\u_\varepsilon, P_\varepsilon, \vp_\varepsilon, \mu_\varepsilon)$ that are independent of the approximating parameter
$\varepsilon\in (0, \min\{\overline{\kappa}, (1/2)\widetilde{\delta}\})$.

\medskip

\textbf{First estimate}.
Integrating the third equation of \eqref{A1CHHS} over $\Omega$, we have
$$
\int_\Omega \vp_\varepsilon(t,x)\,\d x =\int_\Omega \vp_0(x)\,\d x+\int_\Omega R(t,x)\,\d x, \quad \forall\, t\in [0,T].
$$
Then, from assumption $\mathbf{(A3)}$ and  \eqref{AR-1}
we deduce that
\begin{equation}
|\overline{\vp_\varepsilon(t)}|\leq 1-\delta_0,\quad \forall\, t\in [0,T].
\label{mean}
\end{equation}

\textbf{Second estimate}. The pressure $P_\varepsilon$ can be written as follows (see e.g., \cite{Gio2020,JWZ})
\begin{align}
P_\varepsilon&= \mathcal{N}\mathrm{div}\,(\nu(\vp_\varepsilon)\u_\varepsilon) -\mathcal{N}\mathrm{div}\,(\mu_\varepsilon\nabla\vp_\varepsilon)\notag\\
&=\mathcal{N}\mathrm{div}\,(\nu(\vp_\varepsilon)\u_\varepsilon) -\mathcal{N}\mathrm{div}\,((\mu_\varepsilon-\overline{\mu_\varepsilon}) \nabla\vp_\varepsilon)
+ \overline{\mu_\varepsilon}(\vp_\varepsilon-\overline{\vp_\varepsilon}).
\label{Pe}
\end{align}
Using $\mathbf{(A2)}$, H\"{o}lder's inequality and \eqref{mean}, we find
\begin{align*}
\|P_\varepsilon\|
&\leq C\big(\|\mathcal{N}\mathrm{div}\,(\nu(\vp_\varepsilon)\u_\varepsilon)\|+
\|\mathcal{N}\mathrm{div}\,((\mu_\varepsilon-\overline{\mu_\varepsilon}) \nabla\vp_\varepsilon)\|+ |\overline{\mu_\varepsilon}| \|\vp_\varepsilon-\overline{\vp_\varepsilon}\|\big)\\
&\leq C\|\nu(\vp_\varepsilon)\u_\varepsilon\|_{L^{3/2}} + C\|(\mu_\varepsilon-\overline{\mu_\varepsilon}) \nabla\vp_\varepsilon\|_{L^{6/5}}
+ |\overline{\mu_\varepsilon}|(1+\|\vp_\varepsilon\|)\\
&\leq C\|\u_\varepsilon\| + \|\mu_\varepsilon-\overline{\mu_\varepsilon}\|_{L^3} \|\nabla\vp_\varepsilon\| + |\overline{\mu_\varepsilon}|(1+\|\vp_\varepsilon\|).
\end{align*}

\textbf{Third estimate}.
Thanks to $\mathbf{(A1)}$ and  \eqref{mean}, we find the following standard estimate (see e.g., \cite{GGW})
$$
\|F_\varepsilon(\vp_\varepsilon)\|_{L^1} \leq C\int_\Omega
(\varphi_\varepsilon-\overline{\varphi_\varepsilon}) (F'_\varepsilon(\varphi_\varepsilon)-\overline{F'_\varepsilon(\varphi_\varepsilon)})\, \d x +C,
$$
based on which we can further obtain (see \cite{MA2020})
\begin{align*}
\|F_\varepsilon(\vp_\varepsilon)\|_{L^1}
&\leq C(\| \mu_\varepsilon - \overline{\mu_\varepsilon}\|
\|\vp_\varepsilon-\overline{\vp_\varepsilon}\| +\|\vp_\varepsilon-\overline{\vp_\varepsilon}\|\|\vp_\varepsilon\|)+C\\
&\leq C \| \mu_\varepsilon - \overline{\mu_\varepsilon}\| \|\vp_\varepsilon\| + C(1+\|\vp_\varepsilon\|^2).
\end{align*}
As a consequence, we get
\begin{equation}
|\overline{\mu_\varepsilon}|
\leq \|F_\varepsilon(\vp_\varepsilon)\|_{L^1} +\Theta_0|\Omega|^{-1}\left|\int_\Omega \vp_\varepsilon \,\d x\right|
\leq  C \| \mu_\varepsilon - \overline{\mu_\varepsilon}\| \|\vp_\varepsilon\| + C(1+\|\vp_\varepsilon\|^2).
\label{meanmu}
\end{equation}

\textbf{Fourth estimate}.
Multiplying the first equation of \eqref{A1CHHS} by $\u_\varepsilon$ and the third equation by $\mu_\varepsilon$, adding the resultants together and integrating over $\Omega$,
we obtain
\begin{align}
&\frac{\d}{\d t}\mathcal{E}_\varepsilon(t) + \int_\Omega \nu(\vp_\varepsilon)|\uu_\varepsilon|^2\,\d x+ \|\nabla \mu_\varepsilon\|^2\notag\\
&\quad =\int_\Omega \mu_\varepsilon S(1-\vp_\varepsilon)\, \d x +\int_\Omega R\mu_\varepsilon \,\d x+ \int_\Omega P_\varepsilon S\,\d x \notag\\
&\quad =\int_\Omega (\mu_\varepsilon - \overline{\mu_\varepsilon}) S(1-\vp_\varepsilon)\, \d x
+ \int_\Omega R\mu_\varepsilon \,\d x
+ \int_\Omega S \mathcal{N}\mathrm{div}\,(\nu(\vp_\varepsilon)\u_\varepsilon) \, \d x  \notag\\
&\qquad -\int_\Omega S \mathcal{N}\mathrm{div}\,((\mu_\varepsilon-\overline{\mu_\varepsilon}) \nabla\vp_\varepsilon)\, \d x\notag\\
&\quad =: J_1+J_2+J_3+J_4,
\label{aBEL}
\end{align}
where
\begin{align}
\mathcal{E}_\varepsilon(t)=\frac12 \|\nabla \vp_\varepsilon(t)\|^2 + \int_\Omega \Psi_\varepsilon (\vp_\varepsilon(t))\, \d x.
\label{Ee}
\end{align}
In the above computation, we have used \eqref{Pe} and the assumption $\overline{S}=0$.

Let us estimate the four terms on the right-hand side of \eqref{aBEL}. Using the Poincar\'{e}-Wirtinger inequality and Young's inequality, we get
\begin{align*}
J_1 & = \int_\Omega (\mu_\varepsilon - \overline{\mu_\varepsilon}) S(1-\vp_\varepsilon)\, \mathrm{d}x
\notag\\
& \leq \|\mu_\varepsilon - \overline{\mu_\varepsilon}\|_{L^4} \|S\|\|1-\vp_\varepsilon\|_{L^4} \\
&\leq \|\mu_\varepsilon - \overline{\mu_\varepsilon}\|_{H^1}\|S\| (1+\|\vp_\varepsilon\|_{H^1})  \\
& \leq C\|\nabla \mu_\varepsilon \| \|S\|(1+\|\vp_\varepsilon-\overline{\vp_\varepsilon}\|_{H^1})\\
&\leq \frac{1}{6} \|\nabla \mu_\varepsilon \| ^2 +C\|S\|^2(1+\|\nabla \vp_\varepsilon\|^2).
\end{align*}
For $J_2$, we infer from \eqref{meanmu} that
\begin{align*}
J_2
&\leq \|R\|\|\mu_\varepsilon-\overline{\mu_\varepsilon}\| +\|R\|_{L^1}|\mu_\varepsilon|\\
&\leq C\|R\|\|\nabla \mu_\varepsilon\|
+ C\|R\|\big(\| \mu_\varepsilon - \overline{\mu_\varepsilon}\| \|\vp_\varepsilon\| + C(1+\|\vp_\varepsilon\|^2)\big) \\
&\leq  \frac{1}{6} \|\nabla \mu_\varepsilon \| ^2
+ C(1+\|R\|^2)(1+\|\vp_\varepsilon\|^2).
\end{align*}
On the other hand, for $J_3$ and $J_4$, it holds
\begin{align*}
J_3
&\leq \|\mathcal{N}\mathrm{div}\,(\nu(\vp_\varepsilon)\u_\varepsilon)\|
\|S\| \leq \|\nu(\vp_\varepsilon)\|_{L^\infty}\|\u_\varepsilon\|\|S\| \\
&\leq C\nu^*\|\u_\varepsilon\|\|S\| \leq
\frac{1}{2}\nu_*\|\u_\varepsilon\|^2+ C\|S\|^2,\\
J_4&\leq \|\mathcal{N}\mathrm{div}\,((\mu_\varepsilon -\overline{\mu_\varepsilon}) \nabla\vp_\varepsilon)\|\|S\|\\
&\leq \|\mu_\varepsilon-\overline{\mu_\varepsilon}\|_{L^3}\| \nabla\vp_\varepsilon\|\|S\|\\
&\leq \frac{1}{6} \|\nabla \mu_\varepsilon \| ^2
+ C\|S\|^2\|\nabla \vp_\varepsilon\|^2.
\end{align*}
Collecting the above estimates, from \eqref{aBEL} we deduce that
\begin{align}
&\frac{\d}{\d t}\mathcal{E}_\varepsilon(t) + \frac12 \int_\Omega \nu(\vp_\varepsilon)|\uu_\varepsilon|^2\,\d x+ \frac12 \|\nabla \mu_\varepsilon\|^2
 \leq  C(1+\|R\|^2+\|S\|^2)(1+\|\nabla \vp_\varepsilon\|^2),
\label{aBEL1}
\end{align}
where $C>0$ is independent of $\varepsilon$. Observing that $
\|\nabla \vp_\varepsilon\|^2\leq 2(\mathcal{E}_\varepsilon+\gamma_2|\Omega|),
$ we further obtain
\begin{align}
&\frac{\d}{\d t}\mathcal{E}_\varepsilon(t) + \frac12 \int_\Omega \nu(\vp_\varepsilon)|\uu_\varepsilon|^2\,\d x
+ \frac12 \|\nabla \mu_\varepsilon\|^2 \leq  C(1+\|R\|^2+\|S\|^2)(1+\mathcal{E}_\varepsilon(t)).
\label{aBeL2}
\end{align}
Thus, from Gronwall's lemma it follows  that
\be
\frac{1}{2} \| \nabla \vp_\varepsilon(t)\|^2
+ \int_{\Omega} \Psi_\varepsilon(\vp_\varepsilon(t)) \, \d x
\leq C_T, \quad \forall\, t\in [0,T],
\label{fres}
\ee
and
\be
\label{fres2}
\int_0^T \left(\| \u_\varepsilon(t) \|^2 +
\| \nabla \mu_\varepsilon(t)\|^2\right) \,  \d t\leq
C_T,
\ee
where the constant $C_T$ may depend on the initial energy
$\mathcal{E}_\varepsilon(0)$, $\nu_*$, $\nu^*$, $\gamma_2$, $\Omega$ and $T$. In particular, from the argument in Step 1, we see that
$\mathcal{E}_\varepsilon(0)=(1/2) \|\nabla \vp_0\|^2 + \int_\Omega \Psi (\vp_0)\, \d x$, that is, the initial energy is independent of $\varepsilon$.\medskip

\textbf{Fifth estimate}.
Arguing as in \cite{GGW}, we test the fourth equation in \eqref{A1CHHS} with $-\Delta \vp_\varepsilon$ and we obtain
$$
\| \Delta \vp_\varepsilon\|^2
-( \Psi_\varepsilon'(\vp_\varepsilon),
\Delta \vp_\varepsilon )=- ( \mu_\varepsilon,\Delta \vp_\varepsilon ).
$$
Exploiting the integration by parts and the homogeneous Neumann boundary condition
for $\vp_\varepsilon$, we get
$$
\| \Delta \vp_\varepsilon \|^2
+( \Psi_\varepsilon''(\vp_\varepsilon )\nabla \vp_\varepsilon ,
\nabla \vp_\varepsilon )=(\nabla \mu_\varepsilon ,
\nabla \vp_\varepsilon ).
$$
Hence, we deduce from \eqref{psiep} that
$
\| \Delta \vp_\varepsilon \|^2
\leq \alpha \| \nabla \vp_\varepsilon\|^2
+ \| \nabla \mu_\varepsilon \|\| \nabla \vp_\varepsilon\|
$.
This implies
\begin{align}
\|\vp_\varepsilon\|_{H^2}^2\leq C(1+\|\nabla \mu_\varepsilon\|).\label{H2}
\end{align}
Taking the square of both sides and integrating in time, we deduce from \eqref{fres2}--\eqref{H2} that
\begin{align}
\label{estdeltavp0}
&\int_0^T \| \vp_\varepsilon (t)\|_{H^2}^4 \, \d t
\leq C_T.
\end{align}
Besides, recalling \eqref{meanmu} and using \eqref{fres}--\eqref{fres2}, we easily see that
\begin{align}
\int_0^T \|\mu_\varepsilon(t)\|_{H^1}^2 \,\d t
\leq C\int_0^T (\|\nabla \mu_\varepsilon(t)\|^2 +|\overline{\mu_\varepsilon}(t)|^2)\,\d t\leq C_T.
\label{L2H1mu}
\end{align}
On the other hand, by the same arguments used in \cite[Section 5]{Gio2020} (see also Lemma \ref{esing}), we have
\begin{align}
\|F_\varepsilon'(\vp_\varepsilon)\|_{L^p} \leq C(1+\|\nabla \mu_\varepsilon\|),\quad \|\vp_\varepsilon\|_{W^{2,p}}\leq C(1+\|\nabla \mu_\varepsilon\|),
\label{W2p}
\end{align}
for any $p\in [1,+\infty)$. As a consequence,
it holds
$$
\int_0^T \|F_\varepsilon'(\vp_\varepsilon)(t)\|_{L^p}^2\,\d t\leq C_T,\quad \int_0^T \|\vp_\varepsilon(t)\|_{W^{2,p}}^2 \,\d t\leq C_T.
$$

\textbf{Sixth estimate}.
We proceed to derive some higher-order estimates for the approximate solution. To this end, we first infer from Lemma \ref{Bo} that, since $S\in C([0,T];L_0^2(\Omega))$
(cf. $\mathbf{(A3)}$), there exists  $\vv_S\in C([0,T];\mathbf{H}^1(\Omega))$ satisfying
$$
\mathrm{div}\,\vv_S=S\quad \text{in}\ \Omega,\quad \vv=\mathbf{0}\quad \text{on}\ \partial \Omega,
$$
for $t\in[0,T]$. Then, setting
$$
\widetilde{\u}_\varepsilon=\u_\varepsilon-\vv_S,
$$
we can rewrite the approximate problem \eqref{A1CHHS} in the following way:
\be
\begin{cases}
\label{A2CHHS}
\nu(\vp_\varepsilon) \tu_\varepsilon = -\nu(\vp_\ve)\vv_S
-\nabla P_\varepsilon
+ \mu_\varepsilon \nabla \vp_\varepsilon,\\
\text{div} \tu_\varepsilon=0, \\
\partial_t \vp_\varepsilon + \mathrm{div}\,(\vp_\varepsilon \tu_\varepsilon)   =
 \Delta \mu_\varepsilon + S+ R -\mathrm{div}\,(\vp_\varepsilon \vv_S) ,\\
\mu_\varepsilon= - \Delta \vp_\varepsilon +
 \Psi'_\varepsilon(\vp_\varepsilon),
\end{cases}
 \quad \text{ in } Q,
\ee
subject to the initial and boundary conditions
\be
\label{bdini2a}
\begin{cases}
\tu_\varepsilon \cdot \n= \partial_\n \mu_\varepsilon
=\partial_\n \vp_\varepsilon=0, \quad\ \text{on\ } \Sigma,\\
\vp_\varepsilon(\cdot,0)=\vp_0, \qquad\qquad \quad \qquad\,   \text{in } \Omega.
\end{cases}
\ee

Differentiating (formally) the first equation of \eqref{A2CHHS} with respect to time, multiplying the resultant by $\tu_\varepsilon$ and integrating over $\Omega$, we obtain
\begin{align}
\frac12\frac{\d}{\d t}\int_\Omega \nu(\vp_\varepsilon)|\tu_\varepsilon|^2\, \d x
&=
\langle \partial_t\mu_\varepsilon,  \nabla \vp_\varepsilon \cdot \tu_\varepsilon \rangle_{(H^1)',H^1} +\int_\Omega \mu_\ve \nabla \partial_t\vp_\ve \cdot \tu_\ve \,\d x\notag\\
&\quad -\frac12\int_\Omega \nu'(\vp_\ve)\partial_t\vp_\ve |\tu_\ve|^2 \,\d x -
\int_\Omega \partial_t (\nu(\vp_\ve)\vv_S) \cdot \tu_\ve \,\d x.
\label{ut1}
\end{align}
We remark that the above computation can be rigorously justified by a standard approximation procedure using the time difference operator $\partial_t^h f=h^{-1}[f(t+h)-f(t)]$ and
then letting $h$ go to $0$
(see e.g., \cite[pp. 21]{Gio2020}). In a similar manner, we have
\begin{align}
&\frac{\d}{\d t}\int_\Omega \mathrm{div}\,(\vp_\ve \vv_S)\mu_\ve\, \d x= \langle \partial_t (\mathrm{div}\,(\vp_\ve \vv_S)),\mu_\ve\rangle_{(H^1)',H^1}
+
\langle  \partial_t\mu_\ve, \mathrm{div}\,(\vp_\varepsilon \vv_S) \rangle_{(H^1)',H^1},
\label{vpv1}\\
& \frac{\d}{\d t}\int_\Omega (S+R) \mu_\ve \,\d x
= \langle \partial_t\mu_\ve, S+R \rangle_{(H^1)',H^1}
+ \langle \partial_t(S+R),\mu_\ve \rangle_{(H^1)',H^1}.
\label{srmu1}
\end{align}
Furthermore, multiplying the third equation of \eqref{A2CHHS} by $\partial_t \mu_\ve=-\Delta \partial_t \vp_\ve+ \Psi_\varepsilon''(\vp_\ve) \partial_t\vp_\ve$ and integrating
over $\Omega$, after integration by parts, we get
\begin{align}
&\frac{1}{2}\frac{\d}{\d t} \|\nabla \mu_\ve\|^2 +\|\nabla \partial_t \vp_\ve\|^2+\int_\Omega F_\ve''(\vp_\ve) (\partial_t\vp_\ve)^2 \,\d x\notag\\
&\quad = -\langle \partial_t\mu_\ve, \tu_\ve \cdot \nabla \vp_\ve \rangle_{(H^1)',H^1} +\Theta_0\|\partial_t\vp_\ve\|^2 +\langle  \partial_t\mu_\ve, S+R\rangle_{(H^1)',H^1} \notag\\
&\qquad -\langle  \partial_t\mu_\ve,\mathrm{div}\,(\vp_\varepsilon \vv_S) \rangle_{(H^1)',H^1}.\label{mut1}
\end{align}
Adding \eqref{ut1}--\eqref{mut1} together yields
\begin{align}
&\frac{\d}{\d t}\Lambda_\ve(t) +\|\nabla \partial_t \vp_\ve\|^2 +\int_\Omega F_\ve''(\vp_\ve) (\partial_t\vp_\ve)^2 \,\d x\notag\\
&\quad = \int_\Omega \mu_\ve \nabla \partial_t\vp_\ve \cdot \tu_\ve\, \d x -\frac12\int_\Omega \nu'(\vp_\ve)\partial_t\vp_\ve |\tu_\ve|^2 \,\d x -
\int_\Omega \partial_t (\nu(\vp_\ve)\vv_S) \cdot \tu_\ve \,\d x\notag\\
&\qquad +\Theta_0\|\partial_t\vp_\ve\|^2
- \langle \partial_t(S+R), \mu_\ve\rangle_{(H^1)',H^1} +\langle \partial_t (\mathrm{div}\,(\vp_\ve \vv_S)),\mu_\ve \rangle_{(H^1)',H^1}\notag\\
&=: \sum_{i=5}^{10} J_i,
\label{umut1}
\end{align}
where
\begin{align}
\Lambda_\ve(t)= \frac12\int_\Omega \nu(\vp_\varepsilon)|\tu_\varepsilon|^2\, \d x+ \frac12\|\nabla \mu_\ve\|^2+ \int_\Omega \mathrm{div}\,(\vp_\ve \vv_S)\mu_\ve\, \d x
-\int_\Omega (S+R) \mu_\ve\, \d x.
\label{Lambda1}
\end{align}
From \eqref{mean}, \eqref{meanmu}, \eqref{fres} and the Poincar\'{e}-Wirtinger inequality we see that
\begin{align*}
\left|\int_\Omega \mathrm{div}\,(\vp_\ve \vv_S)\mu_\ve \,\d x\right|
&\leq (\|\vp_\ve \,\mathrm{div}\, \vv_S\|_{L^\frac43} + \|\nabla \vp_\ve\cdot\vv_S\|_{L^\frac43})\|\mu_\ve\|_{L^4}\\
&\leq C(\|\vp_\ve\|_{L^4}\|S\|
+ \|\nabla \vp_\ve\|\|\vv_S\|_{L^4}) \|\mu_\ve\|^\frac12\|\mu_\ve\|_{H^1}^\frac12\\
&\leq C\|S\|(1+\|\nabla \mu_\ve\|)\\
&\leq \frac18\|\nabla \mu_\ve\|^2+C(1+\|S\|^2),
\end{align*}
and
\begin{align*}
\left|\int_\Omega (S+R) \mu_\ve\, \d x\right|
&\leq (\|S\|+\|R\|)\|\mu_\ve\|\nonumber\\
&\leq C(\|S\|+\|R\|)(1+\|\nabla \mu_\ve\|)\notag\\
&\leq \frac18\|\nabla \mu_\ve\|^2 +C(1+\|S\|^2+\|R\|^2) .
\end{align*}
From $\mathbf{(A3)}$  we also note that
$S, R\in C([0,T]; L^2(\Omega))$ (see \cite{Si87}). As a consequence, setting
$$
\Lambda^*_\ve(t)=\Lambda_\ve(t) + S^*\quad \text{with} \quad S^*=C(1+\|S\|^2_{L^\infty(0,T;L^2(\Omega))}
+\|R\|^2_{L^\infty(0,T;L^2(\Omega))}),
$$
for some sufficiently large $C>0$, it follows that
\begin{align}
\frac12\nu_* \|\tu_\varepsilon\|^2 + \frac14 \|\nabla \mu_\ve\|^2 +1
\leq \Lambda^*_\ve(t)
\leq \frac12\nu^* \|\tu_\varepsilon\|^2 + \|\nabla \mu_\ve\|^2 + 2S^*.
\label{Lambda2}
\end{align}

In what follows, we estimate the right-hand side of \eqref{umut1} in terms of the quantity $\Lambda^*_\ve$. By a simple modification of the argument in \cite[(5.25)]{Gio2020},
we deduce
\begin{align}
\|\partial_t\vp_\varepsilon\|_{(H^1)'}
\leq C
\|\u_\varepsilon\| \ln^\frac12(C+C\|\nabla \mu_\varepsilon\|)
+\|\nabla \mu_\varepsilon\|+\|S\|+\|R\|.
\label{phit1}
\end{align}
From the identity
$$
\nu(\vp_\varepsilon)\mathrm{curl}\,\u_\varepsilon
+\nu'(\vp_\varepsilon)\nabla \vp_\varepsilon \cdot \u_\varepsilon^\perp =\nabla \mu_\varepsilon \cdot (\nabla \vp_\varepsilon)^\perp,
$$
where $\vv^\perp=(v_2,-v_1)^T$, for any $\vv=(v_1,v_2)^T$, we can conclude from \eqref{BGW}, the assumption $\mathbf{(A2)}$ and \eqref{W2p} that (see \cite[pp. 21]{Gio2020})
\begin{align*}
\|\mathrm{curl}\,\u_\varepsilon\|\leq
\nu_*^{-1}C(\|\u_\varepsilon\|+\|\nabla \mu_\varepsilon\|)(1+\|\nabla \mu_\varepsilon\|)^\frac12 \ln^\frac12(C+C\|\nabla \mu_\varepsilon\|),
\end{align*}
which implies
\begin{align*}
\|\mathrm{curl}\,\widetilde{\u}_\varepsilon\|\leq
\nu_*^{-1}C(\|\widetilde{\u}_\varepsilon\|+\|\vv_S\| +\|\nabla \mu_\varepsilon\|)(1+\|\nabla \mu_\varepsilon\|)^\frac12 \ln^\frac12(C+C\|\nabla \mu_\varepsilon\|)
+\|\mathrm{curl}\, \vv_S\|.
\end{align*}
Thus, recalling \eqref{rot}, we have
\begin{align}
\|\widetilde{\u}_\varepsilon\|_{H^1}
\leq C(\|\widetilde{\u}_\varepsilon\|+\|\nabla \mu_\varepsilon\|+\|S\|)(1+\|\nabla \mu_\varepsilon\|)^\frac12 \ln^\frac12(C+C\|\nabla \mu_\varepsilon\|)+C\|S\|.
\label{ucurl}
\end{align}
From the definition of $\Lambda^*_\ve$ and \eqref{phit1}, \eqref{ucurl}, we have
$$
\|\partial_t\vp_\ve\|_{(H^1)'}\leq C(\Lambda^*_\ve)^\frac12\ln^\frac12(C+C\Lambda^*_\ve),\quad
\|\tu_\ve\|_{H^1}\leq C(\Lambda^*_\ve)^\frac34\ln^\frac12(C+C\Lambda^*_\ve).
$$
On the other hand, using interpolation
\begin{align*}
\|\partial_t\vp_\ve\|^2 &\leq \|\partial_t\vp_\ve\|_{(H^1)'} \|\partial_t\vp_\ve\|_{H^1}
=\|\partial_t\vp_\ve\|_{(H^1)'}(\|\partial_t\vp_\ve\|+\|\nabla \partial_t\vp_\ve\|)\\
&\leq \frac12\|\partial_t\vp_\ve\|^2 +\frac12\|\partial_t\vp_\ve\|_{(H^1)'}^2+ \|\partial_t\vp_\ve\|_{(H^1)'} \|\nabla \partial_t\vp_\ve\|,
\end{align*}
we also have
\begin{align}
\|\partial_t\vp_\ve\|
\leq 2(\|\partial_t\vp_\ve\|_{(H^1)'} +\|\partial_t\vp_\ve\|_{(H^1)'}^\frac12 \|\nabla \partial_t\vp_\ve\|^\frac12).
\label{L2phit}
\end{align}

After the above preparations, recalling that $\mathrm{div}\,\tu_\ve=0$, we can estimate the term $J_5$ like in \cite[(5.38)--(5.43)]{Gio2020} with suitable modifications. More precisely, we have
\begin{align*}
J_5
&\leq \|\tu_\ve\|\|\partial_t\vp_\ve\|_{L^4}\|\nabla \mu_\ve\|_{L^4}
\\
&\leq C\|\tu_\ve\| \|\partial_t\vp_\ve\|^\frac12 \|\partial_t\vp_\ve\|_{H^1}^\frac12
\|\nabla \mu_\ve\|^\frac12\|\mu_\ve\|_{H^2}^\frac12
\\
&\leq C\|\tu_\ve\| (\|\partial_t\vp_\ve\|+\|\partial_t\vp_\ve\|^\frac12 \|\nabla \partial_t\vp_\ve\|^\frac12)
\|\nabla \mu_\ve\|^\frac12\|\mu_\ve\|_{H^2}^\frac12
\\
&\leq C\|\tu_\ve\| (\|\partial_t\vp_\ve\|_{(H^1)'} +\|\partial_t\vp_\ve\|_{(H^1)'}^\frac12 \|\nabla \partial_t\vp_\ve\|^\frac12
+\|\partial_t\vp_\ve\|_{(H^1)'}^\frac14 \|\nabla \partial_t\vp_\ve\|^\frac34)\non\\
&\quad \times
\|\nabla \mu_\ve\|^\frac12
(\|\mu_\ve\|^\frac12+\|\Delta\mu_\ve\|^\frac12)
\\
&\leq C\|\tu_\ve\| (\|\partial_t\vp_\ve\|_{(H^1)'} +\|\partial_t\vp_\ve\|_{(H^1)'}^\frac14 \|\nabla \partial_t\vp_\ve\|^\frac34)
\|\nabla \mu_\ve\|^\frac12
(\|\mu_\ve\|^\frac12+\|\Delta\mu_\ve\|^\frac12)
\\
&\leq
C\|\tu_\ve\| \|\partial_t\vp_\ve\|_{(H^1)'} \|\nabla \mu_\ve\|^\frac12
(1+\|\nabla \mu_\ve\|^\frac12+\|S\|^\frac12+\|R\|^\frac12)\non\\
&\quad
+ C\|\tu_\ve\| \|\partial_t\vp_\ve\|_{(H^1)'} \|\nabla \mu_\ve\|^\frac12\|\partial_t\vp_\ve\|^\frac12
\\
&\quad
+ C\|\tu_\ve\| \|\partial_t\vp_\ve\|_{(H^1)'} \|\nabla \mu_\ve\|^\frac12 \|\tu_\ve\cdot\nabla\vp_\ve\|^\frac12
+ C\|\tu_\ve\| \|\partial_t\vp_\ve\|_{(H^1)'} \|\nabla \mu_\ve\|^\frac12\|\mathrm{div}\,(\vp_\ve\vv_S)\|^\frac12
\\
&\quad
+ C\|\tu_\ve\|\|\partial_t\vp_\ve\|_{(H^1)'}^\frac14 \|\nabla \partial_t\vp_\ve\|^\frac34\|\nabla \mu_\ve\|^\frac12(1+\|\nabla \mu_\ve\|^\frac12+
\|S\|^\frac12+\|R\|^\frac12)
\\
&\quad
+ C\|\tu_\ve\|\|\partial_t\vp_\ve\|_{(H^1)'}^\frac14 \|\nabla \partial_t\vp_\ve\|^\frac34\|\nabla \mu_\ve\|^\frac12\|\partial_t\vp_\ve\|^\frac12
\\
&\quad
+C\|\tu_\ve\|\|\partial_t\vp_\ve\|_{(H^1)'}^\frac14 \|\nabla \partial_t\vp_\ve\|^\frac34\|\nabla \mu_\ve\|^\frac12\|\tu_\ve\cdot\nabla\vp_\ve\|^\frac12
\\
&\quad +C\|\tu_\ve\|\|\partial_t\vp_\ve\|_{(H^1)'}^\frac14 \|\nabla \partial_t\vp_\ve\|^\frac34\|\nabla \mu_\ve\|^\frac12 \|\mathrm{div}\,(\vp_\ve\vv_S)\|^\frac12
\\
&=: \sum_{i=1}^{8}J_5^{(i)},
\end{align*}
where we have used the third equation in \eqref{A2CHHS} to compute the term $\Delta \mu_\ve$. We find that
\begin{align*}
J_5^{(1)}&\leq C (\Lambda^*_\ve)^\frac32\ln^\frac12(C+C\Lambda^*_\ve),\\
J_5^{(2)}&\leq  C\|\tu_\ve\| \|\partial_t\vp_\ve\|_{(H^1)'}\|\nabla \mu_\ve\|^\frac12(\|\partial_t\vp_\ve\|_{(H^1)'} ^\frac12
+\|\partial_t\vp_\ve\|_{(H^1)'}^\frac14 \|\nabla \partial_t\vp_\ve\|^\frac14)\\
&\leq \frac{1}{24} \|\nabla \partial_t\vp_\ve\|^2
+ C(\Lambda^*_\ve)^\frac32\ln^\frac34(C+C\Lambda^*_\ve)
+ C(\Lambda^*_\ve)^\frac{11}{7}\ln^\frac57(C+C\Lambda^*_\ve)\\
&\leq \frac{1}{24} \|\nabla \partial_t\vp_\ve\|^2
+ C(\Lambda^*_\ve)^\frac{11}{7}\ln^\frac34(C+C\Lambda^*_\ve),\\
J_5^{(3)}&\leq
C\|\tu_\ve\|^\frac32 \|\partial_t\vp_\ve\|_{(H^1)'} \|\nabla \mu_\ve\|^\frac12 \|\nabla\vp_\ve\|_{\mathbf{L}^\infty}^\frac12\\
&\leq C (\Lambda^*_\ve)^\frac32\ln^\frac12(C+C\Lambda^*_\ve)(1+\|\nabla \vp_\ve\|_{H^1}\ln^\frac12(e+\|\nabla \vp_\ve\|_{W^{1,3}}))^\frac12\\
&\leq C (\Lambda^*_\ve)^\frac{13}{8}\ln^\frac34(C+C\Lambda^*_\ve),
\end{align*}
where in the estimate for $J_5^{(3)}$ we have used inequality \eqref{BGW} with $f=\partial_{x_i} \vp_\ve$, $i=1,2$, and the estimates \eqref{H2}, \eqref{W2p}.
Arguing similarly, we have
\begin{align*}
J_5^{(4)}&\leq C\|\tu_\ve\| \|\partial_t\vp_\ve\|_{(H^1)'} \|\nabla \mu_\ve\|^\frac12(\|\vp_\ve S\|^\frac12+\|\vv_S\cdot\nabla \vp_\ve\|^\frac12)\\
&\leq C\|\tu_\ve\| \|\partial_t\vp_\ve\|_{(H^1)'} \|\nabla \mu_\ve\|^\frac12\|S\|^\frac12\|\vp_\ve\|_{W^{1,\infty}}^\frac12\\
&\leq C(\Lambda^*_\ve)^\frac54\ln^\frac12(C+C\Lambda^*_\ve)\|S\|^\frac12
(1+\|\vp_\ve\|_{H^2}\ln^\frac12(e+\|\vp_\ve\|_{W^{2,3}}))^\frac12\\
&\leq C(\Lambda^*_\ve)^\frac{13}{8}\ln^\frac34(C+C\Lambda^*_\ve).
\end{align*}
Concerning the left terms involving $\nabla \partial_t\vp_\ve$, we use Young's inequality to obtain
\begin{align*}
J_5^{(5)}&\leq \frac{1}{24} \|\nabla \partial_t\vp_\ve\|^2 +  C\|\tu_\ve\|^\frac85\|\partial_t\vp_\ve\|_{(H^1)'}^\frac25 \|\nabla \mu_\ve\|^\frac45(1+\|\nabla \mu_\ve\|^\frac45+
\|S\|^\frac45+\|R\|^\frac45)\\
&\leq \frac{1}{24} \|\nabla \partial_t\vp_\ve\|^2 + C(\Lambda^*_\ve)^\frac95\ln^\frac15(C+C\Lambda^*_\ve),\\
J_5^{(6)}&\leq \frac{1}{24} \|\nabla \partial_t\vp_\ve\|^2 + C\|\tu_\ve\|^\frac85\|\partial_t \vp_\ve\|_{(H^1)'}^\frac25 \|\nabla \mu_\ve\|^\frac45\|\partial_t\vp_\ve\|^\frac45\\
&\leq \frac{1}{24} \|\nabla \partial_t\vp_\ve\|^2
+ C\|\tu_\ve\|^\frac85\|\nabla \mu_\ve\|^\frac45
(\|\partial_t\vp_\ve\|_{(H^1)'} ^\frac65 +\|\partial_t\vp_\ve\|_{(H^1)'}^\frac45 \|\nabla \partial_t\vp_\ve\|^\frac25)
\\
&\leq \frac{1}{12} \|\nabla \partial_t\vp_\ve\|^2
+ C(\Lambda^*_\ve)^\frac95\ln^\frac35(C+C\Lambda^*_\ve)
+ C(\Lambda^*_\ve)^2\ln^\frac12(C+C\Lambda^*_\ve)\\
&\leq \frac{1}{12} \|\nabla \partial_t\vp_\ve\|^2
+C(\Lambda^*_\ve)^2\ln^\frac35(C+C\Lambda^*_\ve),\\
J_5^{(7)}&\leq \frac{1}{24} \|\nabla \partial_t\vp_\ve\|^2
+ C\|\tu_\ve\|^\frac{12}{5} \|\partial_t\vp_\ve\|_{(H^1)'}^\frac25 \|\nabla \mu_\ve\|^\frac45\|\nabla\vp_\ve\|_{L^\infty}^\frac45\\
&\leq \frac{1}{24} \|\nabla \partial_t\vp_\ve\|^2
+ C(\Lambda^*_\ve)^\frac95\ln^\frac15(C+C\Lambda^*_\ve) (1+\|\nabla \vp_\ve\|_{H^1}\ln^\frac12(e+\|\nabla \vp_\ve\|_{W^{1,3}}))^\frac45\\
&\leq \frac{1}{24} \|\nabla \partial_t\vp_\ve\|^2 + C(\Lambda^*_\ve)^2\ln^\frac35(C+C\Lambda^*_\ve),\\
J_5^{(8)} &\leq
\frac{1}{24} \|\nabla \partial_t\vp_\ve\|^2
+C\|\tu_\ve\|^\frac85\|\partial_t\vp_\ve\|_{(H^1)'}^\frac25 \|\nabla \mu_\ve\|^\frac45 (\|\vp_\ve S\|^\frac45+\|\vv_S\cdot\nabla \vp_\ve\|^\frac45)\\
&\leq \frac{1}{24} \|\nabla \partial_t\vp_\ve\|^2
+C\|\tu_\ve\|^\frac85\|\partial_t\vp_\ve\|_{(H^1)'}^\frac25\|\nabla \mu_\ve\|^\frac45  \|S\|^\frac45\|\vp_\ve\|_{W^{1,\infty}}^\frac45\\
&\leq \frac{1}{24} \|\nabla \partial_t\vp_\ve\|^2
+C\|\tu_\ve\|^\frac85\|\partial_t\vp_\ve\|_{(H^1)'}^\frac25\|\nabla \mu_\ve\|^\frac45  \|S\|^\frac45(1+\|\vp_\ve\|_{H^2} \ln^\frac12(e+\|\vp_\ve\|_{W^{2,3}}))^\frac45\\
&\leq \frac{1}{24} \|\nabla \partial_t\vp_\ve\|^2
+ C(\Lambda^*_\ve)^\frac85\ln^\frac35(C+C\Lambda^*_\ve).
\end{align*}
Collecting the above estimates, we arrive at
\begin{align*}
J_5&\leq \frac{1}{4}\|\nabla \partial_t\vp_\ve\|^2
+ C(\Lambda^*_\ve)^2\ln(C+C\Lambda^*_\ve).
\end{align*}

Next, we estimate $J_6$ by using \eqref{phit1}--\eqref{L2phit}. This gives
\begin{align*}
J_6&\leq \nu^*\|\partial_t\vp_\ve\|\|\tu_\ve\|_{L^4}^2\\
&\leq C(\|\partial_t\vp_\ve\|_{(H^1)'} +\|\partial_t\vp_\ve\|_{(H^1)'}^\frac12 \|\nabla \partial_t\vp_\ve\|^\frac12)\|\tu_\ve\|\|\tu_\ve\|_{H^1}\\
&\leq  \frac{1}{16} \|\nabla \partial_t\vp_\ve\|^2
+ C\|\partial_t\vp_\ve\|_{(H^1)'} \|\tu_\ve\|\|\tu_\ve\|_{H^1}+C  \|\partial_t\vp_\ve\|_{(H^1)'}^\frac23 \|\tu_\ve\|^\frac43 \|\tu_\ve\|_{H^1}^\frac43\\
&\leq \frac{1}{16} \|\nabla \partial_t\vp_\ve\|^2
+ C(\Lambda^*_\ve)^\frac74\ln(C+C\Lambda^*_\ve)
+ C (\Lambda^*_\ve)^2\ln(C+C\Lambda^*_\ve)\\
&\leq \frac{1}{16} \|\nabla \partial_t\vp_\ve\|^2
+ C (\Lambda^*_\ve)^2\ln(C+C\Lambda^*_\ve).
\end{align*}
Besides, it follows that
\begin{align*}
J_7&\leq \nu^*\|\partial_t\vp_\ve\|_{L^4} \|\vv_S\|_{L^4}\|\tu_\ve\|
+ \nu^* \|\partial_t\vv_S\|\|\tu_\ve\|\\
&\leq C(\|\partial_t\vp_\ve\|_{(H^1)'} +\|\partial_t\vp_\ve\|_{(H^1)'}^\frac14 \|\nabla \partial_t\vp_\ve\|^\frac34)\|S\|\|\tu_\ve\|
+C \|\partial_t S\|_{(H^1)'}\|\tu_\ve\|\\
&\leq \frac{1}{16} \|\nabla \partial_t\vp_\ve\|^2
+ C\Lambda^*_\ve\ln^\frac12(C+C\Lambda^*_\ve)
+ C\|S\|^\frac{8}{5} \Lambda^*_\ve \ln^\frac{1}{5}(C+C\Lambda^*_\ve)\\
&\quad +C\Lambda^*_\ve+\|\partial_t S\|_{(H^1)'}^2\\
&\leq \frac{1}{16} \|\nabla \partial_t\vp_\ve\|^2 +C\Lambda^*_\ve\ln^\frac12(C+C\Lambda^*_\ve)+\|\partial_t S\|_{(H^1)'}^2.
\end{align*}
From \eqref{L2phit}, it is straightforward to check that
\begin{align*}
J_8&\leq \frac{1}{16} \|\nabla \partial_t\vp_\ve\|^2 +
C \Lambda^*_\ve\ln(C+C\Lambda^*_\ve).
\end{align*}
The term $J_9$ can be estimated as
\begin{align*}
J_9&\leq  (\|\partial_t S\|_{(H^1)'}+\|\partial_t R\|_{(H^1)'})\|\mu_\ve\|_{H^1}\\
&\leq C(\|\partial_t S\|_{(H^1)'}+\|\partial_t R\|_{(H^1)'})(1+\|\nabla \mu_\ve\|)\\
&\leq C\Lambda^*_\ve + \|\partial_t S\|_{(H^1)'}^2+\|\partial_t R\|_{(H^1)'}^2,
\end{align*}
while for $J_{10}$, we get
\begin{align*}
J_{10}&\leq (\|\nabla \vp_\ve\cdot \partial_t \vv_S\|_{(H^1)'}+\|\nabla\partial_t\vp_\ve\cdot \vv_S\|_{(H^1)'}
+\|\partial_t\vp_\ve S\|_{(H^1)'}+\|\vp_\ve \partial_t S\|_{(H^1)'}) \|\mu_\ve\|_{H^1}\\
&\leq
C\big(\|\nabla \vp_\ve\|_{L^4}\|\partial_t \vv_S\|+
\|\nabla\partial_t\vp_\ve\|\|\vv_S\|_{L^4}\big) (1+\|\nabla \mu_\ve\|)\\
&\quad + C\big(\|S\|\|\partial_t\vp_\ve\|_{L^4} +\|\vp_\ve\|_{H^1}\|\partial_t S\|\big)(1+\|\nabla \mu_\ve\|)\\
&\leq  C\|\partial_t S\|(\|\vp_\ve\|_{H^1} +\|\vp_\ve\|_{H^1}^\frac12\|\vp_\ve\|_{H^2}^\frac12)(1+\|\nabla \mu_\ve\|)\\
&\quad + C\|S\|(1+\|\nabla \mu_\ve\|)(\|\partial_t\vp_\ve\|_{(H^1)'}+\|\nabla\partial_t\vp_\ve\|)\\
&\leq \frac{1}{16} \|\nabla \partial_t\vp_\ve\|^2 +C(\Lambda^*_\ve)^\frac54 + C(\Lambda^*_\ve)^2\ln(C+C\Lambda^*_\ve)+C\|S\|^2\Lambda^*_\ve + \|\partial_t S\|^2+\|S\|^2\\
&\leq \frac{1}{16} \|\nabla \partial_t\vp_\ve\|^2
+ C(\Lambda^*_\ve)^2\ln(C+C\Lambda^*_\ve) + C\Lambda^*_\ve+ \|\partial_t S\|^2.
\end{align*}

Collecting the above estimates and recalling $\mathbf{(A3)}$, we derive from \eqref{umut1} the following differential inequality
\begin{equation*}
\frac{\d}{\d t}\Lambda_\ve^*(t) + \frac12\|\nabla \partial_t \vp_\ve\|^2+\int_\Omega F_\ve''(\vp_\ve) (\partial_t\vp_\ve)^2 \,\d x
\leq C(\Lambda^*_\ve)^2\ln(C+C\Lambda^*_\ve) + C(\|\partial_t S\|^2+\|\partial_t R\|_{(H^1)'}^2).
\end{equation*}
Thanks to the time continuity of the approximate solution and $\vv_S$, we see that
\begin{align*}
\|\sqrt{\nu(\vp_\ve(0))}\tu_\ve(0)\|
&\leq C\|\mu_\ve(0)\|_V\|\vp_\ve(0)\|_{L^\infty}+C\|\vv_S(0)\|\\
&\leq C(1+\|\widetilde{\mu}_0\|_{H^1}+\|S(0)\|),
\end{align*}
which is bounded. As a consequence, the initial data for $\Lambda^*_\ve$ is bounded by
$$
1\leq \Lambda_\ve^*(0)\leq C(1+\|\widetilde{\mu}_0\|_{H^1}^2+\|S(0)\|^2)+2S^*.
$$
Hence, from Lemma \ref{gron}, $\mathbf{(A3)}$ and the fact $\Lambda^*_\ve\in L^1(0,T)$ (recall \eqref{fres2}), we can conclude that
\begin{align}
\Lambda_\ve^*(t)\leq C_T,\quad \forall\, t\in [0,T], \non
\end{align}
where $C_T>0$ depends on   $\|S\|_{L^2(0,T;H^1(\Omega))}$, $\|\partial_t S\|_{L^2(0,T; L^2(\Omega))}$, $\|R\|_{L^2(0,T;H^1(\Omega))}$,
$\|\partial_t R\|_{L^2(0,T; (H^1(\Omega))')}$, $\|\widetilde{\mu}_0\|_{H^1}$, $\mathcal{E}_\varepsilon(0)$, $\Omega$, $T$ and coefficients of the system,
but it is independent of $\ve$. Recalling the definition of $\Lambda_\ve^*$, we infer from the above estimate that
\begin{align}
\|\mu_\ve(t)\|_{H^1}+\|\widetilde{\u}_\ve(t)\|\leq C_T,\quad \forall\, t\in[0,T],\non
\end{align}
as well as
\begin{align}
\int_0^T\|\nabla \partial_t \vp_\ve(t)\|^2\,\d t\leq C_T. \non
\end{align}
As a consequence, it follows from \eqref{W2p}, \eqref{phit1}, \eqref{ucurl} that
\begin{align}
\|\u_\ve(t)\|_{H^1}+\|\vp_\ve(t)\|_{W^{2,p}} +\|\partial_t \vp_\ve(t)\|_{(H^1)'}+\|F'_\ve(\vp_\ve)(t)\|_{L^p}\leq C_T,
\label{es-high}
\end{align}
for any $t\in [0,T]$ and $p\in [2,+\infty)$. The $H^2$-estimate for the pressure $P_\varepsilon$ can be derived from Lemma \ref{press} and \eqref{es-high} (cf. \cite{Gio2020}):
 $$
\|P_\ve(t)\|_{H^2}\leq C_T,\quad \forall\, t\in[0,T].
$$
Finally, we infer from the third equation of \eqref{A2CHHS}, $\mathbf{(A3)}$, \eqref{es-high} and a standard elliptic estimate that
\begin{align*}
\|\mu_\varepsilon\|_{H^3}
&\leq C(\|\partial_t \vp_\varepsilon\|_{H^1} + \|\mathrm{div}\,(\vp_\varepsilon \u_\varepsilon)\|_{H^1} + \|S\|_{H^1}+ \|R\|_{H^1}
+ \|\mu_\ve\|)\\
&\leq C(\|\nabla \partial_t \vp_\ve\|+\|\mu_\ve\|) + C(\|S\|_{H^1}+ \|R\|_{H^1}),
\end{align*}
almost everywhere in $(0,T)$. This yields
$$
\int_0^T\|\mu_\ve(t)\|_{H^3}^2\,\d t\leq C_T.
$$

\textbf{Step 3. Existence.}
Based on the $\ve$-independent estimates obtained in the previous step, we can pass to the limit as $\varepsilon\to 0^+$ and obtain the existence of a global strong solution
$(\u,P,\varphi,\mu)$ to problem \eqref{bdini}--\eqref{CHHS} with corresponding regularity properties via a standard compactness argument (cf. \cite{GGW}).
Further regularity of the solution $(\u,P,\varphi,\mu)$ can be obtained. To this end, from $\mu\in L^\infty(0,T;H^1(\Omega))$ and Lemma \ref{esing}, we can deduce that
$F''(\varphi)\in L^\infty(0,T;L^p(\Omega))$ for any $p\in [2,+\infty)$, and the strict separation property \eqref{sep1} (cf. \cite{HW21}, see also an alternative argument
for the Cahn-Hilliard equation in \cite{GGG22}). Then from the equation $\mu=-\Delta \varphi+\Psi'(\varphi)$, $\mathbf{(A1)}$ and the elliptic estimate, we find that
$\varphi\in L^\infty(0,T;H^3(\Omega))$. Recalling $\mathbf{(A1)'}$ and using \eqref{sep1}, we further obtain $F'(\varphi)\in L^2(0,T;H^3(\Omega))$. Applying the elliptic
estimate again, we can conclude
$\varphi\in L^2(0,T;H^5(\Omega))$. For the chemical potential $\mu$, similar to \cite{Gio2020}, we get $\partial_t\mu \in L^2(0,T;(H^1(\Omega))')$. Finally, using standard
interpolation arguments, we can derive the time continuity $\varphi\in C([0,T];H^3(\Omega))$ and $\mu\in C([0,T];H^1(\Omega))$.
\medskip

\textbf{Step 4. Uniqueness.}
Uniqueness of the strong solution is a direct consequence of the continuous dependence estimate obtained in Proposition \ref{conti-es1}.

The proof of Theorem \ref{str-well} is complete. \hfill $\square$

\section*{Declarations}
\noindent \textbf{Acknowledgement.} 
The authors thank the reviewer for helpful suggestions that improved the presentation of this paper.
C. Cavaterra and M. Grasselli are members of Gruppo Nazionale per l'Ana\-li\-si Matematica, la Probabilit\`{a} e le loro Applicazioni (GNAMPA), Istituto Nazionale di Alta Matematica (INdAM). Moreover, their research is part of the activities of ``Dipartimento di Eccellenza 2023-2027'' of Universit\`a degli Studi di Milano (C. Cavaterra) and Politecnico di Milano (M. Grasselli).
H. Wu is a member of Key Laboratory of Mathematics for Nonlinear Sciences (Fudan University), Ministry of Education.\medskip

\noindent \textbf{Funding.}
C. Cavaterra and M. Grasselli were partially supported by MIUR-PRIN Grant 2020F3NCPX ``Mathematics for Industry 4.0 (Math4I4)''.
C. Cavaterra's work has also been partially supported by MIUR-PRIN Grant 2022 ``Partial differential equations and related geometric-functional inequalities''. H. Wu was partially supported by National Natural Science Foundation of China under Grant number 12071084.

\medskip

\noindent \textbf{Competing interests.}
The authors declare that they have no conflict of interest.


\end{document}